\newcommand{\afficherfigures}{true}
\newtheorem{thm}{Theorem}
\newtheorem{prop}{Proposition}
\newtheorem{definition}{Definition}
\newtheorem{lem}{Lemma}
\newtheorem{cor}{Corollary}
\newtheorem{example}{Example}
\newtheorem{rk}{Remark}
\numberwithin{equation}{section}
\newcommand{\afficherfigure}[1]{\ifthenelse{\equal{\afficherfigures}{true}}{#1}{}}
\title{
    Quadratic obstructions
    to small-time local controllability
    for scalar-input differential systems
}
\author{
 Karine Beauchard\footnote{
  Karine Beauchard, IRMAR, Ecole Normale Sup\'erieure de Rennes, UBL, CNRS, Campus de Ker Lann, 35170 Bruz, France, \texttt{karine.beauchard@ens-rennes.fr}},
 Fr\'ed\'eric Marbach\footnote{Fr\'ed\'eric Marbach, 
  Sorbonne Universit\'es, UPMC Universit\'e Paris 06, CNRS UMR 7598, Laboratoire Jacques-Louis Lions, Place Jussieu, 75005 Paris, France, \texttt{frederic.marbach@upmc.fr}}
}
\newcommand{\ds}{\mathrm{d}s}
\newcommand{\dt}{\mathrm{d}t}
\newcommand{\dtau}{\mathrm{d}\tau}
\newcommand{\Slie}[1]{\mathcal{S}_{#1}}
\newcommand{\Sone}{\Slie{1}}
\newcommand{\Stwo}{\Slie{2}}
\newcommand{\ad}{\mathrm{ad}}
\newcommand{\Span}{\mathrm{Span~}}
\newcommand{\Supp}{\mathrm{supp}}
\newcommand{\Lie}{\mathrm{Lie}}
\newcommand{\M}{\mathcal{M}}
\newcommand{\R}{\mathbb{R}}
\newcommand{\N}{\mathbb{N}}
\newcommand{\tr}[1]{#1^{\mathrm{tr}}}
\newcommand{\eqmod}[1]{=_{[#1]}}
\newcommand{\norm}[2]{\left\| #1 \right\|_{#2}}
\newcommand{\parl}{{\scriptscriptstyle \parallel}}
\newcommand{\stlc}[1]{#1 small-time locally controllable}
\newcommand{\sstlc}{smoothly small-time locally controllable}
\newcommand{\alp}[1]{\left(\beta, #1\right)}
\newcommand{\aux}{\xi}
\newcommand{\tm}{\mathcal{T}}
\newcommand{\Og}[1]{\mathcal{O}_\gamma{\left(#1\right)}}
\newcommand{\Oo}[1]{\mathcal{O}_1{\left(#1\right)}}
\newcommand{\Oz}[1]{\mathcal{O}_0{\left(#1\right)}}
\newcommand{\Ou}[1]{\mathcal{O}{\left(#1\right)}}
\begin{document}
    
\maketitle

\begin{abstract}
 We consider nonlinear scalar-input differential control systems in the 
 vicinity of an equilibrium. When the linearized system at the equilibrium 
 is controllable, the nonlinear system is smoothly small-time locally 
 controllable, i.e., whatever $m>0$ and $T>0$, the state can reach a whole 
 neighborhood of the equilibrium at time $T$ with controls arbitrary small in 
 $C^m$-norm. When the linearized system is not controllable, we prove that 
 small-time local controllability cannot be recovered from the quadratic 
 expansion and that the following quadratic alternative holds.
 
 Either the state is constrained to live within a smooth strict 
 invariant manifold, up to a cubic residual, or the quadratic order adds a 
 signed drift in the evolution with respect to this manifold. In the second 
 case, the quadratic drift holds along an explicit  Lie bracket of length $(2k+1)$, 
 it is quantified in terms of an $H^{-k}$-norm of the control, 
 it holds for controls small in $W^{2k,\infty}$-norm.  
 These spaces are optimal for general nonlinear systems and are 
 slightly improved in the particular case of control-affine systems. 
 
 Unlike other works based on Lie-series formalism, our proof is based on 
 an explicit computation of the quadratic terms by means of appropriate
 transformations. In particular, it does not require that the vector fields 
 defining the dynamic are smooth. We prove that $C^3$ regularity is sufficient
 for our alternative to hold.
 
 This work underlines the importance of the norm used 
 in the smallness assumption on the control: depending on this choice of 
 functional setting, the same system may or may not be small-time locally 
 controllable, even though the state lives within a finite dimensional space.
 
 \bigskip
 
 \noindent \textbf{MSC:} 93B05, 93C15, 93B10
 
 \noindent \textbf{Keywords:} controllability, quadratic expansion, scalar-input, obstruction, small-time 
\end{abstract}

\newpage
\setcounter{tocdepth}{2}
\tableofcontents

\section{Introduction}

\subsection{Scalar-input differential systems}

Let $n \in \N^*$. Throughout this work, we consider differential control 
systems where the state~$x(t)$ lives in $\R^n$ and the control
is a scalar input $u(t) \in \R$. For $f \in C^1(\R^n\times\R,\R^n)$, we 
consider the nonlinear control system:
\begin{equation} \label{system.fxu}
 \dot{x}=f(x,u).
\end{equation}

\begin{definition} \label{Definition:sol.fxu}
    Let $T > 0$ and $x^* \in \R^n$ be a given initial data. We say that a couple 
    $(x,u) \in C^0([0,T],\R^n) \times L^\infty((0,T),\R)$ is
    a \emph{trajectory} of~\eqref{system.fxu} associated with $x^*$ when:
    \begin{equation} \label{eq.ivp}
    \forall t \in [0,T], \quad 
    x(t) = x^* + \int_0^t f(x(s), u(s)) \ds.
    \end{equation}
\end{definition}

\begin{prop} \label{Prop:fxu}
    Let $T > 0$, $x^* \in \R^n$ and $u \in L^\infty((0,T),\R)$.
    System~\eqref{system.fxu} admits a unique maximal trajectory 
    defined on $[0,T_u)$ for some $T_u \in (0,T]$. 
\end{prop}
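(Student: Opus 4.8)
The plan is to read \eqref{eq.ivp} as a Carath\'eodory integral equation and invoke the classical local existence and uniqueness theory; the only point needing care is that the right-hand side $f(x,u(t))$ depends on $t$ solely through $u \in L^\infty$, hence is merely measurable in $t$, although it is $C^1$, and in particular locally Lipschitz, in $x$.

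First I would fix $T$, $x^*$, $u$ as in the statement and set $g(t,x) := f(x,u(t))$. Picking $R > 0$ with $|u(t)| \le R$ for a.e.\ $t$, and for $\rho > 0$ the closed ball $\overline{B}_\rho := \{ x \in \R^n : |x - x^*| \le \rho \}$, the cylinder $\overline{B}_\rho \times [-R,R]$ is compact, so $f$ is bounded on it by some $M = M(\rho)$ and, since $\partial_x f$ is continuous hence bounded there, $f(\cdot,v)$ is Lipschitz on $\overline{B}_\rho$ with a constant $L = L(\rho)$ that does not depend on $v \in [-R,R]$. Hence, for a.e.\ $t$, the map $x \mapsto g(t,x)$ is $L$-Lipschitz and bounded by $M$ on $\overline{B}_\rho$, while $t \mapsto g(t,x)$ is measurable; moreover, for every continuous curve $x$ the composition $s \mapsto g(s,x(s))$ is measurable and bounded, hence integrable, so the right-hand side of \eqref{eq.ivp} is well defined and absolutely continuous in its upper limit. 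This is exactly the Carath\'eodory setting.

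For local existence and uniqueness I would run a fixed-point argument on $X_\tau := C^0([0,\tau],\overline{B}_\rho)$ with the sup norm, using $\Phi(x)(t) := x^* + \int_0^t g(s,x(s))\,\ds$. When $\tau \le \rho/M$ the bound $M$ gives $\Phi(X_\tau) \subset X_\tau$, and the Lipschitz bound gives $\|\Phi^j(x) - \Phi^j(y)\|_\infty \le \tfrac{(L\tau)^j}{j!}\|x - y\|_\infty$, so some iterate $\Phi^j$ is a contraction; Banach's theorem then yields a unique fixed point of $\Phi$ in $X_\tau$, i.e.\ a trajectory of \eqref{system.fxu} on $[0,\tau]$. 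For uniqueness without the a priori confinement to $\overline{B}_\rho$: two trajectories defined on a common interval both take values in a single closed ball around $x^*$ on some $[0,\sigma]$, where the Lipschitz bound gives $|x_1(t) - x_2(t)| \le L\int_0^t |x_1 - x_2|$ for $t \le \sigma$, so Gr\"onwall forces $x_1 = x_2$ on $[0,\sigma]$, and a standard continuation argument propagates the coincidence to the whole common domain.

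Finally, set $T_u := \sup\{ \tau \in (0,T] : \text{\eqref{system.fxu} has a trajectory on } [0,\tau] \}$, a supremum over a nonempty set by the previous step, so $T_u > 0$. By uniqueness, any two trajectories appearing in this definition agree on the intersection of their domains, so they are restrictions of a single map $x \colon [0,T_u) \to \R^n$ which satisfies \eqref{eq.ivp} for every $t \in [0,T_u)$, since that identity involves only the upper limit $t$; this is the maximal trajectory, and any trajectory associated with $x^*$ is one of its restrictions, which gives uniqueness of the maximal one. If $T_u < T$ the supremum is not attained (local existence at $x(T_u)$ would otherwise extend the trajectory past $T_u$), which explains the half-open domain $[0,T_u)$. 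I expect the only real subtlety here to be the bookkeeping imposed by the merely $L^\infty$ time dependence --- it is what forces the integral formulation in Definition \ref{Definition:sol.fxu} and places us in the Carath\'eodory framework --- but it causes no genuine difficulty, precisely because $f$ is $C^1$ in $x$; one could additionally record the blow-up alternative (if $T_u < T$ then $|x(t)| \to +\infty$ as $t \uparrow T_u$), though this is not needed for the statement.
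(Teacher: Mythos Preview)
Your argument is correct and follows essentially the same route as the paper: recast the system as $\dot{x}=g(t,x)$ with $g(t,x)=f(x,u(t))$, observe that this places us in the Carath\'eodory setting, and obtain existence and uniqueness via a fixed-point argument on the integral formulation~\eqref{eq.ivp}. The paper simply cites a reference for this step, whereas you have written out the Picard iteration, the Gr\"onwall uniqueness, and the maximal-extension construction explicitly; there is no substantive difference in approach.
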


\begin{proof}
    Once a control $u \in L^\infty((0,T),\R)$ is fixed, system~\eqref{system.fxu} 
    can be seen as $\dot{x}(t) = g(t,x(t))$, where we introduce $g(t,x) := f(x,u(t))$.
    The function $g$ is not continuous with respect to time. Hence, we cannot apply
    usual Cauchy-Lipschitz-Picard-Lindel\"of existence and uniqueness theorem. However,
    the existence and uniqueness of a solution in the sense of 
    Definition~\ref{Definition:sol.fxu} holds for such functions $g$ 
    (see e.g.~\cite[Theorem 54, page 476]{MR1640001}). The proof relies
    on a fixed-point theorem applied to the integral formulation~\eqref{eq.ivp}.
\end{proof}

In the particular case of control-affine systems, we will work 
with a slightly different functional framework. Let $f_0, f_1 \in
C^1(\R^n,\R^n)$. A control-affine system takes the form:
\begin{equation} \label{system.f0f1}
 \dot{x} = f_0(x) + u f_1(x).
\end{equation}
Such systems are both important from the point of view of applications
and mathematically as a first-order Taylor expansion with respect to a small
control of a nonlinear dynamic.

\begin{definition} \label{Definition:sol.f0f1}
    Let $T > 0$ and $x^* \in \R^n$ be a given initial data. We say that a couple 
    $(x,u) \in C^0([0,T],\R^n) \times L^1((0,T),\R)$ is
    a \emph{trajectory} of~\eqref{system.f0f1} associated with $x^*$ when:
    \begin{equation} \label{eq.ivp2}
    \forall t \in [0,T], \quad 
    x(t) = x^* + \int_0^t \left( f_0(x(s)) + u(s) f_1(x(s)) \right) \ds.
    \end{equation}
\end{definition}

\begin{prop}
    Let $T > 0$, $x^* \in \R^n$ and $u \in L^1((0,T),\R)$.
    System~\eqref{system.f0f1} admits a unique maximal trajectory 
    defined on $[0,T_u)$ for some $T_u \in (0,T]$. 
\end{prop}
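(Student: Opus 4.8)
The plan is to mimic the proof of Proposition~\ref{Prop:fxu}: once the control is frozen, \eqref{system.f0f1} becomes a nonautonomous ordinary differential equation $\dot x(t) = g(t,x(t))$ with $g(t,x) := f_0(x) + u(t) f_1(x)$, and it suffices to check that $g$ falls within the Carathéodory framework for which \cite[Theorem 54, page 476]{MR1640001} provides local existence and uniqueness, after which a standard continuation argument yields the maximal trajectory.

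First I would verify the structural hypotheses on $g$. For each fixed $x \in \R^n$ the map $t \mapsto g(t,x)$ is measurable, being affine in $u(t)$, while for every $t$ at which $u(t)$ is finite (hence for a.e.\ $t$) the map $x \mapsto g(t,x)$ is of class $C^1$, thus continuous and locally Lipschitz. The substance of the argument is the integrable local bound: fixing $R>0$ and letting $C_R$ be a common bound on $|f_0|$, $|f_1|$ and on the Lipschitz constants of $f_0$ and $f_1$ over the closed ball $\overline{B(0,R)}$ — finite because $f_0, f_1 \in C^1$ — one has, for all $t$ and all $x,y$ in that ball,
\[
 |g(t,x)| \le C_R\bigl(1 + |u(t)|\bigr)
 \quad\text{and}\quad
 |g(t,x) - g(t,y)| \le C_R\bigl(1 + |u(t)|\bigr)\,|x-y| ,
\]
and the majorant $t \mapsto C_R(1+|u(t)|)$ lies in $L^1((0,T),\R)$ precisely because $u$ does. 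Concretely, choosing $R > |x^*|$ and then $\delta \in (0,T]$ so small that $\int_0^\delta C_R(1+|u(s)|)\,\ds$ is less than both $R - |x^*|$ and $1$ turns the integral operator associated with \eqref{eq.ivp2} into a contraction of the closed ball $\overline{B(x^*,R-|x^*|)}$ of $C^0([0,\delta],\R^n)$, yielding a unique trajectory on $[0,\delta]$ in the sense of Definition~\ref{Definition:sol.f0f1}.

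The maximal trajectory is then obtained as usual: set $T_u := \sup\{ t_1 \in (0,T] : \text{a trajectory on } [0,t_1] \text{ exists} \}$, which is positive by the previous step; by uniqueness any two trajectories defined on subintervals of $[0,T_u)$ agree on the overlap, so they glue into one trajectory on $[0,T_u)$, maximal by construction, with possibly $T_u < T$ if $f_0$ or $f_1$ grows superlinearly at infinity. The only point needing a little care — and the sole difference with the $L^\infty$ setting of Proposition~\ref{Prop:fxu} — is that the bound on $g$ is no longer uniform in $t$ on compact subsets of $\R^n$ but merely dominated by the fixed $L^1$ function $C_R(1+|u(\cdot)|)$; one must therefore make sure the existence theorem invoked is stated in the Carathéodory form that tolerates integrable (rather than locally bounded) time dependence, which \cite[Theorem 54, page 476]{MR1640001} indeed is.
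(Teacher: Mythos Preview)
Your proposal is correct and follows the same approach as the paper: both apply a fixed-point theorem to the integral formulation~\eqref{eq.ivp2}, with the paper's proof being a single sentence to that effect. Your version spells out the Carath\'eodory hypotheses and the contraction argument in detail, correctly identifying that the only change from the $L^\infty$ setting is that the dominating function $t\mapsto C_R(1+|u(t)|)$ is merely $L^1$ rather than bounded.
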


\begin{proof}
 Here again, one applies a fixed-point theorem to the integral 
 formulation~\eqref{eq.ivp2}.
\end{proof}

In the sequel and where not explicitly stated, it is implicit that we handle
well-defined trajectories of our differential systems, either by restricting
to small enough times, small enough controls or sufficiently nice dynamics
preventing blow-up. 

Moreover, we will often need to switch point of view 
between nonlinear and control-affine systems. 
Given a control-affine system 
characterized by $f_0$ and $f_1$, one can always see it as a particular case of 
a nonlinear system by defining:
\begin{equation} \label{def.f}
 f(x, u) := f_0(x) + u f_1(x).
\end{equation}
Conversely, given a nonlinear system characterized by $f$, its dynamic can be
approximated using
$f(x, u) \approx f_0(x) + u f_1(x) + O(u^2)$ where we define:
\begin{equation} \label{def.f0f1}
 f_0(x) := f(x, 0) \quad \text{and} \quad f_1(x) := \partial_u f(x, 0).
\end{equation}

\subsection{Small-time local controllability}

Multiple definitions of small-time local controllability can be found in the
mathematical literature. Here, we put the focus on the smallness assumption
made on the control. This notion is mostly relevant in the vicinity
of an equilibrium. We use the following definitions:

\begin{definition}
 We say that $(x_e,u_e) \in \R^n \times \R$ is an 
 \emph{equilibrium} of system~\eqref{system.fxu} 
 when $f(x_e, u_e) = 0$ and an 
 equilibrium of system~\eqref{system.f0f1}
 when $f_0(x_e) + u_e f_1(x_e) = 0$. 
 Up to a translation, one can always assume that $x_e = 0$
 and $u_e = 0$. Thus, in the sequel, it is implicit that we consider
 systems for which the couple $(0,0)$ is an equilibrium.
\end{definition}

\begin{definition} \label{Definition:STLC}
    Let $\left(E_T, \norm{\cdot}{E_T}\right)$ be a family 
    of normed vector spaces of scalar functions defined on $[0,T]$, for 
    $T > 0$. We say that a scalar-input differential system is 
    \emph{\stlc{$E$}}
    when the following property holds: for any $T > 0$, for any
    $\eta > 0$, there exists $\delta > 0$ such that, for any
    $x^*, x^\dagger \in \R^n$ with $|x^*| + |x^\dagger| \leqslant \delta$,
    there exists a trajectory $(x,u)$ of the differential system
    defined on $[0,T]$ with $u \in E_T$ satisfying:
    \begin{equation}
     \norm{u}{E_T} \leqslant \eta
     \quad \text{and} \quad
     x(0) = x^*
     \quad \text{and} \quad 
     x(T) = x^\dagger.
    \end{equation}
    Here and in the sequel, it is implicit that this notion of 
    local controllability refers to local controllability in the
    vicinity of the null equilibrium $x_e = 0$ and $u_e = 0$
    of our system. The translation of our results to other 
    equilibriums is left to the reader.
\end{definition}

It could be thought that, in a finite dimensional setting with
smooth dynamics, the notion of small-time local 
controllability should not depend on the smallness assumption made
on the control. However, it is not the case. This fact plays a key role in this work. 
We will see that the relevance of the quadratic approximation depends on the chosen norm.
We will also use the following notion:
\begin{definition} \label{Definition:SSTLC}
    We say that a scalar-input differential system is \emph{\sstlc}
    when it is \stlc{$C^m$} for any $m \in \N$.
\end{definition}

\subsection{Linear theory and the Kalman rank condition}
\label{Subsec:linear}

The natural approach to investigate the local controllability of
system~\eqref{system.fxu} near an equilibrium 
is to study the controllability of the linearized system,
which is given by:
\begin{equation} \label{system.yh}
 \dot{y} = H_0 y + u b,
\end{equation}
where $H_0 := \partial_x f(0,0)$ and $b := \partial_u f(0,0)$.
It is well known (see works~\cite{MR0120435} of 
Pontryagin,~\cite[Theorem~6]{MR0145169} of LaSalle
or~\cite[Theorem~10]{MR0155070} of Kalman, Ho and Narendra) 
that such linear control systems are controllable, 
independently on the allowed time $T$, if and only if they satisfy the
Kalman rank condition:
\begin{equation} \label{kalman}
 \Span \left\{ H_0^k b, \enskip k \in \{ 0, \ldots n-1 \}
 \right\} = \R^n.
\end{equation}
It is also classical to prove that the controllability of the linearized system
implies small-time local controllability for the nonlinear system 
(see~\cite[Theorem~3]{MR0186487} by Markus or~\cite[Theorem~1]{MR889459} by Lee 
and Markus). For example, we have:

\begin{thm} \label{Theorem:Linear}
    Assume that the Kalman rank condition~\eqref{kalman} holds. 
    Then the nonlinear system~\eqref{system.fxu} is \sstlc.
    Moreover, one can choose controls compactly supported within 
    the interior of the allotted time interval.
\end{thm}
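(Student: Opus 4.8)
The plan is to reduce the nonlinear system to its linearization by a fixed-point / inverse-function-theorem argument, and then to exploit the Kalman rank condition together with a classical construction of compactly supported controls for linear systems. First I would recall the well-known fact that, under the Kalman rank condition, the linear system~\eqref{system.yh} is controllable in arbitrarily small time using smooth controls that vanish, together with all their derivatives, at the endpoints of the time interval. Concretely, fix $T>0$ and a cutoff $0 < T_1 < T_2 < T$; the controllability Gramian
\begin{equation*}
 G := \int_{T_1}^{T_2} \rho(s) e^{(T_2-s)H_0} b\, \tr{b} e^{(T_2-s)\tr{H_0}} \ds
\end{equation*}
is invertible for any smooth positive weight $\rho$ supported in $(T_1,T_2)$, again by~\eqref{kalman}. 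Given a target displacement $z \in \R^n$, the control $u_z(s) := \rho(s)\, \tr{b}\, e^{(T_2-s)\tr{H_0}} G^{-1} z$ steers the linear system from $0$ at time $T_1$ to $z$ at time $T_2$, is supported in $(T_1,T_2) \subset (0,T)$, is smooth, and depends linearly (hence smoothly and boundedly in every $C^m$-norm) on $z$. On $[0,T_1]$ and on $[T_2,T]$ we take $u \equiv 0$, so the linear flow is just $y(t) = e^{tH_0} y(0)$ there.

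Next I would set up the nonlinear problem as a perturbation of this. Given $(x^*, x^\dagger)$ small, I want a control $u$ (of the above compactly supported shape, but adjusted) and a trajectory $x$ of~\eqref{system.fxu} with $x(0) = x^*$, $x(T) = x^\dagger$. Write $f(x,u) = H_0 x + u b + R(x,u)$ where $R(x,u) = f(x,u) - H_0 x - u b = O(|x|^2 + |u|^2 + |x||u|)$ is $C^1$ with $R(0,0)=0$, $DR(0,0)=0$ (here I use $f \in C^1$; if only this regularity is available one works with the integral remainder, but the controllability-from-the-linearization statement is classical and I would simply cite~\cite{MR0186487} or~\cite{MR889459} for the core argument). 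Define the end-point map $\Theta : z \mapsto x(T)$, where $x$ solves~\eqref{eq.ivp} with $x(0) = x^*$ and control $u = u_{z}$ as constructed above (with the free endpoint time shifted so that the relevant Gramian interval lies in $(0,T)$). By Proposition~\ref{Prop:fxu} and continuous dependence on data, $\Theta$ is well defined and $C^1$ on a neighborhood of $0$ for $x^*$ small, and its differential at $z = 0$, $x^* = 0$ is precisely the linear end-point map $z \mapsto e^{(T-T_2)H_0} z$, which is invertible. The implicit/inverse function theorem then yields, for $|x^*|$ and $|x^\dagger|$ small enough, a unique small $z = z(x^*, x^\dagger)$ with $\Theta(z) = x^\dagger$; moreover $z \to 0$ as $(x^*, x^\dagger) \to 0$, so the associated control $u_{z}$ is small in every $C^m$-norm. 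This gives \sstlc{} and, by construction, the control is supported in $(0,T)$.

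The step I expect to be the main obstacle is making the inverse-function / fixed-point argument completely rigorous with only $C^1$ dynamics and $L^\infty$ controls: one must check that the end-point map is genuinely $C^1$ jointly in $(x^*, z)$ near the origin (differentiability of the flow with respect to initial data and to the control parameter), uniformly over the small time interval, and that the remainder $R$ does not spoil the contraction. This is standard but slightly delicate because $g(t,x) = f(x,u(t))$ is only $L^\infty$ in $t$; the resolution is to differentiate the integral equation~\eqref{eq.ivp}, obtaining a linear integral equation for the variation whose coefficients are in $L^\infty_t$, and to invoke Gronwall. A cleaner alternative, which I would mention, is to bypass the nonlinear fixed point and quote directly the classical theorem that controllability of the linearization implies \stlc{} with controls as regular as one wishes (Markus~\cite{MR0186487}, Lee--Markus~\cite{MR889459}), and simply observe that their construction can be arranged with controls compactly supported in $(0,T)$ by the Gramian choice above; the \sstlc{} conclusion then follows since the constructed controls are smooth with all norms controlled by the size of $z$.
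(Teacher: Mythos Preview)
Your proposal is correct and close in spirit to the paper's proof, but there is one genuine structural difference worth noting. Both arguments build compactly supported smooth controls for the linearized system via a weighted Gramian on a subinterval of $(0,T)$, and both conclude by an inverse-function-theorem argument on the nonlinear end-point map. The difference lies in the domain of that map: the paper applies the (infinite-dimensional) inverse function theorem to
\[
\mathcal{F}:\R^n\times C^m_\epsilon((0,T),\R)\to\R^n\times\R^n,\qquad (x^*,u)\mapsto(x^*,x(T)),
\]
showing that $\mathcal{F}'(0,0)$ has a bounded right inverse built from the Gramian control; you instead fix the \emph{shape} of the control in advance, parametrize it linearly by $z\in\R^n$, and apply the finite-dimensional inverse function theorem to $\Theta:z\mapsto x(T)$. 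Your route is slightly more elementary (no Banach-space IFT, and your controls are smooth in $t$ so your worry about $L^\infty$-in-time regularity is moot: the map $z\mapsto x(\cdot)$ is $C^1$ by standard $C^1$ dependence of ODE flows on a finite-dimensional parameter). The paper's route is a bit more flexible, since it does not commit to a particular finite-dimensional family of controls, and it makes explicit that the right inverse is bounded from $\R^n\times\R^n$ into $C^m$. Either way the smallness of $u$ in every $C^m$ and the compact support in $(0,T)$ follow immediately from the construction.
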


\begin{proof}
 \textbf{Smooth controllability of the linearized system with compactly supported controls.}
 Let $T > 0$. We start by proving that we can use regular compactly supported controls to 
 achieve controllability for the linear system~\eqref{system.yh}. 
 We introduce the controllability Gramian:
 \begin{equation} \label{def.gramian}
  \mathfrak{C}_T := \int_0^T e^{(T-t) H_0} b \tr{b} e^{(T-t) \tr{H_0}} \dt.
 \end{equation}
 It is well-known that $\mathfrak{C}_T$ is invertible if and only if the
 Kalman rank condition holds (see \cite[Section~1.2]{MR2302744}).
 Let $(\rho_\epsilon)_{\epsilon>0}$ be a family of functions in 
 $C^\infty_c((0,T),\mathbb{R})$ with
 $\Supp (\rho_\epsilon) \subset [\epsilon,T-\epsilon]$,
 such that 
 $\rho_\epsilon(t) \underset{\epsilon \rightarrow 0}{\rightarrow} 1$ for every $t \in (0,T)$ and 
 $|\rho_\epsilon(t)| \leqslant 1$ for every $(\epsilon,t) \in \mathbb{R}^*_+ \times(0,T)$.
 Let:
 \begin{equation} \label{def.gramian_epsilon}
  \mathfrak{C}_{T,\epsilon} := \int_0^T \rho_{\epsilon}(t) e^{(T-t) H_0} b \tr{b} e^{(T-t) \tr{H_0}}  \dt.
 \end{equation}
 By the dominated convergence theorem, $\mathfrak{C}_{T,\epsilon}$ converges to $\mathfrak{C}_T$ in $\mathcal{M}_n(\R)$.
 Thus, for $\epsilon>0$ small enough, $\mathfrak{C}_{T,\epsilon}$ belongs to $GL_n(\R)$, because this is an open subset of $\mathcal{M}_n(\R)$.
 From now on, such an $\epsilon$ is fixed.
 Let $y^*, y^\dagger \in \R^n$.
 Using an optimal control or "Hilbert Uniqueness Method" approach
 (see \cite[Section~1.4]{MR2302744}), we define for $t \in [0,T]$:
 \begin{equation} \label{def.u.hum}
  u(t) := \rho_\epsilon(t) \tr{ b } e^{(T-t) \tr{H_0}} p,
 \end{equation}
  where $p \in \R^n$ is defined by
 \begin{equation} \label{eq.def.b.gram}
  p := \mathfrak{C}_{T,\epsilon}^{-1} \left( y^\dagger - e^{TH_0} y^* \right).
 \end{equation}  
 Using a Duhamel formula for~\eqref{system.yh},
 combined with~\eqref{def.gramian_epsilon},~\eqref{def.u.hum} and~\eqref{eq.def.b.gram}, 
 one checks that the solution of~\eqref{system.yh} with initial 
 condition $y(0)=y^*$ satisfies:
 \begin{equation}
  y(T) = e^{T H_0} y^* + \int_0^T u(t) e^{(T-t)H_0} b \dt 
  = e^{T H_0} y^* + \mathfrak{C}_{T,\epsilon} p
  = y^\dagger.
 \end{equation}
 From~\eqref{def.u.hum} and~\eqref{eq.def.b.gram}, one checks that:
 \begin{equation} \label{eq.thm1.uhm}
  \forall m \geqslant 0, \exists \eta_{T,m} > 0, \quad
  \norm{u}{C^m(0,T)} \leqslant \eta_{T,m} \left( |y^*| + |y^\dagger| \right).
 \end{equation}
 
 \bigskip \noindent
 \textbf{Smooth controllability for the nonlinear system, with compactly supported controls.}
 We move on to the nonlinear system using the approach followed 
 in~\cite[Theorem 3.6]{MR2302744}. Let $m \in \N$ and define:
 \begin{equation}
  C^m_\epsilon((0,T),\R)
  :=
  \left\{ f \in C^m((0,T),\R) , \enskip \Supp(f) \subset [\epsilon,T-\epsilon] \right\},
 \end{equation}
 endowed with the norm $\|.\|_{C^m(0,T)}$, which is a Banach space.
 We introduce the nonlinear mapping:
 \begin{equation}
  \mathcal{F} :
  \left\{
  \begin{aligned}
  \R^n \times C^m_\epsilon((0,T),\R) & \to \R^n \times \R^n, \\
  (x^*, u) & \mapsto (x^*, x(T)),
  \end{aligned}
  \right.
 \end{equation}
 where $x$ is the solution to~\eqref{system.fxu} with initial data $x^*$
 and control $u$. It is well known (see~\cite[Theorem~1, page~57]{MR1640001}),
 that $\mathcal{F}$ defines a $C^1$ map. Moreover, its differential
 $\mathcal{F}'(0,0)$ is the following linear map:
 \begin{equation}
  \mathcal{F}'(0,0): \left\{
  \begin{aligned}
  \R^n \times C^m_\epsilon((0,T),\R) & \to \R^n \times \R^n, \\
  (y^*, u) & \mapsto (y^*, y(T)),
  \end{aligned}
  \right.
 \end{equation}
 where $y$ is the solution to~\eqref{system.yh} with initial data $y^*$.
 From~\eqref{eq.thm1.uhm}, we know that this differential has a bounded right-inverse. 
 The inverse function theorem yields the existence
 of a $C^1$ right-inverse $\mathcal{G}$ to $\mathcal{F}$, defined
 in a small neighborhood of $(0,0)$. Hence, for any $\eta > 0$, there
 exists $\delta > 0$ such that:
 \begin{equation} \label{eq.thm1.xxug}
  |x^*| + |x^\dagger| \leqslant \delta
  \quad \Rightarrow \quad 
  \norm{u}{C^m(0,T)} \leqslant 
  \left| \mathcal{G}(x^*, x^\dagger) \right| \leqslant \eta.
 \end{equation}
 From~\eqref{eq.thm1.xxug}, the nonlinear system~\eqref{system.fxu} is \stlc{$C^m$}, with controls supported in $[\epsilon,T-\epsilon]$. 
 This holds for any $m \in \N$, 
 thus system~\eqref{system.fxu} is \sstlc, with compactly supported controls.
\end{proof}

When the linear test fails, it is necessary to continue the expansion further on
to determine whether small-time local controllability holds or not. Indeed, some
systems are (smoothly) small-time locally controllable despite failing 
the linear test~\eqref{kalman}.

\begin{example} \label{ex.a}
 Let $n = 2$. Consider the following scalar-input control-affine system:
 \begin{equation} \label{sys.ex.a}
 \left\{
 \begin{aligned}
 \dot{x}_1 & = u, \\
 \dot{x}_2 & = x_1^3.
 \end{aligned}
 \right.
 \end{equation}
 The linearized system of~\eqref{sys.ex.a} around the 
 null equilibrium is not controllable because the second direction is 
 left invariant. However, let us explain why system~\eqref{sys.ex.a} is 
 smoothly small-time locally controllable. We start by introducing 
 a smooth even function $\varphi \in C^\infty(\R,\R)$, such that:
 \begin{align}
  \label{hyp.exa.1}
  \varphi(x) & = 0 \quad \text{for } |x| \geqslant 1/4,  \\
  \label{hyp.exa.2}
  \varphi(x) & = 1 \quad \text{for } |x| \leqslant 1/8, \\
  \label{hyp.exa.3}
  \int_{\R_-} \varphi^3 =
  \int_{\R_+} \varphi^3 & = 1.
 \end{align}
 In particular, from~\eqref{hyp.exa.2}, $\varphi(0) = 1$.
 Let $x^*, x^\dagger \in \R^2$ be given initial and final data. 
 Let $T > 0$. For $t \in [0,T]$, we define:
 \begin{equation} \label{def.exa.u}
  u(t) := \frac{1}{T} x^*_1 \varphi'\left(\frac{t}{T}\right)
  + \frac{1}{T} \lambda \varphi'\left(\frac{2t-T}{2T}\right)
  + \frac{1}{T} x^\dagger_1 \varphi'\left(\frac{t-T}{T}\right),
 \end{equation}
 where $\lambda \in \R$ is a constant to be chosen later on.
 From~\eqref{sys.ex.a},~\eqref{hyp.exa.1},~\eqref{hyp.exa.2} and the initial 
 condition $x_1(0) = x^*_1$, we deduce that, for $t\in[0,T]$:
 \begin{equation} \label{eq.exa.x1}
  x_1(t) = x^*_1 \varphi\left(\frac{t}{T}\right)
  + \lambda \varphi\left(\frac{2t-T}{2T}\right)
  + x^\dagger_1 \varphi\left(\frac{t-T}{T}\right).
 \end{equation}
 From~\eqref{hyp.exa.1},~\eqref{hyp.exa.2} and~\eqref{eq.exa.x1}, we deduce that $x_1(T) = x^\dagger_1$.
 From~\eqref{sys.ex.a},~\eqref{hyp.exa.1},~\eqref{hyp.exa.3} and~\eqref{eq.exa.x1}, since the three contributions in $u$ have
 disjoint supports, we have:
 \begin{equation} \label{eq.exa.x2}
  x_2(T) = x^*_2 + T (x^*_1)^3 + T (x^\dagger_1)^3 + 2 T \lambda^3.
 \end{equation}
 Hence, from~\eqref{eq.exa.x2}, the constructed trajectory 
 satisfies $x_2(T) = x^\dagger_2$ if and only if:
 \begin{equation} \label{eq.exa.lambda}
  2 \lambda^3 = \frac{x^\dagger_2-x^*_2}{T} - (x^*_1)^3 - (x^\dagger_1)^3.
 \end{equation}
 Let $m \in \N$. Thanks to~\eqref{hyp.exa.1} and~\eqref{def.exa.u}, we have:
 \begin{equation} \label{eq.exa.uhm}
  \norm{u}{C^m(0,T)}^2 = T^{-2m-1} \norm{\varphi}{C^m(\R)}^2
  \left( \frac{1}{2} |x^*_1|^2 + \frac{1}{2} |x^\dagger_1|^2 + |\lambda|^2 \right).
 \end{equation}
 Let $\eta > 0$. From~\eqref{eq.exa.lambda} and~\eqref{eq.exa.uhm}, 
 there exists $\delta = \delta_{m,T,\eta} > 0$ such that:
 \begin{equation}
  | x^* | + | x^\dagger | \leqslant \delta 
  \quad \Rightarrow \quad 
  \norm{u}{C^m(0,T)} \leqslant \eta.
 \end{equation}
 Hence, we have constructed a control small in $C^m$-norm driving the state from $x^*$ to $x^\dagger$.
 This holds for every $T>0$ and $m \in \N$, 
 thus system~\eqref{sys.ex.a} is \sstlc{}.
 Moreover, thanks to~\eqref{hyp.exa.2}, our construction yields controls which
 are compactly supported within $(0,T)$. Hence, there is no ``control-jerk'' 
 near the initial or the final time.
\end{example}

\subsection{Iterated Lie brackets}

The main tool to study the controllability of nonlinear
systems beyond the linear test is the notion of iterated Lie brackets. Many 
works have investigated the link between Lie brackets and controllability with
the hope of finding necessary or sufficient conditions. We refer 
to~\cite[Sections 3.2 and 3.4]{MR2302744} by Coron and~\cite{MR1061394} by 
Kawski for surveys on this topic. Let us recall elementary definitions from 
geometric control theory that will be useful in the sequel.
\begin{definition}
 Let $X$ and $Y$ be smooth vector fields on $\R^n$. 
 The Lie bracket $[X,Y]$ of $X$ 
 and $Y$ is the smooth vector field defined by:
 \begin{equation}
  [X,Y](x) := Y'(x) X(x) - X'(x) Y(x).
 \end{equation}
 Moreover, we define by induction on $k \in \N$ the notations:
 \begin{align}
  \ad_X^0 (Y) & := Y, \\
  \ad_X^{k+1} (Y) & := \left[X, \ad_X^k(Y)\right].
 \end{align}
\end{definition}

In addition to these special brackets with a particular nesting
structure, we define the following classical linear subspaces of $\R^n$
for smooth control-affine systems.

\begin{definition} \label{def:Sk}
 Let $f_0$ and $f_1$ be smooth vector fields on $\R^n$.
 For $k \geqslant 1$, we define $\Slie{k}$ as
 the non decreasing sequence of linear subspaces of $\R^n$ spanned by the 
 iterated Lie brackets of $f_0$ and $f_1$ (with any possible nesting structure), 
 containing $f_1$ at most $k$ times, evaluated at the null equilibrium. 
 For nonlinear systems, we extend these definitions thanks to~\eqref{def.f0f1}.
\end{definition}

The spaces $\Sone$ and $\Stwo$ play a key role in this paper;
the former describes the set of controllable directions for the linearized 
system while the latter describes the directions involved at the quadratic 
order.  When the Kalman rank condition is not
fulfilled, the quadratic obstructions to small-time local controllability
will come from the components of the state living in the orthogonal of the
controllable space. 

\begin{definition}
 Let $\langle \cdot, \cdot \rangle$ denote the usual euclidian scalar product on 
 $\R^n$. We introduce $\mathbb{P} : \R^n \rightarrow \Sone$ the 
 orthogonal projection on $\Sone$ with respect to $\langle \cdot, \cdot \rangle$. 
 Similarly, we define $\mathbb{P}^\perp := \mathrm{Id} - \mathbb{P} : 
 \R^n \rightarrow \Sone^\perp$ the orthogonal projection on $\Sone^\perp$.
\end{definition}

\subsection{The first known quadratic obstruction}

At the quadratic order, the situation is more involved than at the linear
order and very little is known. Proposing a classification of the possible 
quadratic behaviors for scalar-input systems is the main motivation of this work. 
Historically, the following conjecture due to Hermes was proved by
Sussmann in~\cite{MR710995} for control-affine systems~\eqref{system.f0f1}.

\begin{prop} \label{Prop:Hermes}
    Let $f_0, f_1$ be analytic vector fields over $\R^n$ with $f_0(0) = 0$.
    Assume that:
    \begin{gather}
    \label{hermes.dim.lie}
    \left\{ g(0); \enskip g \in \Lie (f_0, f_1) \right\}=\R^n, \\
    \label{hermes.inclusions}
    \Slie{2k+2} \subset \Slie{2k+1} 
    \quad \text{for any~} k \in \N.
    \end{gather}
    Then system~\eqref{system.f0f1} is \stlc{$L^\infty$}.
\end{prop}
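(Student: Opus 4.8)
The plan is to prove Hermes' conjecture (Proposition~\ref{Prop:Hermes}) by exhibiting, for each target $x^\dagger$ near the origin and each time $T>0$, a control $u \in L^\infty((0,T),\R)$ of small amplitude steering $0$ to $x^\dagger$. The overall strategy is the one pioneered by Sussmann: combine a \emph{Lie-theoretic normal form} with a \emph{scaling / homogenization argument}. First I would pass to the "free" setting by letting $L$ be the free Lie algebra on two generators $X_0, X_1$, with its grading by the number of occurrences of $X_1$; the hypotheses \eqref{hermes.dim.lie}--\eqref{hermes.inclusions} then say, respectively, that the evaluation map $L \to \R^n$ is onto and that every "bad" bracket (one containing $X_1$ an \emph{even} number of times, living a priori in $\Slie{2k+2}$) actually evaluates into the span of the "good" brackets of strictly lower $X_1$-degree, namely $\Slie{2k+1}$. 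The philosophy of Hermes' condition is that bad brackets are the only obstructions, and \eqref{hermes.inclusions} neutralizes them.

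Next I would set up a rescaling. Introduce a one-parameter family of controls of the form $u_\varepsilon(t) = \varepsilon^{?}\, v(t/T')$ (with a sub-interval $T' \le T$), or more robustly a family built from a fixed "template" control modulated so that, after the time-change $s = t/\varepsilon$, the trajectory is governed by a dilated vector field $\varepsilon X_0 + X_1$ acting for time $O(1)$. Under the dilation $\delta_\varepsilon$ on $\R^n$ adapted to the grading (good directions of $X_1$-degree $j$ scaled by $\varepsilon^{\alpha_j}$), the drift $X_0$ has nonnegative weight and the control field $X_1$ has weight giving the leading-order dynamics; the upshot is a well-defined limiting "nilpotent/homogeneous" control system as $\varepsilon \to 0$. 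The key point is that, because of \eqref{hermes.inclusions}, the limiting system has \emph{no even-degree obstruction surviving}: its reachable set from $0$ at the fixed reference time is a full neighborhood of $0$. This is where the Rashevski--Chow / orbit theorem for the limiting homogeneous system, together with an \emph{open mapping} argument (the endpoint map of the limit system is a submersion onto $\R^n$ at a suitable reference control), is invoked.

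From the full-dimensionality of the reachable set of the limit system, I would conclude by a \emph{perturbation / implicit function} argument: the endpoint map $\mathcal{E}_\varepsilon(v) := x_{u_\varepsilon^v}(T)$, suitably rescaled by $\delta_\varepsilon^{-1}$, converges in $C^1$ (on the relevant finite-dimensional slice of controls $v$) to the endpoint map of the limit system, which is a submersion at some $v^*$. Hence for $\varepsilon$ small $\delta_\varepsilon^{-1}\mathcal{E}_\varepsilon$ is still a submersion near $v^*$, so its image contains a fixed ball $B(\,\cdot\,,r)$; undoing the dilation, $\mathcal{E}_\varepsilon$ reaches a ball of radius $\delta_\varepsilon(r) \supset$ any prescribed small neighborhood once $\varepsilon$ is taken small enough, while the $L^\infty$-norm of $u_\varepsilon^v$ stays $O(\varepsilon^{\min \alpha})\to 0$ (or bounded, depending on the scaling chosen, which one then renormalizes). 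Choosing the sub-interval and $\varepsilon$ appropriately yields \stlc{$L^\infty$}. Here analyticity of $f_0, f_1$ enters through the Sussmann--Agrachev--Gamkrelidze machinery: the Chen--Fliess series expansion of the endpoint map converges and its formal Lie-series structure lets one identify the leading term with the homogeneous limit system and control the remainder.

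The main obstacle is the second step: rigorously constructing the rescaled family and proving the $C^1$-convergence of endpoint maps while keeping track of which Lie brackets appear at leading order --- i.e., showing that the hypothesis \eqref{hermes.inclusions} exactly cancels the even-order terms in the Chen--Fliess expansion that would otherwise obstruct the submersion property. In Sussmann's original argument this is the delicate combinatorial heart, requiring a careful choice of the control template (often piecewise constant "bang-bang" blocks whose iterated brackets realize a basis of good directions, in the spirit of Example~\ref{ex.a}) together with estimates on the tail of the series uniform in $\varepsilon$; the analyticity assumption is what makes those tail estimates available. Everything else --- the free Lie algebra bookkeeping, the dilation, and the final open-mapping/IFT conclusion --- is comparatively routine.
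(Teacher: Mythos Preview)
The paper does not prove Proposition~\ref{Prop:Hermes}; it merely states the result and attributes its proof to Sussmann~\cite{MR710995}. So there is no ``paper's own proof'' to compare against --- the proposition is quoted as background, not established in the text.

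Your sketch is broadly faithful to Sussmann's original strategy (free Lie algebra bookkeeping, dilation/homogenization, Chen--Fliess series, and an open-mapping/IFT endgame), and you correctly identify the combinatorial core: under~\eqref{hermes.inclusions} the even-$X_1$-degree (``bad'') brackets are absorbed by lower-odd-degree (``good'') ones, so the leading homogeneous approximation has a surjective endpoint map. That said, the sketch is a plan rather than a proof: the scaling exponent is left as a literal question mark, the ``template control'' is not specified, and the $C^1$-convergence of the rescaled endpoint maps --- precisely the step you flag as the main obstacle --- is where all the work lies. Sussmann's actual argument does not proceed via a single dilation and IFT as you outline; it uses a more intricate construction with iterated concatenations of controls and careful tracking of which terms in the Chen--Fliess expansion survive, together with analytic estimates on the tail. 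If you want to turn this into a self-contained proof you will need to either reproduce that machinery or supply a genuinely different route; as written, the proposal is a reasonable roadmap to the literature but not yet a proof.
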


Reciprocally, for analytic vector fields, hypothesis~\eqref{hermes.dim.lie} is a necessary condition for small-time local 
controllability (for a proof, see~\cite[Proposition~6.2]{MR710995}). Sussmann
was mostly interested in investigating whether~\eqref{hermes.inclusions}
was also a necessary condition, in particular for $k = 0$, the condition:
\begin{equation} \label{s2s1}
\Stwo \subset \Sone.
\end{equation}
The first violation of~\eqref{s2s1} occurs when 
$[f_1,[f_0,f_1]](0) \notin \Sone$.
The following important known result is due to Sussmann 
(see~\cite[Proposition~6.3, page~707]{MR710995}).
\begin{prop} \label{Thm:Sussmann}
Let $f_0, f_1$ be analytic vector fields over $\R^n$ with $f_0(0) = 0$.
Assume that:
\begin{equation} \label{Obstruction_1}
 [f_1,[f_0,f_1]](0) \notin \Sone.
\end{equation}
Then system~\eqref{system.f0f1} is not \stlc{$L^\infty$}.
\end{prop}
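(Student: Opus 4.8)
The plan is to exhibit a scalar quantity, the component of the state along a well-chosen covector annihilating $\Sone$, whose evolution is dominated at leading order by a coercive quadratic form in the control, forcing a one-sided drift. Concretely, since $[f_1,[f_0,f_1]](0) \notin \Sone$, the Hahn--Banach theorem gives a linear form $\ell \in (\R^n)^*$ with $\ell|_{\Sone} = 0$ and $\ell\big([f_1,[f_0,f_1]](0)\big) \neq 0$; up to changing $\ell$ into $-\ell$ we may assume $\ell\big([f_1,[f_0,f_1]](0)\big) > 0$. I would then study $z(t) := \ell(x(t))$ along an arbitrary trajectory issued from $x^* = 0$ with a small control $u$. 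Because $\ell$ kills the linearly controllable space $\Sone$, every linear-in-$u$ contribution to $z$ vanishes, and a Taylor expansion of the flow to second order should yield, at time $T$,
\begin{equation*}
 z(T) = \ell\big([f_1,[f_0,f_1]](0)\big) \, Q_T(u) + \text{(higher order)},
\end{equation*}
where $Q_T(u)$ is an explicit quadratic functional of $u$. The crucial point is to identify $Q_T(u)$ and show it is signed.

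The key steps, in order, would be: (i) expand the solution $x(t)$ as $x = x_1 + x_2 + \cdots$ where $x_1$ solves the linearized equation $\dot x_1 = H_0 x_1 + u b$ and $x_2$ collects the quadratic terms, driven by $u\,\partial_x f_1(0)x_1$ and $\tfrac12 \partial_{xx} f_0(0)(x_1,x_1)$ and $\tfrac12 \partial_{uu} f(0,0)u^2$; (ii) apply $\ell$ and use $\ell|_{\Sone} = 0$ to discard $\ell(x_1(T))$ entirely and to discard all but the ``$[f_1,[f_0,f_1]]$-type'' piece of $\ell(x_2(T))$, since the other quadratic brackets either lie in $\Sone$ or can be absorbed; (iii) using integration by parts, rewrite the surviving term as a quadratic form in the primitive $v(t) := \int_0^t u$ — indeed one expects something like $c\int_0^T w(t)\, v(t)^2\,dt$ or, after further manipulation, an $H^{-1}$-type expression, matching the $H^{-k}$ philosophy announced in the abstract for $k=1$; (iv) check that this quadratic form has a definite sign, so $z(T)$ cannot take both signs for small $u$; (v) conclude: for $x^\dagger$ with $\ell(x^\dagger)$ of the wrong sign (and $|x^\dagger|$ arbitrarily small), no small-control trajectory reaches $x^\dagger$, contradicting \stlc{$L^\infty$}.

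The main obstacle, and where care is needed, is step (iii)--(iv): controlling the ``other'' quadratic contributions and showing the genuine quadratic drift term is both coercive and actually dominates. One has to be sure that the components of $x_2(T)$ coming from brackets that happen to lie outside $\Sone$ but are \emph{not} proportional to $[f_1,[f_0,f_1]](0)$ do not interfere — here the hypothesis is precisely that $[f_1,[f_0,f_1]](0)\notin\Sone$ while, a priori, $[f_0,[f_0,f_1]](0)$ and similar length-three brackets with only one $f_1$ do lie in $\Sone$ by Definition~\ref{def:Sk} with $k=1$, so they are annihilated by $\ell$. The delicate analytic point is then that the cubic remainder, which in $L^\infty$ scales like $\|u\|_{L^\infty}^3 T^{?}$, is genuinely smaller than the quadratic drift, which scales like $\|v\|^2$; this is exactly why the statement is about $L^\infty$-smallness and would fail in, say, $W^{1,\infty}$. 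Since we are only asked to disprove controllability, it suffices to test against the explicit family of controls (or their primitives) realizing a fixed sign of $Q_T$, so one does not need a full classification here — Sussmann's original argument via the Chen--Fliess series is one route, but the transformation-based computation sketched above is the one consistent with the rest of this paper.
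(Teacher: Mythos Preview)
Your overall strategy --- pick $\ell\in(\R^n)^*$ annihilating $\Sone$ but not $[f_1,[f_0,f_1]](0)$, expand $x=x_1+x_2+\cdots$, and show $\ell(x_2(T))$ is a coercive quadratic form in $u$ dominating the cubic remainder --- is sound and close in spirit to what the paper does. But two of your steps are genuinely incomplete, and one remark is backwards.

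\textbf{The real gap: step (ii)--(iv).} You write that after applying $\ell$ one can ``discard all but the $[f_1,[f_0,f_1]]$-type piece of $\ell(x_2(T))$, since the other quadratic brackets either lie in $\Sone$ or can be absorbed''. This is exactly the hard part and you have not done it. With the direct expansion, $\ell(x_2(T))$ is a double integral built from $u(t)H_1 x_1(t)$ and $Q_0(x_1(t),x_1(t))$ pushed through $e^{(T-t)H_0}$; it is a quadratic form in $u$ with a time-dependent kernel, and there is no reason it reduces to a single bracket contribution. What one must actually show is that this form is coercive in $\|u_1\|_{L^2}^2$ for \emph{small} $T$; the ``other'' terms carry extra powers of $T$ and are absorbed only in that regime. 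The paper handles this cleanly by passing to the auxiliary state $\aux_d$ (via iterated flow pullbacks, Lemma~\ref{Lem:eq_xij}), whose quadratic approximation is the explicit sum $z_d(t)=\sum_{l}\int_0^t u_l^2 e^{(t-s)H_0}G_l(0,0)\,\ds$; coercivity then becomes the elementary small-time estimate of Lemma~\ref{Lemma:Coercive}. Your route can be made to work, but you would need to carry out that small-$T$ coercivity argument explicitly rather than asserting it.

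\textbf{Two incorrect remarks.} First, your sentence ``this is exactly why the statement is about $L^\infty$-smallness and would fail in, say, $W^{1,\infty}$'' has the direction reversed: $W^{1,\infty}$-smallness is \emph{stronger} than $L^\infty$-smallness, so the obstruction certainly persists there. In fact for $k=1$ the paper proves the sharper claim that even $W^{-1,\infty}$-smallness suffices (Theorem~\ref{Theorem:QAA}); the phenomenon you may be thinking of --- needing stronger norms to see the drift --- appears only for $k\geqslant 2$. Second, your closing suggestion that ``it suffices to test against the explicit family of controls \ldots realizing a fixed sign of $Q_T$'' is backwards for a \emph{negative} result: to disprove small-time local controllability you must show the one-sided bound $\ell(x(T))\geqslant 0$ for \emph{all} small controls, not exhibit a family.

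\textbf{Comparison.} Both approaches end at the same place (a coercive $H^{-1}$-type drift absorbing an $\Og{\|u_\gamma\|_{L^3}^3}$ remainder), but the paper's flow-based auxiliary systems replace your hand-waved ``absorption'' by a structural cancellation: after pulling back by $\phi_1,\ldots,\phi_d$, the linear part vanishes identically and the quadratic part is already diagonal in the $u_l^2$. This also produces the invariant-manifold correction $G(\mathbb{P}x)$, which your bare covector argument misses; it is not needed to deny controllability from $x^*=0$, but it is what makes the drift statement geometric.
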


Although Sussmann does not insist on the smallness assumption made on the 
control, it can be seen that his assumption is linked to the 
$W^{-1,\infty}$-norm of the control. 
Indeed, he works with arbitrary small-times $T$ and controls $u$ such that $|u|_{L^\infty(0,T)} \leqslant A$, 
with a fixed constant $A>0$. This guarantees that $|u_1|_{L^\infty(0,T)} \leqslant AT$ is
arbitrary small, where $u_1(t) := \int_0^t u(s)\ds$. We prove in 
Theorem~\ref{Theorem:QAA} that the smallness in $W^{-1,\infty}(0,T)$
is in fact the correct assumption for this first quadratic obstruction.
We also prove that the analyticity assumption is not necessary: 
it suffices that $f_0 \in C^3$ and $f_1 \in C^2$ 
(see Corollary~\ref{Corollary:AffineC3}).

\begin{example} \label{Example:easy_drift}
  Let $n = 2$. We consider the following control-affine system:
  \begin{equation}
   \left\{
    \begin{aligned}
     \dot{x}_1 & = u, \\
     \dot{x}_2 & = x_1^2.
    \end{aligned}
   \right.
  \end{equation}
  Around the null equilibrium, we have $\Sone = \R e_1$ and:
  \begin{equation} \label{ex.b.1}
   [f_1, [f_0, f_1]](0) = - 2 e_2 \notin \Sone.
  \end{equation}
  Equation~\eqref{ex.b.1} causes a drift in the direction $e_2$, 
  quantified by the $H^{-1}$-norm of the control. Indeed, if the initial 
  state is $x(0) = 0$, we have:
  \begin{equation}
   x_1(t) = u_1(t) 
   \quad \textrm{and} \quad
   x_2(t) = \int_0^t u_1^2(s) \ds,
  \end{equation}
  where $u_1(t) := \int_0^t u(s) \ds$. 
  Thus $x_2(t) \geqslant 0$ and the system is not locally controllable.
\end{example}

\subsection{The first Lie bracket paradox}

Sussmann also attempted to study further violations of condition~\eqref{s2s1}.
In particular, when $[f_1,[f_0,f_1]](0) \in \Sone$, the next
violation is $[f_1,[f_0,[f_0,[f_0,f_1]]]](0) \notin \Sone$. The 
intermediate violation involving two times $f_0$ never happens. Indeed, from the 
Jacobi identity, we have:
\begin{equation} \label{eq.obs.f0f0}
 [f_1, [f_0, [f_0, f_1]]](0)
 = - [[f_0,f_1], [f_1, f_0]](0) - [f_0, [[f_0,f_1], f_1]](0).
\end{equation}
The first term in the right-hand side of~\eqref{eq.obs.f0f0} vanishes because
of the antisymmetry of the Lie bracket operator. Moreover, when 
$[f_1, [f_0,f_1]](0) \in \Sone$, the second term in the right-hand side
of~\eqref{eq.obs.f0f0} belongs to $\Sone$ because this subspace is stable 
with respect to bracketing by $f_0$. Thus, the second simplest violation
of~\eqref{s2s1} occurs when:
\begin{equation} \label{eq.f03}
 \left[f_1, \ad^3_{f_0}(f_1)\right](0) \notin \Sone.
\end{equation}
However, Sussmann exhibits the following example which indicates that the
violation~\eqref{eq.f03} does not prevent a system from being 
\stlc{$L^\infty$}.

\begin{example} \label{ex.sussmann}
 Let $n = 3$ and consider the following control-affine system:
 \begin{equation} \label{system.sussmann}
  \left\{
  \begin{aligned}
   \dot{x}_1 & = u, \\
   \dot{x}_2 & = x_1, \\
   \dot{x}_3 & = x_1^3 + x_2^2.
  \end{aligned}
  \right.
 \end{equation}
 Around the null equilibrium, $\Sone = \R e_1 + \R e_2$. One checks that:
 \begin{align}
  [f_1, \ad^1_{f_0}(f_1)](0) & = 0, \\
  \label{ex.s.bad}
  [f_1, \ad^3_{f_0}(f_1)](0) & = 2 e_3.
 \end{align}
 Hence, this system exhibits the violation~\eqref{eq.f03}.
 However, it is \stlc{$L^\infty$} 
 (see~\cite[pages~711-712]{MR710995}). 
\end{example}

Historically, Example~\ref{ex.sussmann} stopped the investigation of 
whether condition~\eqref{s2s1} was a necessary condition for small-time
local controllability. One of the motivations of our work is to 
understand in what sense~\eqref{s2s1} can be seen as a necessary
condition. We give further comments on Example~\ref{ex.sussmann} 
in Subsection~\ref{Subsection:QACF_examples}.

\subsection{A short survey of related results}

The search for a necessary and sufficient condition for the small-time local
controllability of differential control systems has a long history and many
related references. We only provide here a short overview of some results 
connected to our work.

\subsubsection{Lie algebra rank condition}

A  well known necessary condition for the $L^\infty$ small-time local 
controllability of analytic systems is the Lie algebra rank 
condition~\eqref{hermes.dim.lie}
due to Hermann and Nagano~\cite{MR0149402,MR0199865,MR0321133}.
By considering the example $\dot{x}_1=ue^{-1/u^2}$ one sees that the 
analyticity  assumption on $f$ cannot be removed.

By the Frobenius theorem (see \cite[Theorem 4]{MR1425878} for a proof), if
there exists $k \in \{1, \ldots n \}$ such that, for every $x$ in a 
neighborhood of $0$, 
$\Span \{ h(x) ; \enskip h \in \Lie(f_0,f_1) \}$ is of 
dimension~$k$, then the reachable set
is (locally) contained in a submanifold of $\R^n$ of dimension~$k$.

\bigskip

In the particular case of $C^\infty$ driftless control-affine systems 
$\dot{x}=\sum_{i=1}^m u_i f_i(x)$, the Lie algebra rank condition is also a 
sufficient condition for the $L^\infty$ small-time local controllability; this 
is the Rashevski-Chow theorem proved in~\cite{MR0001880}. However, the case of
control-affine systems with drift is still widely open, even in the scalar-input
case as in system~\eqref{system.f0f1}.

\subsubsection{Lie brackets necessary and sufficient conditions}

Some necessary conditions and some sufficient conditions for $L^\infty$ 
small-time local controllability were proved by means of Lie-series formalism 
(Chen-Fliess series), for analytic control-affine systems. First, necessary
conditions:
\begin{itemize}
 \item the Sussmann condition~(\ref{Obstruction_1}), proved in~\cite[Proposition~6.3, page~707]{MR710995},
 \item the Stefani condition~\cite{MR935375}: $\text{ad}_{f_1}^{2m}(f_0)(0) \in \mathcal{S}_{2m-1}$ for every $m \in \mathbb{N}^*$.
\end{itemize}
Then, sufficient conditions:
\begin{itemize}
\item the Hermes condition~\cite{MR0638354}, recalled in Proposition \ref{Prop:Hermes} and proved by Sussmann in~\cite{MR710995},
\item the Sussman $S(\theta)$ condition, introduced in~\cite{MR872457} (see also \cite[Theorem 3.29]{MR2302744}):
there exists $\theta \in [0,1]$, such that
every bracket involving $f_0$ an odd number $l$ of times and $f_1$ 
an even number $k$ of times
must be a linear combination of brackets 
involving $k_i$ times $f_1$ and $l_i$ times $f_0$ and such that
$\theta l_i + k_i < \theta l + k$. In some sense, 
this condition says that bad brackets may be neutralized by good ones.
The weight $\theta$ has to be the same for all the bad brackets.
\item the Kawski condition (see \cite[Theorem 3.7]{MR1061394}):
there exists $\theta \in [0,1]$ such that
every bracket involving $f_0$ an odd number $l$ of times and $f_1$ an even number $k$ of times
is a linear combination of brackets of the form $\ad_{f_0}^{\nu_i}(h_i)$ where $\nu_i \geqslant 0$ and
$h_i$ is a bracket involving $k_i$ times $f_1$ and $l_i$ times $f_0$ with $\theta l_i + k_i < \theta l + k$.
\end{itemize}
In the particular case of control-affine systems that are homogeneous with 
respect to a family of dilatations (corresponding to time scalings in the 
control, not amplitude scalings), a necessary and sufficient condition for 
$L^\infty$ small-time  local controllability was proved by Aguilar and Lewis 
in~\cite[Theorem 4.1]{MR2968064}. 

\subsubsection{Control of bilinear systems without a priori bound on the control}

A scalar-input bilinear system $\dot{g} = d L_g (h_0 + u h_1)$
on a semi simple compact Lie group $U$ is (globally) controllable in large
time if and only if the Lie algebra generated by $h_0$ and $h_1$ is equal to the
compact semi-simple Lie algebra on $U$ (see~\cite{MR2004373,MR1776551,MR0338882}). 
In~\cite{MR2224821}, Agrachev and Chambrion use geometric control theory to estimate the
minimal time needed for the global controllability of such systems.
Contrary to the present article, these works do not impose any \emph{a priori}
bound on the control.

In~\cite{MR3250374,beauchard:hal-01333537}, Beauchard, Coron and Teisman propose classes of Schrödinger PDEs
(infinite dimensional bilinear control systems) for which approximate
controllability in $L^2$ is impossible in small time, even with large
controls.

\subsubsection{Quadratic approximations}

In~\cite{MR966201} and~\cite{MR0493664}, Agrachev and Hermes proceed to local 
investigations of mappings of type input-state $F_t:u \mapsto x(t)$
(with fixed initial condition $x(0)=x^*$) near a fixed critical point 
$\bar{u}$ (i.e.\ for which the linearized system is not controllable)
to determine whether $F_t(\bar{u})$ belongs to the interior or the 
boundary of the image of $F_t$, in the particular case when the linearized
system misses only one direction.

In~\cite{MR0493664}, Hermes proposes a sufficient condition for $0$ to be an interior point in 
any time and a necessary condition for $0$ to be a boundary point in small-time.

In~\cite{MR966201}, Agrachev proves that, when 
the quadratic form $F_t''(\bar{u})$ on $\mathrm{Ker~}F_t'(\bar{u})$
is definite on a subspace of $\mathrm{Ker~}F_t'(\bar{u})$ with finite codimension,
then it is enough to find its inertia index (which is either a nonnegative integer, or $+\infty$) to answer the question. He describes flexible explicit formulas for the inertia index of $F_t''$ and uses them for a general study of the quadratic mapping $F_t''$.

\bigskip

Finally, Brockett proposes in~\cite{MR3110058} sufficient conditions for 
controllability in large time and in small time for systems with quadratic 
drifts:
\begin{equation}
 \dot{y} = Ay + Bu \quad \text{and} \quad \dot{z} = \tr{y} Q y,
\end{equation}
where $y \in \R^d$ and $z \in \R$, with $Q \in \mathcal{M}_d(\R)$ 
(satisfying some specific structural assumptions).
In the case of a single scalar control, he proves that such systems are never
small-time locally controllable  (see~\cite[Lemma 4.1, page 444]{MR3110058}).
More precisely, he establishes that, if $Q\neq0$ is a
symmetric matrix contained within a specific subspace of dimension $d$, then
there exists a $0 \leqslant k <d$ such that
$\tr{(A^k B)} Q (A^k B) \neq 0$ and such that $z$ shares the same sign 
for trajectories starting from $y(0)=0$ and for small enough times.
The sign argument prevents small-time local controllability.

Our results can be seen as stemming from this sign argument. Indeed, we 
extend it to any matrix $Q$ and more generally to any
second-order expansion. Then, we improve the argument by proving that the
positive quantity is in fact coercive with respect to some specific norm of 
the control. Last, we use this coercivity to overwhelm higher-order
terms coming from Taylor expansions of general nonlinear systems.

\section{Main results and examples}
\label{Section:results}

This section is organized as follows. We state our main results in 
Subsection~\ref{Subsec:Main}. Then, we give comments in 
Subsection~\ref{Subsection:Comments}. We propose examples 
illustrating these statements and capturing the essential phenomena in Subsection~\ref{Subsection:QACF_examples},
and examples proving the optimality of our functional framework 
in Subsection \ref{Sec:Optimal}.

\bigskip

As stated in the abstract, the functional setting plays a key role in our
results. For $T > 0$ and $m \in \N$, we consider the usual Sobolev spaces 
$W^{m,\infty}(0,T)$ equipped with their natural norm. We also use the 
subspaces $W^{m,\infty}_0(0,T)$ corresponding to functions 
$\varphi \in W^{m,\infty}(0,T)$ which satisfy
$\varphi(0) = \varphi'(0) = \ldots = \varphi^{(m-1)}(0) = 0$ (no boundary 
condition is required at the right boundary for our proofs to hold). 
For $j \geqslant 0$, we define by induction the iterated primitives of $u$, 
denoted $u_j: (0,T) \to \R$ and defined by:
\begin{equation} \label{def:uj}
 u_0:=u \quad \text{and} \quad u_{j+1}(t):=\int_0^t u_j(s) \ds.
\end{equation}
For $p \in [1, +\infty]$ we let:
\begin{equation}
\norm{u}{W^{-1,p}(0,T)} := \norm{u_1}{L^p(0,T)}.
\end{equation}
By convention, we set $W^{-1,\infty}_0(0,T) := W^{-1,\infty}(0,T)$ to avoid 
singling out this particular case. Eventually, the adjective \emph{smooth} 
is used as a synonym of $C^\infty$ throughout the text.

\subsection{Statement of the main theorems} 
\label{Subsec:Main}

For a nonlinear system~\eqref{system.fxu}, our main result is the following quadratic alternative.

\begin{thm} \label{Theorem:QANL}
 Let $f \in C^{\infty}(\R^n \times \R,\R^n)$ with $f(0,0) = 0$.
 Let $\Sone$, $\Stwo$ be as in Definition~\ref{def:Sk}. We 
 define $d:=\mathrm{dim~}\Sone$ and the vector:
 \begin{equation} \label{DEF_d0}    
 d_0 := \partial^2_u f(0,0) \in \R^n.
 \end{equation}
 There exists a map $G \in C^\infty(\Sone,\Sone^\perp)$ 
 with $G(0) = 0$ and $G'(0) = 0$, such that the following alternative holds:
 \begin{itemize}
  
  \item \textbf{When} $\Stwo + \R d_0 = \Sone$, 
  up to a cubicly small error in the control,
  the state lives within the smooth manifold $\M \subset \R^n$ of dimension $d$ 
  given by the graph of $G$:
  \begin{equation} \label{def.MG1}
  \M := \left\{ p_\parl + G(p_\parl) ; \enskip p_\parl \in \Sone \right\}.
  \end{equation} 
  More precisely, for every $T>0$, there exists $C, \eta > 0$ such that, 
  for any trajectory $(x,u) \in C^0([0,T],\R^n) \times L^\infty((0,T),\R)$
  of system~\eqref{system.fxu} with $x(0) = 0$ and satisfying
  $\norm{u}{L^\infty} \leqslant \eta$, one has:
  \begin{equation} \label{Estimate:TQANL.1}
  \forall t \in [0,T], \quad
  \left|
  \mathbb{P}^\perp x(t)- 
  G \left( \mathbb{P} x(t) \right) \right| 
  \leqslant C \norm{u}{L^3}^3.
  \end{equation}
  
  \item \textbf{When} $d_0 \notin \Sone$, for sufficiently small-times
  and regular controls, the state drifts with respect to the invariant
  manifold $\M$ in the direction $\mathbb{P}^\perp d_0$. More precisely:
  \begin{itemize}
   \item System~\eqref{system.fxu} is not \stlc{$L^\infty$}.
   \item There exists $T^*>0$ such that, 
   for any $\tm \in (0,T^*)$, there exists $\eta > 0$ such that,
   for any $T \in (0,\tm]$ and any trajectory 
   $(x,u) \in C^0([0,T],\R^n) \times L^\infty((0,T),\R)$
   of system~\eqref{system.fxu} with $x(0) = 0$ and
   satisfying $\norm{u}{L^\infty} \leqslant \eta$, one has:
   \begin{equation} \label{Estimate:TQANL.2}
   \forall t \in [0,T], \quad
   \left\langle \mathbb{P}^\perp x(t) - 
   G \left( \mathbb{P} x(t) \right) , d_0 \right\rangle \geqslant 0.
   \end{equation}
  \end{itemize}
  
  \item \textbf{When} $d_0 \in \Sone$ \textbf{and} $\Stwo \not \subset \Sone$, 
  for sufficiently small-times
  and regular controls, the state drifts with respect to the invariant
  manifold $\M$ in a fixed direction. More precisely:
  \begin{itemize}
   
   \item There exists $k \in \{1, \ldots, d \}$ such 
   that
   \begin{align} 
   \label{hyp_j}
   [ \ad_{f_0}^{j-1}(f_1) , \ad_{f_0}^{j}(f_1) ] (0) & \in \Sone 
   \quad \text{ for } 1 \leqslant j < k, \\
   \label{hyp_k}
   [ \ad_{f_0}^{k-1}(f_1) , \ad_{f_0}^{k}(f_1) ] (0) & \notin \Sone.
   \end{align}

   \item System~\eqref{system.fxu} is not \stlc{$W^{2k,\infty}$}.
   
   \item There exists $T^*>0$ such that, 
   for any $\tm \in (0,T^*)$, there exists $\eta > 0$ such that,
   for any $T \in (0,\tm]$ and any trajectory 
   $(x,u) \in C^0([0,T],\R^n) \times L^\infty((0,T),\R)$
   of system~\eqref{system.fxu} with $x(0) = 0$ 
   and $u \in W^{2k,\infty}_0$ 
   satisfying $\norm{u}{W^{2k,\infty}} \leqslant \eta$, one has:
   \begin{equation} \label{Estimate:TQANL.3}
   \forall t \in [0,T], \quad
   \left\langle \mathbb{P}^\perp x(t)- 
   G \left( \mathbb{P} x(t) \right) , d_k \right\rangle \geqslant 0,
   \end{equation}
   where the drifting direction $d_k \neq 0$ is defined as:
   \begin{equation} \label{def.dk}
   d_k := - \mathbb{P}^\perp 
   [ \ad_{f_0}^{k-1}(f_1) , \ad_{f_0}^{k}(f_1) ] (0).
   \end{equation}
   
  \end{itemize}
 \end{itemize}
\end{thm}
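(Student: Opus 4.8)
The plan is to reduce the nonlinear system to a normal form via explicit transformations, then isolate the leading quadratic drift term and prove a coercivity estimate that dominates the higher-order remainder. Concretely, I would proceed as follows.

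\textbf{Step 1: Coordinate reduction and the invariant manifold $\M$.} First I would use the Kalman-type structure of $\Sone$ to split coordinates as $x = (p_\parl, p_\perp) \in \Sone \times \Sone^\perp$. The linearized dynamics leave $\Sone^\perp$ invariant at first order. I would construct $G \in C^\infty(\Sone, \Sone^\perp)$, with $G(0) = 0$ and $G'(0) = 0$, so that $\M$ (the graph of $G$) is invariant under the drift $f_0$ up to the order we care about; this is a center-manifold-type construction, but here we only need a finite-order jet of $G$, so it can be done by solving a finite triangular system of algebraic equations on Taylor coefficients. Then I would change variables $z := \mathbb{P}^\perp x - G(\mathbb{P} x)$, so that $\M = \{z = 0\}$ and the quantity to be controlled in \eqref{Estimate:TQANL.1}--\eqref{Estimate:TQANL.3} is exactly $z(t)$ (or $\langle z(t), d_k\rangle$).

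\textbf{Step 2: Explicit quadratic expansion of $z$ via iterated primitives.} Following the paper's stated philosophy (explicit computation rather than Lie series), I would perform a sequence of integrations by parts on the Duhamel formula for $z$, introducing the iterated primitives $u_j$ of $u$ from \eqref{def:uj}. The linear part of $z$ vanishes by construction of $\M$. The leading surviving term is quadratic; in the first alternative it contributes a term of size $O(\|u\|_{L^3}^3)$ after Taylor remainder estimates, giving \eqref{Estimate:TQANL.1}. In the drift alternatives, the leading quadratic term, after enough integrations by parts, takes the form of a positive-definite quadratic functional of $u_k$ (essentially $\int_0^t |u_k|^2 \ds$ times a nonzero constant times the direction $d_k$), up to lower-order and cross terms. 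The identification of the coefficient with $-\mathbb{P}^\perp[\ad_{f_0}^{k-1}(f_1), \ad_{f_0}^k(f_1)](0)$ is the bookkeeping step: one matches the iterated-integral coefficients against iterated Lie brackets, using \eqref{hyp_j} to kill all the lower brackets $1 \leqslant j < k$ so that the first nonzero contribution occurs at index $k$. The hypotheses \eqref{hyp_j}--\eqref{hyp_k} guarantee such a $k \leqslant d$ exists because $\Stwo \not\subset \Sone$.

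\textbf{Step 3: Coercivity beats the remainder.} Here is the crux. I would write, schematically, $\langle z(t), d_k \rangle = c\int_0^t u_k(s)^2 \ds + R(t)$, where $c > 0$ and $R$ collects (i) cubic-and-higher terms in $u$ from Taylor expansion of $f$, (ii) lower-order-in-primitive cross terms, and (iii) the error from only knowing a finite jet of $G$. The goal is to show $R(t)$ is dominated by the coercive quadratic term for $T$ small and $u$ small in $W^{2k,\infty}_0$. The smallness class $W^{2k,\infty}_0$ is exactly what is needed: the boundary conditions $u(0) = \cdots = u^{(2k-1)}(0) = 0$ let us control $\|u_j\|_{L^\infty}$ and $\|u_j\|_{L^2}$ by $T$-powers times $\|u\|_{W^{2k,\infty}}$, so that each remainder term carries either an extra power of $T$ or an extra power of $\|u\|$, hence can be absorbed into $c\int_0^t u_k^2$ for $T \leqslant \tm$ and $\eta$ small. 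Choosing $T^*$ so that the $T$-dependent constants are controlled, and then $\eta = \eta(\tm)$, gives \eqref{Estimate:TQANL.3}; taking $x^\dagger$ with $\langle x^\dagger, d_k \rangle < 0$ and small shows the system is not \stlc{$W^{2k,\infty}$}. The cases $d_0 \notin \Sone$ and $d_0 \in \Sone$ with $\Stwo \not\subset\Sone$ differ only in which primitive index produces the leading term ($u_0 = u$ itself for $d_0$, giving the weaker $L^\infty$ smallness requirement, versus $u_k$ for the bracket obstruction).

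\textbf{Main obstacle.} I expect the hardest part to be Step 3's absorption argument done uniformly: one must track the precise $T$- and $\|u\|$-scaling of every cross term and every Taylor remainder, and verify that none of them is merely comparable to $\int_0^t u_k^2$ without a gain. In particular the terms mixing different primitive indices $u_i u_j$ with $i + j = 2k$ but $i \neq j$ need a careful integration-by-parts or interpolation argument to be re-expressed in terms of $\int u_k^2$ plus genuinely smaller pieces; getting the exponent $2k$ (rather than something larger) in the smallness class, and proving it is optimal, is where the real work lies. The finite-jet construction of $G$ in Step 1 is technically fiddly but routine once one fixes how many derivatives are needed, which is itself dictated by the same scaling count.
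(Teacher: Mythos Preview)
Your high-level strategy (construct $G$, isolate a coercive quadratic term, absorb the cubic remainder) matches the paper's, but the execution differs in two structural respects and has one genuine gap.

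\textbf{Differences.} First, the paper does \emph{not} build $G$ by solving for Taylor jets, nor does it compute the quadratic expansion by integrating the Duhamel formula by parts. Instead it introduces auxiliary states $\xi_j$ defined by composing the flows $\phi_j$ of the vector fields $f_j = (-1)^{j-1}\ad_{f_0}^{j-1}(f_1)$: one sets $\xi_0 = x$ and $\xi_{j+1} = \phi_{j+1}(-u_{j+1},\xi_j)$, and shows $\dot\xi_j = f_0(\xi_j) + u_j f_{j+1}(\xi_j) + \sum_{l\le j} u_l^2\,\mathcal{K}_{l,j}$. The map $G$ is then obtained by the implicit function theorem applied to $F(p_\parl,p_\perp) := \mathbb{P}^\perp\bigl(\phi_d^{-\alpha_d(p_\parl)}\circ\cdots\circ\phi_1^{-\alpha_1(p_\parl)}(p_\parl+p_\perp)\bigr)$. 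This flow-based mechanism is what makes the Lie-bracket identification automatic (via $\partial_\tau^2 F_j(0,\cdot) = [f_j,f_{j+1}]$) and avoids the Chen--Fliess bookkeeping you allude to. Second, the paper does not attack the general system directly: it first performs a \emph{Brunovsk\'y} linear static state feedback $v = u - (\beta,x)$ to reduce to a ``well-prepared'' system satisfying $H_0^d b = 0$, proves everything there, and then checks the relevant brackets and norms are invariant under this feedback. You do not mention this reduction, and without it the last auxiliary system $\xi_d$ does not enjoy the clean second-order estimate that drives the argument.

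\textbf{The gap.} Your Step~3 absorption argument is where a real idea is missing. You claim the remainder carries ``either an extra power of $T$ or an extra power of $\|u\|$,'' but the cubic remainder is of size $\|u\|_{L^3}^3$ (or $\|u_1\|_{L^3}^3$ in the affine case) and the coercive term is $\|u_k\|_{L^2}^2$: there is no \emph{a priori} reason the ratio is small, and no mere $T$-power bookkeeping gives it. The paper closes this with a specific Gagliardo--Nirenberg interpolation inequality,
\[
\|\psi^{(\kappa-1)}\|_{L^3}^3 \;\leq\; C_1\,\|\psi\|_{L^2}^2\,\|\psi^{(3\kappa-3)}\|_{L^\infty} \;+\; C_2\,L^{5/2-3\kappa}\|\psi\|_{L^2}^3,
\]
applied with $\psi = u_k$ and $\kappa = k+1$ (nonlinear case). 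This yields $\|u\|_{L^3}^3 \leq C\,\|u_k\|_{L^2}^2\,\|u^{(2k)}\|_{L^\infty}$ for $u \in W^{2k,\infty}_0$, so smallness of $\|u\|_{W^{2k,\infty}}$ makes the cubic term a small multiple of the coercive quadratic. This interpolation is precisely what pins down the exponent $2k$; the vanishing-trace hypothesis $W^{2k,\infty}_0$ is used to kill the boundary term $C_2 L^{5/2-3\kappa}\|\psi\|_{L^2}^3$ (and is what allows the drift inequality to hold at every $t\in[0,T]$ rather than only at $t=T$). Your ``main obstacle'' paragraph correctly flags this as the hard part, but the mechanism you propose (uniform $T$-scaling of cross terms) is not the one that works.
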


\begin{cor} \label{Corollary:necessary_nonlinear}
 Assume that $f \in C^{\infty}(\R^n\times\R,\R^n)$ 
 with $f(0,0) = 0$ and system~\eqref{system.fxu} is \sstlc.
 Then $\Stwo + \R d_0 = \Sone$.
\end{cor}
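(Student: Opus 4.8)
The plan is to derive Corollary~\ref{Corollary:necessary_nonlinear} from Theorem~\ref{Theorem:QANL} by contraposition. Assume $\Stwo + \R d_0 \neq \Sone$. Since $\Sone = \Stwo + \R d_0$ would hold trivially if the inclusion $\Stwo + \R d_0 \subset \Sone$ failed, I first need to argue that $\Stwo + \R d_0 \subset \Sone$ always holds (this is implicit in the structure of the theorem, since the three cases are meant to be exhaustive: $\Stwo \subset \Sone$ and $d_0 \in \Sone$ together give $\Stwo + \R d_0 \subset \Sone$, hence equality fails only by strict inclusion). So the negation of $\Stwo + \R d_0 = \Sone$ means $\Stwo + \R d_0 \subsetneq \Sone$, i.e. either $d_0 \notin \Sone$, or $d_0 \in \Sone$ but $\Stwo \not\subset \Sone$. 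These are precisely the second and third bullets of the alternative in Theorem~\ref{Theorem:QANL}.

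In the case $d_0 \notin \Sone$, Theorem~\ref{Theorem:QANL} states directly that system~\eqref{system.fxu} is not \stlc{$L^\infty$}; since $\sstlc$ requires \stlc{$C^m$} for every $m$, and in particular a fortiori is stronger than controllability with an $L^\infty$ smallness constraint on smooth compactly supported controls — more precisely, one checks that \stlc{$C^m$} for some $m$ implies \stlc{$L^\infty$} because $C^m$-small controls are $L^\infty$-small — the system is not \sstlc. In the case $d_0 \in \Sone$ and $\Stwo \not\subset \Sone$, Theorem~\ref{Theorem:QANL} provides an integer $k \in \{1,\dots,d\}$ and asserts the system is not \stlc{$W^{2k,\infty}$}; again, since $C^{2k}$-small controls vanishing to the right order are $W^{2k,\infty}_0$-small, \stlc{$C^{2k}$} would imply \stlc{$W^{2k,\infty}$}, so the system is not \sstlc. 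Either way we contradict the hypothesis, which proves the corollary.

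The only genuinely non-routine point is the implication ``\stlc{$C^m$} $\Rightarrow$ \stlc{$W^{\ell,\infty}$}'' (or the $L^\infty$ analogue), i.e. matching the functional framework of Definition~\ref{Definition:STLC} across the different norms and the boundary-condition subspaces $W^{2k,\infty}_0$. The subtlety is that Theorem~\ref{Theorem:QANL}'s third bullet restricts to controls in $W^{2k,\infty}_0$, whereas \sstlc{} a priori allows controls not vanishing at $t=0$; however, Theorem~\ref{Theorem:Linear} and Example~\ref{ex.a} both show that the controls produced in the controllable situations can be taken compactly supported in $(0,T)$, hence in $W^{2k,\infty}_0$ for every $k$, so there is no loss — but to be safe one should note that \sstlc{} as defined only asserts the \emph{existence} of some small control, and one must observe that the non-controllability statements of the theorem hold for \emph{all} trajectories with small control in the relevant space, so the negation transfers cleanly. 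I would therefore write a short preliminary remark recording that $\|u\|_{L^\infty} \le C_T \|u\|_{C^0}$ and $\|u\|_{W^{\ell,\infty}} \le C_{T,\ell}\|u\|_{C^\ell}$, so that a $C^m$-smallness assumption with $m$ large enough is stronger than the smallness assumptions appearing in the obstruction statements, and then invoke Theorem~\ref{Theorem:QANL} directly.
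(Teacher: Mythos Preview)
Your overall approach is the same as the paper's: argue by contraposition and land in one of the two drift cases of Theorem~\ref{Theorem:QANL}, each of which contradicts Definition~\ref{Definition:SSTLC}. Two points deserve correction.

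First, your set-inclusion reasoning is reversed. By Definition~\ref{def:Sk} the sequence $(\Slie{k})$ is non-decreasing, so $\Sone \subset \Stwo \subset \Stwo + \R d_0$ \emph{always} holds, not the reverse. Consequently the negation of $\Stwo + \R d_0 = \Sone$ is $\Stwo + \R d_0 \not\subset \Sone$ (a strict \emph{superset}, not a strict subset), which unpacks exactly into ``$d_0 \notin \Sone$'' or ``$d_0 \in \Sone$ and $\Stwo \not\subset \Sone$''. You reach the correct dichotomy, but through an incorrect intermediate claim; the paper simply starts from ``assume $\Stwo + \R d_0 \not\subset \Sone$''.

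Second, your worry about $W^{2k,\infty}_0$ versus $W^{2k,\infty}$ is unnecessary. The third bullet of Theorem~\ref{Theorem:QANL} asserts that the system is not \stlc{$W^{2k,\infty}$} (the $W^{2k,\infty}_0$ space appears only in the pointwise drift estimate~\eqref{Estimate:TQANL.3}, not in the non-controllability statement). Since $\|u\|_{W^{2k,\infty}} \leqslant \|u\|_{C^{2k}}$ for $u \in C^{2k}([0,T])$, failure of $W^{2k,\infty}$-STLC immediately gives failure of $C^{2k}$-STLC, contradicting Definition~\ref{Definition:SSTLC}. No discussion of traces or compactly supported controls is needed.
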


\begin{proof}
 Assume that $\Stwo + \R d_0 \not\subset \Sone$.
 From Theorem~\ref{Theorem:QANL}, either
 $d_0 \notin \Sone$ and system~\eqref{system.fxu} is not \stlc{$C^0$}, 
 or $d_0 \in \Sone$ and $\Stwo \not \subset \Sone$ and there 
 exists $1 \leqslant k \leqslant d < n$ such that it
 is not \stlc{$C^{2k}$}. Both cases contradict 
 Definition~\ref{Definition:SSTLC}.
\end{proof}

In the particular case of control-affine systems~\eqref{system.f0f1}, the 
optimal functional framework for the conclusions of Theorem \ref{Theorem:QANL} to 
hold can be improved. In the first case, it is sufficient that the control be
small in $W^{-1,\infty}$-norm (instead of $L^\infty$-norm), whereas in the third 
case, it is sufficient that the control be small in $W^{2k-3,\infty}$-norm (instead of $W^{2k,\infty}$-norm).

\begin{thm} \label{Theorem:QAA}
 Let $f_0, f_1 \in C^{\infty}(\R^n,\R^n)$ with $f_0(0) = 0$
 and $d:=\mathrm{dim~}\Sone$. There exists a map $G \in C^\infty(\Sone,\Sone^\perp)$ 
 with $G(0) = 0$ and $G'(0) = 0$, such that the following alternative holds:
 \begin{itemize}
  
  \item \textbf{When} $\Stwo = \Sone$, up to a cubicly small error in the control,
  the state lives within a smooth manifold $\M \subset \R^n$ of dimension $d$ 
  given by the graph of $G$ (see~\eqref{def.MG1}).
  More precisely, for every $\tm > 0$, there exists $C, \eta > 0$ such that, 
  for any $T \in (0,\tm]$,
  for any trajectory $(x,u) \in C^0([0,T],\R^n) \times L^1((0,T),\R)$
  of system~\eqref{system.f0f1} with $x(0) = 0$ which satisfies
  $\norm{u}{W^{-1,\infty}} \leqslant \eta$, one has:
  \begin{equation} \label{CCL:TQA:1}
  \forall t \in [0,T], \quad
  \left|
  \mathbb{P}^\perp x(t) - 
  G \left( \mathbb{P} x(t) \right) \right| 
  \leqslant C \norm{u}{W^{-1,3}}^3.
  \end{equation}
  
  \item \textbf{When} $\Stwo \not \subset \Sone$, for sufficiently small-times
  and small regular controls, the state drifts with respect to the invariant
  manifold $\M$ in a fixed direction. More precisely:
  \begin{itemize}
   
   \item There exists $1\leqslant k \leqslant d$ such that~\eqref{hyp_j} and~\eqref{hyp_k} hold.
   
   \item System~\eqref{system.f0f1} is not \stlc{$W^{2k-3,\infty}$}.
   
   \item There exists $T^* > 0$ such that, for any $\tm \in (0,T^*)$, there exists
   $\eta > 0$ such that, for any $T \in (0,\tm]$ and any trajectory 
   $(x,u) \in C^0([0,T],\R^n) \times L^1((0,T),\R)$
   of system~\eqref{system.f0f1} with $x(0) = 0$ such that 
   $u \in W^{2k-3,\infty}_0$ with $\norm{u}{W^{2k-3,\infty}} \leqslant \eta$, 
   inequality~\eqref{Estimate:TQANL.3} holds.
   
  \end{itemize}
 \end{itemize}
\end{thm}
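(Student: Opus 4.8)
The plan is to deduce Theorem~\ref{Theorem:QAA} from Theorem~\ref{Theorem:QANL} by exploiting the special structure of control-affine systems, in particular the fact that $f_1$ does not depend on $u$, so that $d_0 = \partial_u^2 f(0,0) = 0$. Consequently, in the nonlinear alternative only the first and third cases can occur: either $\Stwo = \Sone$ (equivalently $\Stwo + \R d_0 = \Sone$) or $d_0 = 0 \in \Sone$ and $\Stwo \not\subset \Sone$. The same map $G$ and the same drifting direction $d_k$ given by~\eqref{def.dk} can therefore be reused verbatim; the only thing to prove is that the smallness hypotheses on the control can be weakened from $L^\infty$ to $W^{-1,\infty}$ in the first case, and from $W^{2k,\infty}$ to $W^{2k-3,\infty}$ in the third case, with the cubic residual in~\eqref{CCL:TQA:1} now measured in $W^{-1,3}$ rather than $L^3$.

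The mechanism behind this gain is the standard device of trading control derivatives for state components via iterated integration by parts: for a control-affine system $\dot x = f_0(x) + u f_1(x)$, one writes $u = \dot u_1$ and integrates by parts, which replaces $u$ by its primitive $u_1$ at the cost of introducing the vector field $[f_0, f_1]$ (and a boundary term that vanishes if $u \in W^{-1,\infty}_0 = W^{-1,\infty}$, i.e. with the convention adopted in the excerpt, always). Iterating this $j$ times brings in $\ad_{f_0}^j(f_1)$ and the $j$-th primitive $u_j$. Concretely, I would introduce the change of variables $z := x - \sum_{j \geq 1} u_j(t)\, \ad_{f_0}^{j}(f_1)(\text{base point})$ — or rather the appropriate nonlinear analogue following the ``appropriate transformations'' philosophy announced in the introduction — so that the new state $z$ obeys a dynamics driven by $u_1$ (or a higher primitive) instead of $u$. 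The quadratic term that produces the drift along $[\ad_{f_0}^{k-1}(f_1), \ad_{f_0}^{k}(f_1)](0)$ then appears as a bilinear expression in $u_{k-1}$ and $u_k$, and the coercive quantity controlling the drift is an $H^{-k}$-type seminorm of $u$; smallness of $u$ in $W^{2k-3,\infty}$ (rather than $W^{2k,\infty}$) suffices because the transformation has already absorbed three orders of integration. Symmetrically, in the first case the residual is a genuine cubic expression in $u_1$, whence the $W^{-1,3}$ bound, and the relevant transformations only require $u$ small in $W^{-1,\infty}$ to keep all the error terms under control and to guarantee that the transformed trajectory stays in the domain where the expansion is valid. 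The uniformity over $T \in (0,\tm]$ is obtained because the constants produced by these algebraic manipulations and by the remainder estimates depend only on $\tm$ and on bounds for finitely many derivatives of $f_0, f_1$ on a fixed neighborhood of the origin.

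The main obstacle I anticipate is bookkeeping the remainder terms after the integrations by parts while keeping the smallness assumption as weak as claimed: each integration by parts generates, besides the principal term, quadratic and higher cross-terms (products of the $u_j$'s with Taylor remainders of $f_0$ and $f_1$) that must be shown to be dominated by the coercive lower-order quantity. This is exactly the strategy sketched at the very end of the introduction — ``we use this coercivity to overwhelm higher-order terms coming from Taylor expansions'' — and making it work here amounts to checking that the exponents match: a term involving $u_j$ with $j \leq k-1$ appearing to a power, or a product, that is genuinely lower order in the $H^{-k}$ scaling than the drift term, so that the sign conclusion~\eqref{Estimate:TQANL.3} survives. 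I expect that once Theorem~\ref{Theorem:QANL} is in hand, the bulk of the present proof is organizing these transformations cleanly and invoking the already-established coercivity estimates with the shifted index, rather than introducing genuinely new ideas; the non-STLC conclusions then follow formally, since a would-be controlling family of controls small in $W^{-1,\infty}$ (resp. $W^{2k-3,\infty}$) would contradict the sign of the drift exactly as in the proof of Corollary~\ref{Corollary:necessary_nonlinear}.
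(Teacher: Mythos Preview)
Your proposal correctly identifies the core mechanism --- the auxiliary transformation that trades $u$ for its primitive $u_1$ --- and the source of the three-derivative gain (the absence of a $u^2$ term since $d_0 = 0$). However, the framing as a deduction of Theorem~\ref{Theorem:QAA} \emph{from} Theorem~\ref{Theorem:QANL} is not how the paper proceeds and would not work as a black-box argument. The paper proves both theorems \emph{simultaneously}: a parameter $\gamma \in \{0,1\}$ is introduced (Section~4.3), with $\gamma = 1$ for control-affine systems and $\gamma = 0$ otherwise, and every estimate in the construction of the auxiliary states $\xi_j$, the map $G$, and the drift coercivity is written uniformly in terms of $\|u_\gamma\|$ norms. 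The improvement for control-affine systems enters at one precise place: because $\partial_u^2 f \equiv 0$, the term $u_0^2\,\mathcal{K}_{0,j} = u^2\,\mathcal{K}_{0,j}$ is absent from the evolution~\eqref{HR_l} of $\xi_j$ (the sum starts at $l = \gamma = 1$), so every cubic remainder is $\Og{\|u_1\|_{L^3}^3}$ rather than $\Og{\|u\|_{L^3}^3}$. The Gagliardo--Nirenberg step (Proposition~\ref{Prop:GNII} applied with $\kappa = k - \gamma + 1$) then yields the $W^{2k-3,\infty}$ requirement directly.

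To carry out your plan as stated you would need, beyond the statement of Theorem~\ref{Theorem:QANL}, to (i)~re-establish well-posedness and size estimates for $x$ and the auxiliary states under $W^{-1,\infty}$-smallness only --- this is Lemma~\ref{Lemma:x1u1}, which has no counterpart in the nonlinear case and is the genuine new ingredient; (ii)~track the index shift $\gamma = 1$ through every Gr\"onwall bound in Lemmas~\ref{Lemma:AuxEst1}, \ref{Lemma:Enhanced} and~\eqref{Estimate:G}; and (iii)~verify that the Brunovsk\'y feedback $v = u - (\beta,x)$ preserves the $W^{-1,\infty}$ and $W^{-1,3}$ norms (equations~\eqref{v1.L3}--\eqref{v1.Linf}). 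None of these steps is a consequence of Theorem~\ref{Theorem:QANL}; they amount to redoing its proof with sharper bookkeeping, which is exactly what the unified $\gamma$-framework is designed to avoid duplicating. Your intuition about the mechanism is right, but ``reuse the $G$ and $d_k$ verbatim, then tighten the norms'' understates the work: one must re-enter the proof, not merely post-process its conclusion.
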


\subsection{Comments on the main theorems} 
\label{Subsection:Comments}

\begin{itemize}
 
 \item The linear subspace $\Sone$ is the tangent space to the manifold 
 $\mathcal{M}$ at $0$ because $G'(0)=0$.

 \item The optimality of our result, in terms of the norms used in the smallness 
 assumption on the control, is illustrated in Subsection~\ref{Sec:Optimal}. Even in the 
 case of bilinear systems (for which $f_0$ and $f_1$ are linear in $x$),
 the norms involved in Theorem~\ref{Theorem:QAA} are optimal.

 \item We give a characterization of the time $T^*$ in 
 paragraph~\ref{Subsubsection:coercivity}: it is the maximal time for which some coercivity property 
 holds for an appropriate second-order approximation of the system under study.
 Therefore, it does not depend on higher-order terms.

 \item When $\Stwo + \R d_0 \not\subset \Sone$, the drift 
 relations~\eqref{Estimate:TQANL.2} and~\eqref{Estimate:TQANL.3} can be 
 used to exhibit impossible motions. In particular, they imply that there 
 exists $T, \eta > 0$ such that, for any $x^\dagger \in \Sone^\perp$ satisfying
 $\langle x^\dagger, d_k \rangle < 0$, no motion from $x(0) = 0$ to
 $x(T) = x^\dagger$ is possible with controls of $W^{2k,\infty}$-norm
 smaller then $\eta$ (or $W^{2k-3,\infty}$-norm smaller then $\eta$
 for control-affine systems). Other impossible motions can also be exhibited, 
 by time reversibility. 
 For instance, no motion from an initial state $x^* \in \Sone^\perp$ such 
 that $\langle x^*, d_k \rangle > 0$ to $x(T) = 0$ is possible for small
 times and small controls. More generally, motions going against
 the drift direction are impossible for small-times and small controls.

 \item For a particular class of systems ("well-prepared systems", i.e.\ that 
 satisfy \eqref{hyp.integrator}), inequalities~\eqref{Estimate:TQANL.2} 
 and~\eqref{Estimate:TQANL.3} hold in the following stronger form:
 \begin{equation}
  \forall t \in [0,T], \quad
  \left\langle \mathbb{P}^\perp x(t)- 
  G \left( \mathbb{P} x(t) \right) , d_k \right\rangle 
  \geqslant C(\tm) \int_0^t u_k(s)^2 \ds,
 \end{equation}
 where $u_k$ is defined in~\eqref{def:uj}.
 Such an estimate also holds for general (i.e.\ not well-prepared) systems but
 with a value of $T^*$ that may be strictly smaller than the one explicitly given 
 in paragraph~\ref{Subsubsection:coercivity} (see paragraph~\ref{Subsubsection:IllCoercivity}).
 
 \item The drift relations~\eqref{Estimate:TQANL.2} 
 and~\eqref{Estimate:TQANL.3} hold for controls in $W^{m,\infty}_0$
 (with $m = 2k$ for nonlinear systems and $m=2k-3$ for control-affine systems).
 For controls which only belong to $W^{m,\infty}$, 
 they are still true at the final time, but the bound $\eta$ depends on $T$ 
 (and $\eta(T) \to 0$ as $T \to 0$). The persistence of the drift at the final 
 time under this weaker assumption will be clear from the proof 
 (see paragraph~\ref{Subsubsection:Gagliardo}) and allows to deny $W^{m,\infty}$
 small-time local controllability, and not only $W^{m,\infty}_0$ 
 small-time local controllability
 (see also paragraph~\ref{Subsub:Traces} for more insight on this topic).

 \item To lighten the statement of the theorems, we assumed that the vector
 fields are smooth. This guarantees that the spaces $\Sone$ and $\Stwo$
 are well-defined. However, it is in fact possible to give a meaning to the 
 considered objects when $f \in C^2(\R^n\times\R, \R^n)$
 (see Subsection~\ref{Subsec:S2_nonsmooth}). 
 Moreover, the conclusions of Theorem~\ref{Theorem:QANL} and~\ref{Theorem:QAA} 
 persist for nonsmooth dynamics (see Corollary~\ref{Corollary:NonlinearC3} 
 and~\ref{Corollary:AffineC3} in Subsection~\ref{Subsection:rough}).
 In particular, these corollaries imply that, even in the case of smooth
 vector fields, the constants in our main theorems only depend on low-order
 derivatives of the dynamic.

\end{itemize}

\subsection{Illustrating toy examples} 
\label{Subsection:QACF_examples}

\subsubsection{Evolution within an invariant manifold}

In the absence of drift (i.e.\ when $\Stwo = \Sone$), our theorems imply that 
the state must live within an invariant manifold, up to the cubic order. 
A simple case for which the manifold is not trivial (i.e.\ $\M \neq \Sone$) 
is the following toy model, for which the state stays exactly within the manifold,
without any remainder.

\afficherfigure{
 \begin{figure}[ht!]
  \begin{center}
   \includegraphics[width=6.6cm]{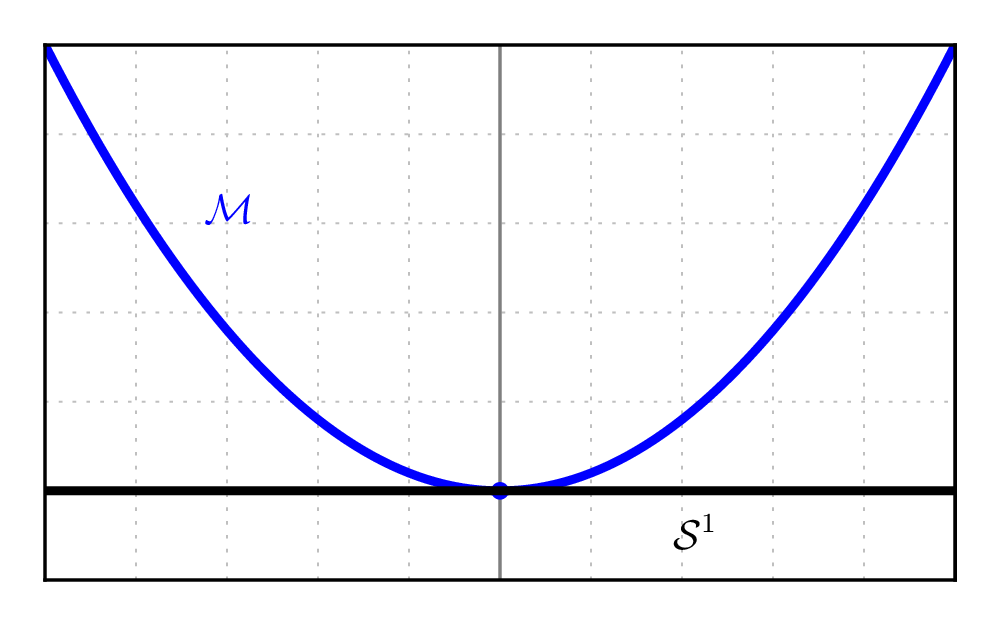}
   \caption{\label{Figure:toy_manifold}
    The toy invariant manifold~\eqref{def.toy.m}
    for system~\eqref{sys.ex.iho}.
   }
  \end{center}
 \end{figure}
}

\begin{example} \label{Example:toy_manifold}
	Let $n = 2$. We consider the following control-affine system:
	\begin{equation} \label{sys.ex.iho}
	\left\{
	\begin{aligned}
	\dot{x}_1 & = u, \\
	\dot{x}_2 & = 2 u x_1.
	\end{aligned}
	\right.
	\end{equation}
	Here, $\Sone = \Stwo = \R e_1$. Using the notation $u_1$ introduced 
	in~\eqref{def:uj}, system~\eqref{sys.ex.iho} can be integrated as 
	$x_1(t) = u_1(t)$ and $x_2(t) = u_1^2(t)$. For any control 
	and any time $t\in [0,T]$, $x(t) \in \M$ 
	(see Figure~\ref{Figure:toy_manifold}), where:
	\begin{equation} \label{def.toy.m}
	\mathcal{M} = \{ (x_1, x_2) \in \R^2, \enskip x_2 = x_1^2 \}.
	\end{equation}
	One checks that $\Sone$ is indeed the tangent space to $\M$ at the origin.
\end{example}

\subsubsection{Quadratic drifts and coercivity}

The simplest kind of quadratic drift for control-affine systems was exposed in
Example~\ref{Example:easy_drift}. Let us study two new examples.

\begin{example} \label{Example:Absorbed}
 Let $n = 1$. We consider the following control-affine system:
 \begin{equation} \label{system.ex.x1ux12}
  \dot{x}_1 = u + x_1^2.
 \end{equation}
 Here, $\Sone = \R e_1$ and Theorem~\ref{Theorem:Linear} asserts that system~\eqref{system.ex.x1ux12} is \sstlc.
 The potential drift direction $[f_1, [f_0, f_1]] = - 2 e_1 \in \Sone$
 and is actually absorbed by the linear controllability in the vicinity
 of the null equilibrium.
\end{example}

\begin{example} \label{Example:Competition}
 Let $n = 3$. We consider the following control-affine system,
 which is also proposed by Brockett in~\cite[page 445]{MR3110058}:
 \begin{equation} \label{system.quad.competition}
  \left\{
   \begin{aligned}
    \dot{x}_1 & = u, \\
    \dot{x}_2 & = x_1, \\
    \dot{x}_3 & = x_1^2 - x_2^2.
   \end{aligned}
  \right.
 \end{equation}
 Here, $\Sone = \R e_1 + \R e_2$ and $[f_1, [f_0, f_1]] = - 2 e_3 \notin \Sone$.
 Thus, Theorem~\ref{Theorem:QAA} states that, in small-time, there will be a 
 quadratic drift preventing the system to be \stlc{$W^{-1,\infty}$}. 
 One can actually compute explicitly the minimal time and prove that:
 \begin{itemize}
  \item if $T \leqslant \pi$, system~(\ref{system.quad.competition}) is not 
  locally controllable in time $T$ (even with large controls),
 \item if $T>\pi$, system~(\ref{system.quad.competition}) is smoothly locally 
 controllable in time $T$.
 \end{itemize}
 Here, there is a quadratic competition, and one of the terms is dominant in 
 small-time:
 \begin{equation} \label{eq.competition}
  x_3(T)=x_3(0) + \int_0^T \left(\dot{x}_2(t)^2 - x_2(t)^2\right) \dt.
 \end{equation}
 The proof of both statements is related to the Poincaré-Wirtinger inequality
 (see~\cite[p.~47]{MR0442564}), that holds for any $T>0$ and any function 
 $\varphi \in H^1_0((0,T),\R)$:
 \begin{equation} \label{eq.wirtinger}
  \int_0^T \varphi'(t)^2 \dt \geqslant \left(\frac{\pi}{T}\right)^2 \int_0^T \varphi(t)^2 \dt.
 \end{equation}
 The constant in the right-hand side is optimal and achieved by the function 
 $\varphi(t):=\sin\left( \frac{\pi t}{T} \right)$.
 
 \bigskip \noindent \textbf{First case: $T \leqslant \pi$}. 
 Let $(x,u) \in C^0([0,T],\R^3) \times L^1((0,T),\R)$ be a trajectory of 
 system~\eqref{system.quad.competition} such that $x_2(0) = x_2(T) = 0$.
 Combining~\eqref{eq.competition} and~\eqref{eq.wirtinger} yields:
 \begin{equation}
  x_3(T) - x_3(0)
  \geqslant \left( \left(\frac{\pi}{T}\right)^2 - 1 \right) \int_0^T x_2(t)^2 \dt 
  \geqslant 0.
 \end{equation}
 Thus the system is not locally controllable in time $T$.

 \bigskip \noindent \textbf{Second case: $T > \pi$}. 
 We prove that, for any $m \in \N$ and any $\eta > 0$, there exists $\delta > 0$
 such that, for any $x^*, x^\dagger \in \R^3$ with 
 $|x^*|+|x^\dagger| \leqslant \delta$, there exists a trajectory 
 of~\eqref{system.quad.competition} 
 $(x,u) \in C^0([0,T],\R^3) \times L^1((0,T),\R)$
 such that $x(0) = x^*$, $x(T) = x^\dagger$ and
  $\norm{u}{C^m} \leqslant \eta$.
  
 By an argument that relies on the Brouwer fixed point theorem 
 (see~\cite[Chapter 8]{MR2302744}), it suffices to prove the existence 
 of $u^\pm \in C^\infty([0,T],\R)$ such that the associated solution of system~(\ref{system.quad.competition}) with initial conditions
 $x^\pm (0)=0$ satisfy $x^\pm(T)=(0,0,\pm 1)$. 

 Thanks to the previous case, one can obtain $u^+$ by rescaling any smooth
 function supported on an interval of length less than $\pi$
 and such that $\int_0^T u(t) \dt = \int_0^T t u(t) \dt = 0$.

 Now, we construct $u^-$. To that end, 
 we introduce $(\rho_\epsilon)_{\epsilon>0}$ a family of functions in 
 $C^\infty_c((0,T),[0,1])$ with
 $\Supp(\rho_\epsilon) \subset [\epsilon,T-\epsilon]$,
 even with respect to $T/2$, 
 such that, for any $t \in (0,T)$, $\rho_\epsilon(t) \to 1$ as $\epsilon \to 0$.
 By the dominated convergence theorem, one has:
 \begin{align}
 \int_0^T \left| \rho_\epsilon(t) \frac{\pi}{T} \cos\left( \frac{\pi t }{T} \right) \right|^2 \dt 
 & \underset{\epsilon \rightarrow 0}{\longrightarrow} \int_0^T \left| \frac{\pi}{T} \cos\left( \frac{\pi t }{T} \right) \right|^2 \dt = \frac{\pi^2}{2T}, \\
 \int_0^T \left| \int_0^t \rho_\epsilon(\tau) \frac{\pi}{T} \cos\left( \frac{\pi \tau }{T} \right) d\tau  \right|^2 \dt
 & \underset{\epsilon \rightarrow 0}{\longrightarrow} \int_0^T \left| \sin\left( \frac{\pi t }{T}  \right) \right|^2 \dt = \frac{T}{2}.
 \end{align}
 Thus, for $\epsilon>0$ small enough, since $T > \pi$:
 \begin{equation}
  \zeta_\epsilon := \int_0^T \left| \rho_\epsilon(t) \frac{\pi}{T} \cos\left( \frac{\pi t }{T} \right) \right|^2 \dt 
  - \int_0^T \left| \int_0^t \rho_\epsilon(\tau) \frac{\pi}{T} \cos\left( \frac{\pi \tau }{T} \right) \dtau  \right|^2 \dt < 0.
 \end{equation}
 Let us fix such an $\epsilon$. The solution of~(\ref{system.quad.competition}) 
 with initial condition $x^-(0)=0$ and control:
 \begin{equation}
  u^-(t) := \frac{1}{\sqrt{-\zeta_\epsilon}} \partial_t 
  \Big[ \rho_\epsilon(t) \frac{\pi}{T} \cos\left( \frac{\pi t }{T} \right) \Big]
 \end{equation}
 satisfies:
 \begin{align}
  x^-_1(t) & = \frac{1}{\sqrt{-\zeta_\epsilon}} \rho_\epsilon(t) \frac{\pi}{T} \cos\left( \frac{\pi t }{T} \right), \\
  x^-_2(t) & = \frac{1}{\sqrt{-\zeta_\epsilon}} \int_0^t \rho_\epsilon(\tau) \frac{\pi}{T} \cos\left( \frac{\pi \tau }{T} \right) \dtau, \\
  x^-_3(T) & =\int_0^T \Big( x^-_1(t)^2 - x^-_2(t)^2 \Big) \dt = \left( \frac{1}{\sqrt{-\zeta_\epsilon}} \right)^2 \zeta_\epsilon = -1.
 \end{align}
 Hence $x^-_1(T)=0$ because $\rho_\epsilon(T)=0$ and $x^-_2(T)=0$ as the integral over 
 $(0,T)$ of a function which is odd with respect to $T/2$.
\end{example}

\subsubsection{Resolution of Sussmann's paradox example}

We turn back to Sussmann's historical Example~\ref{ex.sussmann}.  
Taking a trajectory satisfying $x(0) = 0$, explicit integration 
of~\eqref{system.sussmann} yields:
\begin{equation} \label{ex.sus.x123}
x_1(t) = u_1(t), \quad
x_2(t) = u_2(t) \quad \textrm{and} \quad
x_3(t) = \int_0^t \left(u_1^3(s) + u_2^2(s)\right) \ds.
\end{equation}
Up to the second-order, the component $x_3(T)$ is 
a coercive quadratic form with respect to $\norm{u}{H^{-2}} = \norm{u_2}{L^2}$. 
However, the sum of the orders 0+1+2 is not a good approximation of the nonlinear
solution for the same norm $\norm{u}{H^{-2}}$ when $u$ is small in $L^\infty$.
Indeed:
\begin{equation}
 \int_0^T u_1^3(t) \dt 
 \neq \underset{\norm{u}{L^\infty} \to 0}{o} \left( \int_0^T u_2^2(t) \dt \right).
\end{equation}
Nevertheless, if the smallness assumption on the control in strengthened into
$\norm{u}{W^{1,\infty}} \ll 1$, then the quadratic approximation becomes dominant.
Considering a trajectory such that $x_1(T) = x_2(T) = 0$ and using integration 
by parts yields:
\begin{equation} \label{ex.sus.u13}
 \int_0^T u_1^3(t) \dt 
 = - 2 \int_0^T u_2(t) u_1(t) u(t) \dt 
 = \int_0^T u_2^2(t) \dot{u}(t) \dt.
\end{equation}
Combining~\eqref{ex.sus.x123} and~\eqref{ex.sus.u13} yields:
\begin{equation} \label{eq.sus.drift}
 x_3(T) = \int_0^T u_2^2(t) \left(1+\dot{u}(t)\right) \dt
 \geqslant \frac{1}{2} \norm{u_2}{L^2}^2,
\end{equation}
provided that $\norm{u}{W^{1,\infty}(0,T)} \leqslant 1/2$.
From~\eqref{eq.sus.drift}, we deduce that it is impossible to reach states
of the form $(0,0,-\delta)$ with $\delta > 0$. Hence, system~\eqref{system.sussmann} 
is not \stlc{$W^{1,\infty}$}. 
Our point of view is that condition~\eqref{eq.f03} allows to deny 
small-time controllability with small $W^{1,\infty}$ controls. We think
that this notion is pertinent because it highlights that the quadratic drift can 
only be avoided with highly oscillating controls.
We recover here
the conclusion of Theorem~\ref{Theorem:QAA} with $k = 2$,
in the particular case of Example~\ref{ex.sussmann}.

\subsubsection{Drifts with respect to an invariant manifold}

In Examples~\ref{Example:easy_drift},~\ref{ex.sussmann} 
and~\ref{Example:Competition}, the drifts
hold relatively to the invariant manifold $\M = \Sone$. However, the drift
can also hold with respect to a bent manifold.

\begin{example} \label{Example:drift_bent}
 Let $n = 2$ and $\lambda\in\R$.
 We consider the following control-affine system:
 \begin{equation} \label{system.drift_bent}
 \left\{
 \begin{aligned}
 \dot{x}_1 & = u, \\
 \dot{x}_2 & = 2 u x_1 + \lambda x_1 ^ 2.
 \end{aligned}
 \right.
 \end{equation}
 Straightforward explicit integration of~\eqref{system.drift_bent} yield,
 for any $t \geqslant 0$:
 \begin{equation} \label{eq.drift_bent}
 x_2(t) = x_1^2(t) + \lambda \int_0^t x_1^2(s) \ds.
 \end{equation}
 Equation~\eqref{eq.drift_bent} illustrates the idea that,
 when $\lambda \neq 0$, the state
 $(x_1, x_2)$ endures a quadratic drift with respect to 
 the manifold~\eqref{def.toy.m}. The sign of the
 drift depends on $\lambda$: when it is positive, the state is above
 the manifold and reciprocally.
\end{example}

The direction of the 
drift is not related in any way with the curvature of the manifold
at the origin. Elementary adaptations of~\eqref{system.drift_bent}
in a three dimensional context can be built so that the drift 
holds along a direction which is orthogonal to the curvature
of the manifold.

\subsubsection{Higher-order behaviors}

When $\Stwo = \Sone$, Theorem~\ref{Theorem:QAA} asserts that the state can 
only leave the manifold at cubic order with respect to the control. 
One must then continue the expansion further on, as multiple different
behaviors are possible (see Figure~\ref{Figure:manifolds}). We propose
examples exhibiting different higher-order properties.

\afficherfigure{
\begin{figure}[ht!]
 \begin{center}
  \includegraphics[width=6.6cm]{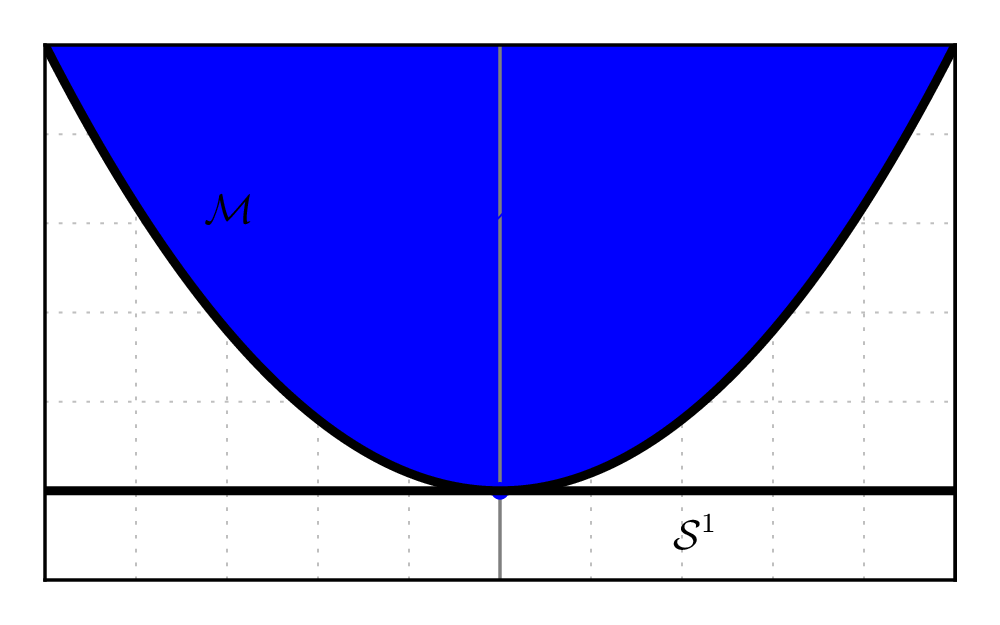}
  \includegraphics[width=6.6cm]{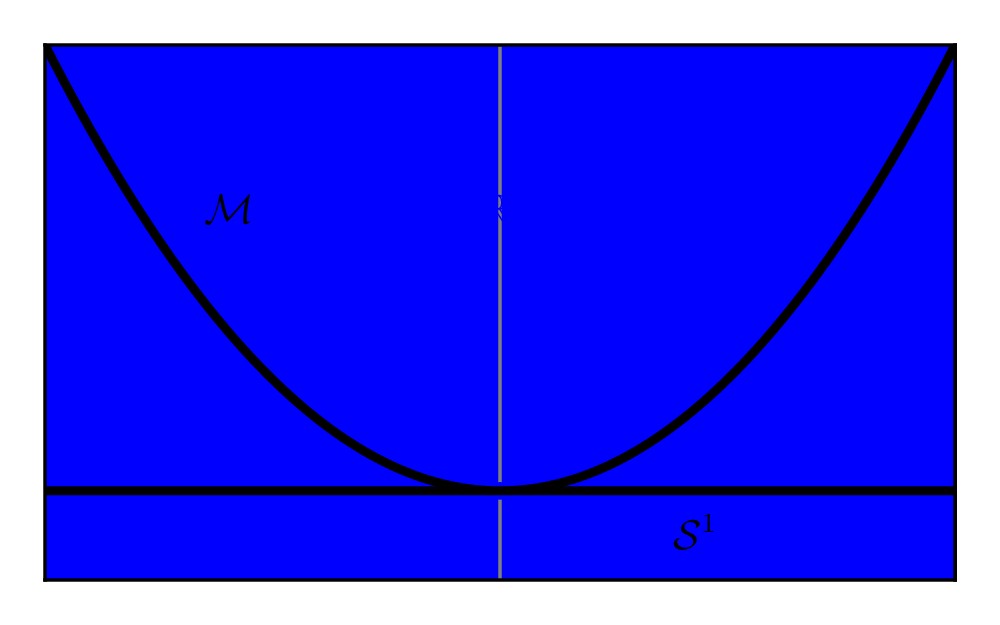}
  \caption{\label{Figure:manifolds} 
   Influence of higher-order terms on the reachable set. 
   \emph{Left}: a higher-order drift pushes the 
   state over the manifold. \emph{Right}: 
   small-time controllability is recovered.}
 \end{center}
\end{figure}
}

\begin{example}
  To recover the situation exposed in 
  Figure~\ref{Figure:manifolds}, left, where a higher-order drift pushes the
  state over the manifold, we perturb system~\eqref{sys.ex.iho} into:
  \begin{equation} \label{sys.ex.iho.2}
  \left\{
  \begin{aligned}
  \dot{x}_1 & = u, \\
  \dot{x}_2 & = 2 u x_1 + x_1^4.
  \end{aligned}
  \right.
  \end{equation}
  This modification changes neither $\Sone$ nor $\Stwo$. 
  For any $t \geqslant 0$:
  \begin{equation}
  x_2(t) = x_1^2(t) + \int_0^t x_1^4(s) \ds.
  \end{equation}
  Hence, the state is constrained to involve within:
  \begin{equation}
  \mathcal{M}_+ := \left\{ (x_1, x_2) \in \R^2, \enskip x_2 - x_1^2 \geqslant 0 \right\}.
  \end{equation}
\end{example}

\begin{example}
 Higher-order terms can also help to recover small-time local controllability, as in 
 Figure~\ref{Figure:manifolds}, right. One possibility is to 
 introduce $\beta \in \R$ and perturb system~\eqref{sys.ex.iho} into:
 \begin{equation} \label{sys.ex.iho.3}
 \left\{
 \begin{aligned}
 \dot{x}_1 & = u + \beta x_2, \\
 \dot{x}_2 & = 2 u x_1.
 \end{aligned}
 \right.
 \end{equation}
 Here again, $\Sone$ and $\Stwo$ are preserved. However, one checks that:
 \begin{equation} \label{ex.iho.3.f3}
 \left[ f_1, \left[ f_1, \left[ f_0, f_1 \right]\right]\right] = 12 \beta e_2.
 \end{equation}
 Hence, when $\beta \neq 0$, the Hermes local controllability 
 condition is satisfied since $\Stwo = \Sone$ and $\Slie{3} = \R^2$
 from~\eqref{ex.iho.3.f3}.
 Thus, one obtains that system~\eqref{sys.ex.iho.3} is \stlc{$L^\infty$}
 (see~\cite[Theorem 2.1, page 688]{MR710995}).
\end{example}

\begin{example}
 The following control-affine example was introduced by Kawski 
 in~\cite[Example 4.1]{MR1061394}, where it is proved that it is 
 \stlc{$L^\infty$}. Let $n = 4$ and consider:
 \begin{equation}
  \left\{
  \begin{aligned}
  \dot{x}_1 & = u, \\
  \dot{x}_2 & = x_1, \\
  \dot{x}_3 & = x_1^3, \\
  \dot{x}_4 & = x_2 x_3.
  \end{aligned}
  \right.
 \end{equation}
 One checks that $\M = \Stwo = \Sone = \R e_1 + \R e_2$. 
 Therefore, Theorem~\ref{Theorem:QAA} does not raise an obstruction
 to controllability (at best, the lost directions will be recovered
 at the cubic order).
\end{example}

\subsubsection{Examples for nonlinear systems} 
\label{Subsection:QANL_examples}

Regarding our work, nonlinear systems exhibit two new features with 
respect to control-affine systems: the possibility of a $u^2$ term and
a $u^3$ term in the vector field.

\begin{example}
 Let $n = 2$ and $\lambda \in \R$. We consider the following non-linear system:
 \begin{equation}
 \left\{
 \begin{aligned}
 \dot{x}_1 & = u, \\
 \dot{x}_2 & = x_1^2 + \lambda u^2 + u^3.
 \end{aligned}
 \right.
 \end{equation}
 When $\lambda \neq 0$, there is a new strong drift, which involves the
 $L^2$-norm of the control and exceeds any other possible drift. It
 holds along the direction $\partial_u^2 f(0,0)$ (here, $2 \lambda e_2$)
 as long as $u$ is small in $L^\infty$-norm (so that the cubic
 term can be ignored).
 When $\lambda = 0$, the strongest possible drift is the one involving
 the $H^{-1}$-norm of the control and is similar to the one described for
 control-affine systems. We recover the obstruction corresponding to 
 the first bad Lie bracket $[f_1,[f_0,f_1]](0) = - 2 e_2$. 
 However, the smallness assumption involves
 the $W^{2,\infty}$-norm of the control in order to ensure that
 the $u^3$ term can be ignored (which is not the case with 
 smallness in $W^{-1,\infty}$).
\end{example}

\subsection{Optimality of the norm hypothesis}
\label{Sec:Optimal}

We give examples of systems proving that the smallness
assumptions used in Theorems~\ref{Theorem:QANL} and~\ref{Theorem:QAA} cannot 
be improved (at least in the range of Sobolev spaces).

\subsubsection{Optimality for control-affine systems}
\label{Subsec:Optimal_affine}

\begin{example} \label{Example:OptimalA} 
 Let $k \in \N^*$. We consider the following control-affine system set in $\R^{k+1}$:
 \begin{equation} \label{ex.opt.a}
 \left\{
 \begin{aligned}
 \dot{x}_1 & = u, \\
 \dot{x}_{j+1} & = x_j, \qquad\qquad \text{for } 1 \leqslant j < k, \\
 \dot{x}_{k+1} & = x_k^2 + x_1^3.
 \end{aligned}
 \right.
 \end{equation}
 The controllable space is such that $\Sone^\perp = \R e_{k+1}$. 
 Moreover, one checks that:
 \begin{align}
 [\ad^{j-1}_{f_0}(f_1), \ad^{j}_{f_0}(f_1)](0) & = 0, \qquad\qquad \text{for } 1 \leqslant j < k, \\
 [\ad^{k-1}_{f_0}(f_1), \ad^{k}_{f_0}(f_1)](0) & = -2 e_{k+1}.
 \end{align}
 Hence, we are in the setting of the quadratic obstruction of order $k$.
 Straightforward integration of~\eqref{ex.opt.a} for trajectories with $x(0) = 0$
 yields $x_j = u_j$ for $1 \leqslant j \leqslant k$ and:
 \begin{equation} \label{ex.opt.a.1}
 x_{k+1}(T) = \int_0^T u_k^2(t) \dt + \int_0^T u_1^3(t) \dt.
 \end{equation}
 Hence, the existence of drift amounts to the existence of a Sobolev  embedding relation. 
\end{example}

Let us prove that our functional setting is optimal. More precisely, we wish to
construct controls realizing both signs in relation~\eqref{ex.opt.a.1} and which
are small in the most regular spaces. We look directly for $u_k$.
We choose a function $\varphi \in C^\infty(\R,\R)$, 
compactly supported in $(0,T)$, such that $\int_0^T \varphi^2 = 1$ 
and $a := \int_0^T \left(\varphi^{(k-1)}\right)^3 \neq 0$. 
Let $\lambda \in \R$ and $\mu \geqslant 1$. We define
the family of dilatations:
\begin{equation}
\varphi_{\lambda,\mu}(t) := \lambda \varphi(\mu t).
\end{equation}
For $m \in [-1, +\infty)$, straightforward scaling arguments lead to:
\begin{equation}
 \norm{\varphi^{(k)}_{\lambda,\mu}}{W^{m,\infty}}
 = |\lambda| \mu^{m+k} \norm{\varphi^{(k)}}{W^{m,\infty}}.
\end{equation}
Hence, the control $u := \varphi^{(k)}_{\lambda,\mu}$ 
is small in $W^{m,\infty}$ when
$|\lambda| \mu^{m+k} \ll 1$.
Moreover,
\begin{equation}
\int_0^T \varphi_{\lambda,\mu}^2 
= \lambda^2 \int_0^T \varphi^2(\mu t) \dt
= \lambda^2 \mu^{-1} \int_0^{\mu T} \varphi^2
= \lambda^2 \mu^{-1}
\end{equation}
and:
\begin{equation}
\begin{split}
\int_0^T \left(\varphi^{(k-1)}_{\lambda,\mu}\right)^3
& = \lambda^3 \mu^{3k-3} \int_0^T \left(\varphi^{(k-1)}\right)^3(\mu t) \dt \\
& = \lambda^3 \mu^{3k-4} \int_0^{\mu T} \left(\varphi^{(k-1)}\right)^3 \\
& = a \lambda^3 \mu^{3k-4}.
\end{split}
\end{equation}
Hence, the cubic term dominates the quadratic term when
$|\lambda| \mu^{3k-3} \gg 1$. 
Since $\mu \geqslant 1$, this relation is compatible with 
smallness of the control in $W^{m,\infty}$
if and only if $m+k < 3k-3$. Thus, the lost direction
can be recovered with controls which are small in $W^{m,\infty}$ for
any $m < 2k-3$. 
Indeed, sign $\lambda$ can take both signs, and since $\varphi$ is
compactly supported in $(0,T)$, we can build controls 
$u^\pm$ and associated trajectories $x^\pm$ of~\eqref{ex.opt.a}
with $x^\pm(0) = 0$ satisfying $x^\pm(T) = (0, \ldots, 0, \pm 1)$.
Using an argument based on the Brouwer fixed point theorem 
as in~\cite[Chapter 8]{MR2302744} enables us to conclude that
system~\eqref{ex.opt.a} is \stlc{$W^{m,\infty}$} for $m < 2k-3$. 
From an homogeneity point of view,
our theorem involving a smallness assumption in $W^{2k-3,\infty}(0,T)$ 
cannot be improved using Sobolev spaces. 

\bigskip

When $k = 2$, system~\eqref{ex.opt.a} corresponds to 
Sussmann's Example~\ref{ex.sussmann} and we conclude that this system 
is \stlc{$W^{m,\infty}$} for any $m \in [-1,1)$. In particular,
we recover the fact, proved by Sussmann, that it is \stlc{$L^\infty$}.

\subsubsection{Optimality for nonlinear systems}
\label{Subsec:Optimal_nonlinear}

\begin{example} 
 Let $k \in \N^*$. We consider the following nonlinear system set in $\R^{k+1}$:
 \begin{equation} \label{ex.opt.a.nl}
 \left\{
 \begin{aligned}
 \dot{x}_1 & = u, \\
 \dot{x}_{j+1} & = x_j, \qquad\qquad \text{for } 1 \leqslant j < k, \\
 \dot{x}_{k+1} & = x_k^2 + u^3.
 \end{aligned}
 \right.
 \end{equation}
 From Theorem~\ref{Theorem:QANL}, this system is not is not \stlc{$W^{2k,\infty}$}.
 The arguments used in the previous paragraph can be adapted to prove that the smallness assumption 
 in $W^{2k,\infty}(0,T)$ cannot be improved using Sobolev spaces. 
\end{example}

\subsubsection{Optimality for bilinear systems}
\label{Subsec:Optimal_bilinear}

One could think that our assumption can be enhanced for some better-behaved
classes of systems, e.g.\ bilinear control systems, for which the pathologic 
cubic term cannot be so easily injected. It turns out to be false. The same
kind of counterexample can be built within the bilinear class. For brevity, 
we give an example only for the first obstruction 
(but examples for higher-order obstructions are also possible).

\begin{example} \label{Example:bilinear}
 We work with $n = 5$. We consider the bilinear system:
 \begin{equation}
 \left\{
 \begin{aligned}
 \dot{x}_1 & = 0, \\
 \dot{x}_2 & = u x_1, \\
 \dot{x}_3 & = 2 u x_2, \\
 \dot{x}_4 & = 3 u x_3, \\
 \dot{x}_5 & = x_5 + u x_2 + x_4,
 \end{aligned}
 \right.
 \end{equation}
 around its equilibrium $x_e = (1, 0, 0, 0, 0)$. One checks that $d = 1$,
 that $\Sone = \R e_2$ and that $\Stwo \not\subset \Sone$. Indeed,
 $[f_1, [f_0, f_1]] (x_e) = - 2 e_5$. Starting from the initial state
 $x(0) = x_e$, we have $x_1 = 1$, $x_2 = u_1$, $x_3 = u_1^2$ and $x_4 = u_1^3$. 
 Using the Duhamel formula, we obtain:
 \begin{equation}
 x_5(T) = \int_0^T e^{T-t} \left(u(t) u_1(t) + u_1^3(t)\right) \dt.
 \end{equation}
 Assuming that we are looking for trajectories satisfying
 $x_2(T) = 0$, we have $u_1(T) = 0$. Hence, integration by parts leads to:
 \begin{equation} \label{ex.opt.b.3}
 x_5(T) e^{-T} 
 = \frac{1}{2} \int_0^T e^{-t} u_1^2(t) \dt 
 + \int_0^T e^{-t} u_1^3(t) \dt.
 \end{equation}
 For singular controls and short times, the exponential multiplier in the 
 integrand does not play an important role. Thus, we recover the key 
 balance of~\eqref{ex.opt.a.1} and the smallness assumption of our theorem
 cannot be improved even within the favorable class of bilinear systems.
\end{example}

\subsubsection{Optimality of the trace hypothesis}
\label{Subsub:Traces}

We give an example illustrating why the drift relation holds for any time only 
when the controls have enough vanishing traces at the initial time. We consider
once more Sussmann's Example~\ref{ex.sussmann} and we compute the solution 
associated with a constant control $u(t) := \eta$, starting from $x(0) = 0$.
One has:
\begin{equation}
 x_1(t) = \eta t, \quad 
 x_2(t) = \frac{1}{2} \eta t^2 
 \quad \text{and} \quad
 x_3(t) = \frac{1}{20} \eta^2 t^5 + \frac{1}{4} \eta^3 t^4.
\end{equation}
Hence, for any $T > 0$, if $|\eta|$ is small enough, then $x_3(T) \geqslant 0$.
However, for any $\eta < 0$, there exists $t$ small enough such that
$x_3(t) < 0$. This illustrates the comment announced in 
Subsection~\ref{Subsection:Comments}: 
\begin{itemize}
 \item the drift relation holds at the final time without any additional 
 assumption on the traces of the control at the initial time,
 \item the drift relation also holds for any time provided that a sufficient 
 number of traces of the control vanish at the initial time.
\end{itemize}
Indeed, if we consider $u(t) = \eta t$ (a control for which $u(0) = 0$), then:
\begin{equation}
 x_1(t) = \frac{1}{2} \eta t^2, \quad 
 x_2(t) = \frac{1}{6} \eta t^3 
 \quad \text{and} \quad
 x_3(t) = \frac{1}{252} \eta^2 t^7 + \frac{1}{56} \eta^3 t^7.
\end{equation}
Hence, if $|\eta|$ is small enough, $x_3(t) \geqslant 0$ for any $t\geqslant 0$.
Of course, similar limiting examples can be built at higher orders. We refer
to paragraph~\ref{Subsubsection:Gagliardo} for a detailed proof of the 
existence of a drift, either at the final time or at any time.

\subsection{Plan of the paper} 

The remainder of the paper is organized in the following way:
\begin{itemize}
 \item In Section~\ref{Section:structure}, we prove preliminary algebraic 
 and analytic results concerning the Lie brackets involved in the spaces 
 $\Sone$ and $\Stwo$ that shed light on their structure.
 \item In Section~\ref{Section:auxiliary}, we introduce auxiliary systems
 useful in the proof of our alternative. They are obtained by iterated
 application of appropriate flows.
 \item In Section~\ref{Section:well_prepared}, we prove our quadratic 
 alternative for a particular class of well-prepared smooth systems, 
 whose linear behavior is a nilpotent iterated integrator.
 \item In Section~\ref{Section:reduction}, we show that the quadratic 
 behavior both of general control-affine and nonlinear systems can be reduced 
 to this particular class of well-prepared systems. We also extend our 
 alternative to non-smooth dynamics.
 \item In Section~\ref{Section:explicit}, we introduce an appropriate second
 order approximation of nonlinear systems that stays exactly within a smooth
 quadratic manifold which can be explicitly computed and corresponds to 
 the second-order approximation of the manifold constructed in the smooth
 case.
 \item In Section~\ref{Section:SSSTLC}, we explore 
 an alternative definition of small-time local controllability.
\end{itemize}

\section{Algebraic properties of the Lie spaces} 
\label{Section:structure}

We explore the structure of the Lie spaces $\Sone$ and $\Stwo$ to gain some 
insight on these objects and to prove useful lemmas. We compute the
involved Lie brackets explicitly and we extend
the definition of these spaces to nonsmooth vector fields.

\subsection{Computations modulo higher-order terms}

With a view to computing explicity the Lie brackets of $\Sone$ and $\Stwo$, we will need to
carry out computations modulo higher-order terms. Indeed, higher-order terms do 
not impact the Lie brackets we want to consider. We use the following concept.

\begin{definition}
 Let $m,p \in \N^*$ and 
 $\varphi, \tilde{\varphi} \in C^\infty(\R^p, \R^p)$. 
 We say that $\varphi$ and $\tilde{\varphi}$ are \emph{equal modulo $m$-th order terms}
 and write $\varphi \eqmod{m} \tilde{\varphi}$
 when there exists $\theta \in C^\infty(\R^p,\mathcal{T}^{m,1}_{p,p})$,
 where $\mathcal{T}^{m,1}_{p,p}$ denotes the space of 
 symmetric multilinear maps from $(\R^p)^m$ to $\R^p$, such that:
 \begin{equation} \label{eq.def.eqmod}
 \forall x \in \R^p, \quad 
 \varphi(x) = \tilde{\varphi}(x) + \theta(x) [x, \ldots, x].
 \end{equation}
\end{definition}

\begin{rk}
 In particular, when $\varphi \eqmod{m} \tilde{\varphi}$,
 since $\theta$ is locally bounded near $0$, we have:
 \begin{equation} \label{eq.phi.O}
 \varphi(x)=\tilde{\varphi}(x) + \underset{x\to 0}{O}\left(|x|^m\right).
 \end{equation}
 However,~\eqref{eq.phi.O} is a weaker hypothesis than~\eqref{eq.def.eqmod}
 because it does not allow differentiation, which is necessary in our
 context for the computation of Lie brackets.
\end{rk}

\begin{lem} \label{Lemma:eqmod123}
 Let $\varphi, \tilde{\varphi}, \psi, \tilde{\psi} \in C^\infty(\R^p, \R^p)$. 
 We assume that:
 \begin{equation}
 \label{lm.eqmod.hyp}
 \varphi \eqmod{1} 0, \quad
 \varphi \eqmod{3} \tilde{\varphi}, 
 \quad \text{and} \quad
 \psi \eqmod{2} \tilde{\psi}.
 \end{equation}
 Then, there holds:
 \begin{equation} \label{lm.eqmod.ccl}
 [\varphi, \psi] \eqmod{2} [\tilde{\varphi}, \tilde{\psi}].
 \end{equation}
\end{lem}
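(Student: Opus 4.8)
The plan is to unwind the definition of $\eqmod{2}$ for the bracket $[\varphi,\psi]$ and track which terms survive. Recall that $[\varphi,\psi](x) = \psi'(x)\varphi(x) - \varphi'(x)\psi(x)$, so I need to understand how the hypotheses on $\varphi$ and $\psi$ propagate through differentiation and the product. First I would write $\varphi = \tilde\varphi + \Theta$ where $\Theta(x) = \theta(x)[x,x,x]$ for some smooth $\theta$ valued in symmetric trilinear maps (this is exactly $\varphi \eqmod{3}\tilde\varphi$), and similarly $\psi = \tilde\psi + \Xi$ with $\Xi(x) = \chi(x)[x,x]$ for some smooth $\chi$ valued in symmetric bilinear maps (this is $\psi \eqmod{2}\tilde\psi$). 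The extra hypothesis $\varphi \eqmod{1} 0$ will be needed to control one of the mixed terms: it says $\varphi(x) = \varrho(x)[x]$ for some smooth $\varrho$, i.e. $\varphi$ vanishes at $0$, hence $\tilde\varphi$ vanishes at $0$ too and $\tilde\varphi'(x)[x]$ contributes a first-order-vanishing factor.

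Next I would expand
\[
[\varphi,\psi] = \psi'\varphi - \varphi'\psi
= (\tilde\psi' + \Xi')(\tilde\varphi + \Theta) - (\tilde\varphi' + \Theta')(\tilde\psi + \Xi),
\]
giving the leading term $\tilde\psi'\tilde\varphi - \tilde\varphi'\tilde\psi = [\tilde\varphi,\tilde\psi]$ plus eight remainder terms: $\tilde\psi'\Theta$, $\Xi'\tilde\varphi$, $\Xi'\Theta$, $\Theta'\tilde\psi$, $\Theta'\Xi$, $\tilde\varphi'\Xi$ (and I should be careful with signs). The claim reduces to showing each remainder is $\eqmod{2} 0$, i.e. each is of the form (smooth)$\cdot[x,x]$. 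The terms $\tilde\psi'\Theta$ and $\Theta'\tilde\psi$ involve $\Theta$ or $\Theta'$: since $\Theta(x)$ is cubic in $x$ it is certainly of the form (smooth)$\cdot[x,x]$, and $\Theta'(x)$ — by differentiating $\theta(x)[x,x,x]$ via the product/Leibniz rule — is $\theta'(x)[\cdot][x,x,x] + 3\theta(x)[\cdot,x,x]$, which as a function of $x$ applied to any fixed vector is still of order $\geq 2$ in $x$, so $\Theta'\tilde\psi$ is fine. The terms $\Xi'\tilde\varphi$ and $\tilde\varphi'\Xi$: here $\Xi(x)$ is quadratic so $\tilde\varphi'\Xi$ is immediately (smooth)$\cdot[x,x]$; for $\Xi'\tilde\varphi$ I differentiate $\chi(x)[x,x]$ to get $\chi'(x)[\cdot][x,x] + 2\chi(x)[\cdot,x]$, and then multiply by $\tilde\varphi(x)$ — this is where $\varphi\eqmod{1}0$ enters: $\tilde\varphi(x)$ vanishes to first order, and $\Xi'(x)$ (being the derivative of a quadratic) is at worst first order, so the product is second order. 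Finally the purely-remainder terms $\Xi'\Theta$ and $\Theta'\Xi$ are easily of order $\geq 2$.

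I expect the main obstacle to be purely bookkeeping: making the formalism precise enough that "of order $\geq 2$" is literally rewritten as $\theta_{\mathrm{new}}(x)[x,x]$ with $\theta_{\mathrm{new}}$ genuinely smooth. The cleanest way is to prove a small helper fact: if $A \in C^\infty(\R^p, \mathcal{T}^{a,1})$ and $B\in C^\infty(\R^p,\mathcal{T}^{b,1})$ with $a+b \geq 2$, and we form the "contraction" $x \mapsto A(x)[\cdot,\ldots, B(x)[x,\ldots,x],\ldots]$ where $B(x)[x,\ldots,x]$ is plugged into one slot of $A(x)$ and the rest of $A$'s slots are filled with $x$, then the resulting map is $\eqmod{2} 0$ — and analogously for derivative-type terms, which after expansion are finite sums of such contractions with $a+b\geq 2$. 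The single delicate point (the $\Xi'\tilde\varphi$ term, where $a=1$ from $\Xi'$ and I need $\tilde\varphi$ to supply the missing order) is handled by invoking $\varphi\eqmod{1}0$, which upgrades $\tilde\varphi = \varrho(x)[x]$, so $b=1$ and $a+b=2$. Once this helper is set up, the proof is a finite enumeration of the eight terms, assigning to each the pair $(a,b)$ and checking $a+b\geq 2$. I would present the helper lemma first (or inline it), then do the enumeration.
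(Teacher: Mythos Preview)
Your proposal is correct and follows essentially the same approach as the paper: both arguments write the hypotheses as $\varphi(x)=\theta_0(x)[x]$, $\varphi=\tilde\varphi+\theta_\varphi(x)[x,x,x]$, $\psi=\tilde\psi+\theta_\psi(x)[x,x]$, differentiate, and check that every remainder term in $\psi'\varphi-\varphi'\psi$ beyond $[\tilde\varphi,\tilde\psi]$ is of the form (smooth)$[x,x]$. The only cosmetic differences are that the paper handles $\varphi'\psi$ and $\psi'\varphi$ as two blocks rather than enumerating six cross-terms, and in the delicate $\Xi'\tilde\varphi$-type term it plugs in $\varphi(x)=\theta_0(x)[x]$ directly rather than first deducing $\tilde\varphi \eqmod{1} 0$ as you do (both work, since $\tilde\varphi=\varphi-\theta_\varphi(x)[x,x,x]$).
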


\begin{proof}
 From hypothesis~\eqref{lm.eqmod.hyp}, there exists functions
 $\theta_0 \in C^\infty(\R^p,\mathcal{T}^{1,1}_{p,p})$, 
 $\theta_\varphi \in C^\infty(\R^p,\mathcal{T}^{3,1}_{p,p})$
 and $\theta_\psi \in C^\infty(\R^p,\mathcal{T}^{2,1}_{p,p})$ such
 that, for any $x \in \R^p$:
 \begin{align}
 \label{lm.eqmod.pr.1}
 \varphi(x) & = \theta_0(x) [x], \\
 \label{lm.eqmod.pr.2}
 \varphi(x) & = \tilde{\varphi}(x) + \theta_\varphi(x) [x,x,x], \\
 \label{lm.eqmod.pr.3}
 \psi(x) & = \tilde{\psi}(x) + \theta_\psi(x) [x,x].
 \end{align}
 Differentiating~\eqref{lm.eqmod.pr.2} and~\eqref{lm.eqmod.pr.3} with respect
 to $x$ yields that, for any $x,h \in \R^p$:
 \begin{align}
 \label{lm.eqmod.pr.4}
 \varphi'(x) \cdot h
 & = \tilde{\varphi}'(x) \cdot h
 + (\theta_\varphi'(x) \cdot h) [x,x,x]
 + 3 \theta_\varphi(x) [h,x,x], \\
 \label{lm.eqmod.pr.5}
 \psi'(x) \cdot h
 & = \tilde'{\psi}(x) \cdot h
 + (\theta_\psi'(x) \cdot h) [x,x]
 + 2 \theta_\psi(x) [h,x].
 \end{align}
 From~\eqref{lm.eqmod.pr.3} and~\eqref{lm.eqmod.pr.4}, we have
 $\varphi' \cdot \psi \eqmod{2} \tilde{\varphi}' \cdot \tilde{\psi}$.
 Indeed, for any $x \in \R^p$:
 \begin{equation} \label{lm.eqmod.pr.6}
 \begin{split}
 \varphi'(x) \cdot \psi(x)
 & = \tilde{\varphi}'(x) \cdot \tilde{\psi}(x)
 + \tilde{\varphi}'(x) \cdot \theta_\psi(x) [x,x] \\
 & \qquad + (\theta_\varphi'(x) \cdot \psi(x)) [x,x,x]
 + 3 \theta_\varphi(x) [\psi(x),x,x]
 \end{split}
 \end{equation}
 From~\eqref{lm.eqmod.pr.1},~\eqref{lm.eqmod.pr.2} and~\eqref{lm.eqmod.pr.5},
 we have 
 $\psi' \cdot \varphi \eqmod{2} \tilde{\psi}' \cdot \tilde{\varphi}$.
 Indeed, for any $x \in \R^p$:
 \begin{equation} \label{lm.eqmod.pr.7}
 \begin{split}
 \psi'(x) \cdot \varphi(x)
 & = \tilde{\psi}'(x) \cdot \tilde{\varphi}(x)
 + \tilde{\psi}'(x) \cdot \theta_\varphi(x) [x,x,x] \\
 & \qquad 
 + (\theta_\psi'(x) \cdot \varphi(x)) [x,x]
 + 2 \theta_\psi(x) [\theta_0(x)[x],x].
 \end{split}
 \end{equation}
 Combining~\eqref{lm.eqmod.pr.6} and~\eqref{lm.eqmod.pr.7} yields~\eqref{lm.eqmod.ccl}.
\end{proof}

\begin{lem} \label{Lemma:eqmod22}
 Let $\varphi, \tilde{\varphi}, \psi, \tilde{\psi} \in C^\infty(\R^p, \R^p)$. 
 We assume that:
 \begin{gather}
 \label{lm.eqmod2.hyp}
 \varphi \eqmod{2} \tilde{\varphi}
 \quad \text{and} \quad
 \psi \eqmod{2} \tilde{\psi}.
 \end{gather}
 Then, there holds:
 \begin{equation} \label{lm.eqmod2.ccl}
 [\varphi, \psi](0) = [\tilde{\varphi}, \tilde{\psi}](0).
 \end{equation}
\end{lem}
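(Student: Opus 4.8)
The plan is to exploit the fact that the value of a Lie bracket $[\varphi,\psi]$ at the origin depends only on the $1$-jets of $\varphi$ and $\psi$ at $0$, together with their values at $0$. Indeed, from the definition $[\varphi,\psi](x) = \psi'(x)\varphi(x) - \varphi'(x)\psi(x)$, evaluation at $x=0$ gives $[\varphi,\psi](0) = \psi'(0)\varphi(0) - \varphi'(0)\psi(0)$, which involves only $\varphi(0)$, $\psi(0)$, $\varphi'(0)$ and $\psi'(0)$. So it suffices to check that the hypothesis $\varphi \eqmod{2}\tilde\varphi$ forces $\varphi(0)=\tilde\varphi(0)$ and $\varphi'(0)=\tilde\varphi'(0)$, and similarly for $\psi,\tilde\psi$.

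First I would unpack the hypothesis: by definition there is $\theta_\varphi \in C^\infty(\R^p,\mathcal{T}^{2,1}_{p,p})$ with $\varphi(x) = \tilde\varphi(x) + \theta_\varphi(x)[x,x]$ for all $x$, and likewise $\theta_\psi$ for $\psi$. Setting $x=0$ immediately gives $\varphi(0) = \tilde\varphi(0)$ and $\psi(0)=\tilde\psi(0)$, since the quadratic term vanishes. Next I would differentiate the identity: for $x,h\in\R^p$,
\begin{equation}
\varphi'(x)\cdot h = \tilde\varphi'(x)\cdot h + (\theta_\varphi'(x)\cdot h)[x,x] + 2\,\theta_\varphi(x)[h,x],
\end{equation}
exactly as in~\eqref{lm.eqmod.pr.4}--\eqref{lm.eqmod.pr.5} of the previous lemma. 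Evaluating at $x=0$, both correction terms vanish (each carries at least one factor of $x$), so $\varphi'(0) = \tilde\varphi'(0)$, and symmetrically $\psi'(0)=\tilde\psi'(0)$.

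Finally I would combine these:
\begin{equation}
[\varphi,\psi](0) = \psi'(0)\varphi(0) - \varphi'(0)\psi(0) = \tilde\psi'(0)\tilde\varphi(0) - \tilde\varphi'(0)\tilde\psi(0) = [\tilde\varphi,\tilde\psi](0),
\end{equation}
which is the desired conclusion~\eqref{lm.eqmod2.ccl}. There is no real obstacle here — the only point requiring a moment's care is the differentiation step, to be sure that every correction term in $\varphi' - \tilde\varphi'$ still contains a factor of $x$ and hence vanishes at the origin (the term $2\theta_\varphi(x)[h,x]$ is the one to watch, but it is linear in $x$ and so is fine). This lemma is in fact a degenerate, simpler version of Lemma~\ref{Lemma:eqmod123}: we only need agreement of the brackets at a point, not as functions modulo second-order terms, so the weaker hypothesis $\eqmod{2}$ on both arguments suffices.
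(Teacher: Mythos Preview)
Your proof is correct and follows essentially the same approach as the paper: both unpack the hypothesis via the $\theta$-functions, differentiate to obtain the analogue of~\eqref{lm.eqmod.pr.4}--\eqref{lm.eqmod.pr.5}, and observe that all correction terms vanish at the origin. The only cosmetic difference is that the paper phrases the intermediate step as $[\varphi,\psi] \eqmod{1} [\tilde\varphi,\tilde\psi]$ before evaluating at $0$, whereas you evaluate at $0$ immediately; the content is identical.
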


\begin{proof}
 We proceed as above. Thanks to~\eqref{lm.eqmod2.hyp}, we introduce functions
 $\theta_\varphi \in C^\infty(\R^p,\mathcal{T}^{2,1}_{p,p})$
 and $\theta_\psi \in C^\infty(\R^p,\mathcal{T}^{2,1}_{p,p})$ such
 that, for any $x \in \R^p$:
 \begin{align}
 \label{lm.eqmod2.pr.1}
 \varphi(x) & = \tilde{\varphi}(x) + \theta_\varphi(x) [x,x], \\
 \label{lm.eqmod2.pr.2}
 \psi(x) & = \tilde{\psi}(x) + \theta_\psi(x) [x,x].
 \end{align}
 Differentiating~\eqref{lm.eqmod2.pr.1} and~\eqref{lm.eqmod2.pr.2} with respect
 to $x$ yields that, for any $x,h \in \R^p$:
 \begin{align}
 \label{lm.eqmod2.pr.3}
 \varphi'(x) \cdot h
 & = \tilde{\varphi}'(x) \cdot h
 + (\theta_\varphi'(x) \cdot h) [x,x]
 + 2 \theta_\varphi(x) [h,x], \\
 \label{lm.eqmod2.pr.4}
 \psi'(x) \cdot h
 & = \tilde{\psi}'(x) \cdot h
 + (\theta_\psi'(x) \cdot h) [x,x]
 + 2 \theta_\psi(x) [h,x].
 \end{align}
 From~\eqref{lm.eqmod2.pr.2} and~\eqref{lm.eqmod2.pr.3}, we have
 $\varphi' \cdot \psi \eqmod{1} \tilde{\varphi}' \cdot \tilde{\psi}$.
 From~\eqref{lm.eqmod2.pr.1},~\eqref{lm.eqmod2.pr.4}, we have 
 $\psi' \cdot \varphi \eqmod{1} \tilde{\psi}' \cdot \tilde{\varphi}$.
 Thus, $[\varphi,\psi] \eqmod{1} [\tilde{\varphi},\tilde{\psi}]$, which 
 yields~\eqref{lm.eqmod2.ccl} by evaluation at $x = 0$.
\end{proof}

\begin{lem} \label{Lemma:Taylor}
 Let $p,q \in \N^*$, $m \in \N$ and $\varphi \in C^{\infty}(\R^p,\R^q)$.
 We denote by $T_m(\varphi) \in C^\infty(\R^p,\R^q)$ the Taylor 
 expansion of order $m$ at the origin associated with $\varphi$. Then:
 \begin{equation} \label{Lemma:Taylor.ccl}
 \varphi \eqmod{m+1} T_m(\varphi).
 \end{equation}
\end{lem}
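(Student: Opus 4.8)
The plan is to write the remainder $R := \varphi - T_m(\varphi)$ in integral (Taylor-with-remainder) form and then exhibit the required multilinear map $\theta \in C^\infty(\R^p, \mathcal{T}^{m+1,1}_{p,q})$ such that $R(x) = \theta(x)[x,\ldots,x]$ ($m+1$ copies of $x$), which is exactly the defining condition~\eqref{eq.def.eqmod} of $\eqmod{m+1}$. First I would recall the classical integral form of the Taylor remainder: for $\varphi \in C^\infty(\R^p,\R^q)$,
\begin{equation}
 \varphi(x) = T_m(\varphi)(x) + (m+1)\int_0^1 (1-s)^m \, \frac{1}{(m+1)!}\, D^{m+1}\varphi(sx)[x,\ldots,x]\,\ds,
\end{equation}
where $D^{m+1}\varphi(y) \in \mathcal{T}^{m+1,1}_{p,q}$ is the symmetric $(m+1)$-linear map given by the $(m+1)$-th total derivative of $\varphi$ at $y$. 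This identity follows by applying the one-dimensional Taylor formula with integral remainder to the function $s \mapsto \varphi(sx)$ on $[0,1]$ and differentiating under the chain rule; smoothness of $\varphi$ makes all manipulations legitimate.

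Next I would define
\begin{equation}
 \theta(x) := \frac{1}{m!}\int_0^1 (1-s)^m \, D^{m+1}\varphi(sx)\,\ds \;\in\; \mathcal{T}^{m+1,1}_{p,q},
\end{equation}
and check the two things required by the definition: (i) $\theta$ is smooth in $x$, and (ii) $\varphi(x) = T_m(\varphi)(x) + \theta(x)[x,\ldots,x]$ for all $x$. Point (ii) is immediate from the integral remainder formula above once the constants are matched. Point (i) follows because $(s,x) \mapsto D^{m+1}\varphi(sx)$ is smooth (composition of the smooth map $D^{m+1}\varphi$ with the smooth map $(s,x)\mapsto sx$) and $[0,1]$ is compact, so differentiation under the integral sign is justified to all orders, giving $\theta \in C^\infty(\R^p,\mathcal{T}^{m+1,1}_{p,q})$. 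Symmetry of $\theta(x)$ as a multilinear map is inherited from the symmetry of $D^{m+1}\varphi(sx)$ (equality of mixed partials for smooth functions).

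Strictly speaking the definition of $\eqmod{m}$ in the excerpt is stated for maps $\R^p \to \R^p$ (source and target dimensions equal), whereas here $\varphi : \R^p \to \R^q$; the only honest remark to make is that the notion and the argument carry over verbatim with $\mathcal{T}^{m+1,1}_{p,q}$ in place of $\mathcal{T}^{m+1,1}_{p,p}$, which is presumably the intended reading since the lemma is stated with general $q$. The proof has no real obstacle: the only mild care needed is bookkeeping of the factorial/binomial constants in the Taylor remainder and the observation that the compactness of the integration interval $[0,1]$ is what upgrades the mere continuity of the remainder to genuine smoothness of $\theta$, which is the feature distinguishing $\eqmod{m+1}$ from the weaker $O(|x|^{m+1})$ statement~\eqref{eq.phi.O} noted in the remark.
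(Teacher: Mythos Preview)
Your proof is correct and follows essentially the same approach as the paper's: both invoke the Taylor formula with integral remainder to define $\theta(x) = \frac{1}{m!}\int_0^1 (1-s)^m D^{m+1}\varphi(sx)\,\ds$ and read off~\eqref{eq.def.eqmod} directly. You are in fact more explicit than the paper about verifying the smoothness of $\theta$ via differentiation under the integral, and your remark about the $\R^p \to \R^q$ versus $\R^p \to \R^p$ mismatch in the definition is a fair observation.
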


\begin{proof}
 This lemma is a consequence of the Taylor formula with integral remainder for
 vector-valued multivariate functions:
 \begin{equation} \label{taylor.pr}
 \varphi(x) = T_m(\varphi)(x)
 + \frac{1}{m!} \int_0^1 (1-t)^m 
 \varphi^{(m+1)} (tx) [x, \ldots, x] \dt,
 \end{equation}
 where $\varphi^{(m+1)}$ denotes the usual
 multi-linear differential map taking $m+1$ arguments in $\R^p$.
 From~\eqref{taylor.pr}, we deduce~\eqref{Lemma:Taylor.ccl}.
\end{proof}

\subsection{Explicit computation of the Lie brackets}
\label{Subsection:C2}

We compute explicitly first and second-order Lie brackets
using only low-order derivatives of the vector fields.
Let $f_0, f_1 \in C^\infty(\R^n,\R^n)$ with $f_0(0) = 0$.

\subsubsection{First-order Lie brackets}

We recall the following notations (already used in Subsection~\ref{Subsec:linear}):
\begin{equation} \label{def:H0b}
H_0 := f_0'(0) \in \mathcal{M}_n(\R)
\quad \text{and} \quad
b := f_1(0) \in \R^n.
\end{equation}
Since $f_0(0) = 0$, for any smooth vector field $g \in C^\infty(\R^n,\R^n)$
and any $k \in \N$:
\begin{equation} \label{adkf0g}
 \ad_{f_0}^k(g)(0) = (-H_0)^k g(0)
\end{equation}
In particular, applying~\eqref{adkf0g} to $f_1$, we define the
vectors $b_k \in \R^n$ for $k \in \N$ as:
\begin{equation} \label{def.vk}
 b_k := \ad_{f_0}^k(f_1)(0) = (-H_0)^k b.
\end{equation}
Hence, as these brackets span $\Sone$, we obtain:
\begin{equation} \label{S1H0b}
 \Sone 
 = \Span\left\{ \ad^k_{f_0}(f_1)(0), \enskip k \in \N \right\}
 = \Span \left\{ b_k, \enskip k \in \N \right\}.
\end{equation}
Therefore, the Kalman rank condition~\eqref{kalman} is equivalent to 
$\Sone = \R^n$, thanks to~\eqref{S1H0b} and the Cayley-Hamilton theorem applied to
the matrix $H_0 \in \mathcal{M}_n(\R)$.

\subsubsection{Second-order Lie brackets}

To carry on the computations at second order, we define the following matrix:
\begin{equation} \label{def:H1}
H_1 := f_1'(0) \in \mathcal{M}_n(\R) 
\end{equation}
We also introduce the following third order tensor, which defines a bilinear
map from $\R^n \times \R^n$ to $\R^n$ (and can be thought of as the 
Hessians of the components of $f_0$):
\begin{equation} \label{def.Q0}
Q_0 := \frac{1}{2} f_0''(0).
\end{equation}
By induction on $k \in \N$, we also define the linear operators:
\begin{align} 
\label{def.L0}
L_0 & := H_1, \\
\label{def.Lk}
L_{k+1} & := L_k H_0 - H_0 L_k - 2 Q_0(b_k, \cdot).
\end{align}
Thanks to these definitions, we can compute the first-order Lie brackets
more precisely.

\begin{lem} \label{Lemma:adkbklk}
 For any $k \in \N$:
 \begin{equation}
  \label{eq.adk.o2}
  \ad^k_{f_0}(f_1)(x) 
  \eqmod{2} b_k + L_k x.
 \end{equation}
\end{lem}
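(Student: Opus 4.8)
The statement to prove is Lemma~\ref{Lemma:adkbklk}: for every $k \in \N$, one has $\ad^k_{f_0}(f_1)(x) \eqmod{2} b_k + L_k x$. The natural approach is induction on $k$. For $k = 0$, we have $\ad^0_{f_0}(f_1) = f_1$, and $b_0 = f_1(0)$, $L_0 = H_1 = f_1'(0)$, so the claim is exactly that $f_1(x) \eqmod{2} f_1(0) + f_1'(0) x$, i.e.\ that $f_1$ agrees with its first-order Taylor polynomial modulo second-order terms. This is immediate from Lemma~\ref{Lemma:Taylor} with $m = 1$.

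For the inductive step, assume $\ad^k_{f_0}(f_1)(x) \eqmod{2} b_k + L_k x$. We want to compute $\ad^{k+1}_{f_0}(f_1) = [f_0, \ad^k_{f_0}(f_1)]$ modulo second-order terms. The idea is to replace each of the two vector fields in the bracket by a controlled approximation and invoke Lemma~\ref{Lemma:eqmod123}. Concretely, one takes $\varphi := f_0$ and $\psi := \ad^k_{f_0}(f_1)$. Since $f_0(0) = 0$, we have $f_0 \eqmod{1} 0$; and by Lemma~\ref{Lemma:Taylor} (with $m = 2$), $f_0 \eqmod{3} T_2(f_0)$, where $T_2(f_0)(x) = H_0 x + Q_0(x,x)$ (using $f_0(0)=0$ and the definitions~\eqref{def.Q0} of $Q_0$). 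The inductive hypothesis gives $\psi \eqmod{2} \tilde\psi$ with $\tilde\psi(x) := b_k + L_k x$. Thus Lemma~\ref{Lemma:eqmod123} applies and yields
\begin{equation}
[f_0, \ad^k_{f_0}(f_1)] \eqmod{2} [\, x \mapsto H_0 x + Q_0(x,x)\,,\ x \mapsto b_k + L_k x \,].
\end{equation}
It then remains to compute this bracket of two explicit polynomial vector fields and check it equals $b_{k+1} + L_{k+1} x$ modulo second-order terms. Writing $X(x) = H_0 x + Q_0(x,x)$ and $Y(x) = b_k + L_k x$, one has $[X,Y](x) = Y'(x) X(x) - X'(x) Y(x) = L_k(H_0 x + Q_0(x,x)) - (H_0 + 2 Q_0(x,\cdot))(b_k + L_k x)$. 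The constant term is $-H_0 b_k = b_{k+1}$ (by~\eqref{def.vk}), and the linear term is $L_k H_0 x - H_0 L_k x - 2 Q_0(b_k, x) = L_{k+1} x$ (by~\eqref{def.Lk}); the remaining terms ($L_k Q_0(x,x)$ and $-2 Q_0(x, L_k x)$) are genuinely second-order, hence absorbed in $\eqmod{2}$. This closes the induction.

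The main subtlety is not the final polynomial bracket computation, which is routine, but ensuring that the hypotheses of Lemma~\ref{Lemma:eqmod123} are met in the right form: that lemma requires the \emph{first} vector field to satisfy both $\varphi \eqmod{1} 0$ \emph{and} $\varphi \eqmod{3} \tilde\varphi$, while the second vector field only needs $\psi \eqmod{2} \tilde\psi$. Here the asymmetry works in our favour because $f_0$ vanishes at the origin (so it plays the role of $\varphi$) while $\ad^k_{f_0}(f_1)$ generically does not. One should also double-check the bookkeeping that $T_2(f_0)$ indeed has no constant term and that its quadratic part is exactly $Q_0(x,x)$ with the factor $\tfrac12$ conventions of~\eqref{def.Q0} matching the Taylor formula~\eqref{taylor.pr}; a sign or factor slip there would propagate into the recursion~\eqref{def.Lk}. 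Apart from these consistency checks, the argument is a clean two-line induction powered by the three auxiliary lemmas already established.
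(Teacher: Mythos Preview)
Your proposal is correct and follows essentially the same approach as the paper: induction on $k$, base case via Lemma~\ref{Lemma:Taylor}, and inductive step via Lemma~\ref{Lemma:eqmod123} applied with $\varphi = f_0$, $\tilde\varphi(x) = H_0 x + Q_0(x,x)$, $\psi = \ad^k_{f_0}(f_1)$, $\tilde\psi(x) = b_k + L_k x$, followed by the explicit bracket computation dropping the quadratic terms $L_k Q_0(x,x)$ and $-2Q_0(x,L_k x)$. Your remarks on the asymmetry of the hypotheses in Lemma~\ref{Lemma:eqmod123} and the factor conventions are apt but do not diverge from the paper's argument.
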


\begin{proof}
 From Lemma~\ref{Lemma:Taylor}, $f_0(0)=0$, 
 definitions~\eqref{def:H0b},~\eqref{def:H1} and~\eqref{def.Q0}:
 \begin{align}
 \label{eq.pr.f0.xeh}
 f_0(x) & \eqmod{3} H_0 x + Q_0(x,x), \\
 \label{eq.pr.f1.xeh}
 f_1(x) & \eqmod{2} b + H_1 x.
 \end{align}
 For $k = 0$,~\eqref{def.vk},~\eqref{def.L0} and~\eqref{eq.pr.f1.xeh}
 prove~\eqref{eq.adk.o2}. Proceeding by induction on $k \in \N$, 
 we evaluate the next bracket using Lemma~\ref{Lemma:eqmod123} with
 $\varphi(x) := f_0(x)$, $\tilde{\varphi}(x) := H_0 x + Q_0(x,x)$,
 $\psi(x) := \ad^k_{f_0}(f_1)(x)$ and
 $\tilde{\psi}(x) := b_k + L_k x$. Thus, using~\eqref{def.vk},~\eqref{def.Lk}
 and~\eqref{eq.pr.f0.xeh}:
 \begin{equation} \label{eq.pr.leftout}
 \begin{split}
 \left[f_0, \ad^k_{f_0}(f_1)\right](x) 
 & \eqmod{2} L_k (H_0 x + Q_0 (x,x)) - (H_0 + 2 Q_0(x, \cdot)) (b_k + L_k x) \\
 & \eqmod{2} b_{k+1} + L_{k+1} x + L_k Q_0(x,x) - 2 Q_0(x, L_k x) \\
 & \eqmod{2} b_{k+1} + L_{k+1} x,
 \end{split}
 \end{equation} 
 since the terms which have been dropped from~\eqref{eq.pr.leftout} are 
 bilinear in $x$. 
\end{proof}    

Using~\eqref{adkf0g},~\eqref{eq.adk.o2} and Lemma~\ref{Lemma:eqmod22},
we obtain the values of the second-order Lie brackets:
\begin{align}
 \label{eq.adkj}
 \left[\ad^k_{f_0}(f_1), \ad^j_{f_0}(f_1)\right](0) 
 & = L_j b_k - L_k b_j, \\
 \left(\ad^i_{f_0} \left(\left[\ad^k_{f_0}(f_1), \ad^j_{f_0}(f_1)\right]\right)\right)(0) 
 & = (-H_0)^i (L_j b_k - L_k b_j).
\end{align}
Since $\Stwo$ is spanned by such brackets, we obtain:
\begin{equation} \label{S2}
 \Stwo = \Span \left\{ (-H_0)^i (L_j b_k - L_k b_j), 
 \enskip (i,j,k) \in \N^3 \right\}.
\end{equation}

\subsection{Definitions for nonsmooth vector fields}
\label{Subsec:S2_nonsmooth}

Let $f_0 \in C^2(\R^n,\R^n)$ and $f_1 \in C^1(\R^n, \R^n)$. For nonlinear systems,
these assumptions are satisfied if
$f \in C^2(\R^n,\R^n)$ and $f_0$ and $f_1$ are defined
by~\eqref{def.f0f1}. With such regularity, the 
objects~\eqref{def:H0b},~\eqref{def:H1} and~\eqref{def.Q0} are well-defined.
Hence, definitions~\eqref{def.vk},~\eqref{def.L0} and~\eqref{def.Lk} make sense.
Therefore, one can use~\eqref{S1H0b} and~\eqref{S2} as definitions of the
spaces $\Sone$ and $\Stwo$. Moreover, relations like~\eqref{eq.adkj} can be 
used to give a meaning to individual brackets.

Another more flexible approach (which leads to the same spaces) is to introduce
regularized vector fields $\hat{f}_0 := T_2 f_0$ and $\hat{f}_1 := T_1 f_1$,
corresponding respectively to second-order and first-order Taylor expansions 
of $f_0$ and $f_1$ at $0$. Hence $\hat{f}_0, \hat{f}_1 \in C^\infty(\R^n,\R^n)$.
Then, one defines $\Sone$ and $\Stwo$ as the usual Lie spaces associated
with these regularized vector fields.

\subsection{Algebraic relations between second-order brackets}

The following algebraic relations highlight the fact that our theorems could 
actually be stated using other equivalent brackets (see~\eqref{equiv.dk}). 
Although these results are not new, 
we include the proofs (inspired from~\cite[pages~279-280]{MR0433288})
for the sake of completeness.

First, for any $i, l \in \N$ such that $0 \leqslant l \leqslant i$, it can
be proved by induction on $l$, using the Jacobi identity and the skew symmetry
of the bracketing operation that:
 \begin{equation} \label{formule_Krener}
 [f_1,\ad_{f_0}^{i}(f_1)] =  \sum_{j=0}^{l-1} (-1)^j [f_0,[\ad_{f_0}^j(f_1),\ad_{f_0}^{i-j-1}(f_1)]] 
 + (-1)^l [\ad_{f_0}^{l}(f_1),\ad_{f_0}^{i-l}(f_1)].
 \end{equation}
This formula has the following consequences.

\begin{prop} \label{Prop:Krener}
 Let $k \in \N^*$. We assume that $f_0, f_1 \in C^{\infty}(\R^n,\R^n)$ and~\eqref{hyp_j} holds. 
 \begin{enumerate}
  
  \item Then $[\ad_{f_0}^a(f_1),\ad_{f_0}^b(f_1)](0) \in \Sone$ for every $a,b \in \N$ such that $a+b \leqslant 2k-2$.
  
  \item Statement~\eqref{hyp_k} is equivalent to each of the following two statements:
  \begin{itemize}
   \item $[\ad_{f_0}^l(f_1),\ad_{f_0}^{2k-1-l}(f_1)](0) \notin \Sone$, for every $l \in \{0,...,2k-1\}$,
   \item $[\ad_{f_0}^l(f_1),\ad_{f_0}^{2k-1-l}(f_1)](0) \notin \Sone$, for some $l \in \{0,...,2k-1\}$.
  \end{itemize}
  Moreover:
  \begin{equation} \label{equiv.dk}
  d_k  = (-1)^{k-1+l}\, \mathbb{P}^\perp\Big(  [ \ad_{f_0}^{l}(f_1) , \ad_{f_0}^{2k-1-l}(f_1) ] (0)  \Big).
  \end{equation}          
  
  \item If~\eqref{hyp_k} holds, then the family $(f_1(0), \ldots, \ad_{f_0}^{k-1} (f_1)(0))$ is linearly independent. Thus, it can only hold for $k \leqslant d$.
  
  \item If $k > \mathrm{dim~} \Sone$, then $[\ad_{f_0}^a(f_1),\ad_{f_0}^b(f_1)](0) \in \Sone$ for every $a,b \in \N$.
  
 \end{enumerate}
\end{prop}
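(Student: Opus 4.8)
The plan is to base everything on assertion~(1), which I extract from~\eqref{formule_Krener}. Throughout I abbreviate $X_j:=\ad_{f_0}^j(f_1)$, so that $X_j(0)=b_j$, $X_j'(0)=L_j$ and $[X_a,X_b](0)=L_b b_a-L_a b_b$ by~\eqref{def.vk}, Lemma~\ref{Lemma:adkbklk} and~\eqref{eq.adkj}; I also use that $\Sone=\Span\{b_j\}$ is $H_0$-stable (since $H_0 b_j=-b_{j+1}$), hence that $[f_0,g](0)=-H_0 g(0)\in\Sone$ whenever $g(0)\in\Sone$, and that the nested subspaces $\Span(b_0,\dots,b_{j-1})$ increase strictly until they fill $\Sone$ at index $j=d$, so that $(b_0,\dots,b_{j-1})$ is free if and only if $j\leqslant d$. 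For~(1) I argue by strong induction on the total degree $s:=a+b\leqslant 2k-2$. Knowing the claim below degree $s$, I first get $[X_0,X_s](0)\in\Sone$: in~\eqref{formule_Krener} with $i=s$ every bracket $[X_j,X_{s-1-j}]$ has degree $s-1$, so its value at $0$ and the $-H_0$-image thereof lie in $\Sone$ by induction; taking $l=\lfloor s/2\rfloor$ leaves just one more term, which is $0$ if $s$ is even and, if $s=2l+1\leqslant 2k-3$, is the consecutive bracket $(-1)^l[X_l,X_{l+1}](0)\in\Sone$ furnished by~\eqref{hyp_j} (the case $s=1$ being~\eqref{hyp_j} itself). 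Solving~\eqref{formule_Krener} for an arbitrary $[X_l,X_{s-l}](0)$ then expresses it via $[X_0,X_s](0)$ and $-H_0$-images of lower-degree brackets, all in $\Sone$, closing the induction.

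For~(2) I apply~\eqref{formule_Krener} with $i=2k-1$ and an arbitrary $l\in\{0,\dots,2k-1\}$: by~(1) the brackets $[X_j,X_{2k-2-j}]$ of degree $2k-2$ it involves, and their $-H_0$-images, lie in $\Sone$, so projecting onto $\Sone^\perp$ gives $\mathbb{P}^\perp[X_0,X_{2k-1}](0)=(-1)^l\,\mathbb{P}^\perp[X_l,X_{2k-1-l}](0)$ for every such $l$. Hence all the vectors $\mathbb{P}^\perp[X_l,X_{2k-1-l}](0)$ vanish together or are nonzero together; taking $l=k-1$ and comparing with~\eqref{def.dk} identifies their common signed value, which yields both equivalences and~\eqref{equiv.dk}.

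For~(3) I argue by contradiction: if $(b_0,\dots,b_{k-1})$ is not free then $d\leqslant k-1$ and $b_d=\sum_{i<d}\beta_i b_i$ for some scalars $\beta_i$. Putting $g:=X_d-\sum_{i<d}\beta_i X_i$ gives $g(0)=0$, hence $\ad_{f_0}^m(g)(0)=(-H_0)^m g(0)=0$ for all $m$; and since $\ad_{f_0}^m(g)=X_{d+m}-\sum_{i<d}\beta_i X_{i+m}$, any $X_p$ with $p\geqslant d$ rewrites as $\ad_{f_0}^{p-d}(g)+\sum_{i<d}\beta_i X_{p-d+i}$ with strictly smaller indices. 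Iterating this on $X_{k-1}$ and $X_k$ presents each as a combination of fields $\ad_{f_0}^m(g)$ with $m\leqslant k-d$ and of $X_0,\dots,X_{d-1}$. Expanding $[X_{k-1},X_k](0)$ by bilinearity, brackets of two fields $\ad_{f_0}^m(g)$ vanish at $0$ (both vanish there), brackets $[X_i,X_{i'}](0)$ with $i,i'<d$ have degree $\leqslant 2d-2\leqslant 2k-2$ hence lie in $\Sone$ by~(1), and the mixed terms equal $\pm(\ad_{f_0}^m g)'(0)\,b_i$ with $i<d$. For these last ones the decisive cancellation is $\mathbb{P}^\perp\big((\ad_{f_0}^m g)'(0)b_i\big)=0$: using $(\ad_{f_0}^m g)'(0)=L_{d+m}-\sum_{i'<d}\beta_{i'}L_{i'+m}$ together with the fact that, whenever $p+i\leqslant 2k-2$, claim~(1) gives $\mathbb{P}^\perp(L_p b_i)=\mathbb{P}^\perp(L_i b_p)$ (since $L_p b_i-L_i b_p=[X_i,X_p](0)$), one finds $\mathbb{P}^\perp\big((\ad_{f_0}^m g)'(0)b_i\big)=\mathbb{P}^\perp\big(L_i(b_{d+m}-\sum_{i'<d}\beta_{i'}b_{i'+m})\big)=\mathbb{P}^\perp\big(L_i\,\ad_{f_0}^m(g)(0)\big)=0$, the required bound $d+m+i\leqslant 2k-2$ holding since $m\leqslant k-d$ and $i\leqslant d-1\leqslant k-2$. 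So $[X_{k-1},X_k](0)\in\Sone$, contradicting~\eqref{hyp_k}; hence $(b_0,\dots,b_{k-1})$ is free and $k\leqslant d$. Finally, for~(4), if $k>d$ then by the contrapositive of~(3) hypothesis~\eqref{hyp_k} fails, so by~(2) all degree-$(2k-1)$ brackets lie in $\Sone$; in particular~\eqref{hyp_j} holds with $k$ replaced by $k+1>d$, and iterating together with~(1) shows $[X_a,X_b](0)\in\Sone$ for all $a,b$.

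The step I expect to be the main obstacle is~(3): one must organise the repeated substitution of the relation $b_d=\sum\beta_i b_i$ into the \emph{vector fields} $X_{k-1}$ and $X_k$ (not just into their values at the origin), keep all index ranges under control, and notice the non-obvious collapse of the ``derivative'' contributions $(\ad_{f_0}^m g)'(0)b_i$ against the second-order bracket identity $[X_i,X_p](0)=L_p b_i-L_i b_p$ provided by~(1). Everything else is a careful but routine use of~\eqref{formule_Krener}, the Jacobi relations it encapsulates, and the $H_0$-invariance of $\Sone$.
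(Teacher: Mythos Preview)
Your proof is correct and, for assertions (1), (2) and (4), follows the same path as the paper: strong induction on the total degree via~\eqref{formule_Krener}, projection onto $\Sone^\perp$ using Statement~1, and iteration of the contrapositive of~(3).

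For assertion (3) your route differs from the paper's in organisation, though it rests on the same key identity. The paper uses the single relation $b_{k-1}=\sum_{r\leqslant k-2}a_r b_r$ (which always exists once the family is dependent, since the $b_j$ are a Krylov sequence) and substitutes it, together with its $H_0$-image $b_k=\sum a_r b_{r+1}$, directly into $[X_{k-1},X_k](0)=L_{k-1}b_k-L_k b_{k-1}$; two applications of the swap $\mathbb{P}^\perp L_a b_b=\mathbb{P}^\perp L_b b_a$ (valid for $a+b\leqslant 2k-2$) then reduce everything to brackets of degree $\leqslant 2k-3$, which lie in $\Sone$ by~(1). You instead go back to the \emph{first} dependence $b_d=\sum_{i<d}\beta_i b_i$, package it as a vector field $g=X_d-\sum\beta_i X_i$ vanishing at the origin, and iterate the substitution until all surviving $X$-indices are below $d$; the mixed terms then collapse via the same swap identity. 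The paper's single-substitution is shorter because it avoids the iteration and the index bookkeeping on $m$, while your vector-field $g$ makes the structural reason for the cancellation (namely $\ad_{f_0}^m(g)(0)=0$) more visible. Both are valid; the core idea---that Statement~1 lets one commute the $L$-index and the $b$-index modulo $\Sone$---is identical.
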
 

\begin{proof}
 \textbf{Statement 1.} We decompose the proof in three steps.
 \begin{itemize}
  \item \textit{Step A.
   We prove that, if $[\ad_{f_0}^a(f_1),\ad_{f_0}^{b}(f_1)](0)$ belongs  to $\Sone$ for every $a,b \in \N$ such that $a+b \leqslant 2m-1$
   and for some $m \in \N^*$, then  $[\ad_{f_0}^a(f_1),\ad_{f_0}^{b}(f_1)](0)$ belongs  to $\Sone$ for every $a,b \in \N$ such that $a+b \leqslant 2m$.}
  
  Let $m \in \N$. We assume that $[\ad_{f_0}^a(f_1),\ad_{f_0}^{b}(f_1)](0) \in \Sone$ for every $a,b \in \N$ such that $a+b \leqslant 2m-1$.
  We deduce from the stability of $\Sone$ with respect to bracketing by $f_0$ and formula (\ref{formule_Krener}) with $i=2m$ and $l=m$ that 
  $[f_1,\ad_{f_0}^{2m}(f_1)](0)$ belongs to $\Sone$. Then, we deduce from the stability of $\Sone$ with respect to bracketing by $f_0$
  and formula  (\ref{formule_Krener}) with $i=2m$ that $[\ad_{f_0}^{l}(f_1),\ad_{f_0}^{2m-l}(f_1)](0) \in \Sone$ for every $l\in\{0,...,2m\}$.
  
  \item \textit{Step B. We prove that, if $[\ad_{f_0}^a(f_1),\ad_{f_0}^{b}(f_1)](0)$ belongs  to $\Sone$ for every $a,b \in \N$ such that $a+b \leqslant 2m$,
   and $[\ad_{f_0}^{m}(f_1),\ad_{f_0}^{m+1}(f_1)](0)$ belongs  to $\Sone$ for some $m \in \N$, 
   then, for every $a,b \in \N$ such that $a+b \leqslant 2m+1$,
   $[\ad_{f_0}^a(f_1),\ad_{f_0}^{b}(f_1)](0)$ belongs  to $\Sone$. }
  
  This is direct consequence of formula (\ref{formule_Krener}) with $i=2m+1$ and the stability of $\Sone$ with respect to bracketing by $f_0$.
  
  \item \textit{Step C. We prove Statement 1.}
  
  Using the case $j=1$ in assumption  (\ref{hyp_j})  and Step 2 with $m=1$ we obtain that 
  $[\ad_{f_0}^a(f_1),\ad_{f_0}^{b}(f_1)](0) \in \Sone$ for every $a,b \in \N$ such that $a+b \leqslant 2$.
  Then, using the case $j=2$ in assumption (\ref{hyp_j}), Step 3 with $m=1$ and Step 2 with $m=2$ we obtain that
  $[\ad_{f_0}^a(f_1),\ad_{f_0}^{b}(f_1)](0) \in \Sone$ for every $a,b \in \N$ such that $a+b \leqslant 4$. The proof is carried on by iteration.
 \end{itemize}
 
 \noindent \textbf{Statement 2.}
 This statement follows from formula (\ref{formule_Krener}) with $i=2k-1$, 
 Statement 1 and the stability of $\Sone$ with respect to bracketing by $f_0$.
 \bigskip
 
 \noindent \textbf{Statement 3.} We assume that (\ref{hyp_j}) holds 
 and the family $(f_1(0), \ldots \ad_{f_0}^{k-1} (f_1)(0))$ is not linearly independent. If $k=1$, then $f_1 (0) = 0$. Thus $\Sone=\{0\}$ and:
 \begin{equation}
 [f_1,[f_1,f_0]](0)
 = f_0''(0)\big(f_1(0),f_1(0)\big) + f_0'(0) f_1'(0) f_1(0) - 2 f_1'(0) f_0'(0) f_1(0) = 0 \in \Sone.
 \end{equation}
 Now, we assume that $k \geqslant 2$. 
 There exists $a_0,...,a_{k-2} \in \R$ such that:
 \begin{equation} \label{ar1}
 \ad_{f_0}^{k-1} (f_1)(0) = \sum_{r=0}^{k-2} a_r \ad_{f_0}^{r} (f_1)(0).
 \end{equation}
 Then:
 \begin{equation} \label{ar2}
 \ad_{f_0}^{k} (f_1)(0) =  \sum_{r=0}^{k-2} a_r \ad_{f_0}^{r+1} (f_1)(0),
 \end{equation}
 because $\ad_{f_0}^r(f_1)(0)=H_0^r f_1(0)$ for every $r \in \N$.
 We deduce from (\ref{hyp_j}) and Statement 1 that,
 for any $a, b \in \N$ such that $a + b \leqslant 2k-2$:
 \begin{equation} \label{ada.adb}
 \Big( \ad_{f_0}^a(f_1) \Big)'\Big(\ad_{f_0}^b (f_1)(0)\Big) \sim 
 \Big( \ad_{f_0}^b (f_1) \Big)' \Big(\ad_{f_0}^a  (f_1)(0)\Big),
 \end{equation}
 where $\sim$ means equality modulo additive terms in $\Sone$ 
 (equivalence classes in~$\R^n / \Sone$).
 Using~\eqref{ar1} and~\eqref{ar2}, we have:
 \begin{equation}
 \begin{split}
 & [\ad_{f_0}^{k}(f_1), \ad_{f_0}^{k-1}(f_1)](0) \\
 &=    \Big( \ad_{f_0}^{k-1}(f_1) \Big)'\Big( \ad_{f_0}^k (f_1)(0)\Big) - \Big( \ad_{f_0}^{k}(f_1) \Big)' \Big( \ad_{f_0}^{k-1}(f_1)(0)\Big) \\
 &=    \sum_{r=0}^{k-2} a_r 
 \left( \Big( \ad_{f_0}^{k-1}(f_1) \Big)' \Big(\ad_{f_0}^{r+1} (f_1)(0)\Big) 
 - \Big( \ad_{f_0}^{k}(f_1) \Big)' \Big( \ad_{f_0}^{r}(f_1)(0)\Big) \right).
 \end{split}
 \end{equation}
 Then, using~\eqref{ada.adb}, we have:
  \begin{equation}
  \begin{split}
  & [\ad_{f_0}^{k}(f_1), \ad_{f_0}^{k-1}(f_1)](0) \\
  & \sim \sum_{r=0}^{k-2} a_r \left(
  \Big( \ad_{f_0}^{r+1}(f_1) \Big)'\Big(\ad_{f_0}^{k-1} (f_1)(0)\Big) - \Big( \ad_{f_0}^{r}(f_1) \Big)' \Big(\ad_{f_0}^{k}(f_1)(0)\Big) \right) \\
  & \sim \sum_{r,R=0}^{k-2} a_r a_R \left(
  \Big(\ad_{f_0}^{r+1}(f_1) \Big)' \Big(\ad_{f_0}^{R} (f_1)(0)\Big) - \Big( \ad_{f_0}^{r}(f_1) \Big)'\Big( \ad_{f_0}^{R+1}(f_1)(0) \Big) \right) \\
  & \sim \sum_{r=0}^{k-2} a_r^2 L_r + \sum_{0 \leqslant r < R \leqslant k-2} a_r a_R L_{r,R},
  \end{split}
  \end{equation}
 where:
 \begin{align}
 L_r & := [\ad_{f_0}^{r+1}(f_1),\ad_{f_0}^{r}(f_1)](0), \\
 L_{r,R} & := 
 \Big( \ad_{f_0}^{r+1}(f_1) \Big)'\Big( \ad_{f_0}^{R} (f_1)(0)\Big) - \Big( \ad_{f_0}^{r}(f_1) \Big)'\Big( \ad_{f_0}^{R+1}(f_1)(0)\Big) \\
 \nonumber
 & \quad + \Big( \ad_{f_0}^{R+1}(f_1) \Big)'\Big(\ad_{f_0}^{r} (f_1)(0)\Big) - \Big( \ad_{f_0}^{R}(f_1) \Big)'\Big( \ad_{f_0}^{r+1}(f_1)(0)\Big) \\
 \nonumber
 & = [\ad_{f_0}^{r+1}(f_1),\ad_{f_0}^{R}(f_1)](0) - [\ad_{f_0}^{r}(f_1),\ad_{f_0}^{R+1}(f_1)](0).
 \end{align}
 Since $2r+1,r+R+1\leqslant 2k-3$, $L_r$ and $L_{r,R}$ belong to $\Sone$ for every $0\leqslant r < R \leqslant k-2$.
 Therefore $[\ad_{f_0}^{k-1}(f_1),\ad_{f_0}^{k}(f_1)](0)$ belong to $\Sone$, i.e.~\eqref{hyp_k} does not hold.
 \bigskip
 
 \noindent \textbf{Statement 4.} 
 By iterating Statement 3, we obtain that $[\ad_{f_0}^{j-1}(f_1),\ad_{f_0}^{j}(f_1)](0) \in \Sone$ for every $j \in \N$.
 Then, Statement 1 yields the conclusion.
\end{proof}

In particular, these algebraic relations lead to constraints on the
different ways that one can have $\Stwo \not \subset \Sone$.
More precisely, we prove:

\begin{lem} \label{Lemma:ComingOut}
 Let $f_0, f_1 \in C^\infty(\R^n,\R^n)$ with $f_0(0) = 0$
 and $d := \mathrm{dim~} \Sone$.
 Assume that $\Stwo \not\subset \Sone$. Then, there exists
 $1 \leqslant k \leqslant d$ such that~\eqref{hyp_j} and~\eqref{hyp_k}
 hold.
\end{lem}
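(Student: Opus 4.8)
The plan is to read off from the hypothesis $\Stwo\not\subset\Sone$ a \emph{minimal} offending bracket having the nested shape that appears in~\eqref{hyp_k}. I would introduce
\[
A := \left\{ j \in \N^* , \enskip [\ad_{f_0}^{j-1}(f_1),\ad_{f_0}^{j}(f_1)](0) \notin \Sone \right\}
\]
and, once $A$ has been shown to be non-empty, set $k := \min A$. By construction $k\geqslant 1$, the bracket in~\eqref{hyp_k} is then not in $\Sone$, and the minimality of $k$ is exactly~\eqref{hyp_j}. Hence the only two points left to establish are that $A\neq\emptyset$ and that $k\leqslant d$.

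To prove $A\neq\emptyset$ I would argue by contraposition. Assume $A=\emptyset$, i.e.\ $[\ad_{f_0}^{j-1}(f_1),\ad_{f_0}^{j}(f_1)](0)\in\Sone$ for every $j\geqslant 1$. Then, for every $K\in\N^*$, hypothesis~\eqref{hyp_j} holds with $k$ replaced by $K$, so Statement~1 of Proposition~\ref{Prop:Krener} yields $[\ad_{f_0}^{a}(f_1),\ad_{f_0}^{b}(f_1)](0)\in\Sone$ whenever $a+b\leqslant 2K-2$; letting $K\to\infty$ this holds for all $a,b\in\N$. By the explicit formula~\eqref{eq.adkj} this means $L_jb_k-L_kb_j\in\Sone$ for all $j,k$, and since $\Sone$ is $H_0$-invariant (it equals $\Span\{(-H_0)^k b , \enskip k\in\N\}$ by~\eqref{S1H0b}), every generator $(-H_0)^i(L_jb_k-L_kb_j)$ of $\Stwo$ listed in~\eqref{S2} lies in $\Sone$. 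Thus $\Stwo\subset\Sone$, a contradiction; so $A\neq\emptyset$ and $k:=\min A$ is well-defined with $k\geqslant 1$.

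It then remains to bound $k$. Since both~\eqref{hyp_j} and~\eqref{hyp_k} hold for our $k$, Statement~3 of Proposition~\ref{Prop:Krener} applies and tells us that the family $\bigl(f_1(0),\ad_{f_0}^{1}(f_1)(0),\ldots,\ad_{f_0}^{k-1}(f_1)(0)\bigr)$ is linearly independent. By~\eqref{S1H0b} each of these $k$ vectors belongs to $\Sone$, so $k\leqslant\dim\Sone=d$, which gives $1\leqslant k\leqslant d$ and finishes the proof.

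The step I expect to require the most care is the implication ``all nested brackets $[\ad_{f_0}^{j-1}(f_1),\ad_{f_0}^{j}(f_1)](0)$ lie in $\Sone$'' $\Rightarrow$ ``$\Stwo\subset\Sone$'': it is not mere bookkeeping but genuinely uses the combinatorial reduction of an arbitrary second-order bracket to the nested ones (Statement~1 of Proposition~\ref{Prop:Krener}, ultimately formula~\eqref{formule_Krener} and the Jacobi identity) together with the concrete description~\eqref{S2} of $\Stwo$ and the $H_0$-invariance of $\Sone$. Everything else --- extracting $k$ from $A$, deducing~\eqref{hyp_j} and~\eqref{hyp_k}, and the dimension count via Statement~3 --- is then immediate.
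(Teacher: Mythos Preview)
Your proof is correct and follows essentially the same approach as the paper. The only cosmetic difference is that the paper invokes Statement~4 of Proposition~\ref{Prop:Krener} (which packages together the iteration of Statement~3 and Statement~1) to get the contradiction directly at level $d$, whereas you unpack this into two separate steps: first showing $A\neq\emptyset$ via Statement~1, then bounding $\min A\leqslant d$ via Statement~3.
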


\begin{proof}
 By contradiction, let us assume that~\eqref{hyp_j} holds with $k > d$.
 From Statement 4 of Proposition~\ref{Prop:Krener}, we get that
 $[\ad^a_{f_0}(f_1), \ad^b_{f_0}(f_1)](0) \in \Sone$ for any
 $a, b \in \N$. Since $\Sone$ is stable with respect to bracketing
 by $f_0$, we also have $\ad^c_{f_0} ([\ad^a_{f_0}(f_1), \ad^b_{f_0}(f_1)])(0)
 \in \Sone$ for any $a,b,c \in \N$. From~\eqref{S2}, this yields
 that $\Stwo \subset \Sone$, which contradicts our assumption.
 Thus, there exists $k \leqslant d$ such that~\eqref{hyp_k} holds.
 Taking the smallest such $k \geqslant 1$ such that~\eqref{hyp_k}
 holds ensures that~\eqref{hyp_j} also holds.
\end{proof}

\section{Construction of auxiliary systems} \label{Section:auxiliary}

We construct by induction auxiliary systems that are useful in the proof of our 
theorems.

\subsection{Definitions and notations}

Let $f \in C^\infty(\mathbb{R}^n \times \mathbb{R},\mathbb{R}^n)$
with $f(0,0) = 0$.
We define the $C^\infty$ map $G_0:\R \times \R^n \rightarrow \mathbb{R}^n$ as:
\begin{equation} \label{DEF_G0}
G_0(\tau,p):=\left\lbrace \begin{aligned}
& \frac{1}{\tau^2} \left( f(p,\tau) - f_0(p) - \tau f_1(p) \right) & \quad \text{ for } \tau \neq 0, \\
& \frac{1}{2} \partial_u^2 f(p,0)                         & \quad \text{ for } \tau=0.
\end{aligned}\right.
\end{equation}
Let $T>0$ and $(x,u) \in C^0([0,T],\R^n) \times L^\infty((0,T),\R)$ a trajectory 
of~\eqref{system.fxu} with $x(0)=0$. 
In the case of a control-affine system, the control $u$ is only assumed to belong to $L^1((0,T),\R)$.
We introduce, for $1 \leqslant j \leqslant d$:

\begin{itemize}
 
 \item the smooth vector fields:
 \begin{equation} \label{def.fj}
 f_j := (-1)^{j-1} \ad_{f_0}^{j-1}(f_1),
 \end{equation}
 
 \item the associated flows $\phi_j$, 
 that are smooth and well-defined on an open neighborhood 
 $\Omega_j \subset \R \times \R^n$ of $(0,0)$,
 which are defined as the solutions to:
 \begin{equation} \label{flow_fj}
 \left\{
 \begin{aligned}
 \partial_\tau \phi_j (\tau,p) 
 & = f_j \big( \phi_j(\tau,p) \big), \\
 \phi_j(0, p) & = p.
 \end{aligned}
 \right.
 \end{equation}
 and we will sometimes write $\phi_j^\tau(p)$ instead of $\phi_j(\tau,p)$,
 
 \item the smooth maps $F_j: \Omega_j \to \R^n$ defined by:
 \begin{equation} \label{def:Fj}
 F_j(\tau, p) := \left( \partial_p \phi_j(\tau,p) \right)^{-1} f_0 \big( \phi_j(\tau,p) \big),
 \end{equation}
 
 \item the smooth maps $G_j: \Omega_j \to \R^n$, defined by:
 \begin{equation} \label{def:Gj}
 G_j(\tau,p):=\left\lbrace \begin{aligned}
 & \frac{1}{\tau^2} \big( F_j(\tau,p) - F_j(0,p) - \tau 
 \partial_\tau F_j(0,p) \big)  & \quad \text{for } \tau \neq 0, \\
 & \frac{1}{2} \partial_\tau^2 F_j(0,p)
 & \quad \text{for } \tau=0,
 \end{aligned}\right.
 \end{equation}
 
 \item the auxiliary states $\aux_j:(0,T) \rightarrow \R^n$, defined by
 induction by:
 \begin{equation} \label{def:xj} 
 \aux_0:=x \quad \text{and} \quad
 \aux_{j+1}:=\phi_{j+1} \left(-u_{j+1} , \aux_j \right).
 \end{equation}
\end{itemize}

\subsection{Evolution of the auxiliary states}

We start by computing some derivatives of the utility functions $F_j$, from which
we then deduce by induction the evolution equation for the auxiliary states.

\begin{lem} \label{Lem_Fj}
 Let $1 \leqslant j \leqslant d$. One has:
 \begin{align}
 \label{Fj_1}
 \partial_\tau F_j(0, \cdot) & = f_{j+1}, \\
 \label{Fj_2}
 \partial_\tau^2 F_j(0, \cdot) & = [f_j, f_{j+1}].
 \end{align} 
\end{lem}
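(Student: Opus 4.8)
The plan is to recognise $F_j(\tau,\cdot)$, defined by~\eqref{def:Fj}, as the pullback of the vector field $f_0$ along the flow $\phi_j^\tau$ generated by $f_j$, and to exploit the classical fact that differentiating such a pullback in the flow parameter amounts to Lie bracketing with $f_j$. Concretely, I would fix $p$ near $0$, abbreviate $M(\tau):=\partial_p\phi_j(\tau,p)$, and record two elementary consequences of~\eqref{flow_fj}: differentiating it in $\tau$ gives $\partial_\tau\big(f_0\circ\phi_j^\tau\big) = \big(f_0'\circ\phi_j^\tau\big)\big(f_j\circ\phi_j^\tau\big)$, while differentiating it in $p$ gives the linearised equation $\partial_\tau M = \big(f_j'\circ\phi_j^\tau\big)M$ with $M(0)=I$; since $\phi_j$ is a local diffeomorphism, $M(\tau)$ is invertible near $\tau=0$ and $\partial_\tau\big(M^{-1}\big) = -M^{-1}\big(f_j'\circ\phi_j^\tau\big)$.

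Then I would simply differentiate $F_j(\tau,p)=M(\tau)^{-1}\big(f_0\circ\phi_j^\tau\big)(p)$ by the product rule:
\[
\partial_\tau F_j(\tau,p) = M(\tau)^{-1}\Big[\big(f_0'\circ\phi_j^\tau\big)\big(f_j\circ\phi_j^\tau\big) - \big(f_j'\circ\phi_j^\tau\big)\big(f_0\circ\phi_j^\tau\big)\Big] = M(\tau)^{-1}\big([f_j,f_0]\circ\phi_j^\tau\big),
\]
where the last equality is merely the definition $[X,Y]=Y'X-X'Y$. Evaluating at $\tau=0$, where $M(0)=I$ and $\phi_j^0=\mathrm{id}$, gives $\partial_\tau F_j(0,\cdot)=[f_j,f_0]$. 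For the second derivative I would observe that the right-hand side $M(\tau)^{-1}\big([f_j,f_0]\circ\phi_j^\tau\big)$ has exactly the same structure as $F_j$, with $f_0$ replaced by the smooth vector field $[f_j,f_0]$; hence the very same product-rule computation yields $\partial_\tau^2 F_j(\tau,p) = M(\tau)^{-1}\big([f_j,[f_j,f_0]]\circ\phi_j^\tau\big)$, and evaluation at $\tau=0$ gives $\partial_\tau^2 F_j(0,\cdot)=[f_j,[f_j,f_0]]$.

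It remains to convert these brackets into the notation of the statement. From~\eqref{def.fj} one has $\ad_{f_0}^{j-1}(f_1)=(-1)^{j-1}f_j$, so
\[
f_{j+1} = (-1)^{j}\ad_{f_0}^{j}(f_1) = (-1)^{j}\big[f_0,\ad_{f_0}^{j-1}(f_1)\big] = (-1)^{j}(-1)^{j-1}[f_0,f_j] = -[f_0,f_j] = [f_j,f_0].
\]
Therefore $\partial_\tau F_j(0,\cdot)=[f_j,f_0]=f_{j+1}$, which is~\eqref{Fj_1}, and $\partial_\tau^2 F_j(0,\cdot)=[f_j,[f_j,f_0]]=[f_j,f_{j+1}]$, which is~\eqref{Fj_2}. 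I expect no genuine obstacle here: the only points requiring care are the differentiation of the matrix inverse $M^{-1}$ (legitimate on a neighbourhood of $\tau=0$ where $M$ stays invertible) and the sign bookkeeping relating the nested brackets $\ad_{f_0}^{j}(f_1)$ to the vector fields $f_j$ under the Lie-bracket convention used in the paper.
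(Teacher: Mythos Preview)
Your proof is correct and follows essentially the same route as the paper: both compute $\partial_\tau F_j(\tau,p)=(\partial_p\phi_j)^{-1}[f_j,f_0](\phi_j)$ via the product rule and the linearised flow equation, evaluate at $\tau=0$, and then iterate the same computation for the second derivative. Your presentation is slightly more explicit about the sign bookkeeping showing $f_{j+1}=[f_j,f_0]$, which the paper leaves implicit in its reference to~\eqref{def.fj}.
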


\begin{proof}
 Differentiating definition~\eqref{def:Fj} with respect to $\tau$ yields:
 \begin{equation} \label{lem_fj_1}
 \partial_\tau F_j
 = - (\partial_p \phi_j)^{-1} \partial_{\tau p} \phi_j 
 (\partial_p \phi_j)^{-1} f_0(\phi_j)
 + (\partial_p \phi_j)^{-1} f_0'(\phi_j) \partial_\tau \phi_j.
 \end{equation}
 Applying Schwarz's theorem and using~\eqref{flow_fj}, one computes:
 \begin{equation} \label{lem_fj_2}
 \partial_{\tau p} \phi_j = \partial_{p \tau} \phi_j 
 = f_j'(\phi_j) \partial_p \phi_j.
 \end{equation}
 Gathering~\eqref{lem_fj_1},~\eqref{lem_fj_2} and using~\eqref{flow_fj} yields:
 \begin{equation} \label{lem_fj_3}
 \begin{split}
 \partial_\tau F_j 
 & = - (\partial_p \phi_j)^{-1} f_j'(\phi_j) f_0(\phi_j)
 + (\partial_p \phi_j)^{-1} f_0'(\phi_j) f_j(\phi_j) \\
 & = (\partial_p \phi_j)^{-1} [f_j, f_0](\phi_j).
 \end{split}
 \end{equation}
 Equation~\eqref{lem_fj_3} proves~\eqref{Fj_1} by evaluation at $\tau = 0$
 thanks to definition~\eqref{def.fj}. Applying the same proof method to
 $\partial_\tau F_j$ yields~\eqref{Fj_2}.
\end{proof}

\begin{lem} \label{Lem:eq_xij}
 Let $1 \leqslant j \leqslant d$ and $\aux_j$ be defined by~\eqref{def:xj}.
 Then $\aux_j(0) = x(0)$ and $\aux_j$ satisfies:
 \begin{equation} \label{HR_l}
 \dot{\aux}_j = f_0(\aux_j) + u_j f_{j+1}(\aux_j)  + \sum_{l=0}^j u_l^2 \mathcal{K}_{l,j},
 \end{equation}
 where we define:
 \begin{equation} \label{def.Klj}
  \mathcal{K}_{l,j} := \left\lbrace 
  \begin{aligned}
    & \left( \partial_p \phi_j(u_j,\aux_j) \right)^{-1} \cdots \left( 
    \partial_p \phi_{l+1}(u_{l+1},\aux_{l+1}) \right)^{-1} G_l (u_l,\aux_l)
    & \text{ for } l<j, \\
    & G_j ( u_j , \aux_j ) & \text{ for } l=j.
  \end{aligned}
  \right.
 \end{equation}
\end{lem}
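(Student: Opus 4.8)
The plan is to establish both assertions simultaneously by induction on $j$, starting from the trivial base case $j = 0$. For $j = 0$ one has $\aux_0 = x$, hence $\aux_0(0) = x(0)$, and since $(x,u)$ is a trajectory of~\eqref{system.fxu} we have $\dot{\aux}_0 = f(x,u)$ almost everywhere; the definition~\eqref{DEF_G0} of $G_0$, valid also at $u = 0$ because of~\eqref{def.f0f1}, rewrites this exactly as $f_0(\aux_0) + u_0 f_1(\aux_0) + u_0^2 G_0(u_0,\aux_0)$, which is~\eqref{HR_l} for $j = 0$: indeed $f_{0+1}$ is the vector field $f_1$ of~\eqref{DEF_G0} by~\eqref{def.fj}, and $\mathcal{K}_{0,0} = G_0(u_0,\aux_0)$ by~\eqref{def.Klj}.

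For the inductive step, assume~\eqref{HR_l} and $\aux_j(0) = x(0)$ at level $j$. Since $u_{j+1}(0) = 0$ by~\eqref{def:uj} and $\phi_{j+1}(0,\cdot) = \mathrm{id}$ by~\eqref{flow_fj}, definition~\eqref{def:xj} immediately gives $\aux_{j+1}(0) = \aux_j(0) = x(0)$. The main computation is the evolution equation, and the trick I would use is to read~\eqref{def:xj} in its inverted form $\aux_j = \phi_{j+1}(u_{j+1},\aux_{j+1})$, which is legitimate thanks to the one-parameter group property of the autonomous flow $\phi_{j+1}$. Differentiating in time, using $\dot{u}_{j+1} = u_j$ from~\eqref{def:uj} and $\partial_\tau \phi_{j+1} = f_{j+1}(\phi_{j+1})$ from~\eqref{flow_fj}, gives
\[
 \dot{\aux}_j = u_j\, f_{j+1}(\aux_j) + \partial_p \phi_{j+1}(u_{j+1},\aux_{j+1})\, \dot{\aux}_{j+1}.
\]
Substituting the induction hypothesis~\eqref{HR_l} for $\dot{\aux}_j$, the two copies of $u_j f_{j+1}(\aux_j)$ cancel, and solving for $\dot{\aux}_{j+1}$ yields
\[
 \dot{\aux}_{j+1} = \left( \partial_p \phi_{j+1}(u_{j+1},\aux_{j+1}) \right)^{-1} f_0(\aux_j) + \sum_{l=0}^{j} u_l^2 \left( \partial_p \phi_{j+1}(u_{j+1},\aux_{j+1}) \right)^{-1} \mathcal{K}_{l,j}.
\]

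It then remains to match these two contributions with the right-hand side of~\eqref{HR_l} at level $j+1$. Using $\aux_j = \phi_{j+1}(u_{j+1},\aux_{j+1})$, the first term is precisely $F_{j+1}(u_{j+1},\aux_{j+1})$ by~\eqref{def:Fj}; expanding it to second order in the $\tau$ variable by means of~\eqref{def:Gj} together with $F_{j+1}(0,\cdot) = f_0$ and $\partial_\tau F_{j+1}(0,\cdot) = f_{j+2}$ from Lemma~\ref{Lem_Fj} turns it into $f_0(\aux_{j+1}) + u_{j+1} f_{j+2}(\aux_{j+1}) + u_{j+1}^2 G_{j+1}(u_{j+1},\aux_{j+1})$. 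This produces the drift-free part, the $u_{j+1} f_{j+2}$ term, and the $l = j+1$ summand $\mathcal{K}_{j+1,j+1} = G_{j+1}(u_{j+1},\aux_{j+1})$ of~\eqref{def.Klj}. For the remaining sum, comparing~\eqref{def.Klj} at levels $j$ and $j+1$ shows $\mathcal{K}_{l,j+1} = (\partial_p \phi_{j+1}(u_{j+1},\aux_{j+1}))^{-1}\mathcal{K}_{l,j}$ for every $0 \leqslant l \leqslant j$, so the expressions agree term by term and the induction closes.

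The part requiring the most care is bookkeeping rather than new ideas. One must check that every point at which a flow, or $F_j$, or $G_j$ is evaluated stays inside the corresponding domain $\Omega_j$; this holds once $T$ and $\norm{u}{L^\infty}$ (or $\norm{u}{L^1}$ in the control-affine case) are small enough, since then the iterated primitives $u_j$ and the auxiliary states $\aux_j$ all remain close to $0$. One must also justify the chain rule used above: as $\aux_j$ is merely Lipschitz in $t$ while $u_{j+1}$ is $C^1$, the differentiations hold only almost everywhere, which suffices because~\eqref{HR_l} is an identity between locally integrable functions.
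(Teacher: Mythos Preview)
Your argument is correct and follows essentially the same route as the paper: induction on $j$ starting from $j=0$, inversion of~\eqref{def:xj} via the group property to write $\aux_j = \phi_{j+1}(u_{j+1},\aux_{j+1})$, differentiation in time, cancellation of $u_j f_{j+1}(\aux_j)$, identification of $(\partial_p\phi_{j+1})^{-1}f_0(\aux_j)$ with $F_{j+1}(u_{j+1},\aux_{j+1})$, and Taylor expansion via~\eqref{def:Gj} and Lemma~\ref{Lem_Fj}. Your additional remarks on the domains $\Omega_j$ and on the almost-everywhere validity of the chain rule are welcome bookkeeping that the paper leaves implicit.
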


\begin{proof}
 Setting $\aux_0 := x$ and using (\ref{DEF_G0}) proves that~\eqref{HR_l} 
 holds for $j = 0$.
 Let $0 \leqslant j < d$.  We assume that~\eqref{HR_l}
 holds. From~\eqref{def:xj}:
 \begin{equation} \label{lem.aux.1}
 \aux_j = \phi_{j+1}(u_{j+1}, \aux_{j+1}).
 \end{equation}
 Differentiating~\eqref{lem.aux.1} with respect to time yields:
 \begin{equation} \label{lem.aux.2}
 \dot{\aux}_j = \dot{u}_{j+1} \partial_\tau \phi_{j+1} (u_{j+1},\aux_{j+1}) 
 + \partial_p \phi_{j+1} (u_{j+1},\aux_{j+1}) \dot{\aux}_{j+1}.
 \end{equation}
 Injecting~\eqref{def:uj},~\eqref{flow_fj} and~\eqref{HR_l} into~\eqref{lem.aux.2}
 yields:
 \begin{equation} \label{lem.aux.3}
 \partial_p \phi_{j+1} (u_{j+1},\aux_{j+1}) \dot{\aux}_{j+1}
 = f_0\big(\phi_{j+1}(u_{j+1}, \aux_{j+1})\big) 
 + \sum_{l=0}^j u_l^2 \mathcal{K}_{l,j}.
 \end{equation}
 From~\eqref{def.Klj}, one has:
 \begin{equation} \label{lem.aux.4}
 \mathcal{K}_{l,j+1} = \left( \partial_p \phi_{j+1} \big( u_{j+1} , \aux_{j+1} \big) \right)^{-1} \mathcal{K}_{l,j}.
 \end{equation}
 Gathering~\eqref{def:Fj},~\eqref{lem.aux.3} and~\eqref{lem.aux.4} gives:
 \begin{equation} \label{lem.aux.5}
 \dot{\aux}_{j+1} = F_{j+1}(u_{j+1},\aux_{j+1})
 + \sum_{l=0}^j u_l^2 \mathcal{K}_{l,j+1}.
 \end{equation}
 Moreover, by~\eqref{def:Fj},~\eqref{def:Gj} and Lemma~\ref{Lem_Fj}, one has:
 \begin{equation} \label{lem.aux.6}
 \begin{split}
 F_{j+1}(u_{j+1}, \aux_{j+1})
 & = F_{j+1}(0,\aux_{j+1}) + u_{j+1} \partial_\tau F_{j+1} (0,\aux_{j+1})
 + u_{j+1}^2 G_{j+1}(u_{j+1}, \aux_{j+1}) \\
 & = f_0(\aux_{j+1}) + u_{j+1} f_{j+2}(\aux_{j+1})
 + u_{j+1}^2 G_{j+1}(u_{j+1}, \aux_{j+1}).
 \end{split}
 \end{equation}
 Hence,~\eqref{lem.aux.5} and~\eqref{lem.aux.6} conclude the proof of~\eqref{HR_l}
 for $j+1$.
\end{proof}

\subsection{An important notation for estimates}

Despite the difference in the optimal functional framework between control-affine
and nonlinear systems, we are going to prove Theorems~\ref{Theorem:QANL}
and~\ref{Theorem:QAA} in a unified way. To that end, we introduce two parameters
$\gamma$ and $q$:
\begin{itemize}
 \item $\gamma:=1$ and $q:=1$, when $\partial_u^2 f = 0$ on an open neighborhood of $0$ 
 (control-affine systems),
 \item $\gamma:=0$ and $q:=\infty$ otherwise
 (general nonlinear systems).
\end{itemize}
Hence, a trajectory $(x,u)$ belongs to $C^0([0,T],\R^n) \times L^q((0,T),\R)$.
Moreover, the following notation is used throughout the paper as it 
lightens both the statements and the proofs:

\begin{definition} \label{Definition:Ogamma}
 Given two observables $A(x,u)$ and $B(x,u)$ of interest, we will write 
 that $A(x,u) = \Og{B(x,u)}$ when: for any $\tm > 0$, there exists 
 $C, \eta > 0$ such that, for any $T \in (0,\tm]$ and any trajectory
 $(x,u) \in C^0([0,T],\R^n) \times L^q((0,T),\R)$ with $x(0) = 0$
 which satisfies $\norm{u_\gamma}{L^\infty} \leqslant \eta$,
 one has $|A(x,u)| \leqslant C |B(x,u)|$. Hence, this notation refers to
 the convergence $\norm{u_\gamma}{L^{\infty}} \to 0$ and holds
 uniformly with respect to the trajectories on a time interval 
 $[0,T] \subset [0,\tm]$. For observables depending on $t \in [0,T]$,
 it is implicit that the notation $A(x,u,t) = \Og{B(x,u,t)}$ 
 always holds uniformly with respect to $t \in [0,T] \subset [0,\tm]$.
 Eventually, we will use the notation $\Ou{\cdot}$ when the estimate
 holds without any smallness assumption on the control.
\end{definition}

\subsection{Estimations for the auxiliary systems}
\label{subsec:Estimate_Aux_NL}

We start with an estimate of the solutions of the system~\eqref{system.fxu},
requiring little regularity on dynamic
(this will be useful in the sequel).

\begin{lem} \label{Lemma:xu}
 Let $f \in C^1(\R^n \times \R, \R^n)$ with $f(0,0)=0$. One has:
 \begin{equation} \label{Estimate:Lemma:xu}
  |x(t)| = \Oz{\norm{u}{L^1}}.
 \end{equation}
\end{lem}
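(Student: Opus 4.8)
The plan is a quantitative continuation argument built on Gr\"onwall's lemma applied directly to the integral formulation~\eqref{eq.ivp}. First, I would use the $C^1$ regularity of $f$ together with $f(0,0)=0$ to produce a local linear growth bound. Fix $r_0,\eta_0>0$ (say $r_0=\eta_0=1$) and set $C_1:=\sup\{|\partial_x f(x,u)| ; |x|\leqslant r_0,\ |u|\leqslant\eta_0\}$ and $C_2:=\sup\{|\partial_u f(x,u)| ; |x|\leqslant r_0,\ |u|\leqslant\eta_0\}$, which are finite by continuity of $f'$. Writing $f(x,u)=\int_0^1\big(\partial_x f(sx,su)\,x+\partial_u f(sx,su)\,u\big)\ds$ then gives $|f(x,u)|\leqslant C_1|x|+C_2|u|$ whenever $|x|\leqslant r_0$ and $|u|\leqslant\eta_0$; here $C_1,C_2,r_0,\eta_0$ depend only on $f$.

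Second, fix $\tm>0$, take $T\in(0,\tm]$, and let $(x,u)\in C^0([0,T],\R^n)\times L^\infty((0,T),\R)$ be a trajectory with $x(0)=0$ and $\norm{u}{L^\infty}\leqslant\eta$ for some $\eta\leqslant\eta_0$ to be fixed below. Introduce the continuation time $t^*:=\sup\{\tau\in[0,T] ; |x(t)|\leqslant r_0\ \text{for all}\ t\in[0,\tau]\}$, which is positive since $x$ is continuous and $x(0)=0$. For $t\in[0,t^*]$, equation~\eqref{eq.ivp} and the growth bound yield $|x(t)|\leqslant C_2\norm{u}{L^1}+C_1\int_0^t|x(s)|\ds$, hence Gr\"onwall's lemma gives $|x(t)|\leqslant C_2 e^{C_1\tm}\norm{u}{L^1}$ for all $t\in[0,t^*]$.

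Third, one closes the estimate globally by a bootstrap. Since $\norm{u}{L^1}\leqslant T\norm{u}{L^\infty}\leqslant\tm\eta$, the previous bound gives $|x(t)|\leqslant C_2 e^{C_1\tm}\tm\,\eta$ on $[0,t^*]$. Choosing $\eta=\eta(\tm)>0$ small enough that $\eta\leqslant\eta_0$ and $C_2 e^{C_1\tm}\tm\,\eta<r_0$ forces $|x(t^*)|<r_0$, which by continuity of $|x|$ and maximality of $t^*$ is possible only if $t^*=T$. Consequently $|x(t)|\leqslant C_2 e^{C_1\tm}\norm{u}{L^1}$ for every $t\in[0,T]$, which is exactly the assertion $|x(t)|=\Oz{\norm{u}{L^1}}$ with $C:=C_2 e^{C_1\tm}$.

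The only genuinely delicate point is the bootstrap in the third step: a priori the trajectory need not stay in the region $|x|\leqslant r_0$ where the linear growth bound is valid, so one must use the smallness of $\norm{u}{L^\infty}$ --- converted, thanks to $T\leqslant\tm$, into smallness of $\norm{u}{L^1}$ --- to trap the state there. Everything else is a routine Gr\"onwall estimate, and the $C^1$ hypothesis on $f$ is used only to obtain the Lipschitz-type bound, with no higher regularity needed.
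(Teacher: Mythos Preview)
Your proof is correct and follows essentially the same approach as the paper's: a local Lipschitz bound from $C^1$ regularity and $f(0,0)=0$, a continuation time, Gr\"onwall's lemma, and a bootstrap using $\norm{u}{L^1}\leqslant\tm\norm{u}{L^\infty}$ to close the estimate. The only cosmetic difference is that the paper uses a single constant $M=\sup|f'|$ in place of your separate $C_1,C_2$.
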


\begin{proof}
 Let $\tm > 0$.
 We define $M:=\mathrm{max}\{ |f'(p,\tau)| ; \enskip (p,\tau) \in \overline{B}(0,1) \times [-1,1] \}$,
 $C:=Me^{M\tm}$ and $\eta:=\min\left\{ 1, 1 / (2C\tm) \right\}$.
 Let  $T \in (0,\tm]$ and $(x,u) \in C^0([0,T],\R^n) \times L^\infty((0,T),\R)$ be a trajectory 
 of system~\eqref{system.fxu} with $x(0) = 0$ which satisfies $\norm{u}{L^{\infty}} \leqslant \eta$.
 Let $T_1:=\sup\{ t \in (0,T) ; \enskip \forall s \in [0,t]\,, |x(s)| \leqslant 1 \}$.
 For every $t \in [0,T_1]$, one has, by the first-order Taylor expansion:
 \begin{equation}
 |x(t)| 
 = \left| \int_0^t f(x(s),u(s))\ds\right| 
 \leqslant M \int_0^t \left(|x(s)|+|u(s)| \right) \ds.
 \end{equation}
 Thus, by Gr\"onwall's lemma:
 \begin{equation}
 |x(t)| \leqslant M e^{Mt} \int_0^t |u(s)| \ds 
 \leqslant C \|u\|_{L^1} 
 \leqslant C \tm \|u\|_{L^\infty} \leqslant C \tm \eta \leqslant \frac{1}{2}.
 \end{equation}
 This proves that $T_1=T$ and that \eqref{Estimate:Lemma:xu} holds,
 in the sense of Definition~\ref{Definition:Ogamma}.
\end{proof}

In the particular case of control-affine systems~\eqref{system.f0f1}, the size of the solution can be estimated by a weaker norm of the control,
according to the following statement.

\begin{lem} \label{Lemma:x1u1}
 Let $f_0, f_1 \in C^1(\R^n, \R^n)$ with $f_0(0)=0$. One has:
 \begin{align} 
  \label{Estimate:Lemma:x1u1}
  |x(t)| & = \Oo{|u_1(t)| + \norm{u_1}{L^1}}, \\
  \label{Estimate:Aux1}
  |\aux_1(t)| & = \Oo{\norm{u_1}{L^1}}. 
 \end{align}
\end{lem}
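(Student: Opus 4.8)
The plan is to use the change of unknown $\aux_1 = \phi_1(-u_1,x)$ from~\eqref{def:xj}, which moves the control ``inside'' the flow of $f_1$ and thereby carries out, in one stroke, the integrations by parts that an elementary approach would require (and which would cost more than $C^1$ regularity). The starting point is the evolution equation for $\aux_1$: differentiating $\aux_1=\phi_1(-u_1,x)$ in time and using~\eqref{flow_fj}, $\dot u_1=u$ and $\dot x = f_0(x)+uf_1(x)$, one obtains
\[
\dot\aux_1 = -u\,f_1(\aux_1) + \partial_p\phi_1(-u_1,x)\big(f_0(x)+uf_1(x)\big).
\]
The identity $\partial_p\phi_1(\tau,p)\,f_1(p)=f_1(\phi_1(\tau,p))$ --- the flow commutes with its own generator, which follows because $\tau\mapsto\partial_p\phi_1(\tau,p)f_1(p)$ and $\tau\mapsto f_1(\phi_1(\tau,p))$ solve the same linear ODE with the same initial value, using only the variational equation for $\partial_p\phi_1$ (hence only $f_1\in C^1$) --- makes the two $u$-terms cancel. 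Combined with $\partial_p\phi_1(-u_1,x)=\big(\partial_p\phi_1(u_1,\aux_1)\big)^{-1}$ (differentiate $\phi_1(-\tau,\phi_1(\tau,\cdot))=\mathrm{id}$) and $x=\phi_1(u_1,\aux_1)$, this yields $\dot\aux_1 = F_1(u_1,\aux_1)$ with $F_1$ as in~\eqref{def:Fj}; equivalently, this is the $j=0$ case of the identity established within the proof of Lemma~\ref{Lem:eq_xij}, and it uses only the definition of $F_1$ and the chain rule, hence remains valid under the sole assumption $f_0,f_1\in C^1$. The key features are that $F_1(0,\cdot)=f_0$, so $F_1(0,0)=0$, and that $\partial_\tau F_1=\big(\partial_p\phi_1\big)^{-1}[f_1,f_0](\phi_1)$ is continuous on a compact neighborhood $K$ of $(0,0)$ in $\Omega_1$, hence bounded there; consequently $|F_1(\tau,p)|\leqslant M|p|+M'|\tau|$ on $K$, for suitable constants $M,M'$.

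Then~\eqref{Estimate:Aux1} follows from a bootstrap argument. Fix $\tm>0$ and choose $r>0$ so small that the closed box $K:=[-r,r]\times\overline{B}(0,2r)$ lies in $\Omega_1$ and that $|\phi_1(\tau,q)|\leqslant r$ and $(-\tau,\phi_1(\tau,q))\in\Omega_1$ whenever $|\tau|\leqslant r$ and $|q|\leqslant r$. On the interval where $|x|<2r$ (nonempty, since $x(0)=0$), $\aux_1$ is well-defined, and on the sub-interval where moreover $|\aux_1|<r$ it solves $\dot\aux_1=F_1(u_1,\aux_1)$, so $|\dot\aux_1|\leqslant M|\aux_1|+M'|u_1|$ and Gr\"onwall's lemma together with $\aux_1(0)=x(0)=0$ give $|\aux_1(t)|\leqslant M'e^{M\tm}\norm{u_1}{L^1}$. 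Picking $\eta>0$ with $\eta\leqslant r$ and $M'e^{M\tm}\tm\eta\leqslant r/2$, the hypothesis $\norm{u_1}{L^\infty}\leqslant\eta$ forces $\norm{u_1}{L^1}\leqslant\tm\eta$, hence $|\aux_1(t)|\leqslant r/2$ and then $|x(t)|=|\phi_1(u_1(t),\aux_1(t))|\leqslant r$; a standard continuity argument propagates these bounds to all of $[0,T]$. This proves~\eqref{Estimate:Aux1}. Finally, $|x(t)-\aux_1(t)|=|\phi_1(u_1(t),\aux_1(t))-\phi_1(0,\aux_1(t))|\leqslant M''|u_1(t)|$ with $M'':=\sup_K|f_1\circ\phi_1|$, which combined with~\eqref{Estimate:Aux1} yields~\eqref{Estimate:Lemma:x1u1}.

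I expect the main nuisance to be the bookkeeping of the flow domains in the continuity argument --- ensuring that $(-u_1(t),x(t))$ and $(u_1(t),\aux_1(t))$ stay in $\Omega_1$ so that $\aux_1$ is legitimately defined --- but this is routine once $r$ and $\eta$ are fixed in that order. The one substantive point is the cancellation of the $u$-terms in $\dot\aux_1$: it is precisely what replaces the repeated integrations by parts and forces the estimate to depend only on $u_1$, not on $u$; indeed $\norm{u}{L^1}$ cannot be bounded in terms of $\norm{u_1}{L^\infty}$, so the gain of a weaker norm is genuinely a feature of the control-affine structure.
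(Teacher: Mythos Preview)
Your proposal is correct and follows essentially the same route as the paper: derive the evolution equation $\dot\aux_1 = F_1(u_1,\aux_1)$ for the auxiliary state, observe that $F_1(0,0)=0$ and that $F_1$ is Lipschitz in $(\tau,p)$ near the origin (so the ODE is driven by $u_1$, not $u$), apply Gr\"onwall together with a bootstrap to keep $(u_1,\aux_1)$ in a compact box of $\Omega_1$, and finally recover $x=\phi_1(u_1,\aux_1)$.

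The only presentational difference is that the paper invokes the decomposition $F_1(\tau,p)=f_0(p)+\tau f_2(p)+\tau^2 G_1(\tau,p)$ from Lemma~\ref{Lem:eq_xij} and bounds $|f_2+\tau G_1|$, whereas you work with $F_1$ directly and bound $|\partial_\tau F_1|$. Your formulation is arguably cleaner under the mere hypothesis $f_0,f_1\in C^1$: the map $G_1$ as defined in~\eqref{def:Gj} needs $\partial_\tau^2 F_1$ to make sense at $\tau=0$, whereas the paper only uses the bounded quantity $f_2+\tau G_1=(F_1(\tau,\cdot)-f_0)/\tau$, which your mean-value argument via $\partial_\tau F_1$ handles without ever naming $G_1$. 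Substantively the two proofs are the same.
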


\begin{proof}
 Let $\tm > 0$,
 $\phi_1: \Omega_1 \subset \R \times \R^n \rightarrow \R^n$ 
 and $G_1:\Omega_1 \rightarrow \mathbb{R}^n$
 be defined by~\eqref{flow_fj} and~\eqref{def:Gj} for $j=1$.
 Let $r>0$ be such that $[-r,r] \times \overline{B}(0,r) \subset \Omega_1$. 
 Let $M_1>0$ be such that:
 \begin{equation} \label{DEF_M1}
 \forall (p,\tau) \in \overline{B}(0,r) \times [-r,r], \quad
 |f_0'(p)|, \enskip |f_2(p)+\tau G_1(\tau,p)|, \enskip |\phi_1'(\tau,p)| \leqslant M_1.
 \end{equation}
 Let $C_1:=M_1 e^{M_1\tm}$, $\eta:=\min\left( r , r/(2C_1\tm) \right)$ 
 and $C:=M_1 \max\{1,C_1\}$.
 Let $T \in (0,\tm]$ and  $(x,u) \in C^0([0,T],\R^n) \times L^1((0,T),\R)$ be a  trajectory of system~\eqref{system.f0f1} with $x(0) = 0$ 
 which satisfies $\norm{u}{W^{-1,\infty}} \leqslant \eta$. 
 Let $u_1$ be defined by  (\ref{def:uj}), $\aux_1$ be defined by (\ref{def:xj}), which solves~\eqref{HR_l},
 and $T_1:=\sup\{ t \in [0,T] ; \enskip \forall s \in [0,t]\,, |\aux_1(s)| \leqslant r \}$.
 For every $t \in [0,T_1]$, one has, by a first-order Taylor expansion:
 \begin{equation}
 \begin{split}
 |\aux_1(t)| & = \left|\int_0^t \left( f_0(\aux_1(s)) + u_1(s) \Big( f_2(\aux_1(s))+u_1(s) G_1(u_1(s),\aux_1(s)) \Big) \right) \ds \right| \\
 & \leqslant M_1 \int_0^t \left( |\aux_1(s)| + |u_1(s)| \right) \ds.
 \end{split}
 \end{equation}
 Thus, by Gr\"onwall's lemma:
 \begin{equation}
 |\aux_1(t)| \leqslant M_1 e^{M_1 t} \int_0^t |u_1(s)| \ds 
 \leqslant C_1 \|u_1\|_{L^1}
 \leqslant C_1 \tm \|u_1\|_{L^\infty} \leqslant C_1 \tm \eta \leqslant \frac{r}{2}.
 \end{equation}
 This proves that $T_1=T$ and~\eqref{Estimate:Aux1} holds for $t \in [0,T]$.
 Then, for every $t\in [0,T]$, one has:
 \begin{equation}
 \begin{split}
 |x(t)| & = |\phi_1(u_1(t),\aux_1(t))| \\
 & \leqslant M_1 \left( |u_1(t)| + |\aux_1(t)| \right) \\
 & \leqslant M_1 |u_1(t)| + M_1 C_1 \|u_1\|_{L^1} \\
 & \leqslant C\left( |u_1(t)| + \|u_1\|_{L^1} \right),
 \end{split}
 \end{equation}
 which holds then $\norm{u_1}{L^\infty} \leqslant \eta$ and
 thus concludes the proof.
\end{proof}

\begin{lem} \label{Lemma:AuxEst1}
 Let $f \in C^\infty(\R^n \times \R, \R^n)$ with $f(0,0) = 0$. 
 The auxiliary states are well-defined and satisfy:
 \begin{equation} \label{auxj_bound}
  \left| \aux_j(t) \right| = \Og{\norm{u_j}{L^1} + \norm{u_{\gamma}}{L^2}^2}.
 \end{equation}
\end{lem}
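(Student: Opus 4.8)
The plan is to fix $\tm > 0$, choose a radius $r > 0$ small enough that $[-r,r] \times \overline{B}(0,r) \subset \Omega_i$ for every $1 \leqslant i \leqslant d$, that $\partial_p \phi_i(\tau,p)$ is invertible on this set, and that $f_0$, the vector fields $f_{i+1}$, the maps $\phi_i$, $\partial_p\phi_i$, $(\partial_p\phi_i)^{-1}$ and $G_i$ are all bounded by a single constant $M$ on the relevant compact sets; then to prove the estimate by induction on $j \in \{1,\ldots,d\}$ via a bootstrap argument in the spirit of the proofs of Lemmas~\ref{Lemma:xu} and~\ref{Lemma:x1u1}. One preliminary elementary fact will be used repeatedly: by Cauchy--Schwarz applied to the iterated primitives, $|u_l(s)| \leqslant C_l\, s^{l-\gamma-1/2}\norm{u_\gamma}{L^2(0,s)}$ for $l \geqslant \gamma+1$, while $\int_0^t u_\gamma(s)^2 \ds \leqslant \norm{u_\gamma}{L^2}^2$ trivially; hence $\sum_{l=0}^{j} \int_0^t u_l(s)^2 \ds = \Ou{\norm{u_\gamma}{L^2}^2}$ uniformly for $t \in [0,T] \subset [0,\tm]$ (in the control-affine case the term $l=0$ is absent since $\partial_u^2 f = 0$ forces $\mathcal{K}_{0,j} = 0$, so only indices $l \geqslant \gamma = 1$ occur).

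I would first record that all the auxiliary states $\aux_0, \ldots, \aux_d$ stay small and are well-defined. Starting from $|\aux_0(t)| = |x(t)| = \Oz{\norm{u}{L^1}}$ (Lemma~\ref{Lemma:xu}; in the control-affine case one may instead start from $\aux_1$ using~\eqref{Estimate:Aux1}, and $x$ itself is never needed since $G_0 = 0$), and using the relation $\aux_i = \phi_i(-u_i,\aux_{i-1})$ from~\eqref{def:xj} together with the bound $|\phi_i(\tau,p)| \leqslant M(|\tau|+|p|)$ near the origin (which holds because $\phi_i(0,p) = p$, so $\phi_i(0,0) = 0$), one propagates smallness upward: since $\norm{u_i}{L^\infty(0,T)} \leqslant \tm^{\,i-\gamma}\norm{u_\gamma}{L^\infty}$ for $i \geqslant \gamma$, and there are at most $d$ steps, for $\eta$ small enough every $\aux_i(t)$ remains in $\overline{B}(0,r)$ on all of $[0,T]$; in particular each $\aux_j$ is well-defined and continuous there. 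Consequently, along any such trajectory the coefficient $\mathcal{K}_{l,j}$ of~\eqref{def.Klj} — a finite product of factors $(\partial_p\phi_i(u_i,\aux_i))^{-1}$ evaluated at points of $[-r,r]\times\overline{B}(0,r)$, times a value $G_l(u_l,\aux_l)$ — is bounded by a constant depending only on $M$ and $j$.

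It then remains to estimate $\aux_j$ from its evolution equation~\eqref{HR_l}, provided by Lemma~\ref{Lem:eq_xij}, with initial condition $\aux_j(0) = x(0) = 0$. Setting $T_* := \sup\{ t \in [0,T] : |\aux_j(s)| \leqslant r \text{ for all } s \leqslant t \}$ and using $f_0(0) = 0$ (so $|f_0(\aux_j)| \leqslant M|\aux_j|$), the boundedness of $f_{j+1}$ and of the $\mathcal{K}_{l,j}$ on $\overline{B}(0,r)$, one obtains on $[0,T_*]$ an inequality of the form $|\aux_j(t)| \leqslant M\int_0^t \bigl( |\aux_j(s)| + |u_j(s)| + \sum_{l=0}^{j} u_l(s)^2 \bigr) \ds$; Gr\"onwall's lemma then yields $|\aux_j(t)| \leqslant M e^{M\tm}\bigl( \norm{u_j}{L^1} + \sum_{l=0}^{j}\int_0^t u_l^2 \bigr) \leqslant C\bigl( \norm{u_j}{L^1} + \norm{u_\gamma}{L^2}^2 \bigr)$ by the preliminary fact, with $C$ depending only on $M$, $\tm$ and $j$. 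For $\eta$ small this right-hand side is $\leqslant r/2$, so the bootstrap closes, $T_* = T$, and the estimate~\eqref{auxj_bound} holds on $[0,T]$ in the sense of Definition~\ref{Definition:Ogamma}.

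The only genuinely delicate point is to obtain the sharp norm $\norm{u_j}{L^1}$ (rather than $\norm{u_{j-1}}{L^1}$): this is precisely why one must argue directly on the evolution equation~\eqref{HR_l} rather than on the composition formula $\aux_j = \phi_j(-u_j,\aux_{j-1})$, because it is the linear-in-$u_j$ drift term $u_j f_{j+1}(\aux_j)$ that contributes exactly $\int_0^t |u_j|$ to the Gr\"onwall estimate, whereas $|\phi_j(-u_j(t),\aux_{j-1}(t))| \lesssim |u_{j}(t)| + |\aux_{j-1}(t)|$ would only give $\norm{u_{j-1}}{L^1}$. Everything else — the propagation of smallness through the flows, the boundedness of the coefficients $\mathcal{K}_{l,j}$, the control of the quadratic residual $\sum_l u_l^2$ by $\norm{u_\gamma}{L^2}^2$, and the uniformity of all constants with respect to $T \in (0,\tm]$ and to the trajectory — is routine bookkeeping.
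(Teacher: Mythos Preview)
Your proof is correct and follows essentially the same approach as the paper: first establish that all auxiliary states remain in a small neighborhood of the origin by propagating smallness through the composition of flows, so that the coefficients $\mathcal{K}_{l,j}$ are uniformly bounded; then apply Gr\"onwall's lemma directly to the evolution equation~\eqref{HR_l}, using $f_0(0)=0$ and the bound $\sum_{l=\gamma}^{j}\norm{u_l}{L^2}^2 = \Ou{\norm{u_\gamma}{L^2}^2}$. The paper states the well-posedness via the composed formula $\aux_j = \phi_j^{-u_j}\circ\cdots\circ\phi_1^{-u_1}(x)$ rather than stepwise, and packages the bounds through the $\Og{\cdot}$ notation rather than an explicit bootstrap with $T_*$, but these are purely stylistic differences.
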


\begin{proof}
 The following estimate obtained 
 from~\eqref{def:uj} for $1 \leqslant j \leqslant d$ will be useful:
 \begin{equation} \label{uju}
 \forall p \in [1, +\infty], \quad
 \norm{u_j}{L^p} = \Og{\norm{u_\gamma}{L^p}}.
 \end{equation}
 
 \bigskip \noindent \textbf{Well-posedness of the auxiliary systems.}
 Let us first explain why the auxiliary states are well-defined for $t \in [0,T]$.
 From~\eqref{def:xj}, for $t \in [0,T]$, one has:
 \begin{equation} \label{xjwp.1}
 \aux_j(t) = \phi_j^{-u_j(t)} \circ \cdots \circ \phi_1^{-u_1(t)}(x(t)).
 \end{equation}
 The flows $\phi_j$ for $1 \leqslant j \leqslant d$ are well-defined
 on open neighborhoods $\Omega_j$ of $(0,0)$ in $\R \times \R^n$. 
 Thanks to the continuity of the flows,  
 one deduces from~\eqref{xjwp.1} that the auxiliary states are well-defined
 on $[0,T]$ provided that $|x|$ and the $|u_j|$ stay small enough. 
 Thanks to Lemma~\ref{Lemma:xu} in the nonlinear case, 
 Lemma~\ref{Lemma:x1u1} in the control-affine case and to~\eqref{uju}, this is true if
 $\norm{u_\gamma}{L^{\infty}}$ is small enough. Moreover, the regularity of the 
 flows implies that:
 \begin{equation} \label{xjwp.2}
 \aux_j(t) = \Og{\norm{u_\gamma}{L^1}}.
 \end{equation}
 
 \bigskip \noindent \textbf{Estimates of the auxiliary states.}
 From~\eqref{HR_l}, one has:
 \begin{equation} \label{xjwp.3}
 \aux_j(t) = \int_0^t \left(f_0(\aux_j(s)) + u_j(s) f_{j+1}(\aux_j(s))  + \sum_{l=\gamma}^j u_l(s)^2 \mathcal{K}_{l,j}(s)\right) \ds.
 \end{equation}
 From~\eqref{xjwp.2},~\eqref{xjwp.3}, $f_0(0) = 0$ and the regularity of the functions involved, one has:
 \begin{equation} \label{xjwp.4}
 \aux_j(t) = \int_0^t \Og{|\aux_j(s)|} \ds 
 + \Og{\norm{u_j}{L^1} + \sum_{l=\gamma}^j \norm{u_l}{L^2}^2}.
 \end{equation}
 From~\eqref{uju} and~\eqref{xjwp.4}:
 \begin{equation} \label{xjwp.5}
 \aux_j(t) = \int_0^t \Og{|\aux_j(s)|} \ds 
 + \Og{\norm{u_j}{L^1} + \norm{u_\gamma}{L^2}^2}.
 \end{equation}
 Application of Gr\"onwall's lemma to~\eqref{xjwp.5} proves~\eqref{auxj_bound}.
\end{proof}

\section{Alternative for well-prepared smooth systems} 
\label{Section:well_prepared}

We prove our quadratic alternative for a particular class
of well-prepared smooth systems. More precisely, we consider smooth 
control systems such that:
\begin{equation} \label{hyp.integrator}
\left(f_0'(0)\right)^d f_1(0) = H_0^d b = 0,
\end{equation}
where $d$ is the dimension of $\Sone$. This condition simplifies the linear
dynamic of the system as it is reduced to an iterated integrator. 
We will explain in Subsection~\ref{Subsection:Brunovski} how one can use static
state feedback to transform any system into such a well-prepared system.

\subsection{Enhanced estimates for the last auxiliary system}

Under assumption~\eqref{hyp.integrator}, the linear dynamic has been fully taken
into account once arrived at the auxiliary state $\aux_d$. Indeed, let us 
consider a trajectory with $x(0) = 0$. Hence $\aux_d(0) = 0$ and, using~\eqref{HR_l}, we have:
\begin{equation} \label{eq_x_k_BIS}
 \dot{\aux}_d 
 = f_0(\aux_d) + u_d f_{d+1}(\aux_d) 
 + \sum_{l=\gamma}^{d} u_l^2 \mathcal{K}_{l,d}.
\end{equation}
Recalling~\eqref{def:H0b}, the linearized system of (\ref{eq_x_k_BIS}) 
around the null equilibrium is:
\begin{equation} \label{yd}
\dot{y}_d = H_0 y_d + u_d f_{d+1}(0).
\end{equation}
From~\eqref{hyp.integrator}, $f_{d+1}(0) = H_0^d b = 0$ and $y_d = 0$ 
since $y_d(0) = 0$. There dependence of $\aux_d$ on the control is thus 
at least quadratic.
The second-order approximation of (\ref{eq_x_k_BIS}) around the null
equilibrium is given by:
\begin{equation} \label{zd.evol}
 \dot{z_d}
 = H_0 z_d + \frac{1}{2} f_0''(0).\big(y_d,y_d\big) 
 + u_d f_{d+1}'(0) y_d + \sum_{l=\gamma}^{d} u_l^2  G_l(0,0).
\end{equation}
We deduce from the relation $y_d = 0$, the initial condition $z_d(0) = 0$
and~\eqref{zd.evol} that:
\begin{equation} \label{zd.eq}
 z_d(t) = \sum_{l=\gamma}^{d} \int_0^t u_l^2(s) e^{(t-s)H_0} G_l(0,0) \ds.
\end{equation}
These remarks lead to the following estimates.

\begin{lem} \label{Lemma:Enhanced}
 Let $f \in C^\infty(\R^n \times \R, \R^n)$ with $f(0,0) = 0$ 
 satisfying~\eqref{hyp.integrator}. One has:
 \begin{align}
 \label{auxd_bound}
 \left| \aux_d(t) \right| & = \Og{\norm{u_{\gamma}}{L^2}^2}, \\ 
 \label{auxd_zd_bound}
 \left| \aux_d(t) - z_d(t)  \right| & = \Og{\norm{u_{\gamma}}{L^3}^3}.
 \end{align}
\end{lem}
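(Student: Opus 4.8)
The plan is to prove \eqref{auxd_bound} and \eqref{auxd_zd_bound} by bootstrapping on the integral formulation of the evolution equation \eqref{eq_x_k_BIS} for $\aux_d$, exploiting the fact that under \eqref{hyp.integrator} both the linear and, via the iterated-primitive structure, the linear-in-$u_d$ source terms are forced to vanish. First I would recall from Lemma~\ref{Lemma:AuxEst1} the crude bound $|\aux_d(t)| = \Og{\norm{u_d}{L^1} + \norm{u_\gamma}{L^2}^2}$, together with the elementary fact \eqref{uju} that $\norm{u_j}{L^p} = \Og{\norm{u_\gamma}{L^p}}$ for $1 \leqslant j \leqslant d$. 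The key improvement for \eqref{auxd_bound} is to absorb the $\norm{u_d}{L^1}$ term: write \eqref{eq_x_k_BIS} in integral form with $\aux_d(0)=0$, and observe that the term $u_d f_{d+1}(\aux_d)$ cannot be controlled directly, but since $f_{d+1}(0) = H_0^d b = 0$ by \eqref{hyp.integrator}, one has $f_{d+1}(\aux_d) = \Og{|\aux_d|}$, so that the contribution of this term is $\Og{\int_0^t |u_d(s)|\,|\aux_d(s)|\,\ds}$. Combined with $f_0(\aux_d) = \Og{|\aux_d|}$ (since $f_0(0)=0$) and the quadratic source $\sum_{l=\gamma}^d u_l^2 \mathcal K_{l,d} = \Og{\norm{u_\gamma}{L^2}^2}$ (the $\mathcal K_{l,d}$ being bounded near $0$ by regularity of the flows and of $G_l$), a Grönwall argument — where the coefficient $|u_d(s)|$ in front of $|\aux_d(s)|$ is integrable with small $L^1$ norm — yields \eqref{auxd_bound}.

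For \eqref{auxd_zd_bound}, the plan is to estimate $w_d := \aux_d - z_d$, where $z_d$ solves the second-order system \eqref{zd.evol} and is explicitly given by \eqref{zd.eq}. Subtracting \eqref{zd.evol} from \eqref{eq_x_k_BIS} and using $y_d = 0$, the evolution of $w_d$ is $\dot w_d = H_0 w_d + R(t)$ with $w_d(0)=0$, where the remainder $R$ collects: (i) the difference $f_0(\aux_d) - H_0 z_d$, which after writing $f_0(\aux_d) = H_0 \aux_d + \Og{|\aux_d|^2}$ splits into $H_0 w_d$ (moved to the left) plus a term that is $\Og{|\aux_d|^2} = \Og{\norm{u_\gamma}{L^2}^4} = \Og{\norm{u_\gamma}{L^3}^3}$ using \eqref{auxd_bound} and the fact that on a bounded time interval with small control $\norm{u_\gamma}{L^2}^2$ is itself $\Og{\norm{u_\gamma}{L^3}^{3/2}}$ hence controlling the square; (ii) the term $u_d f_{d+1}(\aux_d)$ which, since $f_{d+1}(0)=0$, equals $u_d \big(f_{d+1}'(0)\aux_d + \Og{|\aux_d|^2}\big)$, and since $y_d=0$ the matching term in \eqref{zd.evol} is $u_d f_{d+1}'(0) y_d = 0$, so this contributes $\Og{|u_d|\,|\aux_d|} = \Og{\norm{u_\gamma}{L^\infty}\cdot\norm{u_\gamma}{L^2}^2}$ after one further use of \eqref{auxd_bound}, again absorbable into $\Og{\norm{u_\gamma}{L^3}^3}$ on a bounded interval; (iii) the difference of the quadratic sources $\sum_{l=\gamma}^d u_l^2\big(\mathcal K_{l,d}(s) - G_l(0,0)\big)$, where $\mathcal K_{l,d}(s) - G_l(0,0) = \Og{|\aux_d(s)| + \cdots} = \Og{\norm{u_\gamma}{L^2}^2}$ by Taylor-expanding $G_l$ and the flow Jacobians at the origin and invoking \eqref{xjwp.2}, so the total is $\Og{\norm{u_\gamma}{L^2}^2 \cdot \norm{u_\gamma}{L^2}^2}$, once more $\Og{\norm{u_\gamma}{L^3}^3}$. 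Then $w_d(t) = \int_0^t e^{(t-s)H_0} R(s)\,\ds$ is estimated directly, or via Grönwall, giving \eqref{auxd_zd_bound}.

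The main obstacle is bookkeeping the various remainder terms at the correct order: one must be careful that every ``extra'' factor that appears (an $|\aux_d|$, a $|u_d|$, a $\mathcal K_{l,d} - G_l(0,0)$) is small — either $\Og{\norm{u_\gamma}{L^2}^2}$ or $\Og{\norm{u_\gamma}{L^\infty}}$ — so that, on a fixed time horizon $[0,\tm]$, it downgrades a quadratic source into a genuinely cubic one in the sense of the $\Og{\cdot}$ notation of Definition~\ref{Definition:Ogamma}. The precise interpolation I would lean on is that for trajectories on $[0,\tm]$ with $\norm{u_\gamma}{L^\infty}\leqslant \eta$ one has, for instance, $\norm{u_\gamma}{L^2}^4 = \Og{\norm{u_\gamma}{L^3}^3}$ and $\norm{u_\gamma}{L^\infty}\norm{u_\gamma}{L^2}^2 = \Og{\norm{u_\gamma}{L^3}^3}$, which follow from Hölder's inequality on the bounded interval together with the smallness of the control; stating and using these elementary inequalities cleanly is the delicate point, but it is routine. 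The only genuinely structural input is \eqref{hyp.integrator}, used exactly twice — once to kill $f_{d+1}(0)$ so that $f_{d+1}(\aux_d)$ is itself of size $|\aux_d|$, and once (implicitly, via $y_d = 0$) to guarantee that the linear-in-$u_d$ contribution to $z_d$ vanishes.
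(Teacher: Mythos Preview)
Your overall strategy matches the paper's: prove \eqref{auxd_bound} by Gr\"onwall after observing $f_{d+1}(0)=0$, then prove \eqref{auxd_zd_bound} by subtracting \eqref{zd.evol} from \eqref{eq_x_k_BIS} and estimating three remainder pieces. The structure is right, but two of your remainder bounds fail as stated.

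The principal gap is your claimed interpolation $\norm{u_\gamma}{L^\infty}\norm{u_\gamma}{L^2}^2 = \Og{\norm{u_\gamma}{L^3}^3}$. This is false: take $u_\gamma$ equal to $\eta$ on a set of measure $\epsilon$ and equal to some $\delta\in(0,\eta)$ on the rest of $[0,\tm]$. With $\epsilon$ small enough one has $\norm{u_\gamma}{L^\infty}=\eta$, $\norm{u_\gamma}{L^2}^2 \approx \delta^2\tm$, $\norm{u_\gamma}{L^3}^3 \approx \delta^3\tm$, so the ratio is of order $\eta/\delta$, unbounded as $\delta\to 0$ while $\norm{u_\gamma}{L^\infty}\leqslant\eta$ remains fixed. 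Hence your bound for term (ii) does not close. The paper avoids this by keeping the time integral: $\int_0^t |u_d(s)|\,|\aux_d(s)|\,\ds = \Og{\norm{u_\gamma}{L^2}^2\,\norm{u_d}{L^1}}$, and then $\norm{u_d}{L^1}\norm{u_\gamma}{L^2}^2 = \Og{\norm{u_\gamma}{L^3}^3}$ by H\"older (since $\norm{u_\gamma}{L^1}\leqslant \tm^{2/3}\norm{u_\gamma}{L^3}$ and $\norm{u_\gamma}{L^2}^2\leqslant \tm^{1/3}\norm{u_\gamma}{L^3}^2$). The point is that the pointwise $|u_d|$ must be traded for an $L^1$ norm, not an $L^\infty$ norm.

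A second, smaller gap is in term (iii): the difference $\mathcal{K}_{l,d}(s)-G_l(0,0)$ is \emph{not} $\Og{\norm{u_\gamma}{L^2}^2}$. Since $\partial_p\phi_j(0,p)=\mathrm{Id}$ one has $(\partial_p\phi_j(u_j,\aux_j))^{-1}=\mathrm{Id}+\Og{|u_j|}$, and $G_l(u_l,\aux_l)-G_l(0,0)=\Og{|u_l|+|\aux_l|}$; together these give $\mathcal{K}_{l,d}-G_l(0,0)=\Og{|\aux_l|+|u_l|+\cdots+|u_d|}$, which is only \emph{first} order in the control (the intermediate states $\aux_l$ for $l<d$ are only first order, cf.\ \eqref{auxj_bound}). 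This is harmless once corrected: multiplying by $u_l^2$ and integrating, each piece is of the form $\int u_l^2|u_j|$ or $\int u_l^2|\aux_l|$, and H\"older as above yields $\Og{\norm{u_\gamma}{L^3}^3}$. Your other interpolation $\norm{u_\gamma}{L^2}^4=\Og{\norm{u_\gamma}{L^3}^3}$, by contrast, is correct and is also what the paper uses implicitly for term (i).
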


\begin{proof}
 The integral form of~\eqref{eq_x_k_BIS} is:
 \begin{equation} \label{auxdwp.1}
 \aux_d(t) = \int_0^t \left(f_0(\aux_d(s)) + u_d(s) f_{d+1}(\aux_d(s))
 + \sum_{l=\gamma}^d u_l(s)^2 \mathcal{K}_{l,d}(s)\right) \ds.
 \end{equation}
 Using the integrator assumption~\eqref{hyp.integrator}, one has
 $f_{d+1}(0) = 0$. Moreover $u_d = \Og{1}$.
 Hence, thanks to the
 regularity of the functions involved in~\eqref{auxdwp.1}, one obtains:
 \begin{equation} \label{auxdwp.2}
 \aux_d(t) = \int_0^t \Og{|\aux_d(s)|} \ds 
 + \Og{\norm{u_\gamma}{L^2}^2}.
 \end{equation}
 Estimate~\eqref{auxd_bound} follows from the application of 
 Gr\"onwall's lemma to~\eqref{auxdwp.2}. We turn to the next order
 bound, using the following integral formulation:
 \begin{equation} \label{auxdwp.3}
 \begin{split}
 \aux_d(t) - z_d(t)
 & = \int_0^t \Big( f_0(\aux_d(s)) - H_0 z_d(s) + u_d(s) f_{d+1}(\aux_d(s)) \Big) \ds \\
 & \quad + \sum_{j=\gamma}^d \int_0^t u_j^2(s) \Big(\mathcal{K}_{j,d}(s) - G_j(0, 0) \Big) \ds.
 \end{split}
 \end{equation}
 We estimate separately the different parts of the integrand in~\eqref{auxdwp.3}.
 First, using the regularity of $f_0$ and~\eqref{auxd_bound}, one has:
 \begin{equation} \label{R1}
 \begin{split} 
 f_0(\aux_d) - H_0 z_d 
 & = H_0 (\aux_d - z_d) + \Og{|\aux_d|^2} \\
 & = \Og{|\aux_d - z_d|} + \Og{\norm{u_\gamma}{L^2}^4}.
 \end{split}
 \end{equation}
 Moreover, using once again the integrator assumption~\eqref{hyp.integrator},
 the regularity of $f_{d+1}$, estimate~\eqref{auxd_bound} and H\"older's inequality, 
 one has:
 \begin{equation} \label{R2}
 \begin{split}
 \int_0^t u_d(s) f_{d+1}(\aux_d(s)) \ds
 & = \Og{ \int_0^t |u_d(s)| | \aux_d(s) | \ds } \\
 & = \Og{ \norm{u_\gamma}{L^2}^2 \int_0^t |u_d(s)| \ds }
 =  \Og{ \norm{u_\gamma}{L^3}^3 }.
 \end{split}
 \end{equation}
 For $1 \leqslant l \leqslant d$, the bound~\eqref{auxj_bound} 
 yields $\aux_l = \Og{1}$. The $C^2$ regularity of $\phi_l$ justifies that:
 \begin{equation} \label{dpphil}
  \partial_p \phi_l (u_l(t), \aux_l(t)) 
  = \partial_p \phi_l(0, \aux_l(t)) + \Og{|u_l(t)|}
  =   \mathrm{Id} + \Og{|u_l(t)|}.
 \end{equation}
 Inverting~\eqref{dpphil} proves that, for $0 \leqslant j \leqslant d$:
 \begin{equation} \label{R3}
 \begin{split}
 \mathcal{K}_{j,d} - G_j(0,0) & = \left(\partial_p \phi_d\big(u_d,\aux_d\big)\right)^{-1} \cdots
 \left(\partial_p \phi_{j+1}\big(u_{j+1},\aux_{j+1}\big)\right)^{-1} 
 G_j\big(u_j,\aux_j\big) - G_j(0,0) \\
 & = G_j\big(u_j,\aux_j\big) - G_j(0,0) + \Og{|u_d| + \ldots + |u_{j+1}|} \\
 & = \Og{|\aux_j| + |u_d| + \ldots + |u_{j}|}.
 \end{split}
 \end{equation}
 We deduce from~\eqref{auxj_bound},~\eqref{uju}, (\ref{R1}), (\ref{R2}) and (\ref{R3}) and H\"older's 
 inequality that~\eqref{auxdwp.3} can be written:
 \begin{equation} \label{auxdwp.4_AFF}
 \aux_d(t) - z_d(t) = \Og{\norm{u_\gamma}{L^3}^3} + \int_0^t \Og{|\aux_d - z_d|}. 
 \end{equation}
 Applying Gr\"onwall's lemma to~\eqref{auxdwp.4_AFF} concludes
 the proof of estimate~\eqref{auxd_zd_bound}.
\end{proof}

\subsection{Construction of the invariant manifold} 
\label{subsec:Implicit_function_G}

We construct the smooth manifold $\M$, as the graph of an implicit function $G$. 
The following lemma is a key ingredient in the construction of $G$.

\begin{lem} \label{Lem:F}
 Let $f \in C^\infty(\R^n \times \R, \R^n)$ with $f(0,0) = 0$
 satisfying~\eqref{hyp.integrator}.
 There exists a smooth map:
 \begin{equation}
 F: \left\{
 \begin{aligned}
 \Sone \times \Sone^\perp & \to \Sone^\perp, \\
 (p_\parl, p_\perp) & \mapsto F(p_\parl, p_\perp),
 \end{aligned}
 \right.
 \end{equation}
 such that $F(0,0)=0$, $\partial_\perp F(0,0)=\mathrm{Id}$ on $\Sone^\perp$ and
 one has:
 \begin{equation} \label{F:x/xd}
 F\big( \mathbb{P}x(t), \mathbb{P}^\perp x(t) \big) 
 - \mathbb{P}^\perp \aux_d(t) 
 = \Og{\norm{u_\gamma}{L^3}^3}.
 \end{equation}
\end{lem}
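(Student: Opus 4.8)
The plan is to construct $F$ explicitly from the flows $\phi_1,\dots,\phi_d$ of Section~\ref{Section:auxiliary}, and then to transfer the enhanced estimates of Lemma~\ref{Lemma:Enhanced} from $\aux_d$ to $x$ by a Gr\"onwall argument. \emph{Linear algebra and construction of $F$.} Since $H_0^d b = 0$ by the well-preparedness assumption~\eqref{hyp.integrator}, the Cayley--Hamilton theorem applied to $H_0$ on its invariant subspace $\Sone = \Span\{H_0^k b : k \geqslant 0\}$ (see~\eqref{S1H0b}) shows that $(b_0,\dots,b_{d-1})$ spans $\Sone$, hence, by a dimension count, is a basis of it. As $b_{j-1} = (-1)^{j-1}f_j(0)$ by~\eqref{def.vk} and~\eqref{def.fj}, the vectors $f_1(0),\dots,f_d(0)$ also form a basis of $\Sone$. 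I then introduce the smooth map $\Theta(\sigma,w) := \phi_1^{\sigma_1}\circ\cdots\circ\phi_d^{\sigma_d}(w)$, defined near $(0,0)$ on $\R^d\times\Sone^\perp$. By~\eqref{flow_fj}, $\Theta(0,0)=0$ and $D\Theta(0,0)\cdot(\delta\sigma,\delta w) = \sum_j\delta\sigma_j f_j(0) + \delta w$, a linear isomorphism of $\R^d\times\Sone^\perp$ onto $\Sone\oplus\Sone^\perp=\R^n$. The inverse function theorem provides a smooth local inverse $\Theta^{-1}(x)=(\sigma(x),w(x))$ with $\sigma(0)=0$, $w(0)=0$ and $Dw(0)=\mathbb{P}^\perp$; in particular $w(x)=\mathbb{P}^\perp x + \Ou{|x|^2}$. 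I set $F(p_\parl,p_\perp) := w(p_\parl+p_\perp)$, so that $F\in C^\infty(\Sone\times\Sone^\perp,\Sone^\perp)$, $F(0,0)=0$, $\partial_\perp F(0,0)=\mathbb{P}^\perp|_{\Sone^\perp}=\mathrm{Id}$, and $F(\mathbb{P}x,\mathbb{P}^\perp x)=w(x)$.

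\emph{Two key properties of $w$.} First, $\mathbb{P}^\perp f_j(0) = 0$ for every $1\leqslant j\leqslant d+1$: the first $d$ of these vectors lie in $\Sone$ by definition, while $f_{d+1}(0)=(-H_0)^d b = 0$ by~\eqref{hyp.integrator}. Second, $w$ is invariant along the flow $\phi_1$: writing $w(x)=\phi_d^{-\sigma(x)_d}\circ\cdots\circ\phi_1^{-\sigma(x)_1}(x)$, with $\sigma(x)$ the unique small parameter for which this vector lies in $\Sone^\perp$, one sees that replacing $x$ by $\phi_1^{s}(x)$ merely shifts the first component of $\sigma(x)$ by $s$, so that $w(\phi_1^s(x))=w(x)$ and hence $Dw(x)\cdot f_1(x)=0$; this removes the explicit $u$-term from the evolution of $w(x(t))$.

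\emph{The estimate, via Gr\"onwall.} Unwinding~\eqref{def:xj} gives $x(t)=\phi_1^{u_1(t)}\circ\cdots\circ\phi_d^{u_d(t)}(\aux_d(t))$, so $x$ differs from $\aux_d$ only through flows with small parameters. I set $e(t):=F(\mathbb{P}x(t),\mathbb{P}^\perp x(t)) - \mathbb{P}^\perp\aux_d(t) = w(x(t)) - \mathbb{P}^\perp\aux_d(t)$, so that $e(0)=0$. Differentiating, using $\dot x = f(x,u) = f_0(x)+uf_1(x)+u^2 G_0(u,x)$ (see~\eqref{DEF_G0}) together with $Dw(x)f_1(x)=0$, and using the evolution~\eqref{eq_x_k_BIS} of $\aux_d$, one obtains
\[
 \dot e = \big( Dw(x)f_0(x) - \mathbb{P}^\perp f_0(\aux_d) \big)
 - u_d\,\mathbb{P}^\perp f_{d+1}(\aux_d)
 - \sum_{l=\gamma}^d u_l^2\,\mathbb{P}^\perp \mathcal{K}_{l,d}
 + u^2\, Dw(x) G_0(u,x).
\]
I would then Taylor-expand each term at the origin, substituting $x = \phi_1^{u_1}\circ\cdots\circ\phi_d^{u_d}(\aux_d)$ and $w(x)=\mathbb{P}^\perp x + \Ou{|x|^2}$. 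The linear-in-$u$ contributions all vanish because $\mathbb{P}^\perp f_j(0)=0$ for $1\leqslant j\leqslant d+1$; the purely quadratic contributions (the $u_l^2$-terms of $Dw(x)f_0(x)$ and, in the nonlinear case, $u^2 Dw(x)G_0(u,x)$, matched against $\sum_l u_l^2\mathbb{P}^\perp\mathcal{K}_{l,d}$, recalling $G_0(0,0)=\tfrac12\partial_u^2 f(0,0)$ and that the $\aux_d$-side quadratic part is exactly $z_d$ by~\eqref{zd.eq}) cancel up to $\Ou{|\aux_d|^2}$ and up to remainders carrying an extra factor $|x|$ or $|\aux_d|$ or a primitive $u_j$ with $j\geqslant 1$. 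What remains is $\dot e = \Og{|e|} + r(t)$ where each term of $r$ is a product of a primitive $u_j$ ($j\geqslant1$), or $|x|$, or $u_l^2$, or $\aux_d$, with a quadratically small factor. Using $|\aux_d(t)|=\Og{\norm{u_\gamma}{L^2}^2}$ and $|\aux_d(t)-z_d(t)|=\Og{\norm{u_\gamma}{L^3}^3}$ from Lemma~\ref{Lemma:Enhanced}; the size bounds $|x(t)|=\Og{\norm{u_\gamma}{L^1}}$ (Lemma~\ref{Lemma:xu}) or $|x(t)|=\Oo{|u_1(t)|+\norm{u_1}{L^1}}$ (Lemma~\ref{Lemma:x1u1}); the bounds $\norm{u_j}{L^p}=\Og{\norm{u_\gamma}{L^p}}$ and $|u_j(t)|=\Og{\norm{u_\gamma}{L^1}}$ for $j\geqslant1$ from~\eqref{uju}; and the interpolation inequalities $\norm{u_\gamma}{L^1}\norm{u_\gamma}{L^2}^2 = \Og{\norm{u_\gamma}{L^3}^3}$ and $\norm{u_\gamma}{L^2}^4 = \Og{\norm{u_\gamma}{L^3}^3}$ (from the Cauchy--Schwarz bound $\norm{v}{L^2}^2\leqslant\norm{v}{L^1}^{1/2}\norm{v}{L^3}^{3/2}$ and H\"older's $\norm{v}{L^1}\leqslant T^{2/3}\norm{v}{L^3}$, with $T\leqslant\tm$ and $\norm{u_\gamma}{L^\infty}$ small), one checks $\int_0^t |r(s)|\,\ds = \Og{\norm{u_\gamma}{L^3}^3}$; Gr\"onwall's lemma then yields $|e(t)| = \Og{\norm{u_\gamma}{L^3}^3}$, which is~\eqref{F:x/xd}.

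\emph{The main obstacle.} The delicate part is the bookkeeping behind "$\int_0^t|r|\,\ds=\Og{\norm{u_\gamma}{L^3}^3}$": one must verify that the linear and quadratic parts of $\dot e$ genuinely cancel (this uses $\mathbb{P}^\perp f_j(0)=0$ and the explicit second-order expansion underlying~\eqref{zd.eq}), and that every surviving term is either absorbed into $\Og{|e|}$ or carries, \emph{after integration in time}, enough factors of the iterated primitives — equivalently of $\norm{u_\gamma}{L^1}$ — to become cubic in $L^3$. The subtlety specific to the control-affine setting is precisely that a flow parameter $u_1(t)$ is only $\norm{u_\gamma}{L^\infty}$-small pointwise, so its pointwise product with a quadratically small quantity fails to be $\Og{\norm{u_\gamma}{L^3}^3}$; it is the differentiate-then-Gr\"onwall structure that places such $u_1$'s inside a time integral $\int_0^t|u_1(s)|(\cdots)\,\ds \leqslant \norm{u_1}{L^1}(\cdots)$ and so restores the cubic bound.
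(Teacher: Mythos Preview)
Your construction of $F$ is different from the paper's: you take $F(p_\parl,p_\perp)=w(p_\parl+p_\perp)$ with $(\sigma,w)=\Theta^{-1}$, so your $F$ depends only on the sum; the paper instead inverts $\psi(\tau)=\mathbb{P}\big(\phi_1^{\tau_1}\circ\cdots\circ\phi_d^{\tau_d}(0)\big)$ to get $\alpha(p_\parl)=\psi^{-1}(p_\parl)$ and sets $F(p_\parl,p_\perp)=\mathbb{P}^\perp\big(\phi_d^{-\alpha_d(p_\parl)}\circ\cdots\circ\phi_1^{-\alpha_1(p_\parl)}(p_\parl+p_\perp)\big)$. Your $\phi_1$-invariance $Dw(x)f_1(x)=0$ is a nice observation, and the two $F$'s agree to second order in $p_\parl$, so your $F$ does satisfy the estimate. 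But your \emph{proof} of the estimate has a genuine gap.

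The gap is exactly where you flag it. After removing the $u$-term via $Dw\cdot f_1=0$, the expression for $\dot e$ still carries terms that are purely quadratic in the control: $\mathbb{P}^\perp Q_0(y,y)$, $D^2w(0)[y,H_0 y]$, $u\,\mathbb{P}^\perp H_1 y$, $u\,D^2w(0)[y,b]$, and $-\sum_l u_l^2\,\mathbb{P}^\perp G_l(0,0)$. For Gr\"onwall to close, these must combine into exactly $\mathbb{P}^\perp H_0$ times the quadratic part of $e$; equivalently, one must verify $\mathbb{P}^\perp z(t)+\tfrac12 D^2w(0)[y(t),y(t)]=\mathbb{P}^\perp z_d(t)$. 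That is a concrete algebraic identity in $D^2w(0)$ which you never check. The sentence that the terms ``cancel up to remainders carrying an extra factor $|x|$, $|\aux_d|$, or $u_j$'' is not an argument: a pure quadratic like $u_1(t)^2$ already carries such factors yet integrates to $\|u_1\|_{L^2}^2$, not to $\Og{\|u_\gamma\|_{L^3}^3}$.

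The paper avoids this entirely by \emph{not} differentiating. Its key step is the pointwise estimate $|\alpha_j(\mathbb{P}x)-u_j|=\Og{\|u_\gamma\|_{L^2}^2}$, which follows at once from $\mathbb{P}x-\psi(u_1,\ldots,u_d)=\Og{|\aux_d|}$ and Lemma~\ref{Lemma:Enhanced}. Then a short induction on $j$ shows
\[
\phi_j^{-\alpha_j(\mathbb{P}x)}\circ\cdots\circ\phi_1^{-\alpha_1(\mathbb{P}x)}(x)
=\aux_j+\sum_{l\leqslant j}(u_l-\alpha_l(\mathbb{P}x))\,f_l(0)+\Og{\|u_\gamma\|_{L^3}^3},
\]
because each new error term is either $(u_l-\alpha_l)^2$ or $(u_l-\alpha_l)\cdot O(|\aux_l|)$, a product of a quadratically small factor with one of size $\Og{\|u_\gamma\|_{L^1}}$, hence cubic in $L^3$ by H\"older. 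Applying $\mathbb{P}^\perp$ kills the $f_l(0)$ terms and gives~\eqref{F:x/xd}. The errors are structurally cubic; no cancellation is needed. If you wish to keep your chart, the same direct argument works for it once you note that $\Theta^{-1}\circ\tilde\Theta$ is the identity on $\R^d\times\Sone^\perp$, which yields $|\sigma_j(x)-u_j|=\Og{|\mathbb{P}\aux_d|}=\Og{\|u_\gamma\|_{L^2}^2}$.
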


\begin{proof}
 \textbf{Construction of} $F$. 
 We introduce the smooth map $\psi: \R^d \to \Sone$ defined by:
 \begin{equation}
 \label{def.psi}
 \psi(u_1, \ldots ,u_d) := 
 \mathbb{P} \left( \phi_1^{u_1} \circ \cdots \circ \phi_d^{u_d}(0) \right).
 \end{equation}
 Using (\ref{flow_fj}), we obtain by differentiating~\eqref{def.psi} 
 that, for every $h \in \R^d$,
 \begin{equation} \label{psi'}
 \psi'(0) \cdot h = h_1 f_1(0) + \ldots + h_d f_d(0).
 \end{equation}
 From~\eqref{psi'}, $\psi'(0) : \R^d \to \Sone$ is bijective because
 $(f_1(0), \ldots, f_d(0))$ is a basis of $\Sone$.
 By the inverse mapping theorem, $\psi$ is a local 
 $C^\infty$-diffeomorphism around $0$.
 We introduce the smooth map 
 $\alpha: \Omega \subset \Sone \to \R^d$ defined (locally around $0$) by:
 \begin{equation}
 \label{def.alpha}
 \alpha(p_\parl) = \big( \alpha_1(p_\parl) , \ldots , \alpha_d(p_\parl) \big) 
 = \psi^{-1} (p_\parl).
 \end{equation}
 In particular, $\alpha_i(0)=0$ for $1 \leqslant i \leqslant d$.
 We define $F$ as:
 \begin{equation} 
 \label{def.F}
 F(p_\parl, p_\perp) 
 := \mathbb{P}^\perp \Big( \phi_d^{-\alpha_d(p_\parl)} \circ \cdots \circ \phi_1^{-\alpha_1(p_\parl)}(p_\parl + p_\perp) \Big).
 \end{equation}
 Then, $F(0,\cdot)$ is the identity on $\Sone^\perp$ 
 because $\alpha(0)=0$ and $\phi_j^0 = \mathrm{Id}$. 
 One has:
 \begin{equation} \label{eq.dF.perp}
 F(0,0)=0\,, \quad
 \frac{\partial F}{\partial p_\parl}(0,0) = 0
 \quad \text{and} \quad 
 \frac{\partial F}{\partial p_\perp}(0,0) = \mathrm{Id}.
 \end{equation}
 
 \bigskip \noindent \textbf{Proof of estimate~\eqref{F:x/xd}.}
 First, using~\eqref{def:xj} 
 and the $C^1$ regularity of the flows, we have:
 \begin{equation} \label{aj.uj.1}
 \mathbb{P} x - \psi(u_1, \ldots, u_d)
 = \mathbb{P} \phi_1^{u_1} \circ \cdots \circ \phi_d^{u_d}(\aux_d)
 - \mathbb{P} \phi_1^{u_1} \circ \cdots \circ \phi_d^{u_d}(0)
 = \Og{|\aux_d|}.
 \end{equation}
 Hence, plugging estimate~\eqref{auxd_bound} from Lemma~\ref{Lemma:Enhanced}
 into~\eqref{aj.uj.1} and using that $\psi^{-1}$ is locally 
 Lipschitz-continuous proves that:
 \begin{equation} \label{aj.uj}
 \left| \alpha_j (\mathbb{P}x) - u_j \right|
 = \Og{\norm{u_{\gamma}}{L^2}^2}.
 \end{equation}
 Using~\eqref{aj.uj}, we will now prove that:
 \begin{equation} \label{HR_flots_comp}
 \phi_j^{-\alpha_j(\mathbb{P}x)} \circ \cdots \circ 
 \phi_1^{-\alpha_1(\mathbb{P}x)} (x)
 = \aux_j + \sum_{l=1}^j \Big( u_l-\alpha_l(\mathbb{P}x) \Big) f_l (0) 
 + \Og{\norm{u_{\gamma}}{L^3}^3}.
 \end{equation}
 Equation~\eqref{HR_flots_comp} with $j = d$ proves~\eqref{F:x/xd}
 because the projection $\mathbb{P}^\perp$ involved in~\eqref{def.F}
 kills the $d$ terms $f_1(0), \ldots, f_{d}(0)$ from $\Sone$.
 We proceed by induction on $1 \leqslant j \leqslant d$ to prove (\ref{HR_flots_comp}).
 
 \bigskip \noindent \textbf{Initialization for $j = 1$.} 
 By definition of $\aux_1$, we have:
 \begin{equation} \label{eq.p1a1}
 \phi_1^{-\alpha_1(\mathbb{P}x)} (x)
 = \phi_1^{-\alpha_1(\mathbb{P}x)} \circ \phi_1^{u_1} (\aux_1)
 = \phi_1\left( u_1 - \alpha_1(\mathbb{P}x), \aux_1\right).
 \end{equation}
 Then, using a Taylor formula with respect to the time-like variable
 of the flow $\phi_1$, we get from~\eqref{flow_fj},~\eqref{aj.uj}
 and~\eqref{eq.p1a1}:
 \begin{equation}
 \begin{split}
 \phi_1^{-\alpha_1(\mathbb{P}x)} (x)
 & = \aux_1 + \left(u_1 - \alpha_1(\mathbb{P}x)\right) \partial_\tau \phi_1(0, \aux_1)
 + \Og{|u_1 - \alpha_1(\mathbb{P}x)|^2} \\
 & = \aux_1 + \left(u_1 - \alpha_1(\mathbb{P}x)\right) f_1(\aux_1)
 + \Og{\norm{u_{\gamma}}{L^2}^4} \\
 & = \aux_1 + \left(u_1 - \alpha_1(\mathbb{P}x)\right) f_1(0)
 + \Og{\norm{u_{\gamma}}{L^3}^3},
 \end{split}
 \end{equation}
 thanks to the $C^1$ regularity of $f_1$, estimate~\eqref{auxj_bound} and H\"older's inequality.
 
 \bigskip \noindent \textbf{Heredity.} Let $1 \leqslant j < d$ and assume 
 that~\eqref{HR_flots_comp} holds. Applying 
 $\phi_{j+1}^{-\alpha_{j+1}(\mathbb{P}x)}$ to this relation
 and using Taylor's formula gives:
 \begin{equation} \label{her.1}
 \begin{split}
 & \phi_{j+1}^{-\alpha_{j+1}(\mathbb{P}x)} \circ \phi_j^{-\alpha_j(\mathbb{P}x)}
 \circ \cdots \circ \phi_1^{-\alpha_1(\mathbb{P}x)} (x) \\
 & = \phi_{j+1}^{-\alpha_{j+1}(\mathbb{P}x)}
 \left( \aux_j + \sum_{l=1}^j \Big( u_l-\alpha_l(\mathbb{P}x) \Big) f_l (0) 
 + \Og{\norm{u_{\gamma}}{L^3}^3} \right) \\
 & = \phi_{j+1}^{-\alpha_{j+1}(\mathbb{P}x)}(\aux_j)
 + \sum_{j=1}^l (u_l-\alpha_l(\mathbb{P}x)) \partial_p \phi_{j+1}^{-\alpha_{j+1}(\mathbb{P}x)}
 (\aux_j) f_l(0) + \Og{\norm{u_{\gamma}}{L^3}^3} \\
 & = \phi_{j+1}^{u_{j+1} - \alpha_{j+1}(\mathbb{P}x)} (\aux_{j+1})
 + \sum_{j=1}^l (u_l-\alpha_l(\mathbb{P}x)) f_l(0) + \Og{\norm{u_{\gamma}}{L^3}^3},
 \end{split}
 \end{equation}
 because, thanks to~\eqref{aj.uj}, for $1 \leqslant l \leqslant j$:
 \begin{equation}
 \begin{split}
 (u_l-\alpha_l(\mathbb{P}x)) \left(\partial_p \phi_{j+1}^{-\alpha_{j+1}(\mathbb{P}x)} - \mathrm{Id}  \right) f_l(0) 
 & = \Og{\norm{u_{\gamma}}{L^2}^2  \left|\alpha_{j+1}(\mathbb{P}x)\right|} \\
 & = \Og{\norm{u_{\gamma}}{L^2}^2 \left(|u_{j+1}| + \norm{u_{\gamma}}{L^2}^2 \right)} \\
 & = \Og{\norm{u_{\gamma}}{L^3}^3}.
 \end{split}
 \end{equation}
 Moreover, the same arguments as in the initialization lead to:
 \begin{equation} \label{her.2}
 \phi_{j+1}^{u_{j+1} - \alpha_{j+1}(\mathbb{P}x)} (\aux_{j+1})
 = \aux_{j+1} + \left(u_{j+1} - \alpha_{j+1}(\mathbb{P}x)\right) f_{j+1}(0)
 + \Og{\norm{u_{\gamma}}{L^3}^3}.
 \end{equation}
 Thus, we obtain (\ref{HR_flots_comp}) at the next order 
 by incorporating~\eqref{her.2} in~\eqref{her.1}.
\end{proof}

\begin{lem}
 Let $f \in C^\infty(\R^n \times \R, \R^n)$ with $f(0,0) = 0$
 satisfying~\eqref{hyp.integrator}.
 There exists a smooth map $G : \Sone \to \Sone^\perp$ with $G(0) = 0$
 and $G'(0) = 0$ such that:
 \begin{equation} \label{Estimate:G}
  \mathbb{P}^\perp x(t) - G(\mathbb{P}x(t)) - \mathbb{P}^\perp \aux_d(t)
  = \Og{\norm{u_\gamma}{L^3}^3}.
 \end{equation}
\end{lem}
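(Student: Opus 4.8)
The plan is to build $G$ by the implicit function theorem applied to the map $F$ of Lemma~\ref{Lem:F}, and then to propagate the cubic estimate of that lemma through the (near-identity) linearization of $F$ in its second variable.

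\emph{Construction of $G$.} Recall from Lemma~\ref{Lem:F} and~\eqref{eq.dF.perp} that $F\in C^\infty(\Sone\times\Sone^\perp,\Sone^\perp)$ with $F(0,0)=0$, $\partial_\parl F(0,0)=0$ and $\partial_\perp F(0,0)=\mathrm{Id}$. Since $\partial_\perp F(0,0)$ is invertible, the implicit function theorem yields a smooth map $G:\Sone\to\Sone^\perp$, defined near $0$, with $G(0)=0$ and $F(p_\parl,G(p_\parl))=0$. Differentiating this identity at $p_\parl=0$ and using $\partial_\parl F(0,0)=0$ and $\partial_\perp F(0,0)=\mathrm{Id}$ gives $G'(0)=0$; hence $\M$, the graph of $G$ as in~\eqref{def.MG1}, is a smooth $d$-dimensional manifold tangent to $\Sone$ at the origin.

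\emph{Reduction to Lemma~\ref{Lem:F}.} Fix $\tm>0$ and a trajectory with $x(0)=0$ and $\norm{u_\gamma}{L^\infty}$ small enough; by Lemmas~\ref{Lemma:xu},~\ref{Lemma:x1u1},~\ref{Lemma:AuxEst1} and~\ref{Lemma:Enhanced} all of $|x(t)|$, $|\mathbb{P}x(t)|$, $|\mathbb{P}^\perp x(t)|$ and $|\aux_d(t)|$ are small, so $G(\mathbb{P}x(t))$ is well-defined and $F$ may be evaluated along the trajectory. Since $F(\mathbb{P}x,G(\mathbb{P}x))=0$ by construction while $F(\mathbb{P}x,\mathbb{P}^\perp x)=\mathbb{P}^\perp\aux_d+\Og{\norm{u_\gamma}{L^3}^3}$ by Lemma~\ref{Lem:F}, I would subtract these and use the first-order Taylor formula for $F$ in its second argument along the segment from $G(\mathbb{P}x)$ to $\mathbb{P}^\perp x$, which gives
\[
\big(\mathrm{Id}+E(t)\big)\big(\mathbb{P}^\perp x(t)-G(\mathbb{P}x(t))\big)=\mathbb{P}^\perp\aux_d(t)+\Og{\norm{u_\gamma}{L^3}^3},
\]
where $E(t)$ is the average over $s\in[0,1]$ of $\partial_\perp F\big(\mathbb{P}x(t),\,G(\mathbb{P}x(t))+s(\mathbb{P}^\perp x(t)-G(\mathbb{P}x(t)))\big)-\mathrm{Id}$. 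Because $\partial_\perp F$ is continuous with $\partial_\perp F(0,0)=\mathrm{Id}$, for small controls $\mathrm{Id}+E$ is invertible with uniformly bounded inverse; inverting first yields the a priori bound $|\mathbb{P}^\perp x-G(\mathbb{P}x)|=\Og{\norm{u_\gamma}{L^2}^2}$ (from $\mathbb{P}^\perp\aux_d=\Og{\norm{u_\gamma}{L^2}^2}$, Lemma~\ref{Lemma:Enhanced}), and then, using $G(\mathbb{P}x)=\Og{|\mathbb{P}x|^2}$, the refined estimate $E(t)=\Og{|\mathbb{P}x(t)|+\norm{u_\gamma}{L^2}^2}$. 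Isolating the leading term gives $\mathbb{P}^\perp x-G(\mathbb{P}x)-\mathbb{P}^\perp\aux_d=-E(\mathrm{Id}+E)^{-1}\mathbb{P}^\perp\aux_d+\Og{\norm{u_\gamma}{L^3}^3}$.

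\emph{Closing the estimate, and the main obstacle.} It then remains to show that the residual $E(\mathrm{Id}+E)^{-1}\mathbb{P}^\perp\aux_d$ is $\Og{\norm{u_\gamma}{L^3}^3}$. Expanding it with $\mathbb{P}^\perp\aux_d=\Og{\norm{u_\gamma}{L^2}^2}$, $E=\Og{|\mathbb{P}x|+\norm{u_\gamma}{L^2}^2}$ and the a priori size bounds for $|\mathbb{P}x(t)|$ from Lemmas~\ref{Lemma:xu} and~\ref{Lemma:x1u1}, one is left with products of iterated primitives of $u$ that must be compared by the Sobolev and H\"older relations on $[0,T]\subset[0,\tm]$, typically $\norm{u_\gamma}{L^1}\norm{u_\gamma}{L^2}^2=\Og{\norm{u_\gamma}{L^3}^3}$ and $\norm{u_\gamma}{L^2}^4=\Og{\norm{u_\gamma}{L^3}^3}$ (valid under the smallness assumption since $T\leqslant\tm$), the whole bookkeeping being carried out uniformly in the control-affine and nonlinear cases through the parameters $\gamma,q$ of Definition~\ref{Definition:Ogamma}. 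I expect this last step to be the delicate part: one must check that \emph{every} cross term produced by inverting $\partial_\perp F$ and by the Taylor remainders is genuinely cubic in the relevant primitive of $u$, not merely small; this is exactly where the gain already built into Lemma~\ref{Lem:F} — replacing the flow times $u_j$ by the implicit corrections $\alpha_j(\mathbb{P}x)$, so that the residual flow times $u_j-\alpha_j(\mathbb{P}x)$ are quadratically small — has to be combined with the embeddings of $(0,T)$ to absorb higher-order contributions.
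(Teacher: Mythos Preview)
Your approach is essentially the paper's: the only difference is that the paper applies the implicit function theorem to the augmented map $\tilde F(p_\parl,p_\perp,\rho):=F(p_\parl,p_\perp)-\rho$, obtaining a smooth $\Theta$ with $p_\perp=\Theta(p_\parl,\rho)$ and $G:=\Theta(\cdot,0)$, so that Lemma~\ref{Lem:F} reads $\mathbb{P}^\perp x=\Theta\big(\mathbb{P}x,\,\mathbb{P}^\perp\aux_d+\Og{\norm{u_\gamma}{L^3}^3}\big)$ and a single first-order expansion of $\Theta$ in its second argument (using $\partial_\rho\Theta(0,0)=\mathrm{Id}$) yields~\eqref{Estimate:G} directly---this simply packages your explicit inversion of $\mathrm{Id}+E$ into the implicit function $\Theta$. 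The ``delicate'' cross-term step you flag is handled identically in both versions, via~\eqref{auxd_bound} and the H\"older and smallness bounds you list.
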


\begin{proof}
 We consider the smooth map:
 \begin{equation} \label{def:FFtilde}
 \tilde{F} : \left\{
 \begin{aligned}
 \Sone \times \Sone^\perp \times \Sone^\perp & \to \Sone^\perp, \\
 (p_\parl, p_\perp, \rho) & \mapsto F(p_\parl, p_\perp) - \rho.
 \end{aligned}
 \right.
 \end{equation}
 One checks from~\eqref{eq.dF.perp}  that $\tilde{F}(0,0,0) = 0$ 
 and $\partial_\perp \tilde{F}(0,0,0) = \mathrm{Id}$.
 By the implicit function theorem, there exists an open neighborhood $U$ of
 $(0,0)$ in $\Sone\times\Sone^\perp$, an open neighborhood of $V$ of $0$
 in $\Sone^\perp$ and a smooth map $\Theta : U \to V$ such that,
 for any $(p_\parl, \rho) \in U$:
 \begin{equation}
 \Big( p_\perp \in V \text{ and } 
 \tilde{F}(p_\parl, p_\perp, \rho) = 0 
 \Big)
 \Leftrightarrow
 \Big(
 p_\perp = \Theta(p_\parl, \rho)
 \Big).
 \end{equation}
 In particular, $\Theta(0,0) = 0$. Moreover, by differentiating the relation:
 \begin{equation} \label{eq.FtildeTheta}
 \tilde{F}(p_\parl,\Theta(p_\parl, \rho), \rho) = 0,
 \end{equation}
 we obtain thanks to~\eqref{eq.dF.perp}:
 \begin{equation} \label{diff_d=Id}
 \frac{\partial \Theta}{\partial p_\parl}(0,0) = 0
 \quad \text{and} \quad
 \frac{\partial \Theta}{\partial \rho}(0,0) = - \mathrm{Id}.
 \end{equation}
 We define $G:\Sone \rightarrow \Sone^\perp$ by:
 \begin{equation} \label{def.G}
 G(p_\parl) := \Theta(p_\parl,0).
 \end{equation}
 One checks that $G(0) = 0$ and $G'(0) = 0$ from~\eqref{diff_d=Id}.
 Moreover, using (\ref{F:x/xd}), the $C^1$ regularity of $\Theta$,
 \eqref{auxd_bound} and (\ref{diff_d=Id}) we get:
 \begin{equation} \label{rel_G}
 \begin{split}
 \mathbb{P}^\perp x(t)
 & =  \Theta\Big( \mathbb{P} x(t) , \mathbb{P}^\perp \aux_d(t) + 
 \Og{\norm{u_\gamma}{L^3}^3} \Big) \\
 & = \Theta\Big( \mathbb{P}x(t) , 0 \Big) + \mathbb{P}^\perp \aux_d(t) + 
 \Og{\norm{u_\gamma}{L^3}^3} \\
 & =  G\big(\mathbb{P}x(t)\big) + \mathbb{P}^\perp \aux_d(t)  +
 \Og{\norm{u_\gamma}{L^3}^3}.
 \end{split}
 \end{equation}
 Equation~\eqref{rel_G} concludes the proof of~\eqref{Estimate:G}.
\end{proof}

\begin{rk}
 The functions $F$ and $G$ constructed in this subsection are only defined in small 
 neighborhoods of the origin. However, since we are always considering controls 
 which are small in $W^{-\gamma,\infty}$ and since Lemmas~\ref{Lemma:xu} and~\ref{Lemma:x1u1} imply that,
 for such controls, trajectories stay in a small neighborhood of the origin,
 we only use the constructed functions where they are well-defined. In fact,
 one can also choose any smooth extension of these functions to the whole
 space. This procedure simplifies the statement of our theorems, avoiding the
 need to mention this detail. 
\end{rk}

\subsection{Invariant manifold case}

When $\Stwo + \mathbb{R} d_0 = \Sone$, then, by (\ref{zd.eq}),  $z_d(t) \in \Sone$ for any $t\in[0,T]$ because $G_j(0,0)$ belongs to $\Sone$ 
(for $j = 0$, see~\eqref{DEF_d0} and~\eqref{DEF_G0}, for $1 \leqslant j \leqslant d$, see~\eqref{def:Gj} and~\eqref{Fj_2})
and this space is stable by $H_0$.  Thus $\mathbb{P}^\perp z_d(t) = 0$.
Taking this fact into account and combining~\eqref{auxd_zd_bound} 
from Lemma~\ref{Lemma:Enhanced} with~\eqref{Estimate:G} gives:
\begin{equation} \label{xd_dand_S1}
\mathbb{P}^\perp x - G(\mathbb{P}x) = \Og{\norm{u_\gamma}{L^3}^3}.
\end{equation}
Estimate~\eqref{xd_dand_S1} and the meaning of the notation $\Og{\cdot}$ yield
the existence of positive constants $C$ and $\eta$ such that the 
conclusions~\eqref{Estimate:TQANL.1} and~\eqref{CCL:TQA:1} 
of Theorems~\ref{Theorem:QANL} and~\ref{Theorem:QAA} 
hold for controls that are smaller than $\eta$ in $W^{-\gamma,\infty}$-norm,
in the particular case of well-prepared systems satisfying~\eqref{hyp.integrator}.

\subsection{Quadratic drift case}

We consider the case $\Stwo + \mathbb{R} d_0  \not\subset \Sone$ and prove that
the state drifts towards the direction $d_k$ defined by~\eqref{DEF_d0} for $k=0$
and~\eqref{def.dk} for $k \geqslant 1$.

\subsubsection{Coercivity of the quadratic drift for small-times}
\label{Subsubsection:coercivity}

We know from Lemma~\ref{Lemma:ComingOut} that there exists 
$0 \leqslant k \leqslant d$ such that $G_j(0,0) \in \Sone$ for 
$0\leqslant j < k$ and $G_k(0,0) \notin \Sone$. Indeed, thanks 
to~\eqref{DEF_G0},~\eqref{def:Gj} and~\eqref{Fj_2}, 
one has $G_k(0,0) = \frac{1}{2} d_k$.
The heuristic is then that:
\begin{equation} \label{pp.zd.dk}
\mathbb{P}^\perp z_d(t) 
\approx
\frac{1}{2} \left(\int_0^t u_k^2(s) \ds\right) 
d_k.
\end{equation}
To make this statement more precise, we define, for $t \geqslant 0$, 
$Q_t(u) := \langle z_d(t), d_k \rangle$ and we introduce the following set:
\begin{equation} \label{def:TT}
 \mathbb{T} := \left\{ T > 0; \enskip  \exists C_T > 0\,, \forall t \in (0,T]\,, \forall v \in L^2(0,t), Q_t(v) \geqslant C_T \int_0^t v_k(s)^2 \ds \right\}.
\end{equation}

\begin{lem} \label{Lemma:Coercive}
 The set $\mathbb{T}$ is non empty.
\end{lem}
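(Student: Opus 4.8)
The plan is to show that the quadratic form $Q_t$ is coercive with respect to $\int_0^t u_k(s)^2\ds$ for sufficiently small times, which amounts to controlling all the other terms in $z_d$ by the $L^2$-norm of $u_k$ times a small factor. Recall from~\eqref{zd.eq} that
\begin{equation*}
z_d(t) = \sum_{l=\gamma}^d \int_0^t u_l^2(s) e^{(t-s)H_0} G_l(0,0)\,\ds,
\end{equation*}
so that, pairing with $d_k$ and using $G_k(0,0) = \tfrac12 d_k$,
\begin{equation*}
Q_t(v) = \frac12 \int_0^t \langle e^{(t-s)H_0} d_k, d_k\rangle\, v_k(s)^2\,\ds + \sum_{l \ne k} \int_0^t \langle e^{(t-s)H_0} G_l(0,0), d_k\rangle\, v_l(s)^2\,\ds.
\end{equation*}
First I would observe that the ``diagonal'' term is the dominant one: for $t \le 1$, $\langle e^{(t-s)H_0}d_k, d_k\rangle = |d_k|^2 + O(t)$, so it is bounded below by $\tfrac14 |d_k|^2 \int_0^t v_k(s)^2\,\ds$ once $t$ is small enough, using $d_k \ne 0$.

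The heart of the matter is to absorb the off-diagonal terms $\int_0^t v_l^2$ for $l \ne k$. The key point is that $v_l$ and $v_k$ are iterated primitives of the same function $v$, so on a short interval they are related by Poincar\'e--Wirtinger-type inequalities. Concretely, for $l < k$ one has $v_k = I^{k-l} v_l$ where $I$ is integration from $0$, and a scaling/Poincar\'e argument on $(0,t)$ gives $\|v_l\|_{L^2(0,t)} \lesssim t^{-(k-l)}\|v_k\|_{L^2(0,t)}$ — wait, that inequality goes the wrong way. The correct move is the reverse: I need $\|v_l\|_{L^2(0,t)} \le C\, \|v_k\|_{L^2(0,t)}$ for $l > k$, which holds with $C = O(t^{l-k})$ and hence is small for small $t$; and for $l < k$ I need to reexamine. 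Here the saving grace is that the problematic indices $l < k$ have $G_l(0,0) \in \Sone$ (this is exactly the content of Lemma~\ref{Lemma:ComingOut}, recalled at the start of Subsection~4.4), so $\langle e^{(t-s)H_0} G_l(0,0), d_k\rangle$: since $\Sone$ is $H_0$-stable, $e^{(t-s)H_0}G_l(0,0) \in \Sone$, and $d_k = \mathbb{P}^\perp(\cdots) \in \Sone^\perp$, hence this inner product vanishes identically. Thus only the terms with $l > k$ survive in the off-diagonal sum, and those are controlled by $O(t^{2(l-k)})\int_0^t v_k^2$.

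So the proof structure is: (i) decompose $Q_t(v)$ as diagonal plus off-diagonal; (ii) lower-bound the diagonal term by $c|d_k|^2 \int_0^t v_k^2$ for $t$ small, using continuity of $e^{tH_0}$ at $t = 0$; (iii) show the off-diagonal terms with $l < k$ vanish because $G_l(0,0) \in \Sone$ and $d_k \perp \Sone$; (iv) bound the off-diagonal terms with $l > k$ in absolute value by $C t^{2(l-k)} \int_0^t v_k^2$ via the elementary estimate $\|v_l\|_{L^2(0,t)} \le (t^{l-k}/(l-k)!)\|v_k\|_{L^2(0,t)}$ (iterated Cauchy--Schwarz on the iterated primitive); (v) conclude that for $t$ small enough, say $t \le T_0$, $Q_t(v) \ge \tfrac18|d_k|^2 \int_0^t v_k^2$, so $T_0 \in \mathbb{T}$ and $\mathbb{T} \ne \emptyset$. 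I expect step (iv), i.e.\ carefully setting up the Cauchy--Schwarz estimate for iterated primitives on a short interval with the right power of $t$, to be the only genuinely computational part; the main conceptual obstacle is recognizing that the dangerous low-order indices are harmless for the \emph{orthogonal} projection onto $d_k$ thanks to the $\Sone$-membership of $G_l(0,0)$, which is precisely why the statement is phrased in terms of $d_k \in \Sone^\perp$ rather than the raw bracket.
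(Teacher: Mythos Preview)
Your proposal is correct and follows essentially the same approach as the paper: drop the terms $l<k$ because $G_l(0,0)\in\Sone$ (which is $H_0$-stable) while $d_k\in\Sone^\perp$, bound the diagonal term $l=k$ below using $\langle e^{(t-s)H_0}G_k(0,0),d_k\rangle = \tfrac12|d_k|^2 + O(t)$, and absorb the terms $l>k$ via the iterated-primitive estimate $\norm{v_l}{L^2(0,t)}\lesssim t^{l-k}\norm{v_k}{L^2(0,t)}$. The only cosmetic remark is that $G_k(0,0)=\tfrac12 d_k$ holds only after projecting onto $\Sone^\perp$, but since the $\Sone$-part of $G_k(0,0)$ stays in $\Sone$ under $e^{(t-s)H_0}$ and is orthogonal to $d_k$, your formula for the diagonal term is nonetheless correct.
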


\begin{proof}
 From~\eqref{zd.eq}, using that $\langle G_j(0,0), d_k \rangle = 0$ for $j < k$, 
 we compute:
 \begin{equation} \label{heur.est0}
  Q_t(u) = \sum_{j=k}^{d} \int_0^t u_j(s)^2 \langle 
 e^{(t-s)H_0} G_j(0,0), d_k \rangle \ds,
 \end{equation}
 There exists $C>0$ such that, for $T$ small enough
 and $t \in [0,T]$:
 \begin{equation} \label{heur.est1}
 \left| \int_0^t u_k(s)^2 \left( e^{(t-s)H_0} - \mathrm{Id} \right) G_k(0,0) \ds \right| 
 \leqslant C t \int_0^t u_k(s)^2 \ds
 \end{equation}
 and, for every $k+1 \leqslant j \leqslant d$,
 \begin{equation} \label{heur.est2}
 \left| \int_0^t u_j(s)^2 e^{(t-s)H_0} G_j(0,0) \ds \right| 
 \leqslant C t^2\int_0^t u_k(s)^2 \ds,
 \end{equation}
 because:
 \begin{equation}
 |u_j(s)| \leqslant s^{j-k-1} \int_0^t |u_k(\tau)| \mathrm{d}\tau 
 \leqslant s^{j-k-\frac{1}{2}} \left(\int_0^s |u_k(\tau)|^2 \mathrm{d}\tau\right)^{\frac{1}{2}}.
 \end{equation}
 Gathering~\eqref{heur.est0},~\eqref{heur.est1} and~\eqref{heur.est2}
 one has $Q_t(u) \geqslant \frac{1}{4} \norm{u_k}{L^2(0,t)}$
 for $T$ small enough.
\end{proof}

Thanks to Lemma~\ref{Lemma:Coercive}, we can define the coercivity time 
$T^*$ as:
\begin{equation} \label{def:T*}
 T^* := \sup \left\{ T > 0; \enskip T \in \mathbb{T} \right\}.
\end{equation}
In the general case $T^* < + \infty$ (see Example~\ref{Example:Competition}
where $T^* = \pi$).
However, in some particular easy cases, it is also possible that $T^* = + \infty$ (see  Example~\ref{Example:easy_drift}). In such cases, Theorems~\ref{Theorem:QANL}
and~\ref{Theorem:QAA} actually lead to the conclusion that the associated
systems are not even \emph{$W^{2k-3\gamma}$ large time locally controllable}
in the sense that, for any $T > 0$, there exists $\eta > 0$ such that,
for any $\delta > 0$, there exists $x^\dagger \in \R^n$ with 
$|x^\dagger|\leqslant \delta$ such that there is no trajectory from $x(0) = 0$
to $x(T) = x^\dagger$ with a control such that $\norm{u}{W^{2k-3\gamma}} \leqslant \eta$.

\subsubsection{Absorption of cubic residuals by interpolation}
\label{Subsubsection:Gagliardo}

Let $\tm \in (0,T^*)$. From~\eqref{def:TT} and~\eqref{def:T*}, there exists 
$C_\tm > 0$ such that, for any $T \in (0,\tm]$, 
$v \in L^2(0,T)$, and any $t \in [0,T]$:
\begin{equation} \label{qtv}
Q_t(v) \geqslant C_\tm \int_0^t v(s)^2 \ds.
\end{equation}
Thanks to~\eqref{auxd_zd_bound},~\eqref{Estimate:G} and~\eqref{qtv}, 
one has:
\begin{equation} \label{eq.drift.23}
\begin{split}
\left\langle \mathbb{P}^\perp x(t) - G \left( \mathbb{P} x(t) \right) , d_k \right\rangle
& = \left\langle \mathbb{P}^\perp \aux_d(t) , d_k \right\rangle 
+ \Og{\norm{u_\gamma}{L^3(0,t)}^3} \\
& = \left\langle z_d(t)  , d_k \right\rangle + 
\Og{\norm{u_\gamma}{L^3(0,t)}^3}  \\
& \geqslant C_\tm \norm{u_k}{L^2(0,t)}^2 + \Og{\norm{u_\gamma}{L^3(0,t)}^3}.
\end{split}
\end{equation}
Therefore, there exists $\eta_0 > 0$ and $M > 0$ such that,
if $\norm{u_\gamma}{L^\infty} \leqslant \eta_0$, one has:
\begin{equation}\label{eq.drift.23.bis}
\left\langle \mathbb{P}^\perp x(t)  - G \left( \mathbb{P} x(t) \right) , d_k \right\rangle 
\geqslant C_\tm \norm{u_k}{L^2(0,t)}^2 - M \norm{u_{\gamma}}{L^3(0,t)}^3.
\end{equation}

\begin{prop}
 \label{Prop:GNII}
 Let $\kappa \geqslant 1$. There exists constants $C_1, C_2 > 0$ 
 (depending on $\kappa$), such that, for any $L > 0$ and any 
 $\psi \in W^{3\kappa-3,\infty}((0,L),\R)$:
 \begin{equation} \label{est.gnii}
 \norm{\psi^{(\kappa-1)}}{L^3}^3 \leqslant
 C_1 \norm{\psi}{L^2}^2 \norm{\psi^{(3\kappa-3)}}{L^\infty} +
 C_2 L^{\frac{5}{2}-3\kappa} \norm{\psi}{L^2}^3.
 \end{equation} 
 Moreover, for any $L > 0$ and any 
 $\psi \in W^{3\kappa-3}_0((0,L),\R)$:
 \begin{equation} \label{est.gnii_0}
 \norm{\psi^{(\kappa-1)}}{L^3}^3 \leqslant
 (C_1+C_2) \norm{\psi}{L^2}^2 \norm{\psi^{(3\kappa-3)}}{L^\infty}.
 \end{equation} 
\end{prop}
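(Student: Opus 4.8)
The plan is to prove this Gagliardo–Nirenberg-type interpolation inequality by a scaling argument reducing to the case $L = 1$, then establishing the fixed-length inequality by a compactness/contradiction argument or by an explicit interpolation estimate. First I would recall the classical Gagliardo–Nirenberg inequality on a bounded interval: for a function $\psi$ on $(0,1)$ one controls an intermediate derivative $\psi^{(\kappa-1)}$ in $L^3$ by an interpolation between $\|\psi\|_{L^2}$ and the top derivative $\|\psi^{(3\kappa-3)}\|_{L^\infty}$, together with a lower-order term $\|\psi\|_{L^2}$ accounting for the fact that the domain is bounded (no decay at infinity). Concretely, on $(0,1)$ there exist constants $C_1, C_2$ such that
\begin{equation} \label{est.gnii.ref}
\norm{\psi^{(\kappa-1)}}{L^3}^3 \leqslant C_1 \norm{\psi}{L^2}^2 \norm{\psi^{(3\kappa-3)}}{L^\infty} + C_2 \norm{\psi}{L^2}^3.
\end{equation}
This follows from the standard theory (e.g.\ via the Gagliardo–Nirenberg inequality $\|\psi^{(j)}\|_{L^p} \lesssim \|\psi^{(m)}\|_{L^\infty}^{\theta}\|\psi\|_{L^2}^{1-\theta} + \|\psi\|_{L^2}$ with the appropriate exponent relations, here $p=3$, $j = \kappa-1$, $m = 3\kappa-3$, so that $\theta = 1/3$ and the powers match: $\|\psi^{(\kappa-1)}\|_{L^3}^3 \lesssim \|\psi^{(3\kappa-3)}\|_{L^\infty}\|\psi\|_{L^2}^2 + \|\psi\|_{L^2}^3$), or by iterating the one-step interpolation $\|\psi^{(j)}\|_{L^\infty}^2 \lesssim \|\psi^{(j+1)}\|_{L^\infty}\|\psi^{(j-1)}\|_{L^\infty} + \|\psi^{(j-1)}\|_{L^\infty}^2$ together with a Sobolev embedding to pass from $L^\infty$ control of low derivatives to $L^2$ control of $\psi$ itself.

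Next I would run the scaling. Given $\psi \in W^{3\kappa-3,\infty}((0,L),\R)$, set $\tilde\psi(s) := \psi(Ls)$ for $s \in (0,1)$. Then $\tilde\psi^{(r)}(s) = L^r \psi^{(r)}(Ls)$, so that $\|\tilde\psi^{(\kappa-1)}\|_{L^3(0,1)}^3 = L^{3(\kappa-1)} \cdot L^{-1} \|\psi^{(\kappa-1)}\|_{L^3(0,L)}^3$, $\|\tilde\psi\|_{L^2(0,1)}^2 = L^{-1}\|\psi\|_{L^2(0,L)}^2$, and $\|\tilde\psi^{(3\kappa-3)}\|_{L^\infty(0,1)} = L^{3\kappa-3}\|\psi^{(3\kappa-3)}\|_{L^\infty(0,L)}$. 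Plugging these into \eqref{est.gnii.ref} for $\tilde\psi$ and multiplying through by $L^{1-3(\kappa-1)} = L^{4-3\kappa}$ yields
\begin{equation}
\norm{\psi^{(\kappa-1)}}{L^3(0,L)}^3 \leqslant C_1 \norm{\psi}{L^2(0,L)}^2 \norm{\psi^{(3\kappa-3)}}{L^\infty(0,L)} + C_2 L^{4-3\kappa} L^{-3/2} \norm{\psi}{L^2(0,L)}^3,
\end{equation}
and since $L^{4-3\kappa}L^{-3/2} = L^{5/2-3\kappa}$ this is exactly \eqref{est.gnii}. The first term is scale-invariant (as it must be, the two sides having the same homogeneity), while the lower-order term picks up the announced power of $L$.

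Finally, for \eqref{est.gnii_0}: when $\psi \in W^{3\kappa-3}_0((0,L),\R)$, the vanishing traces $\psi(0) = \psi'(0) = \cdots = \psi^{(3\kappa-4)}(0) = 0$ let me apply the Poincaré inequality $\kappa-1$ times (or rather, iterated Hardy/Poincaré-type inequalities on intervals with a one-sided boundary condition) to get $\norm{\psi}{L^2(0,L)} \leqslant C L \norm{\psi'}{L^2(0,L)} \leqslant \cdots$; more directly, I would bound $\norm{\psi}{L^2(0,L)} \lesssim L^{3\kappa-3} \norm{\psi^{(3\kappa-3)}}{L^\infty(0,L)}$ using that $\psi$ and enough of its derivatives vanish at $0$ (write $\psi^{(j)}(t) = \int_0^t \psi^{(j+1)}$, iterate, and bound by $L^\infty$). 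Substituting this into the second term on the right of \eqref{est.gnii} converts $C_2 L^{5/2-3\kappa}\norm{\psi}{L^2}^3$ into a bound of the form $C_2 L^{5/2-3\kappa} \norm{\psi}{L^2}^2 \cdot L^{3\kappa-3}\norm{\psi^{(3\kappa-3)}}{L^\infty}$; the powers of $L$ combine to $L^{5/2-3\kappa+3\kappa-3} = L^{-1/2}$, which is bounded on $(0,1)$ — wait, that is not uniform as $L \to \infty$. The cleaner route is to absorb the lower-order term using the homogeneous Poincaré inequality with the \emph{right} power: since all relevant traces vanish, $\norm{\psi}{L^2(0,L)} \lesssim L \norm{\psi^{(\kappa-1)} \text{-related quantity}}$, and one shows directly $\norm{\psi}{L^2}^3 \lesssim \norm{\psi}{L^2}^2 \norm{\psi^{(\kappa-1)}}{L^3} L^{\text{something}}$, then absorbs $\norm{\psi^{(\kappa-1)}}{L^3}^3$ by Young. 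The main obstacle I anticipate is precisely this last bookkeeping of powers of $L$ in the zero-trace case: one must verify that the Poincaré-type gain from the vanishing traces exactly cancels the factor $L^{5/2-3\kappa}$, so that the constant in \eqref{est.gnii_0} is genuinely $L$-independent. This is where I would be most careful, checking that $3\kappa - 3$ vanishing derivatives give a gain of $L^{3\kappa - 5/2}$ in the $L^2 \to L^2$ estimate after the interpolation is applied, matching the deficit. Everything else is standard interpolation theory invoked off the shelf.
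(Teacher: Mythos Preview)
Your approach is essentially the same as the paper's: cite Gagliardo--Nirenberg on $(0,1)$ with the exponents $p=3$, $j=\kappa-1$, $m=3\kappa-3$, $\theta=1/3$, then scale to $(0,L)$, then in the zero-trace case replace one factor of $\norm{\psi}{L^2}$ by a Poincar\'e-type bound coming from the vanishing derivatives at $0$. The scaling computation you give for \eqref{est.gnii} is correct and matches the paper exactly.

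The only issue is the arithmetic slip that derailed your treatment of \eqref{est.gnii_0}. The correct bound from the vanishing traces is
\[
\norm{\psi}{L^2(0,L)} \leqslant L^{1/2}\norm{\psi}{L^\infty(0,L)} \leqslant L^{1/2}\cdot L^{3\kappa-3}\norm{\psi^{(3\kappa-3)}}{L^\infty(0,L)} = L^{3\kappa-5/2}\norm{\psi^{(3\kappa-3)}}{L^\infty(0,L)},
\]
not $L^{3\kappa-3}$ as you first wrote (you dropped the $L^{1/2}$ from the Cauchy--Schwarz step $L^2\hookleftarrow L^\infty$). With the correct power, substituting into the second term of \eqref{est.gnii} gives
\[
C_2\,L^{5/2-3\kappa}\norm{\psi}{L^2}^3 \leqslant C_2\,L^{5/2-3\kappa}\norm{\psi}{L^2}^2\cdot L^{3\kappa-5/2}\norm{\psi^{(3\kappa-3)}}{L^\infty} = C_2\,\norm{\psi}{L^2}^2\norm{\psi^{(3\kappa-3)}}{L^\infty},
\]
and the $L$-dependence cancels exactly, uniformly in $L>0$. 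This is precisely what the paper does in one line; your closing sentence (``a gain of $L^{3\kappa-5/2}$ \ldots matching the deficit'') is correct, so you had already found your own error --- there is no need for the Young-inequality absorption detour you contemplated.
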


\begin{proof}
 This is a particular case of the Gagliardo-Nirenberg interpolation inequality.
 Indeed, for any $\kappa \geqslant 1$:
 \begin{equation} \label{eq.gnii}
 \frac{1}{3} = \frac{\kappa-1}{1} + 
 \frac{1}{3} \cdot \left(\frac{1}{\infty} - \frac{3\kappa-3}{1}\right)
 + \left(1-\frac{1}{3}\right)\frac{1}{2}.
 \end{equation}
 From~\eqref{eq.gnii}, we can apply~\cite[Theorem p.125]{MR0109940} for
 functions defined on $[0,1]$. The generalization to functions defined on
 $[0,L]$ uses a straightforward scaling argument which gives the power
 of $L$ in front of $C_2$ in (\ref{est.gnii}).
 Then, for $\psi \in W^{3\kappa-3,\infty}_0((0,L),\R)$, 
 using the Cauchy-Schwarz inequality, iterated integration 
 and the conditions $\psi^{(j)}(0)=0$ for $j=0,...,3\kappa-4$ we obtain:
 \begin{equation}
  L^{\frac{5}{2}-3\kappa} \norm{\psi}{L^2} 
  \leqslant L^{3-3\kappa} \norm{\psi}{L^\infty}
  \leqslant \norm{\psi^{(3\kappa-3)}}{L^\infty},
 \end{equation}
 which proves (\ref{est.gnii_0}).
\end{proof}

We apply Proposition~\ref{Prop:GNII} to $\psi := u_k$  with $\kappa=k-\gamma+1$.

\bigskip

\textbf{First case}: $u \in W^{2k-3\gamma,\infty}_0(0,T)$.
Let $t \in [0,T]$.
In this case, $\psi = u_k \in W^{3k-3\gamma}_0(0,t)$ 
because $u \in W^{2k-3\gamma}_0(0,t)$ and $u_k^{(j)} (0) = u_{k-j}(0)=0$ for $j=0,...,k-1$.
Thanks to~\eqref{est.gnii_0} for $L = t$ and to the equality $u_k^{(k)} = u$, one has:
\begin{equation}
 \begin{split}
 \norm{u_\gamma}{L^3(0,t)}^3
 = \norm{u_k^{(k-\gamma)}}{L^3(0,t)}^3
 & \leqslant (C_1+C_2) \|u_k\|_{L^2(0,t)}^2 \|u_k^{(3k-3\gamma)}\|_{L^\infty(0,t)} \\
 & \leqslant (C_1+C_2) \|u_k\|_{L^2(0,t)}^2 \|u^{2k-3 \gamma} \|_{L^\infty(0,t)}.
 \end{split}
\end{equation}
Then, we deduce from (\ref{eq.drift.23.bis}) that:
\begin{equation}
\left\langle \mathbb{P}^\perp x(t)  - G \left( \mathbb{P} x(t) \right) , d_k \right\rangle
\geqslant \|u_k\|_{L^2(0,t)}^2 \Big( C_{\tm} - M (C_1+C_2) \| u^{(2k-3\gamma)} \|_{L^\infty(0,t)}  \Big)
\end{equation}
In particular, when $u$ satisfies:
\begin{equation}
 \norm{u}{W^{2k-3\gamma,\infty}} \leqslant \eta(\tm) := \min \left( \eta_0, \frac{C_{\tm}}{2M(C_1+C_2)} \right),
\end{equation}
then:
\begin{equation} \label{qd_ccl}
\left\langle \mathbb{P}^\perp x(t)  - G \left( \mathbb{P} x(t) \right) , d_k \right\rangle
\geqslant \frac{1}{2} C_{\tm} \|u_k\|_{L^2(0,t)}^2 \geqslant 0\,.
\end{equation}
This gives the conclusions (\ref{Estimate:TQANL.2}) and (\ref{Estimate:TQANL.3}) of Theorems~\ref{Theorem:QANL} and \ref{Theorem:QAA}.
Moreover, we recover the inequality announced in Subsection~\ref{Subsection:Comments} (the drift is
quantified by the $H^{-k}$-norm of $u$).
\bigskip

\textbf{Second case}: $u \in W^{2k-3\gamma,\infty}(0,T)$.
 When the control $u$ only belong to $W^{2k-3\gamma,\infty}(0,T)$ 
 instead of $W^{2k-3\gamma,\infty}_0(0,T)$,
 inequality (\ref{est.gnii}) with $L=T$ yields:
 \begin{equation}
 \begin{split}
 \norm{u_\gamma}{L^3}^3
 & \leqslant \|u_k\|_{L^2}^2 \Big( C_1 \norm{u^{(2k-3\gamma)}}{L^\infty} + C_2 T^{\frac{5}{2}-3(k-\gamma+1)} \|u_k\|_{L^2} \Big) \\
 & \leqslant \|u_k\|_{L^2}^2 \Big( C_1 \norm{u^{(2k-3\gamma)}}{L^\infty} + C_2 T^{3-3(k-\gamma+1)} \|u_k\|_{L^\infty} \Big) \\
 & \leqslant \|u_k\|_{L^2}^2 \Big( C_1 \norm{u^{(2k-3\gamma)}}{L^\infty} + C_2 T^{-2k+2\gamma} \|u_\gamma\|_{L^\infty} \Big) \\
 & \leqslant \|u_k\|_{L^2}^2 \Big( C_1  + C_2 T^{-2k+2\gamma}  \Big) \|u \|_{W^{2k-3\gamma,\infty}}\,.
 \end{split}
 \end{equation}
 In particular, when $u$ satisfies:
 \begin{equation}
  \norm{u}{W^{2k-3\gamma,\infty}} \leqslant \eta(T) 
  := \min \left( \eta_0, \frac{C_{\tm}}{2M(C_1+C_2 T^{-2k+2\gamma})} \right),
 \end{equation}
 then (\ref{qd_ccl}) holds at the final time $t = T$. 
 In particular, this proves that system~\eqref{system.fxu} is not \stlc{$W^{2k-3\gamma,\infty}$}.

\section{Reduction to well-prepared smooth systems} 
\label{Section:reduction}

We prove Theorems~\ref{Theorem:QANL} and~\ref{Theorem:QAA} for general systems 
by reduction to the case of well-prepared smooth systems considered in
Section~\ref{Section:well_prepared}, by means of a linear static state feedback 
transformation. We start with smooth systems then extend our results to less 
regular dynamics.

\subsection{Linear static state feedback and Lie brackets}

We consider linear static state feedback transformations of
the control. We prove that the structural properties of the Lie spaces $\Sone$ 
and $\Stwo$ that are involved in our theorems are invariant under such 
transformations. 

\bigskip

Let $f \in C^\infty(\R^n\times\R,\R^n)$ and $\beta \in \R^n$. We consider the 
transformed control $v := u - (\beta, x)$. The state now evolves
under the equation $\dot{x} = g(x,v)$ where we define:
\begin{equation} \label{def.fg}
 g(x,v) := f(x, v + (\beta, x)).
\end{equation}
As in~\eqref{def.f0f1}, we use the notations $f_0(x) := f(x,0)$,
$g_0(x) := g(x,0)$, $f_1(x) := \partial_u f(x,0)$, $g_1(x) := \partial_v g(x,0)$
and $d_0 := \partial_{uu} f(0,0) = \partial_{vv} g(0,0)$. Using~\eqref{def.fg}, 
a Taylor expansion with respect to the control and Lemma~\ref{Lemma:Taylor}, 
one has:
\begin{align}
 \label{eq.g0}
 g_0(x) & \eqmod{3} f_0(x) + (\beta, x) f_1(x) + \frac{1}{2} (\beta, x)^2 d_0, \\
 \label{eq.g1}
 g_1(x) & \eqmod{2} f_1(x) + (\beta, x) d_0.
\end{align}
The linear controllable spaces associated with the dynamics 
$f$ and $g$ are the same. We will denote them by $\Sone$. Indeed, 
$g_1(0) = f_1(0)$ and $g_0'(0)=f_0'(0) + (\beta, \cdot) f_1(0)$. Thus, the 
iterated powers of $g_0'(0)$ applied to $g_1(0)$ span the same space as the 
iterated powers of $f_0'(0)$ applied to $f_1(0)$. We prove that second-order
Lie brackets of $g_0$ and $g_1$ inherit the following behavior.

\begin{lem} \label{Lemma:Lie_Bruno}
 Let $k \in \N$. Assume that $d_0 \in \Sone$ and that the original system satisfies:
 \begin{equation} \label{equiv.hyp}
 \left[f_1, \ad^{j}_{f_0}(f_1)\right](0) \in \Sone,
 \quad \text{for any } 1 \leqslant j \leqslant k.
 \end{equation}
 Then, the transformed system satisfies:
 \begin{equation} \label{equiv.csq}
 \left[g_1, \ad^{j}_{g_0}(g_1)\right](0) \in \Sone,
 \quad \text{for any } 1 \leqslant j \leqslant k.
 \end{equation}
 Moreover, one has the following equality modulo terms in $\Sone$:
 \begin{equation} \label{equiv.equiv}
 \left[g_1, \ad^{k+1}_{g_0}(g_1)\right](0) \sim
 \left[f_1, \ad^{k+1}_{f_0}(f_1)\right](0).
 \end{equation}
\end{lem}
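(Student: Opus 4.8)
The plan is to reduce everything to polynomial vector fields and then argue by induction on $j$. By Lemma~\ref{Lemma:Taylor} together with~\eqref{eq.g0}--\eqref{eq.g1}, writing $p:=(\beta,\cdot)$, one has
\[
g_0 \eqmod{3} H_0 x + Q_0(x,x) + p\,(b+H_1 x) + \tfrac12 p^2 d_0
\quad\text{and}\quad
g_1 \eqmod{2} b + H_1 x + p\,d_0,
\]
so that $g_0'(0)=\widetilde H_0:=H_0+b\tr{\beta}$ and $g_1'(0)=\widetilde H_1:=H_1+d_0\tr{\beta}$. Since $g_0(0)=0$, Lemma~\ref{Lemma:adkbklk} applied to the pair $(g_0,g_1)$ shows that each $\ad_{g_0}^j(g_1)$ is $\eqmod{2}$ to an affine vector field, hence by Lemmas~\ref{Lemma:eqmod123} and~\ref{Lemma:eqmod22} every bracket evaluated at the origin occurring below may be computed after replacing $f_0,f_1,g_0,g_1$ by these truncations. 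As a preliminary step I would also record, using formula~\eqref{formule_Krener} and the $\ad_{f_0}$-stability of $\Sone$ (exactly as in Statement~1 of Proposition~\ref{Prop:Krener}), that hypothesis~\eqref{equiv.hyp} is equivalent to $[\ad_{f_0}^a(f_1),\ad_{f_0}^b(f_1)](0)\in\Sone$ for all $a+b\le k$.

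The core of the argument is to prove, by induction on $j\in\{0,\dots,k+1\}$, that $\ad_{g_0}^j(g_1)-\ad_{f_0}^j(f_1)$ belongs to a fixed linear space $\mathcal R_j$ of \emph{admissible remainders}: the span of vector fields of the form $x\mapsto (\beta,\zeta(x))\,\eta(x)$ (and products with a further factor $(\beta,\zeta'(x))$, or with a constant vector of $\Sone$), where $\zeta$ is obtained from $f_0$, $f_1$, $d_0$ and the identity by iterated bracketing, and where $\eta$ ranges over $d_0$, the fields $\ad_{f_0}^i(f_1)$ with $i\le k$, and their iterated brackets with $f_0$ and $f_1$ --- together with all vector fields that are $\eqmod{2}$ to $0$. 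The two features of this class that make the scheme run are: (i) every $R\in\mathcal R_j$ satisfies $R(0)\in\Sone$, because $\eta(0)\in\Sone$ and the other factors take values in $\Sone$; and (ii) $[g_1,R](0)\in\Sone$ for every $R\in\mathcal R_j$, which follows from the Leibniz identity $[Y,hX]=h[Y,X]+(Yh)X$ applied with $h=(\beta,\zeta(\cdot))$, $X=\eta$, $Y=g_1$: the term $h[g_1,\eta]$ vanishes at $0$ because $\zeta(0)=0$ (or, in the variants where $\zeta(0)\neq0$, because $[g_1,\eta](0)\in\Sone$), leaving $(Yh)(0)\,\eta(0)=(\beta,\zeta'(0)b)\,\eta(0)\in\Sone$. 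Granting this induction, for $1\le j\le k$ one combines $\ad_{g_0}^j(g_1)-\ad_{f_0}^j(f_1)\in\mathcal R_j$ with the elementary fact $[g_1-f_1,\ad_{f_0}^j(f_1)](0)=[p\,d_0,\ad_{f_0}^j(f_1)](0)=-(\beta,b_j)\,d_0\in\Sone$ and with hypothesis~\eqref{equiv.hyp} to obtain~\eqref{equiv.csq}; for $j=k+1$ the same computation gives~\eqref{equiv.equiv}.

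For the inductive step I would write $\ad_{g_0}=\ad_{f_0}+\ad_{p f_1}+\ad_{p^2 d_0/2}$ modulo higher-order corrections (legitimate by the $\eqmod{}$-calculus of Section~\ref{Section:structure}) and use $\ad_{p f_1}(Z)=p\,[f_1,Z]-(\beta,Z)\,f_1$ and $\ad_{p^2 d_0/2}(Z)=\tfrac12 p^2\,Z'(\cdot)d_0-p\,(\beta,Z)\,d_0$. With $Z=\ad_{f_0}^j(f_1)+R_j$: the piece $\ad_{f_0}(Z)=\ad_{f_0}^{j+1}(f_1)+\ad_{f_0}(R_j)$ gives the expected leading bracket plus an element of $\mathcal R_{j+1}$ (because $\mathcal R_j$ is $\ad_{f_0}$-stable, $\Sone$ being $\ad_{f_0}$-stable), while the piece $p\,[f_1,\ad_{f_0}^j(f_1)]$ is admissible \emph{precisely because} $[f_1,\ad_{f_0}^j(f_1)](0)\in\Sone$; this is where~\eqref{equiv.hyp} is used, it is available exactly for $j\le k$, and this is why the scheme closes at $j=k+1$ and produces ``$\sim$'' there rather than ``$\in\Sone$''. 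The remaining contributions --- $(\beta,\ad_{f_0}^j(f_1))f_1$, $p\,[f_1,R_j]$, the $p^2 d_0$-terms, and the brackets of $\ad_{p f_1}+\ad_{p^2 d_0/2}$ with $R_j$ --- are checked by the same Leibniz manipulations to lie in $\mathcal R_{j+1}$. The main obstacle I anticipate is exactly this last verification: pinning down a definition of $\mathcal R_j$ that is simultaneously $\ad_{f_0}$-stable, absorbs all the feedback cross-terms produced at each step, tracks the ``number of $f_1$'s'' finely enough that~\eqref{equiv.hyp} (valid only up to weight $k$) suffices, and still guarantees property (ii); and, in parallel, keeping the higher-order errors controlled along the induction via Lemmas~\ref{Lemma:eqmod123}--\ref{Lemma:eqmod22}. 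A more computational alternative avoids $\mathcal R_j$ altogether: with $\ad_{f_0}^j(f_1)\eqmod{2} b_j+L_j x$ from Lemma~\ref{Lemma:adkbklk} and formula~\eqref{eq.adkj}, the claim reduces to the congruence $L_j^{(g)}b-\widetilde H_1 b_j^{(g)}\equiv L_j b-H_1 b_j \pmod{\Sone}$, where $b_j^{(g)}=(-\widetilde H_0)^j b$ and $L_j^{(g)}$ is built from $\widetilde H_0,\widetilde H_1$ and $Q_0^{(g)}:=\tfrac12 g_0''(0)$, proved by induction on $j$ with the ``bad'' $H_1$-cross-terms cancelling in the displayed combination, again by~\eqref{equiv.hyp}.
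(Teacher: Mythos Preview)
Your strategy is essentially the same as the paper's: an induction on $j$ tracking the difference $\ad_{g_0}^j(g_1)-\ad_{f_0}^j(f_1)$ modulo second-order terms, using the $\eqmod{}$-calculus of Section~\ref{Section:structure}, with hypothesis~\eqref{equiv.hyp} consumed exactly at the step $j\le k\to j+1$ to absorb the term $p\,[f_1,\ad_{f_0}^j(f_1)]$. The final bracket with $g_1$ is also handled in the same way.

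The difference is one of packaging, and it addresses precisely the obstacle you flag. Instead of an abstract remainder class $\mathcal R_j$ whose closure properties must be checked, the paper writes down the remainder explicitly as
\[
\ad_{g_0}^m(g_1)(x)\;\eqmod{2}\;\ad_{f_0}^m(f_1)(x)+\mu_m(x)+\sum_{j=0}^{m-1}\lambda_j^m(x)\,\ad_{f_0}^j(f_1)(x),
\]
with smooth scalar functions $\lambda_j^m$ and a smooth $\Sone$-valued function $\mu_m$ satisfying $\mu_m(0)=0$, and then gives closed recursion formulas for $\mu_{m+1}$ and $\lambda_j^{m+1}$ in terms of the previous data. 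This two-term ansatz ($\Sone$-valued part plus scalar multiples of lower $\ad_{f_0}^j(f_1)$'s) is exactly the minimal structure that is stable under $\ad_{g_0}$ once~\eqref{equiv.hyp} is available, and it makes both of your properties~(i) and~(ii) immediate: the $\mu_m$-piece contributes only to $\Sone$ because its image lies in $\Sone$, and the $\lambda_j^m$-piece contributes $\lambda_j^m(0)\,[f_1,\ad_{f_0}^j(f_1)](0)\in\Sone$ by~\eqref{equiv.hyp}. Your vaguer description of $\mathcal R_j$ (products of factors $(\beta,\zeta(x))$ with various $\eta$'s) would, once cleaned up, collapse to this same ansatz; the paper's version simply skips that cleanup. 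Your alternative computational route via $b_j^{(g)}$, $L_j^{(g)}$ and~\eqref{eq.adkj} is also viable and is in fact the content of Lemma~\ref{Lemma:adkbklk} applied to $(g_0,g_1)$, but the bookkeeping of the feedback cross-terms in $Q_0^{(g)}$ is heavier than the direct inductive proof.
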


\begin{proof}
 \textbf{We start with first-order Lie brackets.} 
 For any $0 \leqslant m \leqslant k + 1$, we prove that there exists
 $m$ smooth scalar functions $\lambda^m_0, \ldots, \lambda^m_{m-1} : \R^n \to \R$
 and a smooth function $\mu_m : \R^n \to \Sone$ with $\mu_m(0) = 0$ such that:
 \begin{equation} \label{adm.tilde}
 \ad^m_{g_0}(g_1)(x)
 \eqmod{2}
 \ad^m_{f_0}(f_1)(x) 
 + \mu_m(x)
 + \sum_{j=0}^{m-1} \lambda^m_j(x) \ad^j_{f_0}(f_1)(x).
 \end{equation}
 We proceed by induction on $m$. For $m = 0$, the sum in~\eqref{adm.tilde} is 
 empty by convention. Thus,~\eqref{adm.tilde} holds for $m=0$ with 
 $\mu_0(x) := (\beta,x) d_0$ thanks to~\eqref{eq.g1}. Let $0 \leqslant m \leqslant k$, we 
 compute the next Lie bracket using~\eqref{eq.g0},~\eqref{adm.tilde} 
 and Lemma~\ref{Lemma:eqmod123}:
 \begin{equation} \label{adm.tilde1}
 \begin{split}
 \left[g_0, \ad^m_{g_0}(g_1)\right] 
 \eqmod{2} & 
 \frac{1}{2} \alp{\cdot}^2 \left(\ad^m_{g_0}(g_1)\right)' d_0
 + \left(\ad^m_{f_0}(f_1)\right)' \left(f_0 + \alp{\cdot}f_1 \right) 
 + \mu_m' g_0 \\
 & + \sum_{j=0}^{m-1} \left({\left(\lambda_j^m\right)}', g_0 \right) \ad^j_{f_0}(f_1) 
 + \lambda_j^m \left(\ad^j_{f_0}(f_1)\right)' \left( f_0 + \alp{\cdot}f_1\right) \\
 & - f_0' \left( \ad^m_{f_0}(f_1) + \sum_{j=0}^{m-1} \lambda^m_j \ad^j_{f_0}(f_1) 
 + \mu_m \right)
 - \alp{\ad^m_{g_0}(g_1)} f_1 \\
 & - \alp{\cdot} f_1' \left( \ad^m_{f_0}(f_1) + \sum_{j=0}^{m-1} \lambda^m_j \ad^j_{f_0}(f_1) 
 + \mu_m \right) \\
 & - \alp{\cdot} \alp{\ad^m_{g_0}(g_1)} d_0.
 \end{split}
 \end{equation}
 Using $\alp{\cdot} f_1' \mu_m \eqmod{2} 0$ and
 $\alp{\cdot}^2 \left(\ad^m_{g_0}(g_1)\right)' d_0 \eqmod{2} 0$,
 reordering the terms in~\eqref{adm.tilde1} yields:
 \begin{equation} \label{adm.tilde2}
 \begin{split}
 \left[g_0, \ad^m_{g_0}(g_1)\right] 
 \eqmod{2} & \enskip \ad^{m+1}_{f_0}(f_1) 
 + \mu_m' g_0 - f_0' \mu_m - \alp{\cdot} \alp{\ad^m_{g_0}(g_1)} d_0
 \\
 &
 + \alp{\cdot} \left( [f_1, \ad^m_{f_0}(f_1)]  + \sum_{j=0}^{m-1} \lambda^m_j 
 [f_1, \ad^j_{f_0}(f_1)] \right) \\
 & - \alp{\ad^m_{g_0}(g_1)} f_1
 + \sum_{j=0}^{m-1} \left({\left(\lambda_j^m\right)}', \tilde{f_0} \right) \ad^j_{f_0}(f_1)  
 + \lambda^m_j \ad^{j+1}_{f_0}(f_1).
 \end{split}
 \end{equation}
 Since $m \leqslant k$, hypothesis~\eqref{equiv.hyp} yields the existence of
 a constant $\gamma_m \in \Sone$ such that:
 \begin{equation} \label{def.gammam}
 [f_1, \ad^m_{f_0}(f_1)]  + \sum_{j=0}^{m-1} \lambda^m_j 
 [f_1, \ad^j_{f_0}(f_1)] \eqmod{1} \gamma_m.
 \end{equation}
 From Lemma~\ref{Lemma:Taylor}, $f_0' \eqmod{1} H_0$. Thus,
 $f_0' \mu_m \eqmod{2} H_0 \mu_m$. Using these remarks, we define:
 \begin{equation} \label{def.mum}
 \mu_{m+1} := \mu_m' g_0 - H_0 \mu_m + \alp{\cdot} \gamma_m
 - \alp{\cdot} \alp{\ad^m_{g_0}(g_1)} d_0.
 \end{equation}
 Since $\Sone$ is stable under multiplication by $H_0$ and the image of 
 $\mu_m$ is included in $\Sone$, so is the image of $\mu_{m+1}$
 and $\mu_{m+1}(0) = 0$. Eventually, we define:
 \begin{align}
 \label{def.lambdam0}
 \lambda^{m+1}_0 & := - \alp{\ad^m_{g_0}(g_1)} 
 + \left({\left(\lambda_0^m\right)}', \tilde{f_0} \right), \\
 \label{def.lambdamj}
 \lambda^{m+1}_j & := \lambda^m_{j-1}
 + \left({\left(\lambda_j^m\right)}', \tilde{f_0} \right),
 \quad \text{for} \quad 1 \leqslant j < m, \\
 \label{def.lambdamm}
 \lambda^{m+1}_m & := \lambda^m_{m-1}.
 \end{align}
 In the particular case $m = 0$, we only use~\eqref{def.lambdam0} with the 
 convention that $\lambda^0_0 = 0$. For $m = 1$, we use~\eqref{def.lambdam0} 
 and~\eqref{def.lambdamm}. For larger $m$, we use all three formulas
 including~\eqref{def.lambdamj}. Plugging into~\eqref{adm.tilde2} the 
 definitions~\eqref{def.gammam} and~\eqref{def.mum} proves~\eqref{adm.tilde}
 at order $m+1$. 
 \bigskip
 
 \textbf{We move on to second-order Lie brackets.}
 Let $1 \leqslant m \leqslant k +1$. Using Lemma~\ref{Lemma:eqmod22} 
 and formula~\eqref{adm.tilde}, we compute:
 \begin{equation} \label{ad1m.tilde1}
 \begin{split}
 \left[g_1, \ad^m_{g_0}(g_1)\right](0)
 & = \left(\ad^m_{f_0}(f_1)\right)'(0) f_1(0) + \mu_m'(0) f_1(0) \\
 & + \sum_{j=0}^{m-1} \lambda_j^m(0) \left(\ad^j_{f_0}(f_1)\right)'(0) f_1(0)
 + ((\lambda_j^m)'(0), f_1(0)) \ad^j_{f_0}(f_1)(0) \\
 & - f_1'(0) \ad^m_{f_0}(f_1) - f_1'(0) \mu_m(0)
 - \sum_{j=0}^{m-1} \lambda^m_j(0) f_1'(0) \ad^j_{f_0}(f_1)(0).
 \end{split}
 \end{equation}
 We have $\mu_m(0) = 0$. For $j \in \N$, the vectors
 $\ad^j_{f_0}(f_1)(0)$ belong to $\Sone$ by definition of $\Sone$.
 Moreover, since the image of $\mu_m$ is contained in $\Sone$, 
 $\mu_m'(0) f_1(0) \in \Sone$. Hence, from~\eqref{ad1m.tilde1}:
 \begin{equation} \label{ad1m.tilde2}
 \left[g_1, \ad^m_{g_0}(g_1)\right](0)
 \sim \left[f_1, \ad^m_{f_0}(f_1)\right](0)
 + \sum_{j=0}^{m-1} \lambda_j^m(0) \left[f_1, \ad^j_{f_0}(f_1)\right](0).
 \end{equation}
 Using~\eqref{equiv.hyp} and~\eqref{ad1m.tilde2} 
 proves~\eqref{equiv.csq} for $1 \leqslant m \leqslant k$
 and~\eqref{equiv.equiv} for $m = k+1$.
\end{proof}

We proved Lemma~\ref{Lemma:Lie_Bruno} for nonlinear systems.
It also holds in the particular case of control-affine systems. For such systems, 
approximate equalities~\eqref{eq.g0} and~\eqref{eq.g1} become equalities which 
hold with $d_0 = 0 \in \Sone$.

\subsection{Generalization of the proof using a Brunovsk\'y transformation}
\label{Subsection:Brunovski}

We prove Theorem~\ref{Theorem:QANL} and 
Theorem~\ref{Theorem:QAA} for general systems by reduction to the well-prepared 
class studied in Section~\ref{Section:well_prepared}. The main argument is a 
linear transformation first proposed by Brunovsk\'y in~\cite{MR0284247} 
(see~\cite[Theorem 2.2.7]{MR2224013} for a modern proof). 

\subsubsection{A linear transformation}
\label{Paragraph:Brunovsky}

We consider the linearized system~\eqref{system.yh}. 
Denoting by $d$ the dimension of $\Sone$, there 
exists a matrix $R \in GL_n(\R)$ such that $Rb = e_1$ and:
\begin{equation} \label{bruno}
R H_0 R^{-1} = \begin{pmatrix}
\Lambda_d & * \\
0 & *
\end{pmatrix},
\quad \text{where} \quad
\Lambda_d := \begin{pmatrix}
-\alpha_1 & & \ldots & & - \alpha_d \\
1 & 0 & \ldots & 0 & 0 \\
0 & 1 & & & 0 \\
\vdots & & \ddots & & \vdots \\
0 & & & 1 & 0
\end{pmatrix}
\end{equation}
and the $\alpha_i$ are the coefficients of the characteristic polynomial
of the controllable part of~$H_0$: 
$\chi(X) := X^d + \alpha_1 X^{d-1} + \ldots + \alpha_d$. We introduce 
$\alpha$ a column vector with $n$ components whose first $d$ components
are the $\alpha_i$ and whose last $n-d$ components are null.
Let us denote by $\beta := \tr{R} \alpha$
and $v := u - (\beta, {y})$. Hence:
\begin{equation} \label{system.ytilde}
 \dot{y} = \mathcal{H}_0 y + v b,
\end{equation}
with $\mathcal{H}_0 := H_0 + R^{-1} e_1 \tr{\alpha} R$.  
By construction of~\eqref{bruno}, one has $\mathcal{H}_0^d b = 0$, which 
corresponds to the well-prepared nilpotent integrator form that we studied in 
Section~\ref{Section:well_prepared}.

\subsubsection{Generalization of the proof for nonlinear systems}

We start with the following lemma which proves that the analytic notions
involved in Theorem~\ref{Theorem:QANL} are invariant under 
linear static state feedback transformations.

\begin{lem} \label{Lemma:Norms}
 Let $f \in C^\infty(\R^n\times\R,\R^n)$ with $f(0,0) = 0$, $\beta \in \R^n$ 
 and $\tm > 0$.
 There exists $C, \eta > 0$ and a family of constants $C_m,\eta_m > 0$ 
 for $m \in \N$ such that, 
 for any $T \in (0,\tm)$ and any trajectory
 $(x,u) \in C^0([0,T],\R^n) \times L^\infty((0,T),\R)$ of system~\eqref{system.fxu}
 with $x(0) = 0$, letting $v := u - \alp{x}$, one has:
 \begin{align}
  \label{eq.Lemma:Norms.1}
  \norm{u}{L^\infty} \leqslant \eta & \quad \Rightarrow \quad 
  \norm{v}{L^3} \leqslant C \norm{u}{L^3}, \\
  \label{eq.Lemma:Norms.2}
  \norm{u}{W^{m,\infty}} \leqslant \eta_m & \quad \Rightarrow \quad 
  \norm{v}{W^{m,\infty}} \leqslant C_m \norm{u}{W^{m,\infty}}.
 \end{align}
 Moreover, when $u(0) = \ldots = u^{(m-1)}(0) = 0$, 
 then $v(0) = \ldots = v^{(m-1)}(0) = 0$.
\end{lem}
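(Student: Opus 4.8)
The plan is to write $v = u - \alp{x}$ and to estimate the corrective term $\alp{x}$ using only the control $u$, exploiting that $x$ solves $\dot{x} = f(x,u)$ with $x(0) = 0$. Throughout, one fixes $\eta$ (and each $\eta_m$) small enough that the smallness assumption on $u$ forces the trajectory to remain in the fixed compact set $\overline{B}(0,1) \times [-1,1]$, as in the proof of Lemma~\ref{Lemma:xu}; on this set all derivatives of $f$ are bounded.

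For \eqref{eq.Lemma:Norms.1}: the Gr\"onwall estimate already established inside the proof of Lemma~\ref{Lemma:xu} gives $|x(t)| \leqslant C \int_0^t |u(s)| \ds$ for $t \in [0,T]$. H\"older's inequality with conjugate exponents $3/2$ and $3$ yields $|x(t)| \leqslant C t^{2/3} \norm{u}{L^3(0,t)}$, so that $\norm{x}{L^3(0,T)}^3 \leqslant C^3 \norm{u}{L^3(0,T)}^3 \int_0^T t^2 \dt \leqslant \tfrac{1}{3} C^3 \tm^3 \norm{u}{L^3(0,T)}^3$. Hence $\norm{v}{L^3} \leqslant \norm{u}{L^3} + |\beta|\, \norm{x}{L^3} \leqslant (1 + C |\beta| \tm) \norm{u}{L^3}$, which is \eqref{eq.Lemma:Norms.1}.

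For \eqref{eq.Lemma:Norms.2}: since $\alp{x}^{(k)} = \alp{x^{(k)}}$, it suffices to bound $\norm{x^{(k)}}{L^\infty}$ by $C'_k \norm{u}{W^{m,\infty}}$ for $0 \leqslant k \leqslant m$, and then sum. I would prove this by induction on $k$, the case $k = 0$ being Lemma~\ref{Lemma:xu} together with $\norm{u}{L^1} \leqslant \tm \norm{u}{L^\infty}$. For the step, differentiate the ODE $k-1$ times: by the Fa\`a di Bruno formula $x^{(k)} = \frac{d^{k-1}}{dt^{k-1}} f(x,u)$ is a finite sum of terms $\big(\partial^p f\big)(x,u)\big[x^{(i_1)},\dots,u^{(j_q)}\big]$ in which every inner derivative of $x$ or $u$ has order $\geqslant 1$ and the orders add up to $k-1$. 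The only two summands that are linear in the quantities $x^{(i)}, u^{(j)}$ ($1 \leqslant i, j \leqslant k-1$) are $\partial_x f(x,u)\, x^{(k-1)}$ and $\partial_u f(x,u)\, u^{(k-1)}$: the first is handled by the induction hypothesis, the second by $\norm{u^{(k-1)}}{L^\infty} \leqslant \norm{u}{W^{m,\infty}}$; every other summand contains at least two such factors, hence is $\Ou{\norm{u}{W^{m,\infty}}^2}$, which for $\eta_m$ small is absorbed into $\Ou{\norm{u}{W^{m,\infty}}}$. This gives $\norm{x^{(k)}}{L^\infty} \leqslant C'_k \norm{u}{W^{m,\infty}}$, and therefore $\norm{v}{W^{m,\infty}} \leqslant \norm{u}{W^{m,\infty}} + |\beta| \sum_{k=0}^m \norm{x^{(k)}}{L^\infty} \leqslant C_m \norm{u}{W^{m,\infty}}$.

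For the trace claim, evaluating the same Fa\`a di Bruno expansion of $x^{(k)}$ at $t=0$ for $1 \leqslant k \leqslant m$ shows by induction that $x^{(k)}(0) = 0$, since each summand is a multilinear expression in derivatives of $f$ at $(0,0)$ applied to vectors among $x^{(i)}(0)$ ($i < k$) and $u^{(j)}(0)$ ($j \leqslant k-1 \leqslant m-1$), all of which vanish by hypothesis; consequently $v^{(k)}(0) = u^{(k)}(0) - \alp{x^{(k)}(0)} = 0$ for $0 \leqslant k \leqslant m-1$. I expect the only real difficulty to be the bookkeeping in the inductive step: one must keep track of the Fa\`a di Bruno tree precisely enough to isolate the two genuinely linear contributions from the harmless higher-degree ones. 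No analytic ingredient beyond Gr\"onwall's lemma and H\"older's inequality (both already used) is needed.
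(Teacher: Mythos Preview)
Your proof is correct and follows essentially the same strategy as the paper. The only organizational difference is that the paper packages the induction into a single family of smooth maps $P_m(u,\dot u,\ldots,u^{(m-1)},x)$ vanishing at the origin, with $\partial_t^m v = \partial_t^m u + P_m(u,\ldots,u^{(m-1)},x)$, obtained by repeatedly substituting $\dot x = f(x,u)$; this simultaneously yields the $W^{m,\infty}$ bound (via local Lipschitzness of $P_m$) and the trace claim (via $P_m(0)=0$), whereas you achieve the same two conclusions by bounding each $x^{(k)}$ separately through the Fa\`a di Bruno expansion.
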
 

\begin{proof}
 \textbf{First}, from Lemma~\ref{Lemma:xu}, 
 one has $x = \Oz{\norm{u}{L^1}}$. Thus,
 from H\"older's inequality:
 \begin{equation} \label{v.L3}
  \norm{v}{L^3} 
  = \Oz{\norm{u}{L^3}+\norm{u}{L^1}}
  = \Oz{\norm{u}{L^3}}.
 \end{equation}
 From Definition~\ref{Definition:Ogamma}, 
 estimate~\eqref{v.L3} proves~\eqref{eq.Lemma:Norms.1}. 
 \bigskip
 
 \noindent \textbf{Second}, we prove by induction on $m \geqslant 0$ that there 
 exists a family of smooth functions $P_m : \R^m \times \R^n \to \R^n$, 
 vanishing at zero, such that:
 \begin{equation} \label{dt.m.v}
  \partial_t^m v = \partial_t^m u + P_m(u, \partial_t u, \ldots, \partial_t^{m-1}u, x).
 \end{equation}
 For $m = 0$,~\eqref{dt.m.v} is a rephrasing of $v = u - \alp{x}$ with 
 $P_0(x) := - \alp{x}$. For a fixed $m \in \N$, we differentiate~\eqref{dt.m.v} 
 with respect to time using~\eqref{system.fxu}:
 \begin{equation} \label{dt.mm.x}
  \partial_t^{m+1} v 
  = \partial_t^{m+1} u
  + (\partial_x P_m) f(x,u) 
  + \sum_{j = 0}^{m-1} \partial_t^{j+1} u (\partial_j P_m).
 \end{equation}
 Hence,~\eqref{dt.mm.x} proves~\eqref{dt.m.v} at order $m+1$ provided that we 
 define:
 \begin{equation} \label{def.pm}
  \begin{split}
   P_{m+1}(a_0, \ldots a_{m}, x)
   & := \sum_{j=0}^{m-1} a_{j+1} (\partial_j P_m)(a_0, \ldots a_{m-1}, x) \\
   & \quad \quad + (\partial_x P_m) (a_0, \ldots a_{m-1}, x) f(x, a_0).
  \end{split}
 \end{equation}
 From~\eqref{def.pm} and since $f(0,0) = 0$, $P_{m+1}$ vanishes at zero.
 In particular, from~\eqref{dt.m.v} and the null value of the $P_m$ at zero, 
 the derivatives of $v$ vanish at the initial time as soon as those 
 of $u$ vanish for trajectories with $x(0) = 0$.
 
 \bigskip
 
 \noindent \textbf{Last}, we prove~\eqref{eq.Lemma:Norms.2}.
 From the smoothness of $P_m$ and the null value of $P_m$ at zero, we deduce the
 existence of $A_m, \rho_m > 0$ such that, for any
 $(a_0, \ldots, a_{m-1}) \in \R^{m}$, for any $x \in \R^n$,
 \begin{equation} \label{Pm.implication}
 |x| + \sum_{j=0}^{m-1} |a_j| \leqslant \rho_m
 \Rightarrow
 \left| P_m(a_0,\ldots, a_{m-1}, x)\right| \leqslant
 A_m \left(|x| + \sum_{j=0}^{m-1} |a_j|\right).
 \end{equation}
 From Lemma~\ref{Lemma:xu}, $x = \Oz{\norm{u}{L^\infty}}$.
 Thus, from~\eqref{Pm.implication} there exists $B_m, \eta_m > 0$ such that:
 \begin{equation} \label{Pm.implication2}
  \norm{u}{W^{m,\infty}} \leqslant \eta_m
  \quad \Rightarrow \quad 
  \left| P_m(u,\partial_t u, \ldots, \partial_t^{m-1} u, x)\right| \leqslant
  B_m \norm{u}{W^{m,\infty}}.
 \end{equation}
 From~\eqref{dt.m.v} and~\eqref{Pm.implication2}, 
 estimate~\eqref{eq.Lemma:Norms.2} holds with $C_m := 2 B_m$.
\end{proof}

We finish the proof of Theorem~\ref{Theorem:QANL} in the general case.
Let $f \in C^\infty(\R^n\times\R,\R^n)$ with $f(0,0) = 0$. 
Let $\beta \in \R^n$ be defined as in paragraph~\ref{Paragraph:Brunovsky}.
We apply the associated linear static state feedback transformation to
the full nonlinear system by setting $v := u - (\beta, x)$ and considering
the transformed system as in~\eqref{def.fg}. The new system $\dot{x} = g(x,v)$
satisfies the integrator assumption~\eqref{hyp.integrator}. Hence,
we know from Section~\ref{Section:well_prepared} that Theorem~\ref{Theorem:QANL}
is true for this system. Let us check that it also holds for the initial system.
\begin{itemize}
 
 \item Let us assume that $\Stwo + \R d_0 = \Sone$ for the initial $f$-system.
 Then, thanks to Lemma~\ref{Lemma:Lie_Bruno}, this is also the case for the
 transformed $g$-system. From Theorem~\ref{Theorem:QANL} applied to $g$,
 there exists a map $G \in C^\infty(\Sone, \Sone^\perp)$ such that, for
 any $T > 0$, there exists $M, \eta > 0$ such that, for any trajectory
 $(x,v) \in C^0([0,T],\R^n) \times L^\infty((0,T),\R)$ of 
 $\dot{x} = g(x,v)$ with $x(0) = 0$ and $\norm{v}{L^\infty} \leqslant \eta$:
 \begin{equation}
  \forall t \in [0,T], \quad
  \left| \mathbb{P}^\perp x(t) - G(\mathbb{P}x(t)) \right| \leqslant
  M \norm{v}{L^3}^3.
 \end{equation}
 Thanks to Lemma~\ref{Lemma:Norms}, if 
 $(x,u) \in C^0([0,T],\R^n) \times L^\infty((0,T),\R)$ is a trajectory
 of~\eqref{system.fxu} with $x(0) = 0$ and $\norm{u}{L^\infty} \leqslant 
 \min (\eta_0, \eta/C_0)$
 then $\norm{v}{L^\infty} \leqslant \eta$ and:
 \begin{equation}
  \forall t \in [0,T], \quad
  \left| \mathbb{P}^\perp x(t) - G(\mathbb{P}x(t)) \right| \leqslant
  C M \norm{u}{L^3}^3.
 \end{equation}
 
 \item Let us assume that $\Stwo + \R d_0 \not\subset \Sone$ for the initial
 $f$-system. Then there exists a smallest $0 \leqslant k \leqslant d$
 such that $d_k \neq 0$. Moreover, thanks to Lemma~\ref{Lemma:Lie_Bruno},
 this direction is the same for the $g$-system (see~\eqref{equiv.equiv}).
 Thus, there is no ambiguity. From Theorem~\ref{Theorem:QANL} applied to~$g$, 
 there exits $T^* > 0$ such that, for any $\tm < T^*$ and any
 $T \in (0,\tm]$, there exists $\eta > 0$, for any trajectory  
 $(x,v) \in C^0([0,T],\R^n) \times L^\infty((0,T),\R)$ of 
 $\dot{x} = g(x,v)$ with $x(0) = 0$, $v \in W^{2k,\infty}_0(0,T)$  
  and $\norm{v}{W^{2k,\infty}} \leqslant \eta$:
 \begin{equation} \label{drift.dk.v}
  \forall t \in [0,T], \quad
  \left\langle \mathbb{P}^\perp x(t) - G(\mathbb{P}x(t)), d_k \right\rangle 
  \geqslant 0.
 \end{equation}
  Thanks to Lemma~\ref{Lemma:Norms}, if 
  $(x,u) \in C^0([0,T],\R^n) \times L^\infty((0,T),\R)$ is a trajectory
  of~\eqref{system.fxu} with $x(0) = 0$, such that $u \in W^{2k,\infty}_0(0,T)$ and $\norm{u}{W^{2k,\infty}} \leqslant 
  \min (\eta_{2k}, \eta/C_{2k})$
  then $v \in W^{2k,\infty}(0,T)$,
  $\norm{v}{W^{2k,\infty}} \leqslant \eta$ and~\eqref{drift.dk.v} holds.
\end{itemize}

\subsubsection{Generalization of the proof for control-affine systems}

We finish the proof of Theorem~\ref{Theorem:QAA} in the general case for
control-affine systems. We proceed exactly as for nonlinear systems:
using a Brunovsk\'y transformation and the invariance of the geometric
and analytic notions involved.

It only remains to be checked that the weaker norms $W^{-1,3}$ and
$W^{-1,\infty}$ of the control are preserved under linear static state 
feedback transformations. From Lemma~\ref{Lemma:x1u1}, one has:
\begin{equation} \label{x.o.u1}
 x(t) = \Oo{|u_1(t)| + \norm{u_1}{L^1}}.
\end{equation}
Integrating~\eqref{x.o.u1} yields:
\begin{equation} \label{bx.o.u1}
 \alp{\int_0^t x(s) \ds} = \Oo{\norm{u_1}{L^1}}.
\end{equation}
In particular, still denoting by $v = u - \alp{x}$, 
equation~\eqref{bx.o.u1} and H\'older's inequality 
yield the missing estimates:
\begin{align}
 \label{v1.L3}
 \norm{v_1}{L^3} & = \Oo{\norm{u_1}{L^3}}, \\
 \label{v1.Linf}
 \norm{v_1}{L^\infty} & = \Oo{\norm{u_1}{L^\infty}}.
\end{align}

\subsubsection{Coercivity estimate for ill-prepared systems}
\label{Subsubsection:IllCoercivity}

In paragraph~\ref{Subsubsection:coercivity}, we proved that, for well-prepared
systems satisfying~\eqref{hyp.integrator}, the quadratic drift along $d_k$ is
quantified by the $H^{-k}$-norm of the control (the $L^2$-norm of $u_k$). For
ill-prepared systems we therefore obtain that the drift is quantified by the 
$L^2$-norm of $v_k$, with $v = u - \alp{x}$ (where the transformation is chosen
from using the Brunovsk\'y approach). In the following lines, we prove that, 
for small enough times, this also yields coercivity with respect to the 
$L^2$-norm of $u_k$, thereby justifying the comment announced in 
Subsection~\ref{Subsection:Comments}.

\paragraph{Obstruction of order $0$ (for nonlinear systems).} 
Let us assume that we are considering a nonlinear system for which 
$d_0 \notin \Sone$. Hence, for $\tm < T^*$ (where $T^*$ is defined 
by~\eqref{def:TT} and~\eqref{def:T*}), 
the drift along $d_0$ is quantified by the $L^2$-norm
of $v$. From Lemma~\ref{Lemma:xu} and the Cauchy-Schwarz inequality, one has:
\begin{equation}
 |x(t)| = \Oz{\sqrt{t} \norm{u}{L^2}}.
\end{equation}
Hence:
\begin{equation}
 \norm{v}{L^2(0,t)} = \norm{u}{L^2(0,t)} \left(1 + \Oz{t}\right) \geqslant
 \frac{1}{2}\norm{u}{L^2(0,t)},
\end{equation}
provided that $\tm$ is small enough and that $u$ is small enough in $L^\infty$.
Therefore, the drift is coercive with respect to $\norm{u}{L^2}$ for small
enough times and controls.

\paragraph{Obstruction of order $1 \leqslant k \leqslant d$.}
Let us assume that we are considering a system for which $d_0 \in \Sone$
(or $d_0 = 0$ for control-affine systems) but $d_k \notin \Sone$. 
Hence, for $\tm < T^*$, the drift along $d_k$ is quantified by the $L^2$-norm
of $v_k$. From Lemma~\ref{Lemma:AuxEst1} and the Cauchy-Schwarz inequality, 
we have:
\begin{equation}
 |\aux_d(t)| = \Og{\sqrt{t} \norm{u_d}{L^2(0,t)} + \norm{u_\gamma}{L^2(0,t)}^2}.
\end{equation}
Moreover, from the $C^2$ regularity of the flow $\phi_1, \ldots \phi_d$, one has:
\begin{equation} \label{decomp.x}
 x = u_1 f_1(0) + \ldots + u_d f_d(0) + \Og{|\aux_d| + |u_1|^2 + \ldots + |u_d|^2}.
\end{equation}
Integrating~\eqref{decomp.x} with respect to time $k$ times yields:
\begin{equation} \label{vk.uk}
 \begin{split}
  \norm{v_k}{L^2(0,t)} = \norm{u_k}{L^2(0,t)}
   & +  \Ou{\norm{u_{k+1}}{L^2(0,t)} + \ldots + \norm{u_{k+d}}{L^2(0,t)}} \\
   & + t^{k+\frac{1}{2}} \Og{ \norm{u_d}{L^2(0,t)}} \\
   &  + t^k \Og{ \norm{u_\gamma}{L^2(0,t)}^2} \\
   & + t^{k-1} \Og{\norm{u_1}{L^2(0,t)}^2 + \ldots + \norm{u_d}{L^2(0,t)}^2}.
 \end{split}
\end{equation}
For $1 \leqslant j \leqslant d$, $\norm{u_{k+j}}{L^2(0,t)} = \Ou{t \norm{u_k}{L^2(0,t)}}$.
Hence, the error terms in the first line are small when the time is small enough.
The error term of the second line is also small because $k \leqslant d$ so 
it is of order $\Og{t^{d+1/2} \norm{u_k}{L^2(0,t)}}$. In the
fourth line, the first error term dominates the following ones for small enough 
times. Hence, it remains to be checked that the following quantity is small enough:
\begin{equation} \label{def.U}
 U(t) := t^k \norm{u_\gamma}{L^2(0,t)}^2 + t^{k-1} \norm{u_1}{L^2(0,t)}^2 
\end{equation}
Thanks to the Gagliardo-Nirenberg interpolation inequality, for each 
$1 \leqslant k \leqslant d$, there exists $C_1, C_2 > 0$ (independent on $T$)
such that, for $l \in \{0,1\}$:
\begin{equation} \label{GN:Bis}
 \norm{u_l}{L^2(0,t)}^2 \leqslant
 C_1 \norm{u^{(2k-2l)}_k}{L^2(0,t)} \norm{u_k}{L^2(0,t)} + C_2 t^{2l-2k} \norm{u_k}{L^2(0,t)}^2.
\end{equation}

\bigskip \noindent \textit{First case: $k = 1$}. We start with this low-order
case which is handled a little differently. For control-affine systems, we
directly obtain from~\eqref{def.U} that:
\begin{equation} \label{U.k1.1}
 U(t) \leqslant (1+t) \norm{u_1}{L^2(0,t)}^2 
 \leqslant (1+t) \sqrt{t} \norm{u_1}{L^\infty} \norm{u_1}{L^2(0,t)}.
\end{equation}
For nonlinear systems, we use the interpolation inequality~\eqref{GN:Bis} 
for the first term:
\begin{equation} \label{U.k1.2}
 \begin{split}
  t \norm{u}{L^2(0,t)}^2 
  & \leqslant t C_1 \norm{\dot{u}}{L^2(0,t)} \norm{u_1}{L^2(0,t)} 
  + C_2 t^{-1} \norm{u_1}{L^2(0,t)}^2 \\
  & \leqslant t^{\frac{3}{2}} C_1 \norm{\dot{u}}{L^\infty} \norm{u_1}{L^2(0,t)} 
  + C_2 t^{\frac{1}{2}} \norm{u}{L^\infty} \norm{u_1}{L^2(0,t)}. 
 \end{split}
\end{equation}
The second term is estimated as $\norm{u_1}{L^2(0,t)}^2 
\leqslant t^{\frac{3}{2}} \norm{u}{L^\infty} \norm{u_1}{L^2(0,t)}$. 
Both estimates~\eqref{U.k1.1} and~\eqref{U.k1.2} lead to the conclusion
that $U(t)$ is small with respect to $\norm{u_1}{L^2(0,t)}$ when 
$u_\gamma \to 0$ in $L^\infty$. From~\eqref{vk.uk}, we conclude that:
\begin{equation} \label{vk.uk.12}
 \norm{v_k}{L^2(0,t)} \geqslant \frac{1}{2} \norm{u_k}{L^2(0,t)},
\end{equation}
for small enough times and small enough controls.

\bigskip \noindent \textit{Second case: $k \geqslant 2$}.
Thanks to~\eqref{GN:Bis}, for $l \in \{0, 1\}$, one has:
\begin{equation}
 \begin{split}
 t^{k-l} \norm{u_l}{L^2(0,t)}^2 
 & \leqslant
 C_1 t^{k-l} \norm{u^{(2k-2l)}_k}{L^2(0,t)} \norm{u_k}{L^2(0,t)} 
 + C_2 t^{l-k} \norm{u_k}{L^2(0,t)}^2 \\
 & \leqslant
 C_1 t^{k-l+\frac{1}{2}} \norm{u^{(k-2l)}}{L^\infty} \norm{u_k}{L^2(0,t)} 
 + C_2 t^{l+\frac{1}{2}} \norm{u}{L^\infty} \norm{u_k}{L^2(0,t)}.
 \end{split}
\end{equation}
Using~\eqref{def.U}, we conclude that:
\begin{equation}
 U(t) = \Og{\norm{u}{W^{k-2\gamma,\infty}} + \norm{u}{W^{k-2,\infty}}} \norm{u_k}{L^2}.
\end{equation}
For nonlinear systems, $k < 2k$ and $k - 2 < 2k$.
For control-affine systems, $k - 2 < 2k-3$. Hence, in both cases:
\begin{equation}
U(t) = \Og{\norm{u}{W^{2k-3\gamma,\infty}} \norm{u_k}{L^2}} .
\end{equation}
Hence, when $u \to 0$ in $W^{2k-3\gamma}$, $U(t)$ is small with respect
to $\norm{u_k}{L^2(0,t)}$ and we conclude that~\eqref{vk.uk.12}
also holds for small enough times and small enough controls.

\subsection{Persistence of results for less regular systems} 
\label{Subsection:rough}

Theorems~\ref{Theorem:QANL} and~\ref{Theorem:QAA} are stated with smooth vector 
fields to facilitate their understanding. However, the same conclusions can be 
extended to less regular dynamics and this highlights that our method only 
relies on the quadratic behavior of the system. We explain 
how one can extend the quadratic alternative to such less regular systems. 
As observed in Subsection~\ref{Subsec:S2_nonsmooth}, the spaces $\Sone$ 
and $\Stwo$ can be defined as soon as $f \in C^2(\R^n\times\R,\R^n)$.
We start with general nonlinear systems.

\begin{cor} \label{Corollary:NonlinearC3}
 Let $f \in C^3(\R^n\times\R,\R^n)$ with $f(0,0) = 0$. 
 The conclusions of Theorem~\ref{Theorem:QANL} hold.
\end{cor}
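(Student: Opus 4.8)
The plan is to show that the entire proof of Theorem~\ref{Theorem:QANL} — the auxiliary systems of Section~\ref{Section:auxiliary}, the alternative for well-prepared systems of Section~\ref{Section:well_prepared}, and the Brunovsk\'y reduction of Section~\ref{Section:reduction} — uses only a bounded number of derivatives of $f$, and in fact goes through under the sole hypothesis $f \in C^3$. The first observation is that all the \emph{data} of the statement are second-order data: by the explicit formulas \eqref{S1H0b}, \eqref{S2}, \eqref{def.dk} and the discussion of Subsection~\ref{Subsec:S2_nonsmooth}, the spaces $\Sone$, $\Stwo$, the vectors $d_0 = \partial_u^2 f(0,0)$ and $d_k$, the manifold $\M$ (i.e.\ the map $G$), and the coercivity time $T^*$ depend only on $H_0 = \partial_x f(0,0)$, $Q_0 = \tfrac12 \partial_x^2 f(0,0)$, $b = f_1(0)$, $H_1 = f_1'(0)$ and $d_0$ — all well defined from $f \in C^3$. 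The only place where the smooth proof genuinely spent regularity is the construction of the auxiliary vector fields $f_j = (-1)^{j-1}\ad_{f_0}^{j-1}(f_1)$ and their flows $\phi_j$, since $\ad_{f_0}^{j-1}$ costs roughly $j-1$ derivatives of $f_0$ and $j$ ranges up to $d \leqslant n$, and since Lemma~\ref{Lemma:Taylor} (used to manipulate $\eqmod{\cdot}$) requires $C^\infty$. The remedy, already suggested in Subsection~\ref{Subsec:S2_nonsmooth}, is to carry out \emph{all} the algebraic and flow constructions with the polynomial, hence $C^\infty$, vector fields $\hat f_0 := T_2 f_0$, $\hat f_1 := T_1 f_1$, together with the polynomial nonlinear dynamic $\hat f(x,u) := \hat f_0(x) + u\,\hat f_1(x) + \tfrac12 u^2 d_0$, which shares with $f$ its second-order data and hence the same $\Sone,\Stwo,d_0,d_k,G,T^*$.

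Next I would keep the \emph{true} trajectory $x$ of \eqref{system.fxu} but build the auxiliary states $\aux_j$ of \eqref{def:xj} using the smooth flows $\hat\phi_j$ of $\hat f_j := (-1)^{j-1}\ad_{\hat f_0}^{j-1}(\hat f_1)$. Differentiating exactly as in Lemma~\ref{Lem:eq_xij}, the evolution equation \eqref{HR_l} acquires one extra source term of the form $(\partial_p\hat\phi_j)^{-1}\cdots(\partial_p\hat\phi_1)^{-1} r(x,u)$, where $r := f - \hat f$. Since $f \in C^3$, Taylor's formula with integral remainder gives $r(x,u) = \Ou{|x|^3 + |u|^3 + u^2|x|}$ for nonlinear systems (and, in the control-affine setting treated by Corollary~\ref{Corollary:AffineC3}, $r$ even vanishes to order three \emph{in $x$}). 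Using the a priori size estimates already available with $C^1$ regularity — $|x(t)| = \Oz{\norm{u}{L^1}}$ of Lemma~\ref{Lemma:xu}, $|x(t)| = \Oo{|u_1(t)|+\norm{u_1}{L^1}}$ of Lemma~\ref{Lemma:x1u1}, and $|\aux_j(t)|$, $|\aux_d(t)|$, $|\aux_d(t)-z_d(t)|$ of Lemmas~\ref{Lemma:AuxEst1} and~\ref{Lemma:Enhanced} — together with H\"older's inequality, one checks that $\int_0^t r(x(s),u(s))\,\mathrm ds = \Og{\norm{u_\gamma}{L^3}^3}$ on any $[0,\tm]$. Re-running the Gr\"onwall arguments of Section~\ref{Section:well_prepared} verbatim, this extra term is absorbed into the remainders of \eqref{auxd_bound}, \eqref{auxd_zd_bound}, \eqref{F:x/xd}, \eqref{Estimate:G}: the quadratic approximation $z_d$ of \eqref{zd.eq} is built purely from $H_0$ and the $G_l(0,0)$, which are unchanged by the regularization, so the no-drift estimate \eqref{Estimate:TQANL.1} follows and the drift case reduces to the same coercivity plus Gagliardo–Nirenberg interpolation argument of paragraphs~\ref{Subsubsection:coercivity} and~\ref{Subsubsection:Gagliardo}, unaffected by the cubic source.

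Finally, the Brunovsk\'y reduction of Subsection~\ref{Subsection:Brunovski} is the linear static state feedback $v := u - (\beta, x)$ with $\beta$ depending only on $H_0$ and $b$; Lemma~\ref{Lemma:Lie_Bruno} needs only $f_0 \in C^3$, $f_1 \in C^2$ for the expansions \eqref{eq.g0}–\eqref{eq.g1}, and Lemma~\ref{Lemma:Norms} only uses the recursion \eqref{dt.m.v}, whose functions $P_m$ involve $f$ composed with already-differentiated quantities and hence are well defined and smooth-enough for $f \in C^3$ — so the reduction carries over, possibly noting that $\hat f$ and $f$ have the same Brunovsk\'y normal form. The main obstacle I anticipate is the bookkeeping of the remainder $r$ through the iterated compositions defining $\aux_d$ and through the implicit-function construction of $G$ in Subsection~\ref{subsec:Implicit_function_G}: one must verify that at each composition step the error introduced stays $\Og{\norm{u_\gamma}{L^3}^3}$ uniformly in $T \in (0,\tm]$, and (for the parallel Corollary~\ref{Corollary:AffineC3}) that the sharper $W^{-1,\infty}\to W^{-1,3}$ gain survives — which is exactly where one needs $r$ to vanish to order three \emph{in the state variable alone}, combined with the same integration-by-parts and H\"older manipulations already used throughout Section~\ref{Section:reduction}.
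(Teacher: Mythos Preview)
Your approach is correct but takes a different route than the paper. The paper's proof is more modular: it introduces the truncated trajectory $\hat{x}$ solving $\dot{\hat{x}} = T_2 f(\hat{x}, u)$ with $\hat{x}(0) = 0$, applies Theorem~\ref{Theorem:QANL} to this smooth polynomial system \emph{as a black box} (thereby obtaining $G$, $T^*$, and all conclusions for $\hat{x}$), and then proves the single comparison estimate $|x(t) - \hat{x}(t)| = \Oz{\norm{u}{L^3}^3}$ via one Gr\"onwall argument on $\dot x - \dot{\hat x} = (f - T_2 f)(x,u) + (H_0 + uH_1)(x-\hat x) + \tfrac12 Q_0(x-\hat x, x+\hat x)$; since $G'$ is locally bounded, this cubic error transfers directly through $\mathbb{P}^\perp x - G(\mathbb{P}x)$ and the manifold/drift conclusions for $x$ follow from those for $\hat{x}$. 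By contrast, you keep the true trajectory $x$ throughout and propagate the remainder $r = f - T_2 f$ through every step of Sections~\ref{Section:auxiliary}--\ref{Section:reduction} (auxiliary evolution equations, Lemma~\ref{Lemma:Enhanced}, the implicit construction of $G$, the Brunovsk\'y reduction). This works --- your key observation that $\int_0^t r(x(s),u(s))\,\ds = \Og{\norm{u_\gamma}{L^3}^3}$ is exactly what is needed --- but it requires re-verifying each lemma with an extra source term, whereas the paper isolates the low-regularity issue in one short comparison. Your route has the mild advantage of making explicit where regularity is spent at each step; the paper's route is shorter and more conceptual.
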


\begin{proof}
 We start by considering a truncated system with initial condition $\hat{x}(0) = 0$:
 \begin{equation} \label{system.trunc.nl}
  \dot{\hat{x}} 
  = T_2 f(\hat{x}, u)
  = H_0 \hat{x} + u b + \frac{1}{2} Q_0(\hat{x},\hat{x})
  + u H_1 \hat{x} + \frac{1}{2} d_0 u^2,
 \end{equation}
 where $T_2 f$ denotes the second-order Taylor expansion of $f$ around the 
 origin, with straightforward notations already used in the previous subsections.
 This defines a new nonlinear system, which is smooth and therefore satisfies
 the quadratic alternative of Theorem~\ref{Theorem:QANL}. Moreover, by
 definition (see Subsection~\ref{Subsec:S2_nonsmooth}), 
 the spaces $\Sone$ and $\Stwo$ for this new system
 are the same as those of the original system. Let us prove that $x$ and 
 $\hat{x}$ are close. One has:
 \begin{equation} \label{xxhat.evol}
  \begin{split}
   \dot{x} - \dot{\hat{x}}
   & = f(x,u) - T_2 f(\hat{x},u) \\
   & = \big(f(x,u) - T_2 f(x,u)\big) + (H_0 + u H_1) (x - \hat{x})
    + \frac{1}{2} Q_0(x-\hat{x},x+\hat{x}).
  \end{split}
 \end{equation} 
 From Lemma~\ref{Lemma:xu} applied to $f$ and $T_2f$:
 \begin{align}
  \label{ptnl.bigO.x1}
  x(t) & = \Oz{1}, \\
  \label{ptnl.bigO.hatx}
  \hat{x}(t) & = \Oz{1}, \\
  \label{ptnl.bigO.xu}
  x(t) & = \Oz{\norm{u}{L^1}}.
  \end{align}
 Since $u(t) = \Oz{1}$, one has:
 \begin{equation} \label{xxhat.1}
  (H_0 + u H_1) (x - \hat{x}) = \Oz{|x-\hat{x}|}.
 \end{equation}
 From~\eqref{ptnl.bigO.x1} and~\eqref{ptnl.bigO.hatx}, one has:
 \begin{equation} \label{xxhat.2}
  Q_0(x-\hat{x},x+\hat{x}) = \Oz{|x-\hat{x}|}.
 \end{equation}
 Since $f \in C^3(\R^n\times\R,\R^n)$:
 \begin{equation} \label{hyp.C3}
  |f(x,u) - T_2f(x,u)| =\Oz{|x|^3 + |u|^3}.
 \end{equation}
 From~\eqref{hyp.C3},~\eqref{ptnl.bigO.xu} and H\"older's inequality, one has:
 \begin{equation} \label{xxhat.3}
  f(x,u) - T_2 f(x,u) 
   = \Oz{ \norm{u}{L^1}^3 + |u|^3 }
   = \Oz{ \norm{u}{L^3}^3 + |u|^3 }.
 \end{equation}
 Plugging estimates~\eqref{xxhat.1},~\eqref{xxhat.2} and~\eqref{xxhat.3}
 into~\eqref{xxhat.evol} and integrating yields:
 \begin{equation} \label{xxhat.evol2}
  x(t) - \hat{x}(t) = \Oz{ \norm{u}{L^3}^3 
  + \int_0^t |x(s) - \hat{x}(s)| \ds }.
 \end{equation}
 Applying Gr\"onwall's lemma to~\eqref{xxhat.evol2} provides the estimate:
 \begin{equation} \label{xxhat.evol3}
  x(t) - \hat{x}(t) =  \Oz{ \norm{u}{L^3}^3 }.
 \end{equation}
 Let $\alpha \in \R^n$. We consider the transformed control $v := u - 
 (\alpha,\hat{x})$. With this new control, the evolution 
 equation~\eqref{system.trunc.nl} becomes $\dot{\hat{x}} = g(\hat{x},v)$ where 
 $g(\hat{x},v) := T_2 f(\hat{x},v+(\alpha,\hat{x}))$.
 Hence, applying Lemma~\ref{Lemma:xu} to this system yields:
 \begin{equation} \label{hatx.ov}
  \hat{x}(t) = \Oz{\norm{v}{L^1}} = \Oz{\norm{u-(\alpha,\hat{x})}{L^1}}.
 \end{equation}
 Combining~\eqref{xxhat.evol3} with~\eqref{hatx.ov}, using 
 the triangular inequality and H\"older's inequality gives:
 \begin{equation} \label{xxhat.evol4}
 x(t) - \hat{x}(t) =  \Oz{ \norm{u - (\alpha,\hat{x})}{L^3}^3 }.
 \end{equation}
 Let $G : \Sone \to \Sone^\perp$ be the smooth function associated with
 system~\eqref{system.trunc.nl} by Theorem~\ref{Theorem:QANL}. Since
 $G'$ is bounded in a vicinity of zero, estimate~\eqref{xxhat.evol4} yields:
 \begin{equation} \label{xxhat.evol5}
  \mathbb{P}^\perp x(t) - G(\mathbb{P}x(t)) 
  = \mathbb{P}^\perp \hat{x}(t) - G(\mathbb{P}\hat{x}(t)) +
  \Oz{\norm{u - (\alpha,\hat{x})}{L^3}^3 }.
 \end{equation}
 Estimate~\eqref{xxhat.evol5} allows to transpose all the conclusions on the 
 state $\hat{x}$ to conclusions on the state $x$ since the remainder is exactly
 of the same size as those that have been managed in 
 Section~\ref{Section:well_prepared}, provided that one chooses the $\alpha$
 corresponding to the Brunovsk\'y transform.
\end{proof}

\begin{cor} \label{Corollary:AffineC3}
 Let $f_0 \in C^3(\R^n,\R^n)$ and $f_1 \in C^2(\R^n,\R^n)$ with $f_0(0) = 0$.
 There exists $G \in C^2(\Sone, \Sone^\perp)$ with $G(0) = 0$ and $G'(0) = 0$
 such that the conclusions of Theorem~\ref{Theorem:QAA} hold.
\end{cor}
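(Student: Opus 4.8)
\emph{The plan is to mirror the proof of Corollary~\ref{Corollary:NonlinearC3}}, replacing the single truncation of $f$ by a joint truncation of $f_0$ and $f_1$, and replacing the $L^\infty$-based estimates by the $W^{-1,\cdot}$-estimates appropriate to control-affine systems. First I would introduce the polynomial, hence smooth, vector fields $\hat f_0 := T_2 f_0$ and $\hat f_1 := T_1 f_1$, with $\hat f_0(0)=0$. Since $\hat f_0'(0)=H_0$, $\hat f_1(0)=b$, $\hat f_1'(0)=H_1$ and $\frac12 \hat f_0''(0)=Q_0$, the truncated system $\dot{\hat x}=\hat f_0(\hat x)+u\hat f_1(\hat x)$ has, by the explicit formulas~\eqref{S1H0b},~\eqref{S2} and~\eqref{def.dk}, the same spaces $\Sone$, $\Stwo$, the same smallest index $k$ for which~\eqref{hyp_j}--\eqref{hyp_k} hold, and the same drift direction $d_k$ as the original one; this is exactly the viewpoint of Subsection~\ref{Subsec:S2_nonsmooth}. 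Being smooth, the truncated system falls under Theorem~\ref{Theorem:QAA}, which yields a map $G\in C^\infty(\Sone,\Sone^\perp)$ (a fortiori $C^2$) with $G(0)=0$, $G'(0)=0$, together with the three conclusions written for the trajectory $\hat x$. It then remains to transfer those conclusions to the trajectory $x$ of the original system driven by the same control $u\in L^1$.

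The heart of the matter is a comparison estimate $x(t)-\hat x(t)=\Og{\norm{u}{W^{-1,3}}^3}$ (here $\gamma=1$, $q=1$), the analogue of~\eqref{xxhat.evol3}. Subtracting the two evolution equations gives $\dot x-\dot{\hat x}=\big(f_0(x)-\hat f_0(x)\big)+\big(\hat f_0(x)-\hat f_0(\hat x)\big)+u\big(f_1(x)-\hat f_1(x)\big)+u\big(\hat f_1(x)-\hat f_1(\hat x)\big)$, with residuals $f_0(x)-\hat f_0(x)=\Og{|x|^3}$ (since $f_0\in C^3$) and $f_1(x)-\hat f_1(x)=\Og{|x|^2}$ (since $f_1\in C^2$). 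As for every control-affine estimate in the paper, this equation cannot be fed directly to Gr\"onwall's lemma, because the last two terms carry a bare factor $u$ whose $L^1$-norm is not controlled by the smallness assumption $\norm{u}{W^{-1,\infty}}\leqslant\eta$. I would therefore run the comparison after one application of the first auxiliary flow, between $\aux_1:=\phi_1(-u_1,x)$ and $\tilde{\aux}_1:=\phi_1(-u_1,\hat x)$, using the \emph{same} flow $\phi_1$ of $f_1$ in both, so that $x=\phi_1(u_1,\aux_1)$ and $\hat x=\phi_1(u_1,\tilde{\aux}_1)$ are recovered by a common Lipschitz map and it suffices to compare $\aux_1$ with $\tilde{\aux}_1$. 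The computation proving Lemma~\ref{Lem:eq_xij} for $j=1$ shows that $\dot{\aux}_1=f_0(\aux_1)+u_1 f_2(\aux_1)+u_1^2 G_1(u_1,\aux_1)$, in which only the mollified primitive $u_1$ appears, while $\tilde{\aux}_1$ satisfies the same equation up to the extra forcing $\Og{|\hat x|^3}+u\,\Og{|\hat x|^2}$ arising from $\hat f_0-f_0=\Og{|\cdot|^3}$ and $\hat f_1-f_1=\Og{|\cdot|^2}$. Using Lemma~\ref{Lemma:x1u1} (so $|\aux_1(t)|,|\tilde{\aux}_1(t)|=\Og{\norm{u_1}{L^1}}$ and $|\hat x(t)|=\Og{|u_1(t)|+\norm{u_1}{L^1}}$), integration by parts to move the remaining bare $u$ onto its primitive $u_1$, and H\"older's inequality to turn the resulting $L^1$- and $L^2$-in-time bounds into $\norm{u_1}{L^3}^3$ --- exactly the bookkeeping in the proof of Lemma~\ref{Lemma:Enhanced} --- I would reach a difference equation $\aux_1(t)-\tilde{\aux}_1(t)=\int_0^t\Og{|\aux_1-\tilde{\aux}_1|}\,\ds+\Og{\norm{u_1}{L^3}^3}$, conclude by Gr\"onwall, and propagate the bound to $x-\hat x$ through $\phi_1$.

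With the comparison in hand, the three conclusions transfer exactly as at the end of Section~\ref{Section:reduction}: since $G$ is locally Lipschitz and $G'(0)=0$, one has $\mathbb{P}^\perp x-G(\mathbb{P}x)=\mathbb{P}^\perp\hat x-G(\mathbb{P}\hat x)+\Og{\norm{u}{W^{-1,3}}^3}$, and Theorem~\ref{Theorem:QAA} applied to $\hat x$ gives either~\eqref{CCL:TQA:1} in the first case, or, after the Gagliardo--Nirenberg absorption of paragraph~\ref{Subsubsection:Gagliardo}, the drift inequality~\eqref{Estimate:TQANL.3} in the second case. The Brunovsk\'y feedback $v:=u-\alp{\hat x}$ intervenes only inside the application of Theorem~\ref{Theorem:QAA} to the truncated system and through the invariance estimates~\eqref{v1.L3}--\eqref{v1.Linf} and Lemma~\ref{Lemma:Norms}, so that smallness of $u$ in $W^{-1,\infty}$, resp.\ $W^{2k-3,\infty}_0$, propagates to smallness of $v$ in the corresponding space.

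\emph{The main obstacle} is the comparison estimate, and within it the treatment of the mixed term $u\big(f_1(x)-\hat f_1(x)\big)=u\,\Og{|x|^2}$: this is the genuinely new phenomenon with respect to Corollary~\ref{Corollary:NonlinearC3}, where the residual $f(x,u)-T_2 f(x,u)=\Oz{|x|^3+|u|^3}$ carried no bare-$u$ contribution and integrated trivially, which is precisely why the control-affine case must be argued separately. Passing to the first auxiliary state removes the dangerous bare $u$, but one must still check scrupulously that every remaining residual is cubic in the primitive $u_1$ and can be brought, via H\"older and iterated integration by parts as in the proofs of Lemmas~\ref{Lemma:x1u1} and~\ref{Lemma:Enhanced}, to a remainder of size $\Og{\norm{u}{W^{-1,3}}^3}$ compatible with the estimates of Section~\ref{Section:well_prepared}; together with the verification that $\phi_1$ and the utility maps $F_1$, $G_1$ built from $f_0,f_1$ differ from their hatted analogues only by quadratically small quantities --- so that no $\aux_1$-Lipschitz factor is lost when passing between the original and the truncated data --- this routine but lengthy bookkeeping is the only delicate point of the proof.
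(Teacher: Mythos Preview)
Your plan contains a genuine gap: the pointwise comparison $x(t)-\hat x(t)=\Oo{\norm{u_1}{L^3}^3}$ is simply false, and with it your conclusion that the smooth map $\hat G$ from the truncated system can serve as the $G$ in the corollary. Take $n=2$, $f_0=0$, $f_1(x)=e_1+x_1^2 e_2$, so $\hat f_1=e_1$. Then $x(t)=(u_1(t),u_1(t)^3/3)$ while $\hat x(t)=(u_1(t),0)$, and $\mathbb P^\perp x-\hat G(\mathbb P x)=u_1(t)^3/3$. For oscillatory controls (say $u=\cos(\omega\,\cdot)$ on $[0,2\pi/\omega]$) one has $|u_1(t)|^3\sim\omega^{-3}$ but $\norm{u_1}{L^3}^3\sim\omega^{-4}$, so the ratio blows up while $\norm{u_1}{L^\infty}\to 0$. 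The culprit is precisely the term $u\,(\hat f_1-f_1)(\hat x)$: your integration by parts moves the bare $u$ onto $u_1$, but the resulting expression still evaluates the primitive at the \emph{current} time rather than integrating it, so you get $|u_1(t)|^3$ and not $\int_0^t|u_1|^3$. Your auxiliary variable $\tilde\aux_1:=\phi_1(-u_1,\hat x)$ inherits the same defect, since the bare-$u$ forcing survives after pulling back by~$\phi_1$.

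The paper avoids this by comparing $\aux_1$ with the \emph{genuine} auxiliary state $\hat\aux_1:=\hat\phi_1(-u_1,\hat x)$ of the truncated system, using its own flow~$\hat\phi_1$; then both auxiliary equations are free of bare $u$ and the difference is honestly $\Oo{\norm{u_1}{L^3}^3}$ by Gr\"onwall. The price is that $x=\phi_1(u_1,\aux_1)$ and $\hat x=\hat\phi_1(u_1,\hat\aux_1)$ are now related to their auxiliary states through \emph{different} flows. The paper handles this with the change-of-flow map $\Phi(\tau,p):=\phi_1(\tau,\hat\phi_1(-\tau,p))$, proves $\partial_\tau\Phi=\Oo{|\hat\aux_1|^2}$ via a cancellation identity, replaces $u_1$ by a function $\beta_0(\mathbb P_0\hat x)$ of the state, and only then defines a \emph{new} $G$ built from $\hat G$, $\Phi$, and a local inverse. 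That $G$ is only $C^2$ because $\Phi$ carries the regularity of $\phi_1$, i.e.\ of $f_1\in C^2$ --- which is exactly why the corollary asserts $G\in C^2$ and not $G\in C^\infty$ as your argument would yield. In the toy example above the paper's construction gives $G(p_1)=p_1^3/3\neq\hat G(p_1)=0$.
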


\begin{proof}
 We consider a regularized system:
 \begin{equation} \label{system.hatf0f1}
   \dot{\hat{x}} = \hat{f}_0(\hat{x}) + u \hat{f}_1(\hat{x}),
 \end{equation}
 where $\hat{f}_0 := T_2 f_0$ is the second-order Taylor expansion of $f_0$
 at zero and $\hat{f}_1 := T_1 f_1$ is the first-order Taylor expansion of
 $f_1$ at zero. Using the notations introduced in 
 Section~\ref{Section:well_prepared}, we consider the
 respective auxiliary systems $\aux_1$ and $\hat{\aux}_1$
 associated with systems~\eqref{system.f0f1} and~\eqref{system.hatf0f1}. 
 One has:
 \begin{align} 
  \label{evol.aux1}
  \dot{\aux}_1 
  & = f_0(\aux_1) + u_1 f_2(\aux_1) + u_1^2 G_1(u_1, \aux_1), \\
  \label{evol.aux1.hat}
  \dot{\hat{\aux}}_1 
  & = \hat{f_0}(\hat{\aux}_1) + u_1 \hat{f}_2(\hat{\aux}_1)
  + u_1^2 \hat{G}_1(u_1, \hat{\aux}_1).
 \end{align}
 From Lemma~\ref{Lemma:x1u1} and estimate~\eqref{Estimate:Aux1},
 we have $\aux_1 = \Oo{1}$ and $\hat{\aux}_1 = \Oo{1}$.
 Since $f_0 \in C^3(\R^n,\R^n)$ and $\hat{f}_0 = T_2 f_0$,
 \begin{equation} \label{est.rough.1}
  \begin{split}
   f_0(\aux_1) - \hat{f_0}(\hat{\aux}_1)
   & = \left(f_0(\aux_1) - f_0(\hat{\aux}_1)\right)
     + \left(f_0(\hat{\aux}_1) - \hat{f_0}(\hat{\aux}_1)\right) \\
   & = \Oo{|\aux_1 - \hat{\aux}_1|}
     + \Oo{|\hat{\aux}_1|^3}.
  \end{split}
 \end{equation}
 Similarly, since $f_2 = -[f_0,f_1] \in C^1(\R^n,\R^n)$
 and $\hat{f}_2 = - [\hat{f}_0,\hat{f}_1] = f_2 + O(x^2)$
 one has:
 \begin{equation} \label{est.rough.2}
  \begin{split}
   f_2(\aux_1) - \hat{f_2}(\hat{\aux}_1)
   & = \left(f_2(\aux_1) - f_2(\hat{\aux}_1)\right)
     + \left(f_2(\hat{\aux}_1) - \hat{f_2}(\hat{\aux}_1)\right) \\
   & = \Oo{|\aux_1 - \hat{\aux}_1|}
     + \Oo{|\hat{\aux}_1|^2}.
  \end{split}
 \end{equation} 
 Last, one checks that 
 $\hat{G}_1(0,0) = \frac{1}{2} [\hat{f}_1,\hat{f}_2](0)
 = \frac{1}{2} [f_1,f_2](0) = G_1(0,0)$. 
 Moreover, we have
 $G_1 \in C^1(\R^n,\R^n)$.
 Indeed, since $f_1$ is $C^2$, $\phi_1$ is $C^3$ with
 respect to $\tau$ and $C^2$ with respect to~$p$, thus
 $F_1$ is $C^3$ with respect to $\tau$ and $C^1$ with respect to~$p$.
 Thus $G_1$, which is obtained from $F_1$ through
 a Taylor formula with integral remainder, is at least $C^1$
 in $(\tau,p)$.
 Hence:
 \begin{equation} \label{est.rough.3}
   G_1(u_1,\aux_1) - \hat{G_1}(u_1,\hat{\aux}_1)
   = \Oo{|u_1| + |\aux_1| + |\hat{\aux}_1|}.
 \end{equation}
 From Lemma~\ref{Lemma:x1u1} and estimate~\eqref{Estimate:Aux1},
 we also have $\aux_1 = \Oo{\norm{u_1}{L^1}}$ 
 and $\hat{\aux}_1 = \Oo{\norm{u_1}{L^1}}$.
 Therefore, plugging estimates~\eqref{est.rough.1},
 \eqref{est.rough.2} and~\eqref{est.rough.3} into
 the difference of~\eqref{evol.aux1.hat} with \eqref{evol.aux1},
 integrating in time and using H\"older's inequality yields:
 \begin{equation} \label{est.rough.4}
  \aux_1(t) - \hat{\aux}_1(t)
  = \Oo{\norm{u_1}{L^3}^3}
  + \int_0^t \Oo{|\aux_1(s) - \hat{\aux}_1(s)|} \ds.
 \end{equation}
 Applying Gr\"onwall's lemma to~\eqref{est.rough.4} gives:
 \begin{equation} \label{est.rough.5}
  r(t) := \aux_1(t) - \hat{\aux}_1(t) = \Oo{\norm{u_1}{L^3}^3}.
 \end{equation}
 We denote by $\phi_1$ and $\hat{\phi}_1$, the flows associated 
 with $f_1$ and $\hat{f}_1$. Hence:
 \begin{equation} \label{x.Phi.O1}
   x 
   = \phi_1(u_1, \aux_1)
   = \phi_1(u_1, \hat{\aux}_1 + r)
   = \phi_1(u_1, \hat{\phi}_1(-u_1, \hat{x}) + r)
   = \Phi(u_1, \hat{x}) + \Oo{|r|},
 \end{equation}
 thanks to the $C^1$ regularity of $\phi_1$,
 where we introduce the map:
 \begin{equation} \label{def.Phi}
   \Phi(\tau,p) := \phi_1\left(\tau, \hat{\phi}_1(-\tau, p)\right).
 \end{equation}
 Differentiating~\eqref{def.Phi} 
 and using the shorthand notation $\hat{p}_1 := \hat{\phi}_1(-\tau, p)$
 yields:
 \begin{equation} \label{dtau.Phi}
   \begin{split}
     \partial_\tau \Phi(\tau, p)
     & = \partial_\tau \phi_1\left(\tau, \hat{p}_1\right)
       - \partial_p \phi_1\left(\tau, \hat{p}_1\right)
         \cdot \partial_\tau \hat{\phi}_1(-\tau,p) \\
     & = f_1 (\phi_1(\tau, \hat{p}_1))
       - \partial_p \phi_1\left(\tau, \hat{p}_1\right)
         \cdot \hat{f}_1(\hat{p}_1) \\
     & = \Psi(\tau, \hat{p}_1) 
     - \partial_p \phi_1(\tau, \hat{p}_1) \cdot 
     \left( \hat{f}_1(\hat{p}_1) - f_1(\hat{p}_1) \right),
   \end{split}
 \end{equation}
 where we introduced:
 \begin{equation} \label{def.Psi}
   \Psi(\tau, p) := f_1 (\phi_1(\tau, p)) 
    - \partial_p \phi_1\left(\tau, p\right) \cdot f_1(p). 
 \end{equation}
 One checks that $\Psi(0, p) = 0$. 
 Moreover, using Schwarz's theorem, we obtain:
 \begin{equation} \label{eq.psi'}
   \partial_\tau \Psi(\tau,p) 
   = f_1'(\phi_1(\tau,p)) \cdot f_1(\phi_1(\tau,p))
   - \partial_{\tau p} \phi_1(\tau, p) \cdot f_1(p)
   = f_1'(\phi_1(\tau,p)) \cdot \Psi(\tau,p).
 \end{equation}
 From~\eqref{eq.psi'}, we deduce that $\Psi(\tau,p) = 0$.
 Hence,~\eqref{dtau.Phi} yields:
 \begin{equation} \label{est.dtauphi}
  \partial_\tau \Phi(u_1,\hat{x}) 
  = \Oo{|\hat{\aux}_1|^2}.
 \end{equation}
 For $p \in \R^n$, let us denote by $\mathbb{P}_0(p)$ the component
 of $\mathbb{P}p$ along $b_0 = b = f_1(0)$ in the basis of $\Sone$
 made up of $(b_0, \ldots b_{d-1})$ (see~\eqref{def.vk}).
 Considering the function 
 $\tau \mapsto \mathbb{P}_0(\hat{\phi}_1(\tau, 0))$,
 thanks to the local inversion theorem, there exists a smooth function 
 $\beta_0 : \R \to \R$ such that, if $p = \hat{\phi}_1(\tau, 0)$
 then $\tau = \beta_0 (\mathbb{P}_0 (p))$ for $\tau$ and $p$
 small enough. Hence, since $\hat{x} = \hat{\phi}_1(u_1, \hat{\aux}_1)$,
 the $C^1$ regularity of $\hat{\phi}_1$ yields:
 \begin{equation} \label{u1b1}
  u_1 = \beta_0(\mathbb{P}_0 \hat{x}) + \Oo{|\hat{\aux}_1|}.
 \end{equation}
 Gathering~\eqref{est.rough.5},~\eqref{x.Phi.O1},~\eqref{est.dtauphi} and~\eqref{u1b1} yields:
 \begin{equation}
  x = \Phi(\beta_0(\mathbb{P}_0 \hat{x}), \hat{x}) 
  + \Oo{\norm{u_1}{L^3}^3}.
 \end{equation}
 The cubic remainder is of the same size as those that are absorbed during
 the usual proof (in fact, during the proof, one absorbs such remainders on
 a transformed control $v = u - (\alpha,\hat{x})$ 
 but we have already seen that the involved norms on $u_1$ and $v_1$
 can be interchanged, as in~\eqref{v1.L3} and~\eqref{v1.Linf}). 
 Thus, we only need to study the first
 part of this equation and prove that it defines a manifold.
 We look at:
 \begin{equation}
  x = \Phi(\beta_0(\mathbb{P}_0 \hat{x}), \mathbb{P}\hat{x} + \hat{G}(\mathbb{P}\hat{x})).
 \end{equation}
 Once again, the local inversion theorem tells us that we can express
 $\mathbb{P}\hat{x}$ as a $C^2$ function $\rho(\mathbb{P}x)$. Thus, we set:
 \begin{equation}
  G(p_\parl) := 
  \mathbb{P}^\perp \Phi(\beta_0(\mathbb{P}_0\rho(p_\parl)), \rho(p_\parl) + \hat{G}(\rho(p_\parl)).
 \end{equation}
 This concludes the proof of the theorem for less regular
 vector fields.
\end{proof}

\section{Explicit approximation of the invariant manifold} \label{Section:explicit}

We construct an explicit quadratic approximation of the 
invariant manifold involved in our quadratic alternative theorems. We also
exhibit a second-order approximation of the differential systems
that stays exactly within this manifold when $\Stwo = \Sone$. 

\bigskip

We continue to denote by $1 \leqslant d < n$ the dimension of $\Sone$.
Using the notations introduced in Subsection~\ref{Subsection:C2}, \eqref{def.vk},~\eqref{def.L0} and~\eqref{def.Lk}, we define:
\begin{equation} \label{def.M2}
 \mathcal{M}_2 := \left\{
  \sum_{0 \leqslant i < d} \alpha_i b_i
  + \mathbb{P}^\perp \left( \frac{1}{2} \sum_{0 \leqslant i < d} \alpha_i^2  L_i b_i
  + \sum_{0 \leqslant i < j < d} \alpha_i \alpha_j 
   L_i b_j \right),
 \enskip \alpha \in \R^d
 \right\}.
\end{equation}
Since $b_0, \ldots b_{d-1}$ are $d$ independent vectors which 
span $\Sone$, equation~\eqref{def.M2} defines a global smooth manifold of 
$\R^n$ of dimension $d$. Indeed, it is defined as the image of a smooth
injective function, with smooth inverse. We recover from~\eqref{def.M2}
that the tangent subspace to~$\mathcal{M}_2$ at the origin is 
$\Sone$. For $x \in \R^n$, and $0 \leqslant k \leqslant d-1$, we introduce 
the notation $\mathbb{P}_k (x)$ to denote the component of $\mathbb{P}x$
along $b_k$ in the basis $(b_0, \ldots b_{d-1})$. Formally, 
one could also define $\mathcal{M}_2$ as the set of $x \in \R^n$ such that:
\begin{equation} \label{eq.manifold}
 \mathbb{P}^\perp x
 = 
  \frac{1}{2} \sum_{0 \leqslant i < d} \mathbb{P}_i(x)^2 \mathbb{P}^\perp(L_i b_i)
  + \sum_{0 \leqslant i < j < d} \mathbb{P}_i(x) \mathbb{P}_j(x)
  \mathbb{P}^\perp(L_i b_j).
\end{equation}

\subsection{Local expansion of the invariant manifold}

\begin{prop} \label{Prop:manifolds}
 The manifold $\mathcal{M}_2$ defined by~\eqref{def.M2} is a local 
 approximation of second-order of the manifold $\M$ involved in the 
 main quadratic alternative theorems around the origin.
\end{prop}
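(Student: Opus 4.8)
The plan is to show that the second-order Taylor polynomial at the origin of the smooth graph map $G\colon\Sone\to\Sone^\perp$ defining $\M$ (see~\eqref{def.MG1}) coincides with the purely quadratic graph map of $\mathcal{M}_2$ read off from~\eqref{eq.manifold}; since both $\M$ and $\mathcal{M}_2$ are graphs over $\Sone$ and are tangent to $\Sone$ at the origin, this is exactly the claim. First I would reduce to a well-prepared system: the manifold $\M$ and the function $G$ are intrinsic to the state space, since the Brunovsk\'y feedback of Subsection~\ref{Subsection:Brunovski} only relabels the control and thus leaves the trajectories in $\R^n$ — hence the manifold they generate — unchanged. It therefore suffices to compute $G''(0)$ for a well-prepared system, and to check separately that the $2$-jet of $\mathcal{M}_2$ at $0$ is invariant under the feedback. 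This last point follows from Lemma~\ref{Lemma:Lie_Bruno} and formula~\eqref{adm.tilde}: under the feedback, $b_m$ is replaced by $b_m$ plus a linear combination of $b_0,\dots,b_{m-1}$ (a unitriangular change of basis inside $\Sone$), and the classes $[L_ib_j]\in\R^n/\Sone$ transform accordingly, so that substituting into~\eqref{def.M2} leaves the quadratic graph unchanged modulo $\Sone$. I expect this bookkeeping — tracking the $\Sone$-valued correction terms $\mu_m$ and $\lambda^m_j$ of~\eqref{adm.tilde} through the definition~\eqref{def.M2} — to be the only genuinely technical obstacle; the rest is a direct Taylor expansion.

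For a well-prepared system, Subsection~\ref{subsec:Implicit_function_G} shows that $\M$ is the image of $\mathbf{a}\mapsto\Phi(\mathbf{a}):=\phi_1^{a_1}\circ\cdots\circ\phi_d^{a_d}(0)$, where $\phi_j$ is the flow~\eqref{flow_fj} of $f_j=(-1)^{j-1}\ad_{f_0}^{j-1}(f_1)$, with $\mathbb{P}\Phi(\mathbf{a})=\psi(\mathbf{a})$ and $\mathbb{P}^\perp\Phi(\mathbf{a})=G(\psi(\mathbf{a}))$. Indeed $x(\mathbf{a}):=\phi_1^{a_1}\circ\cdots\circ\phi_d^{a_d}(0)$ satisfies $\phi_d^{-a_d}\circ\cdots\circ\phi_1^{-a_1}(x(\mathbf{a}))=0$ and $\mathbb{P}x(\mathbf{a})=\psi(\mathbf{a})$, so $F(\psi(\mathbf{a}),\mathbb{P}^\perp x(\mathbf{a}))=0$ and uniqueness in the implicit function theorem of Lemma~\ref{Lem:F} forces $\mathbb{P}^\perp x(\mathbf{a})=G(\psi(\mathbf{a}))$. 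I would then iterate the one-parameter expansion $\phi_j^{a}(p)=p+af_j(p)+\tfrac12 a^2 f_j'(p)f_j(p)+O(a^3)$ and compose the $d$ flows to obtain
\[ \Phi(\mathbf{a})=\sum_{j=1}^d a_j f_j(0)+\tfrac12\sum_{j=1}^d a_j^2\, f_j'(0)f_j(0)+\sum_{1\le i<j\le d} a_ia_j\, f_i'(0)f_j(0)+O(|\mathbf{a}|^3). \]

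Next I would substitute $f_j(0)=(-1)^{j-1}b_{j-1}$ and $f_j'(0)=(-1)^{j-1}L_{j-1}$, which follow from~\eqref{eq.adk.o2} together with the definition of $b_k$ and $L_k$, and introduce the $\Sone$-coordinates $\beta_k:=\mathbb{P}_k(\Phi(\mathbf{a}))$; to first order these read $\beta_k=(-1)^k a_{k+1}$. Since $G$ vanishes to first order at the origin, only this linear relation between $\mathbf{a}$ and $\beta$ is relevant at second order; inserting it, the signs cancel in pairs and one gets
\[ \mathbb{P}^\perp\Phi(\mathbf{a})=\tfrac12\sum_{q=0}^{d-1}\beta_q^2\,\mathbb{P}^\perp(L_qb_q)+\sum_{0\le p<q\le d-1}\beta_p\beta_q\,\mathbb{P}^\perp(L_pb_q)+O(|\beta|^3), \]
which is exactly the value at $\sum_k\beta_kb_k$ of the quadratic graph map of $\mathcal{M}_2$ in~\eqref{eq.manifold} (recall that $\mathbb{P}_i(x)=\beta_i$ on this graph). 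Hence $G(p_\parl)$ agrees with the graph map of $\mathcal{M}_2$ up to $o(|p_\parl|^2)$, so $\mathcal{M}_2$ is indeed the second-order local approximation of $\M$ at the origin, which completes the proof. The remaining routine points are the explicit iteration of the flow composition (standard) and, for general systems, combining the reduction of Section~\ref{Section:reduction} with the feedback-invariance of the $2$-jet of $\mathcal{M}_2$ noted above.
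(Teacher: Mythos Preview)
Your proof is correct and follows essentially the same route as the paper: both arguments compute the second-order Taylor expansion of the composition of the flows $\phi_1,\dots,\phi_d$ at the origin, identify $f_j(0)=(-1)^{j-1}b_{j-1}$ and $f_j'(0)=(-1)^{j-1}L_{j-1}$ via Lemma~\ref{Lemma:adkbklk}, and match the result against~\eqref{eq.manifold}. The only difference is cosmetic: you parametrize $\M$ directly as the image of $\mathbf{a}\mapsto\phi_1^{a_1}\circ\cdots\circ\phi_d^{a_d}(0)$ and expand forward, whereas the paper uses the relation $G''(0)=-\partial_\parl^2 F(0,0)$ obtained from~\eqref{eq.FG} and expands the backward composition $\phi_d^{-\alpha_d}\circ\cdots\circ\phi_1^{-\alpha_1}(p_\parl)$ appearing in~\eqref{def.F}. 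Your route is arguably slightly more direct since it bypasses the implicit differentiation step.

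One remark on your reduction to well-prepared systems: it is not needed here. Although the construction of $F$, $\psi$, $\alpha$, $G$ sits in Section~\ref{Section:well_prepared}, the integrator hypothesis~\eqref{hyp.integrator} is only used for the trajectory estimates (\eqref{auxd_bound}, \eqref{F:x/xd}, \eqref{Estimate:G}), not for the definitions~\eqref{def.psi}, \eqref{def.alpha}, \eqref{def.F}, \eqref{def.G} themselves; and $(b_0,\dots,b_{d-1})$ is always a basis of $\Sone$. So both the paper's computation and yours apply verbatim to any system, and the feedback-invariance of the $2$-jet of $\mathcal{M}_2$---while true---need not be checked separately.
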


\begin{proof}
 The manifold $\M$ is defined in~\eqref{def.MG1}.
 First, both manifolds contain the origin and admit $\Sone$ as their tangent
 subspace at the origin. Here, we check that the second-order terms 
 contained in $\Sone^\perp$ are the same. Since both definitions are given
 as graphs, we need to check that the second-order Taylor expansions match.
 From~\eqref{def.G}, we have $G(p_\parl) = \Theta(p_\parl, 0)$. 
 From~\eqref{def:FFtilde} and~\eqref{eq.FtildeTheta}, we have:
 \begin{equation} \label{eq.FG}
  \forall p_\parl \in \Sone, \quad
  F(p_\parl, G(p_\parl)) = 0.
 \end{equation}
 Differentiating~\eqref{eq.FG} twice with respect to $p_\parl$ yields:
 \begin{equation} \label{eq.diff.FG}
  \partial_\parl^2 F + 2 \partial_\perp\partial_\parl F \cdot G'
  + \partial_\perp^2 F \cdot G' G' + \partial_\perp F \cdot G'' = 0.
 \end{equation}
 At the origin, $G = 0$, $G' = 0$ and $\partial_\perp F = \mathrm{Id}$ 
 from~\eqref{diff_d=Id}. Hence,~\eqref{eq.diff.FG} yields:
 \begin{equation} \label{eq.G''F}
  G''(0) = - \partial_\parl^2 F(0,0).
 \end{equation}
 We must compute a Taylor approximation of $F$ with respect to
 $p_\parl$. Let $\phi_j$ be defined by~\eqref{flow_fj}.
 Using a Taylor expansion with respect to time, one has:
 \begin{equation} \label{eq.phitaup.1}
  \phi_j(\tau, p) \eqmod{3} \phi_j(0, p) + \tau \partial_\tau \phi_j(0,p)
  + \frac{1}{2} \tau^2 \partial_\tau^2 \phi_j(0,p).
 \end{equation}
 Using~\eqref{flow_fj} and~\eqref{eq.phitaup.1}, one has:
 \begin{equation} \label{eq.phitaup.2}
  \phi_j(\tau, p) \eqmod{3} p + \tau \left(f_j(0) + f'_j(0)p\right)
  + \frac{1}{2} \tau^2 f'_j(0)f_j(0).
 \end{equation}
 Let $\alpha_1, \ldots \alpha_d \in \R^d$.
 Iterated application of~\eqref{eq.phitaup.2} yields:
 \begin{equation} \label{eq.phitaup.3}
  \begin{split}
   \phi_d^{-\alpha_d} \circ \cdots \circ \phi_1^{-\alpha_1}(p)
   & \eqmod{3} p - \sum_{i=1}^d \alpha_i f_i(0) 
   + \frac{1}{2} \sum_{i=1}^d \alpha_i^2 f_i'(0) f_i(0)  \\
   & \quad - \sum_{i=1}^d \alpha_i f_i'(0) \left( p 
   - \sum_{j = 1}^{i-1} \alpha_j f_j(0) \right).
  \end{split}
 \end{equation}
 Using~\eqref{def.psi} and~\eqref{def.alpha}, one obtains,
 for $p_\parl \in \Sone$, since $\alpha_i(0) = 0$:
 \begin{equation} \label{xparl.alpha}
  p_\parl \eqmod{2} \alpha_1(p_\parl) f_1(0) + \ldots + \alpha_d(p_\parl) f_d(0).
 \end{equation}
 Plugging~\eqref{eq.phitaup.3} and~\eqref{xparl.alpha} into
 definition~\eqref{def.F} yields:
 \begin{equation} \label{F.eqmod2}
  \begin{split}
   F(p_\parl, 0)
   & \eqmod{3} \frac{1}{2} \sum_{i=1}^d \alpha_i^2(p_\parl) \mathbb{P}^\perp (f_i'(0) f_i(0))
   - \sum_{i=1}^d \alpha_i (p_\parl)
   \sum_{j=i}^d \alpha_j (p_\parl) \mathbb{P}^\perp (f_i'(0) f_j(0)) \\
   & \eqmod{3} - \frac{1}{2} \sum_{i=1}^d \alpha_i^2(p_\parl) \mathbb{P}^\perp (f_i'(0) f_i(0))
   - \sum_{1 \leqslant i < j \leqslant d} \alpha_i(p_\parl) \alpha_j (p_\parl)
   \mathbb{P}^\perp (f_i'(0) f_j(0)).
  \end{split}
 \end{equation}
 For $1 \leqslant k \leqslant d$, recalling the definition of $f_k$ 
 (see~\eqref{def.fj}) and using Lemma~\ref{Lemma:adkbklk}, 
 one has $f_k(0) = (-1)^{k-1} b_{k-1}$ and $f_k'(0) = (-1)^{k-1} L_{k-1}$.
 From~\eqref{eq.G''F} and~\eqref{F.eqmod2}, we have:
 \begin{equation} \label{G.eqmod3}
 G(p_\parl) \eqmod{3} \frac{1}{2} \sum_{i=0}^{d-1} 
 \mathbb{P}_{i}(p_\parl)^2 \mathbb{P}^\perp (L_i b_i)
 + \sum_{0 \leqslant i < j < d}
 \mathbb{P}_{i}(p_\parl) \mathbb{P}_{j}(p_\parl)
 \mathbb{P}^\perp (L_i b_j).
 \end{equation}
 Since~\eqref{G.eqmod3} matches~\eqref{eq.manifold}, it
 concludes the proof of Proposition~\ref{Prop:manifolds}.
\end{proof}

\subsection{Construction of an homogeneous second-order system}

We construct an homogeneous second-order system that provides
a good approximation of any differential system and stays exactly within
$\mathcal{M}_2$, under the assumption that $\Stwo = \Sone$. Approximating the 
behavior of nonlinear systems using homogeneous (with respect
to amplitude dilatations) approximations has already been used in various contexts:
for small-time local controllability (around an equilibrium in~\cite{MR872457}
and around a trajectory in~\cite{MR1051629}), for large-time local 
controllability in~\cite{MR1395834}, for stabilization in~\cite{MR1096761} and 
the construction of Lyapunov functions in~\cite{MR1195304}.

\bigskip

As proposed in Section~\ref{Section:well_prepared}, given some control-affine
or nonlinear differential system, we decompose the state $x$ 
as $y + z + r$, where $y$ denotes the linear part, 
$z$~the quadratic part and $r$ a remainder (which is thus at 
least cubic in the control). Thanks to the linear theory, we know that $y$ 
lives in $\Sone$. Hence, an homogeneous approximation of $x$ up to the second
order is the quantity: 
\begin{equation} \label{def.zeta}
 \zeta := \mathbb{P}y + \mathbb{P}^\perp z.
\end{equation}
In~\eqref{def.zeta}, we write $\mathbb{P}y$ instead of $y$ to 
highlight the orthogonality with respect to the second term $\mathbb{P}^\perp z$.
Using the notations introduced in~\eqref{DEF_d0},~\eqref{def:H0b},~\eqref{def:H1} 
and~\eqref{def.Q0}, we recall that:
\begin{align}
\label{eq.y.recall}
\dot{y} & = H_0 y + u b, \\
\label{eq.z.recall}
\dot{z} & = H_0 z + u H_1 y + Q_0(y, y) + \frac{1}{2} u^2 d_0.
\end{align}
Since $\Sone$ is stable under multiplication by $H_0$, we have the relations:
\begin{align}
\label{eq.php}
\mathbb{P} H_0 \mathbb{P} & = H_0 \mathbb{P}, \\
\label{eq.pphpp}
\mathbb{P}^\perp H_0 \mathbb{P}^\perp & = \mathbb{P}^\perp H_0.
\end{align}
Applying $\mathbb{P}$ to~\eqref{eq.y.recall} and using~\eqref{eq.php},
then $\mathbb{P}^\perp$ to~\eqref{eq.z.recall}
and using~\eqref{eq.pphpp}, one obtains:
\begin{align} 
\label{eq.y.zeta}
\mathbb{P} \dot{\zeta} & = H_0 \mathbb{P} \zeta + u \mathbb{P} b, \\
\label{eq.z.zeta}
\mathbb{P}^\perp \dot{\zeta} & = \mathbb{P}^\perp H_0 \zeta + u \mathbb{P}^\perp H_1 \mathbb{P} \zeta
+ \mathbb{P}^\perp Q_0(\mathbb{P} \zeta, \mathbb{P} \zeta)
+ \frac{1}{2} u^2 \mathbb{P}^\perp d_0.
\end{align}
Combining~\eqref{eq.y.zeta} and~\eqref{eq.z.zeta} 
leads to the following ODE for $\zeta$:
\begin{equation} \label{system.zeta}
 \dot{\zeta} = g_0(\zeta) + u g_1(\zeta) + \frac{1}{2} u^2 \mathbb{P}^\perp d_0,
\end{equation}
where:
\begin{align}
\label{def.g0}
g_0(\zeta) & := (H_0 \mathbb{P} + \mathbb{P}^\perp H_0) \zeta + \mathbb{P}^\perp Q_0(\mathbb{P}\zeta, \mathbb{P}\zeta), \\
\label{def.g1}
g_1(\zeta) & := b + \mathbb{P}^\perp H_1 \mathbb{P} \zeta.
\end{align}
We prove in the following lemma that system~\eqref{system.zeta}
exhibits nice properties concerning the Lie brackets of $g_0$ and
$g_1$ since they are fully explicit.

\begin{lem} \label{Lemma:Lie_zeta}
 Let $j, k \in \N$. For any $\zeta \in \R^n$, we have:
 \begin{align} \label{eq.adk.g0g1}
 \ad^k_{g_0}(g_1)(\zeta) & = b_k + \mathbb{P}^\perp L_k \mathbb{P} \zeta, 
 \\
 \label{eq.adkj.g0g1}
 \left[\ad^k_{g_0}(g_1), \ad^j_{g_0}(g_1)\right](\zeta) & = 
 \mathbb{P}^\perp (L_j b_k - L_k b_j).
 \end{align}
\end{lem}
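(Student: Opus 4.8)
The plan is to prove~\eqref{eq.adk.g0g1} by induction on $k$ and then to deduce~\eqref{eq.adkj.g0g1} by a short direct computation. The whole argument rests on four elementary structural facts: the projector identities $\mathbb{P}^2=\mathbb{P}$ and $\mathbb{P}\mathbb{P}^\perp=\mathbb{P}^\perp\mathbb{P}=0$; the $H_0$-invariance of $\Sone$ recorded in~\eqref{eq.php} and~\eqref{eq.pphpp}; the normalizations $b_0=b$, $L_0=H_1$ from~\eqref{def.vk} and~\eqref{def.L0}, together with the recursions $b_{k+1}=-H_0 b_k$ (from~\eqref{def.vk}) and $L_{k+1}=L_k H_0-H_0 L_k-2Q_0(b_k,\cdot)$ (from~\eqref{def.Lk}); and the fact that $b_k\in\Sone$, so that $\mathbb{P}b_k=b_k$ and $H_0 b_k=-b_{k+1}\in\Sone$, whence $\mathbb{P}^\perp b_{k+1}=0$.

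For the base case $k=0$, formula~\eqref{eq.adk.g0g1} is exactly the definition~\eqref{def.g1} of $g_1$ since $b_0=b$ and $L_0=H_1$. For the inductive step, I would write $Y_k$ for the affine vector field $\zeta\mapsto b_k+\mathbb{P}^\perp L_k\mathbb{P}\zeta$, whose differential is the constant linear map $\mathbb{P}^\perp L_k\mathbb{P}$, so that by the definition of the Lie bracket $\ad_{g_0}^{k+1}(g_1)(\zeta)=[g_0,Y_k](\zeta)=\mathbb{P}^\perp L_k\mathbb{P}\,g_0(\zeta)-g_0'(\zeta)\,Y_k(\zeta)$. The first term simplifies because $\mathbb{P}g_0(\zeta)=H_0\mathbb{P}\zeta$ (use~\eqref{def.g0},~\eqref{eq.php} and $\mathbb{P}\mathbb{P}^\perp=0$), giving $\mathbb{P}^\perp L_k H_0\mathbb{P}\zeta$ after noting $H_0\mathbb{P}\zeta\in\Sone$. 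For the second term, $g_0'(\zeta)h=(H_0\mathbb{P}+\mathbb{P}^\perp H_0)h+2\mathbb{P}^\perp Q_0(\mathbb{P}\zeta,\mathbb{P}h)$ by symmetry of $Q_0$; evaluating at $h=Y_k(\zeta)$ and using $\mathbb{P}Y_k(\zeta)=b_k$, $H_0 b_k=-b_{k+1}$, $\mathbb{P}^\perp b_{k+1}=0$ and~\eqref{eq.pphpp}, one gets $g_0'(\zeta)Y_k(\zeta)=-b_{k+1}+\mathbb{P}^\perp H_0 L_k\mathbb{P}\zeta+2\mathbb{P}^\perp Q_0(b_k,\mathbb{P}\zeta)$. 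Subtracting and regrouping yields $[g_0,Y_k](\zeta)=b_{k+1}+\mathbb{P}^\perp\big(L_k H_0-H_0 L_k-2Q_0(b_k,\cdot)\big)\mathbb{P}\zeta$, which is $b_{k+1}+\mathbb{P}^\perp L_{k+1}\mathbb{P}\zeta$ by~\eqref{def.Lk}, closing the induction.

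Finally,~\eqref{eq.adkj.g0g1} follows at once: by the previous step $\ad_{g_0}^k(g_1)=Y_k$ and $\ad_{g_0}^j(g_1)=Y_j$ are affine with constant differentials $\mathbb{P}^\perp L_k\mathbb{P}$ and $\mathbb{P}^\perp L_j\mathbb{P}$, so $[Y_k,Y_j](\zeta)=\mathbb{P}^\perp L_j\mathbb{P}\,Y_k(\zeta)-\mathbb{P}^\perp L_k\mathbb{P}\,Y_j(\zeta)$; the $\zeta$-linear contributions of $Y_k$ and $Y_j$ carry a factor $\mathbb{P}\mathbb{P}^\perp=0$ and disappear, while $\mathbb{P}b_k=b_k$ and $\mathbb{P}b_j=b_j$, leaving $\mathbb{P}^\perp(L_j b_k-L_k b_j)$, independent of $\zeta$. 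There is no real obstacle in this lemma — it is a bookkeeping exercise — but the step requiring the most care is tracking the projections through the bilinear term $Q_0$ appearing in $g_0'$, since that is where the recursion for $L_{k+1}$ is reconstructed.
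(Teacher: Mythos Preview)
Your proof is correct and follows essentially the same approach as the paper: induction on $k$ for~\eqref{eq.adk.g0g1} using the projector identities~\eqref{eq.php}--\eqref{eq.pphpp} and the recursion~\eqref{def.Lk}, followed by a direct computation for~\eqref{eq.adkj.g0g1} where the cross terms vanish via $\mathbb{P}\mathbb{P}^\perp=0$. The only cosmetic difference is that you name the affine field $Y_k$ and separate $g_0'(\zeta)Y_k(\zeta)$ from $Y_k'(\zeta)g_0(\zeta)$ explicitly, whereas the paper writes out the full bracket in one block; the substance is identical.
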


\begin{proof}
 We proceed by induction on $k \in \N$. From~\eqref{def.vk},~\eqref{def.L0} 
 and~\eqref{def.g1},~\eqref{eq.adk.g0g1} holds for $k = 0$. Let $k \in \N$ be 
 such that~\eqref{eq.adk.g0g1} holds. Using~\eqref{def.g0}, we compute the next 
 bracket:
 \begin{equation} \label{eq.adk.g0g1.1}
 \begin{split}
 \ad^{k+1}_{g_0}(g_1) (\zeta)
 & = \left[g_0, \ad^k_{g_0}(g_1)\right](\zeta) \\
 & = (\mathbb{P}^\perp L_k \mathbb{P}) \left(H_0 \mathbb{P} \zeta + \mathbb{P}^\perp H_0 \zeta + \mathbb{P}^\perp Q_0(\mathbb{P}\zeta, \mathbb{P}\zeta)\right) \\
 & \quad - H_0 \mathbb{P} \left(b_k + \mathbb{P}^\perp L_k \mathbb{P} \zeta\right) 
 - \mathbb{P}^\perp H_0 \left(b_k + \mathbb{P}^\perp L_k \mathbb{P} \zeta\right) \\
 & \quad - 2 \mathbb{P}^\perp Q_0(b_k, \mathbb{P}\zeta)
 - 2 \mathbb{P}^\perp Q_0(\mathbb{P} \mathbb{P}^\perp L_k \mathbb{P}\zeta, \mathbb{P}\zeta).
 \end{split}
 \end{equation}
 Thanks to~\eqref{eq.php},~\eqref{eq.pphpp} and the relation $\mathbb{P} \mathbb{P}^\perp = 0$,
 we deduce from~\eqref{eq.adk.g0g1.1} that:
 \begin{equation} \label{eq.adk.g0g1.2}
 \ad^{k+1}_{g_0}(g_1) (\zeta)
 = b_{k+1} + \mathbb{P}^\perp \left(L_k H_0 - H_0 L_k - 2 Q_0(b_k, \cdot)\right) \mathbb{P} \zeta.
 \end{equation}
 The conclusion follows from~\eqref{eq.adk.g0g1.2} because we obtain the
 same recursion relation as in~\eqref{def.Lk}. Thus~\eqref{eq.adk.g0g1}
 holds for any $k \in \N$.
 Using~\eqref{eq.adk.g0g1} and $\mathbb{P}\mathbb{P}^\perp = 0$, we compute:
 \begin{equation} \label{eq.adkj.g0g1.1}
 \begin{split}
 \left[\ad^k_{g_0}(g_1), \ad^j_{g_0}(g_1)\right](\zeta)
 & = \mathbb{P}^\perp L_j \mathbb{P} \left(b_k + \mathbb{P}^\perp L_k \mathbb{P} \zeta\right)
 - \mathbb{P}^\perp L_k \mathbb{P} \left(b_j + \mathbb{P}^\perp L_j \mathbb{P} \zeta\right) \\
 & = \mathbb{P}^\perp (L_j b_k - L_k b_j).
 \end{split}
 \end{equation}
 Hence,~\eqref{eq.adkj.g0g1.1} concludes the proof of~\eqref{eq.adkj.g0g1}.
\end{proof}

\begin{lem} \label{Lemma:MFRAK}
 Assume that $\Stwo + \R d_0 = \Sone$. Then, there exists a smooth 
 manifold $\mathfrak{M} \subset \R^n$ of dimension $d := \mathrm{dim~} \Sone$, 
 such that any trajectory of system~\eqref{system.zeta} with $\zeta(0) = 0$
 satisfies $\zeta(t) \in \mathfrak{M}$ for any $t \geqslant 0$.
\end{lem}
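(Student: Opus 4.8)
The plan is to show that the explicit manifold $\mathcal{M}_2$ from~\eqref{def.M2} works, i.e.\ to take $\mathfrak{M}:=\mathcal{M}_2$ and prove it is invariant under~\eqref{system.zeta}. The hypothesis $\Stwo+\R d_0=\Sone$ is used twice. First, $d_0\in\Sone$, so $\mathbb{P}^\perp d_0=0$ and the $u^2$--term in~\eqref{system.zeta} vanishes; the system reduces to $\dot\zeta=g_0(\zeta)+ug_1(\zeta)$. Second, $\Stwo\subset\Sone$, which by~\eqref{S2} and~\eqref{eq.adkj} means that $\mathbb{P}^\perp(L_jb_k-L_kb_j)=0$ for all $j,k\in\N$, i.e.\ $\mathbb{P}^\perp L_jb_k$ is symmetric in $(j,k)$; equivalently, by Lemma~\ref{Lemma:Lie_zeta}, the constant vector fields $[\ad^k_{g_0}(g_1),\ad^j_{g_0}(g_1)]$ all vanish identically. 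I would also record that $\mathbb{P}\zeta$ obeys the \emph{closed} linear equation $\mathbb{P}\dot\zeta=H_0\mathbb{P}\zeta+u\mathbb{P}b$ of~\eqref{eq.y.zeta}; since $\mathbb{P}\zeta(t)\in\Sone$ and $(b_0,\dots,b_{d-1})$ is a basis of $\Sone$, one may write $\mathbb{P}\zeta(t)=\sum_{k=0}^{d-1}\alpha_k(t)b_k$ with $\alpha(0)=0$ and $\alpha$ solving a linear ODE $\dot\alpha=M\alpha+ue_0$ in $\R^d$, where $M$ represents $H_0|_{\Sone}$ in the basis $(b_k)$ (using $H_0b_k=-b_{k+1}$ together with the relation expressing $b_d$ on $(b_0,\dots,b_{d-1})$).

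Since the coordinates $\alpha_i$ are precisely the components $\mathbb{P}_i(\zeta)$, the reformulations~\eqref{def.M2} and~\eqref{eq.manifold} show that $\zeta(t)\in\mathcal{M}_2$ is equivalent to $\mathbb{P}^\perp\zeta(t)=\mathbb{P}^\perp\Gamma(\alpha(t))$, where $\Gamma(\alpha):=\tfrac12\sum_i\alpha_i^2L_ib_i+\sum_{i<j}\alpha_i\alpha_jL_ib_j$. From~\eqref{eq.z.zeta} (the $u^2$--term being gone) the function $w:=\mathbb{P}^\perp\zeta$ solves the linear equation $\dot w=\mathbb{P}^\perp H_0\mathbb{P}^\perp w+\mathbb{P}^\perp\!\big(H_0\mathbb{P}\zeta+uH_1\mathbb{P}\zeta+Q_0(\mathbb{P}\zeta,\mathbb{P}\zeta)\big)$ with $w(0)=0$, whose forcing term is a known function of $\alpha(t)$ and $u(t)$. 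By uniqueness for linear ODEs it thus suffices to check that $t\mapsto\mathbb{P}^\perp\Gamma(\alpha(t))$ solves the same equation with initial value $0$; the initial condition is clear from $\alpha(0)=0$. Differentiating $\mathbb{P}^\perp\Gamma(\alpha(t))$ and substituting $\dot\alpha=M\alpha+ue_0$, the desired identity reduces, after using $L_0=H_1$ and the symmetry $\mathbb{P}^\perp L_jb_k=\mathbb{P}^\perp L_kb_j$, to a purely algebraic relation among the $b_k$, $L_k$, $H_0$ and $Q_0$, which is exactly what the recursion $L_{k+1}=L_kH_0-H_0L_k-2Q_0(b_k,\cdot)$ of~\eqref{def.Lk} encodes --- the same mechanism behind Lemma~\ref{Lemma:Lie_zeta}.

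The \textbf{main obstacle} is this last algebraic verification. It is slightly more delicate than its counterpart for well-prepared systems in Section~\ref{Section:well_prepared} for two reasons: $\Sone^\perp$ is not $H_0$--invariant, so the contribution $\mathbb{P}^\perp H_0\mathbb{P}^\perp\Gamma(\alpha)$ must be tracked carefully; and, contrary to the nilpotent integrator normal form, $H_0b_{d-1}$ re-enters $\Span(b_0,\dots,b_{d-1})$, so the forcing is a genuine quadratic form in $\alpha(t)$ and not a single $u_k^2$--type term. Once the identity holds, invariance of $\mathfrak{M}:=\mathcal{M}_2$ follows for all $t\geqslant 0$ --- the equation for $\zeta$ is globally solvable because, as noted, $\mathbb{P}\zeta$ and then $\mathbb{P}^\perp\zeta$ satisfy linear equations --- and $\zeta(0)=0\in\mathcal{M}_2$ finishes the proof; that $\mathcal{M}_2$ is a global smooth $d$--dimensional manifold was already observed after~\eqref{def.M2}. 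Equivalently, one may argue geometrically: for $\zeta\in\mathcal{M}_2$ one has $T_\zeta\mathcal{M}_2=\Span\{\ad^k_{g_0}(g_1)(\zeta):0\leqslant k\leqslant d-1\}$ --- this is exactly where $\Stwo\subset\Sone$ is used, via the symmetry of $\mathbb{P}^\perp L_jb_k$ --- so $g_1=\ad^0_{g_0}(g_1)$ is automatically tangent to $\mathcal{M}_2$, the $u^2$--term vanishes, and the algebraic identity above is precisely the assertion that $g_0$ is tangent to $\mathcal{M}_2$; invariance then follows from the standard fact that a submanifold to which the right-hand side of an ODE is everywhere tangent is invariant under its flow.
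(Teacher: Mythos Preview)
Your proposal is correct, but it follows a different route from the paper's proof of this particular lemma. The paper argues abstractly: it computes $\Lie\{g_0,g_1\}(\zeta)$ using Lemma~\ref{Lemma:Lie_zeta}, shows (via the hypothesis $\Stwo\subset\Sone$) that all second-order brackets vanish identically, then checks that the first-order brackets $\ad^j_{g_0}(g_1)(\zeta)$ for $j\geqslant d$ are linear combinations of those for $0\leqslant j<d$, so that $\Lie\{g_0,g_1\}(\zeta)$ has constant dimension $d$ everywhere; the Frobenius theorem then produces an integral manifold $\mathfrak{M}$ without identifying it. Your approach instead takes $\mathfrak{M}=\mathcal{M}_2$ from the outset and verifies invariance directly by an ODE computation --- this is precisely what the paper carries out in the subsection immediately following the lemma (equations~\eqref{eq.Q.1}--\eqref{eq.Q.9}), where it shows $\frac{\mathrm{d}}{\dt}Q(\zeta)=\mathbb{P}^\perp H_0\,Q(\zeta)$ and hence $Q(\zeta(t))\equiv 0$. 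So your ``main obstacle'' is genuine but has a clean resolution: the algebraic identity you need is exactly~\eqref{eq.Q.8}, and the worries you flag ($\Sone^\perp$ not $H_0$-stable, $H_0b_{d-1}$ re-entering) are handled by the telescoping in~\eqref{eq.Q.5}--\eqref{eq.Q.7} using~\eqref{def.Lk}. The trade-off: the paper's Frobenius argument is shorter and conceptual but gives only existence, whereas your direct computation (which the paper also does, just afterwards) simultaneously proves the lemma and identifies $\mathfrak{M}=\mathcal{M}_2$.
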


\begin{proof}
 We start by proving that, for any $\zeta \in \R^n$:
 \begin{equation} \label{def.Lie.g0g1}
  \Lie~\left\{ g_0, g_1 \right\}(\zeta) = 
  \Span \left\{ b_k + \mathbb{P}^\perp L_k \mathbb{P} \zeta, \enskip 0 \leqslant k < d \right\}.
 \end{equation}
 We use Lemma~\ref{Lemma:Lie_zeta}. 
 From~\eqref{eq.adkj} and~\eqref{eq.adkj.g0g1}, we obtain:
 \begin{equation} \label{eq.adkj.g0g1.2}
  \begin{split}
  \left[\ad^k_{g_0}(g_1), \ad^j_{g_0}(g_1)\right](\zeta)
  & = \mathbb{P}^\perp (L_j b_k - L_k b_j) \\
  & = \mathbb{P}^\perp \left[\ad^k_{f_0}(f_1), \ad^j_{f_0}(f_1)\right](0) \\
  & = 0.
  \end{split}
 \end{equation}
 Hence, from~\eqref{eq.adkj.g0g1.2}, all brackets containing
 $g_1$ at least two times vanish identically. Thus, the space 
 $\Lie~\left\{ g_0, g_1 \right\}(\zeta)$ is spanned
 by brackets containing $g_1$ exactly once.
 Using~\eqref{eq.adk.g0g1} and 
 $\mathbb{P}^\perp L_j b_k = \mathbb{P}^\perp L_k b_j$, we compute:
 \begin{equation} \label{eq.adjg0g1}
 \begin{split}
 \ad^{j}_{g_0}(g_1) (\zeta) & = b_j + \mathbb{P}^\perp L_j \mathbb{P} \zeta \\
 & = b_j + \mathbb{P}^\perp L_j \left(\sum_{l=0}^{d-1} \mathbb{P}_l(\zeta) b_l\right) \\
 & = b_j + \sum_{l=0}^{d-1} \mathbb{P}_l(\zeta) \mathbb{P}^\perp L_l  b_j \\
 & = b_j + \sum_{k=0}^{d-1} \mathbb{P}_k(b_j) \sum_{l=0}^{d-1}  
 \mathbb{P}_l(\zeta) \mathbb{P}^\perp L_l b_k \\
 & = \sum_{k=0}^{d-1} \mathbb{P}_k(b_j) b_k + \sum_{k=0}^{d-1} \mathbb{P}_k(b_j) \mathbb{P}^\perp L_k \left(\sum_{l=0}^{d-1}  
 \mathbb{P}_l(\zeta) b_l\right) \\
 & = \sum_{k=0}^{d-1} \mathbb{P}_k(b_j) \left(b_k + \mathbb{P}^\perp L_k \mathbb{P} \zeta \right).
 \end{split}
 \end{equation}
 Equation~\eqref{eq.adjg0g1} proves that the $d$ first brackets span 
 $\Lie~\left\{ g_0, g_1 \right\}(\zeta)$
 and thus~\eqref{def.Lie.g0g1} holds.
 From the definition of $\Sone$ and $d = \mathrm{dim~} \Sone$, the family 
 $b_0, \ldots b_{d-1}$ is free. Thus, from~\eqref{def.Lie.g0g1},
 we have that, for any $\zeta \in \R^n$, the dimension of 
 $\Lie~\left\{ g_0, g_1 \right\}(\zeta)$ is exactly $d$.
 Hence, from the Frobenius theorem (as stated in~\cite[Corollary 3.26]{MR2302744}), 
 the state $\zeta(t)$ must evolve within a manifold of $\R^n$ of 
 dimension $d$ (see also~\cite[Theorem 4, page 45]{MR1425878} 
 or~\cite[Theorem 2.20, page 48]{MR1967689} for proofs of this geometric result).
\end{proof}

\subsection{Exact evolution within the quadratic manifold}

We prove that Lemma~\ref{Lemma:MFRAK} actually holds with
$\mathfrak{M} = \mathcal{M}_2$ defined in~\eqref{def.M2}. Using the assumption
$\Stwo = \Sone$ and~\eqref{eq.adkj}, one has $\mathbb{P}^\perp L_j b_k =
\mathbb{P}^\perp L_k b_j$. Hence~\eqref{def.M2} can be rewritten in a more
symmetric way as:
\begin{equation} \label{def.Mquad}
 \mathcal{M}_2 = \left\{
 \sum_{k=0}^{d-1} \alpha_k b_k 
 + \frac{1}{2} \sum_{j=0}^{d-1} \sum_{k=0}^{d-1} \alpha_j \alpha_k \mathbb{P}^\perp L_j b_k,
 \enskip
 (\alpha_0, \ldots \alpha_{d-1}) \in \R^d
 \right\}.
\end{equation}
Equivalently, it corresponds to the set of $x \in \R^n$ 
for which the following vector-valued second-order polynomial vanishes:
\begin{equation} \label{def.Q}
 Q(x) := \mathbb{P}^\perp x - \frac{1}{2}
 \sum_{j=0}^{d-1} \sum_{k=0}^{d-1} 
 \mathbb{P}_j(x) \mathbb{P}_k(x) \mathbb{P}^\perp L_j b_k.
\end{equation}
We compute the evolution of $Q$ along trajectories $t \mapsto \zeta(t)$
by differentiating~\eqref{def.Q}:
\begin{equation} \label{eq.Q.1}
\frac{\mathrm{d}}{\dt} Q(\zeta(t))
= \mathbb{P}^\perp \dot{\zeta} 
- \frac{1}{2} \sum_{j=0}^{d-1} \sum_{k=0}^{d-1} \mathbb{P}_j(\dot{\zeta}) \mathbb{P}_k(\zeta) \mathbb{P}^\perp L_j b_k
- \frac{1}{2} \sum_{j=0}^{d-1} \sum_{k=0}^{d-1} \mathbb{P}_j(\zeta) \mathbb{P}_k(\dot{\zeta}) \mathbb{P}^\perp L_j b_k.
\end{equation}
Recalling that $\mathbb{P}^\perp L_j b_k = \mathbb{P}^\perp L_k b_j$ and using~\eqref{eq.y.zeta} 
and~\eqref{eq.z.zeta}, we have from~\eqref{eq.Q.1}:
\begin{equation} \label{eq.Q.2}
\begin{split}
\frac{\mathrm{d}Q}{\dt}
& = \mathbb{P}^\perp H_0 \mathbb{P}^\perp \zeta + u \mathbb{P}^\perp H_1 \mathbb{P} \zeta + \mathbb{P}^\perp Q_0(\mathbb{P} \zeta, \mathbb{P} \zeta) \\
& \quad - \sum_{j=0}^{d-1} \sum_{k=0}^{d-1} \mathbb{P}_j(H_0 \mathbb{P} \zeta + u b) \mathbb{P}_k(\zeta) \mathbb{P}^\perp L_j b_k.
\end{split}
\end{equation}
Since $b = b_0$ (see~\eqref{def.vk}) and $L_0 = H_1$ (see~\eqref{def.L0}), we 
have:
\begin{equation} \label{eq.Q.3}
\sum_{j=0}^{d-1} \sum_{k=0}^{d-1} \mathbb{P}_j(u b) \mathbb{P}_k(\zeta) \mathbb{P}^\perp L_j b_k
= u \sum_{k=0}^{d-1} \mathbb{P}_k(\zeta) \mathbb{P}^\perp L_0 b_k
= u \mathbb{P}^\perp H_1 \mathbb{P} \zeta.
\end{equation}
From~\eqref{eq.Q.2} and~\eqref{eq.Q.3}, one has:
\begin{equation} \label{eq.Q.4}
\frac{\mathrm{d}Q}{\dt}
= \mathbb{P}^\perp H_0 \mathbb{P}^\perp \zeta + \mathbb{P}^\perp Q_0(\mathbb{P} \zeta, \mathbb{P} \zeta)
- \sum_{j=0}^{d-1} \sum_{k=0}^{d-1} \mathbb{P}_j(H_0 \mathbb{P} \zeta) \mathbb{P}_k(\zeta) \mathbb{P}^\perp L_j b_k.
\end{equation}
From~\eqref{def.Lk},~\eqref{eq.pphpp} and~\eqref{def.Q}, one has:
\begin{equation} \label{eq.Q.5}
\begin{split}
\mathbb{P}^\perp H_0 \mathbb{P}^\perp \zeta & + \mathbb{P}^\perp Q_0(\mathbb{P} \zeta, \mathbb{P} \zeta) \\
& = \mathbb{P}^\perp H_0 Q
+ \frac{1}{2} \sum_{j=0}^{d-1} \mathbb{P}_j(\zeta) \mathbb{P}^\perp H_0 \mathbb{P}^\perp L_j \mathbb{P} \zeta  
+ \mathbb{P}^\perp Q_0(\mathbb{P} \zeta, \mathbb{P} \zeta) \\
& = \mathbb{P}^\perp H_0 Q
+ \frac{1}{2} \sum_{j=0}^{d-1} \mathbb{P}_j(\zeta) \mathbb{P}^\perp \left(H_0 L_j \mathbb{P} \zeta + 2 Q_0(b_j, \mathbb{P}\zeta)\right) \\
& = \mathbb{P}^\perp H_0 Q - \frac{1}{2} \sum_{j=0}^{d-1} \mathbb{P}_j(\zeta) \mathbb{P}^\perp L_{j+1} \mathbb{P}\zeta
+ \frac{1}{2} \sum_{j=0}^{d-1} \mathbb{P}_j(\zeta) \mathbb{P}^\perp L_j H_0 \mathbb{P}\zeta.
\end{split}
\end{equation}
The last term in~\eqref{eq.Q.5} can be further simplified using~\eqref{def.vk}. Indeed:
\begin{equation} \label{eq.Q.6}
\begin{split}
\frac{1}{2} \sum_{j=0}^{d-1} \mathbb{P}_j(\zeta) \mathbb{P}^\perp L_j H_0 \mathbb{P}\zeta
& = \frac{1}{2} \sum_{j=0}^{d-1} \sum_{k=0}^{d-1} \mathbb{P}_j(\zeta) \mathbb{P}_k(\zeta) \mathbb{P}^\perp L_j H_0 b_k \\
& = - \frac{1}{2} \sum_{j=0}^{d-1} \sum_{k=0}^{d-1} \mathbb{P}_j(\zeta) \mathbb{P}_k(\zeta) \mathbb{P}^\perp L_j b_{k+1} \\
& = - \frac{1}{2} \sum_{j=0}^{d-1} \sum_{k=0}^{d-1} \mathbb{P}_j(\zeta) \mathbb{P}_k(\zeta) \mathbb{P}^\perp L_{k+1} b_j \\
& = - \frac{1}{2} \sum_{k=0}^{d-1} \mathbb{P}_k(\zeta) \mathbb{P}^\perp L_{k+1} \mathbb{P} \zeta.
\end{split}
\end{equation}
Similarly, the last term of~\eqref{eq.Q.4} can be rewritten:
\begin{equation} \label{eq.Q.7}
\begin{split}
- \sum_{j=0}^{d-1} \sum_{k=0}^{d-1} \mathbb{P}_j(H_0 \mathbb{P} \zeta) \mathbb{P}_k(\zeta) \mathbb{P}^\perp L_j b_k
& = - \sum_{j=0}^{d-1} \sum_{k=0}^{d-1} \sum_{l=0}^{d-1} \mathbb{P}_l(\zeta) \mathbb{P}_j(H_0 b_l) \mathbb{P}_k(\zeta) \mathbb{P}^\perp L_k b_j \\
& = - \sum_{k=0}^{d-1} \sum_{l=0}^{d-1} \mathbb{P}_l(\zeta) \mathbb{P}_k(\zeta) \mathbb{P}^\perp L_k H_0 b_l \\
& = \sum_{k=0}^{d-1} \sum_{l=0}^{d-1} \mathbb{P}_l(\zeta) \mathbb{P}_k(\zeta) \mathbb{P}^\perp L_k b_{l+1} \\
& = \sum_{k=0}^{d-1} \sum_{l=0}^{d-1} \mathbb{P}_l(\zeta) \mathbb{P}_k(\zeta) \mathbb{P}^\perp L_{l+1} b_k \\
& = \sum_{l=0}^{d-1} \mathbb{P}_l(\zeta) \mathbb{P}^\perp L_{l+1} \mathbb{P}\zeta.
\end{split}
\end{equation}
Hence, grouping~\eqref{eq.Q.4},~\eqref{eq.Q.5},~\eqref{eq.Q.6} and~\eqref{eq.Q.7}, 
we conclude that:
\begin{equation} \label{eq.Q.8}
 \frac{\mathrm{d}}{\dt}Q\left(\zeta(t)\right) = \mathbb{P}^\perp H_0 Q\left(\zeta(t)\right).
\end{equation}
Straight-forward integration of~\eqref{eq.Q.8} yields, for any $t \geqslant 0$:
\begin{equation} \label{eq.Q.9}
 Q\left(\zeta(t)\right) = e^{t \mathbb{P}^\perp H_0} Q(\zeta(0)).
\end{equation}
In particular, we deduce from~\eqref{eq.Q.9} that the evolution of $Q$ along
trajectories of $\zeta(t)$ does not depend on the control. Moreover, when
$\zeta(0) = 0$, $Q(\zeta(0)) = Q(0) = 0$ and this remains true for any
positive time: the quadratic homogeneous model cannot leave the manifold.
Hence Lemma~\ref{Lemma:MFRAK} holds with $\mathfrak{M} = \mathcal{M}_2$.

\begin{rk}
 In this paper, we introduced two different "quadratic" approximations for a
 nonlinear system: $y+z$ and $\zeta$. The decomposition $y+z$ used in 
 Section~\ref{Section:well_prepared} is quite classical but it does not 
 behave as well as $\zeta$. Indeed $\zeta$ provides an approximation which is
 homogeneous with respect to dilatations of order one in $\Sone$ and order
 two in $\Sone^\perp$ (while $y+z$ mixes first and second-order terms in
 $\Sone$). A clear indication that $\zeta$ behaves more nicely than $y+z$ is 
 Lemma~\ref{Lemma:MFRAK}, since $\zeta$ lives exactly within a given manifold.
 This remark might hint towards introducing well-prepared homogeneous 
 approximations instead of standard approximations to study local properties.
 Of course, the approximation $y+z$ is also relevant. It lives within 
 $\M_2$ up to a cubic residual. Indeed, since $\zeta = y + \mathbb{P}^\perp z$
 and using~\eqref{def.Q}:
 \begin{equation}
  Q(y+z) = Q(\zeta) + \Og{|\zeta| |Pz|}
  = \Og{\norm{u_\gamma}{L^\infty}^3}. 
 \end{equation}
\end{rk}

\subsection{Examples of approximate invariant manifolds}

We consider variations around the toy system exposed in 
Example~\ref{Example:toy_manifold} 
to illustrate the difference between
$\M$ and its approximation $\M_2$. When the system is already second-order
homogeneous, we have $\M_2 = \M$. However, this is not always the case.

\begin{example}[Higher-order terms] \label{Example:M_HOT}
 We consider the following variation:
 \begin{equation} \label{system.hot}
  \left\{
  \begin{aligned}
  \dot{x}_1 & = u, \\
  \dot{x}_2 & = 2 u x_1 + 3 u x_1^2.
  \end{aligned}
  \right.
 \end{equation}
 System~\eqref{system.hot} also satisfies $\Sone = \R e_1$ and 
 $\Stwo = \Sone$. However, the invariant manifold
 $\M$ defined in~\eqref{def.MG1} takes into account the cubic term
 and is given by:
 \begin{equation} \label{M.hot}
  \M = \left\{ (x_1, x_2) \in \R^2, \enskip
  x_2 = x_1^2 + x_1^3 \right\}.
 \end{equation}
 Here, $\M_2$ is only a local approximation of~\eqref{M.hot} 
 of second-order (see Figure~\ref{Figure:m_vs_m2}, left plot).
\end{example}

In Example~\ref{Example:M_HOT}, the difference between $\M$ 
and $\M_2$ is due to cubic terms. It is of also possible
to build systems for which $f(x,u)$ is a polynomial of degree
two but $\M \neq \M_2$. Indeed, the homogeneity with respect
to dilatations for a system is not constrained by the
degree of the polynomials defining the dynamics.

\begin{example}[Non-polynomial invariant manifold]
 We consider the following variation
 \begin{equation} \label{system.bent}
 \left\{
 \begin{aligned}
 \dot{x}_1 & = u, \\
 \dot{x}_2 & = 2 u x_1 + u x_2.
 \end{aligned}
 \right.
 \end{equation}
 System~\eqref{system.bent} also satisfies $\Sone = \R e_1$ and 
 $\Stwo = \Sone$. The invariant manifold is given by:
 \begin{equation} \label{def.mbent}
 \mathcal{M} =  
 \left\{ (x_1, x_2) \in \R^n, \enskip 
 x_2 = 2 \left(e^{x_1} - 1 - x_1\right) \right\}.
 \end{equation}
 One checks that $\M_2$ is indeed the local approximation
 of~\eqref{def.mbent} because of the Taylor expansion
 $2(e^{x_1}-1-x_1) = x_1^2 + O(x_1^3)$
 for $|x_1|\leqslant 1$ (see Figure~\ref{Figure:m_vs_m2}, right plot).
\end{example}

\afficherfigure{
\begin{figure}[ht!]
 \begin{center}
  \includegraphics[width=6.6cm]{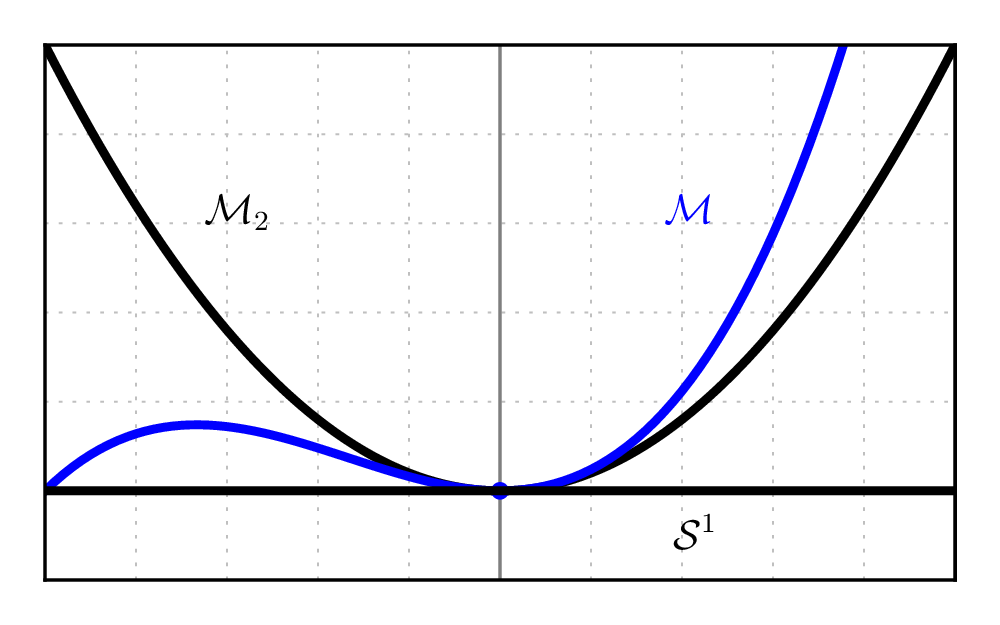}
  \includegraphics[width=6.6cm]{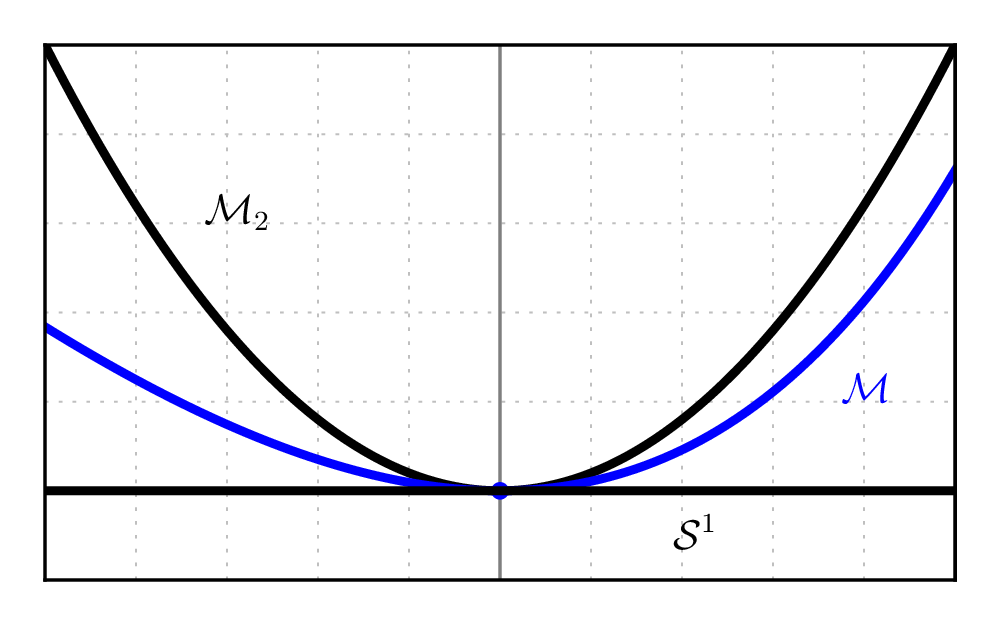}
  \caption{\label{Figure:m_vs_m2}
   Influence of left-out terms on the invariant manifold.}
 \end{center}
\end{figure}
}

\section{On other notions of small-time local controllability}
\label{Section:SSSTLC}

As sketched in the introduction, multiple definitions of small-time local
controllability can be found in the literature. In this work, we chose to
put the focus on the smallness assumption concerning the control, because we
think that it is the easiest way to highlight the links between the functional
setting and the geometric properties of the Lie brackets. However, other choices
are possible; we explore one and explain how it relates 
to our definition.

\subsection{Small-state small-time local controllability}

\begin{definition} \label{Definition:SSSTLC}
 We say that a differential system is \emph{small-state small-time locally controllable}
 when, for any $T > 0$, for any $\varepsilon > 0$, there exists $\delta > 0$
 such that, for any $x^*, x^\dagger \in \R^n$ with 
 $|x^*|+|x^\dagger| \leqslant \delta$, there exists a trajectory 
 $(x,u) \in C^0([0,T],\R^n) \times L^q((0,T),\R)$ such that
 $x(0) = x^*$, $x(T) = x^\dagger$ and:
 \begin{equation}
  \forall t \in [0,T], \quad |x(t)| \leqslant \varepsilon.
 \end{equation}
 In particular, the control is \emph{a priori} allowed to be large in $L^q((0,T),\R)$
 (with $q = \infty$ for nonlinear systems and $q = 1$ for control-affine systems).
\end{definition}

Here again, other choices would be possible: Definition~\ref{Definition:SSSTLC}
is linked to the $L^\infty$-norm of the state along the trajectory, but
one could also consider stronger norms.

\subsection{Relations between state-smallness and control-smallness}

Small-state small-time local controllability can be linked to the notions
studied in this work. 

\begin{lem}
 If a differential system is \stlc{$L^\infty$} (for
 Definition~\ref{Definition:STLC}), then it is small-state small-time locally 
 controllable (for Definition~\ref{Definition:SSSTLC}).
\end{lem}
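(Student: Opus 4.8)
The plan is to show that $L^\infty$ small-time local controllability (in the sense of Definition~\ref{Definition:STLC}) gives enough control over the trajectory, through a continuity-type estimate, to conclude state-smallness. First I would fix $T > 0$ and $\varepsilon > 0$. The key observation is that the $L^\infty$-norm of a trajectory can be controlled in terms of the initial data and the $L^\infty$-norm of the control by an a priori estimate of the same Gr\"onwall type as in Lemma~\ref{Lemma:xu}. More precisely, since $f \in C^1$ and $f(0,0)=0$, there exist constants $M, \rho > 0$ such that whenever $|x(0)| \leqslant \rho$ and $\norm{u}{L^\infty(0,T)} \leqslant \rho$, the trajectory stays in $\overline{B}(0,1)$ and satisfies
\begin{equation}
 \forall t \in [0,T], \quad |x(t)| \leqslant M e^{MT}\left( |x(0)| + \norm{u}{L^1(0,T)} \right) \leqslant M e^{MT}\left( |x(0)| + T \norm{u}{L^\infty(0,T)} \right).
\end{equation}
This is exactly the argument already carried out in the proof of Lemma~\ref{Lemma:xu}, only keeping track of the nonzero initial data; I would simply invoke it.

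Next I would apply the hypothesis. Since the system is \stlc{$L^\infty$}, for the given $T$ and for the auxiliary smallness parameter $\eta := \min\{\rho, \varepsilon / (2Me^{MT}\max(1,T))\}$, there exists $\delta_0 > 0$ such that any $x^*, x^\dagger$ with $|x^*| + |x^\dagger| \leqslant \delta_0$ can be joined by a trajectory $(x,u)$ on $[0,T]$ with $x(0) = x^*$, $x(T) = x^\dagger$ and $\norm{u}{L^\infty(0,T)} \leqslant \eta$. I would then set $\delta := \min\{\delta_0, \rho, \varepsilon / (2Me^{MT})\}$ and check that this $\delta$ works: if $|x^*| + |x^\dagger| \leqslant \delta$ then in particular $|x^*| \leqslant \delta \leqslant \rho$ and $\norm{u}{L^\infty} \leqslant \eta \leqslant \rho$, so the a priori estimate applies and gives
\begin{equation}
 \forall t \in [0,T], \quad |x(t)| \leqslant M e^{MT}\left( |x^*| + T \eta \right) \leqslant M e^{MT}\left( \tfrac{\varepsilon}{2Me^{MT}} + \tfrac{\varepsilon}{2Me^{MT}} \right) = \varepsilon.
\end{equation}
This is precisely the state-smallness required by Definition~\ref{Definition:SSSTLC}, so the lemma follows.

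There is no real obstacle here: the proof is a direct combination of the definition of \stlc{$L^\infty$} with the elementary Gr\"onwall a priori bound. The only point requiring a modicum of care is bookkeeping the constants so that the final $\delta$ depends only on $T$, $\varepsilon$, and the fixed data $M, \rho$ of the system (not on the particular trajectory), which is automatic because the estimate is uniform over all admissible small controls and small initial data. One should also note the mild subtlety that Definition~\ref{Definition:SSSTLC} allows the control to be large in $L^q$ but here we produce a control that is in fact small in $L^\infty$; this is harmless, as a small control is a fortiori an admissible (possibly-large) control. For control-affine systems one would run the identical argument using Lemma~\ref{Lemma:x1u1} in place of Lemma~\ref{Lemma:xu} if one wanted the $W^{-1,\infty}$ refinement, but for the stated lemma the $L^\infty$ version suffices.
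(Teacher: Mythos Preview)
Your proof is correct and follows essentially the same approach as the paper: both combine a Gr\"onwall-type a priori estimate (bounding $|x(t)|$ in terms of $|x(0)|$ and $\norm{u}{L^\infty}$) with the $L^\infty$-STLC hypothesis, then choose $\delta$ small enough via a two-step procedure. The only differences are cosmetic bookkeeping of constants.
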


\begin{proof}
 Let $f \in C^\infty(\R^n \times \R,\R^n)$ with $f(0,0) = 0$. 
 Let $T > 0$ and $\varepsilon > 0$. We define 
 $M := \sup \{ |f'(p,\tau)|, \enskip p \in \bar{B}(0,\varepsilon), \tau \in [-\varepsilon,\varepsilon] \}$, $C := (1 + MT) T e^{MT}$ and 
 $\sigma := \varepsilon / (2C)$. Moreover, considering
 $(x,u) \in C^0([0,T],\R^n) \times L^\infty((0,T),\R)$ with
 $|x(0)| \leqslant \sigma$ and $\norm{u}{L^\infty} \leqslant \sigma$,
 we have, as long as $x(t) \in \bar{B}(0,\varepsilon)$:
 \begin{equation} \label{xtx0.1}
  \begin{split}
   \left|x(t)\right|
   & \leqslant |x(0)| + \left| \int_0^t f(x(s),u(s)) \ds \right| \\
   & \leqslant |x(0)| + M t \norm{u}{L^\infty} + M \int_0^t |x(s)| \ds. 
  \end{split}
 \end{equation}
 Applying Gr\"onwall's lemma to~\eqref{xtx0.1} yields:
 \begin{equation} \label{xtx0.2}
  \left|x(t)\right| \leqslant e^{MT} (T |x(0)| + MT^2 \norm{u}{L^\infty}).
 \end{equation}
 Using~\eqref{xtx0.2} and the definition of $C$ leads to:
 \begin{equation} \label{xtx0.3}
 \left|x(t)\right| \leqslant C (|x(0)| + \norm{u}{L^\infty}) \leqslant \varepsilon.
 \end{equation}
 Since we assumed that the system is \stlc{$L^\infty$},
 from Definition~\ref{Definition:STLC}, there exists $\delta_\sigma > 0$
 such that, for any $x^*,x^\dagger \in \R$ with 
 $|x^*|+|x^\dagger| \leqslant \delta_\sigma$, there exists a trajectory
 from $x^*$ to $x^\dagger$ with a control smaller than $\sigma$.
 Therefore, thanks to~\eqref{xtx0.3}, 
 small-state small-time locally controllable if we set 
 $\delta := \min (\sigma, \delta_\sigma)$.
\end{proof}

For control-affine systems, a more precise result can be obtained.

\begin{lem}
 A control-affine system is \stlc{$W^{-1,\infty}$} (for 
 Definition~\ref{Definition:STLC}) if and only if it is small-state small-time 
 locally controllable (for Definition~\ref{Definition:SSSTLC}).
\end{lem}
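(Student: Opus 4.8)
The plan is to prove the two implications separately. The implication ``\stlc{$W^{-1,\infty}$} $\Rightarrow$ small-state small-time locally controllable'' is a sharpening of the preceding lemma, obtained by replacing the $L^\infty$--Gr\"onwall estimate by its control-affine refinement. First I would record the variant of Lemma~\ref{Lemma:x1u1} with nonzero initial datum: running the same Gr\"onwall argument on the auxiliary state $\aux_1 = \phi_1(-u_1,x)$, but keeping $\aux_1(0)=x(0)=x^\ast$, gives constants $C,\rho > 0$ depending only on $f_0,f_1,\tm$ such that every trajectory of~\eqref{system.f0f1} on $[0,T]$ with $T \leqslant \tm$ and $|x(0)| + \norm{u}{W^{-1,\infty}} \leqslant \rho$ satisfies $|x(t)| \leqslant C\big(|x(0)| + \norm{u}{W^{-1,\infty}}\big)$ on $[0,T]$. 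Then, given $T$ and $\varepsilon$, I would set $\sigma := \min\{\rho,\varepsilon/(2C)\}$, apply the \stlc{$W^{-1,\infty}$} hypothesis to obtain $\delta_\sigma > 0$ such that any two points of $\bar{B}(0,\delta_\sigma)$ are joined on $[0,T]$ by a trajectory with $\norm{u}{W^{-1,\infty}} \leqslant \sigma$, and conclude with $\delta := \min\{\sigma,\delta_\sigma\}$; the estimate above forces that trajectory to stay in $\bar{B}(0,\varepsilon)$.

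For the converse, the degenerate case $f_1(0) = 0$ is disposed of first: then $f_0(0) = f_1(0) = 0$, so Gr\"onwall applied to $|\dot x| \leqslant (C+L|u|)\,|x|$ shows that the trajectory issued from $x(0) = 0$ vanishes identically for every $u \in L^1$; hence no $x^\dagger \neq 0$ is reachable from the origin, so the system satisfies neither notion and the equivalence is trivial. When $b := f_1(0) \neq 0$, the key idea is to \emph{rectify} $f_1$: by the flow-box theorem there is a local $C^\infty$ diffeomorphism $\Phi$ with $\Phi(0)=0$ and $(\Phi')^{-1}\,(f_1\circ\Phi) \equiv e_1$, so that in the coordinates $\tilde x := \Phi^{-1}(x)$ the system reads $\dot{\tilde x} = \tilde f_0(\tilde x) + u\,e_1$ with $\tilde f_0(0)=0$. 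The point of this change of variables is that the control can now be read off the first coordinate: $u = \dot{\tilde x}_1 - \tilde f_0(\tilde x)_1$, whence $u_1(t) = \tilde x_1(t) - \tilde x_1(0) - \int_0^t \tilde f_0(\tilde x(s))_1\,\ds$.

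Given $T$ and $\eta$, I would then fix $\varepsilon > 0$ small enough that $\Phi$ and $\Phi^{-1}$ are defined and Lipschitz on the relevant balls and that $(2 + 2\,|\nabla (\tilde f_0)_1(0)|\,T)\,\varepsilon \leqslant \eta$. Transferring the small-state hypothesis through $\Phi$ (``staying in a small ball'' and ``joining points close to the origin'' are stable under a local diffeomorphism fixing $0$, after adjusting the radii by the Lipschitz constants) produces, for $x^\ast,x^\dagger$ close enough to $0$, a trajectory $(\tilde x,u)$ on $[0,T]$ joining $\Phi^{-1}(x^\ast)$ to $\Phi^{-1}(x^\dagger)$ with $|\tilde x(t)| \leqslant \varepsilon$ throughout. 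The displayed formula for $u_1$ together with the bound $|\tilde f_0(\tilde x(s))_1| = \Ou{\varepsilon}$ then yields $\norm{u}{W^{-1,\infty}} = \norm{u_1}{L^\infty} \leqslant (2 + 2\,|\nabla(\tilde f_0)_1(0)|\,T)\,\varepsilon \leqslant \eta$; pulling the trajectory back through $\Phi$ gives the required trajectory in the original coordinates, establishing \stlc{$W^{-1,\infty}$}.

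The main obstacle, and the step that dictates the whole strategy, is exactly the one just addressed: in the original coordinates one cannot bound $\norm{u_1}{L^\infty}$ from the confinement $|x(t)| \leqslant \varepsilon$ by a direct integration by parts, because the $L^1$--norm of the control (the total variation of $u_1$) is not controlled and reappears in every error term -- a control-affine trajectory can stay in an arbitrarily small ball with arbitrarily large $\norm{u}{L^1}$. Straightening $f_1$ removes this difficulty, since it replaces the relation ``$\dot x$ involves $u\,f_1(x)$ with $f_1(x)=b+O(|x|)$'' by the exact identity ``$\dot{\tilde x}_1 = u + \tilde f_0(\tilde x)_1$'', for which the confinement bound is immediate. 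The rest is bookkeeping: the locality of $\Phi$, which is harmless as $\varepsilon$ is chosen small, and the matching of the quantifiers $T,\varepsilon,\delta,\eta$ between the two notions.
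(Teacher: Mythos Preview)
Your proof is correct, and for the forward implication it matches the paper's one-line argument (enhanced Gr\"onwall via the first auxiliary system with nonzero initial datum).

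For the converse you take a genuinely different route. The paper builds an input-to-state map $\mathcal{F}:(x^\ast,u_1)\mapsto (x^\ast,\mathbb{P}_0 x)$ on a Banach space of continuous functions, checks that $\mathcal{F}'(0,0)$ is invertible by reducing to a linear Volterra equation of the second kind, applies the inverse function theorem in $C^0$, and then runs a continuation argument to show that $u_1$ never exits the small neighborhood on which the local inverse is defined. Your approach replaces all of this by a flow-box rectification of $f_1$: in the straightened coordinates the identity $u_1(t)=\tilde x_1(t)-\tilde x_1(0)-\int_0^t (\tilde f_0)_1(\tilde x)$ is exact, so the bound on $\norm{u_1}{L^\infty}$ is read off directly from the confinement of $\tilde x$, with no bootstrap and no function-space inversion needed. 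The separate handling of $f_1(0)=0$ is a minor cost (and your Gr\"onwall argument there is fine). Your proof is shorter and more elementary; the paper's approach has the advantage of staying within the auxiliary-system framework $(\phi_1,\aux_1)$ developed in Section~\ref{Section:auxiliary}, and its Volterra computation makes the linearized relation between $u_1$ and $\mathbb{P}_0 x$ explicit.
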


\begin{proof}
 Let $f_0, f_1 \in C^\infty(\R^n,\R^n)$ with $f_0(0) = 0$. The forward implication
 is proved as above, using an enhanced estimate as in Lemma~\ref{Lemma:x1u1},
 obtained via the use of the first auxiliary system. 
 We turn to the reverse implication. 
 
 \bigskip \noindent \textbf{Heuristic.} The key argument is that
 $\norm{u_1}{L^\infty}$ can be estimated from $\norm{x}{L^\infty}$. Indeed, one has:
 \begin{equation} \label{xapproxu1}
  x = x(0) + u_1 f_1(0) + \text{lower order terms}.
 \end{equation}
 Thus we can hope to recover $u_1$ from the knowledge of the state.
 The lower order terms in~\eqref{xapproxu1} are easily estimated when it is known
 that $u_1$ is small. Here, this is not \emph{a priori} the case. Hence, we need
 to invert relation~\eqref{xapproxu1} so that the lower order terms can be estimated
 from the state and not from the control.
 
 \bigskip \noindent \textbf{Construction of an input-to-state map.}
 Let $T > 0$ be fixed. 
 For $p \in \R^n$, we denote as previously by $\mathbb{P}_0 p$ the component
 of $\mathbb{P}p$ along $b_0$ in the basis $(b_0, \ldots b_{d-1})$ of $\Sone$.
 We introduce the spaces:
 \begin{align}
  A & := \{ (x^*,v) \in \R^n \times C^0([0,T],\R); \enskip v(0)=0 \}, \\
  B & := \{ (x^*,\pi) \in \R^n \times C^0([0,T],\R); \enskip \pi(0)=\mathbb{P}_0 x^* \}
 \end{align}
 and the following input-to-state map:
 \begin{equation}
  \mathcal{F}: \left\{
   \begin{aligned}
    A & \to B, \\
    (x^*, v) & \mapsto \left(x^*, \mathbb{P}_0 \phi_1(v, \aux_1)\right),
   \end{aligned}
  \right.
 \end{equation}
 where $\phi_1$ is defined by~\eqref{flow_fj} and $\aux_1$ is the solution to:
 \begin{equation}
  \dot{\aux}_1 = f_0(\aux_1) + v f_2(\aux_1) + v^2 G_1(v, \aux_1),
 \end{equation}
 with initial data $\aux_1(0) = x^*$. One has $\mathcal{F}(0,0) = (0,0)$.
 Straightforward Gr\"onwall estimates prove that $\mathcal{F}$ is 
 well-defined and $C^1$ on a small neighborhood of $(0,0)$ in $A$.

 \bigskip \noindent \textbf{Local inversion at zero.}
 For $(x^*,v) \in A$, we compute:
 \begin{equation} \label{F'00}
  \mathcal{F}'(0,0) \cdot (x^*, v)
  = \left( x^*, \mathbb{P}_0\left( \partial_\tau \phi_1(0,0) v
  + \partial_p \phi_1(0,0) y_1 \right) \right),
 \end{equation}
 where $y_1$ is the solution to $y_1(0) = x^*$ and:
 \begin{equation} \label{evol.y1}
  \dot{y}_1 = H_0 y_1 + v H_0 b.
 \end{equation}
 Since $\mathbb{P}_0 \partial_\tau \phi_1(0,0)
 = \mathbb{P}_0 f_1(0) = 1$ and $\partial_p \phi_1(0,0) = \mathrm{Id}$,
 equation~\eqref{F'00} yields:
 \begin{equation} \label{F'00.2}
  \mathcal{F}'(0,0) \cdot (x^*, v)
  = \left( x^*, v + \mathbb{P}_0 y_1 \right),
 \end{equation}
 From~\eqref{evol.y1}, one obtains:
 \begin{equation}
  \mathbb{P}_0 y_1(t) = \mathbb{P}_0(e^{t H_0} x^*)
  + \int_0^t v(s) \mathbb{P}_0 \left(e^{(t-s)H_0} H_0 b\right) \ds.
 \end{equation}
 Let $(x^*,\pi) \in B$. Solving $\mathcal{F}'(0,0) \cdot (x^*, v) = (x^*,\pi)$
 amounts to finding a $v \in C^0([0,T],\R)$ with $v(0)=0$ such that:
 \begin{equation} \label{volt1}
  v(t) + \int_0^t v(s) \mathbb{P}_0 \left(e^{(t-s)H_0} H_0 b\right) \ds
  = \pi(t) - \mathbb{P}_0(e^{t H_0} x^*) =: h(t),
 \end{equation}
 where $h(0) = 0$.
 We are faced with a linear Volterra integral equation of second-kind,
 with a smooth kernel. We refer to~\cite[Section 1.2]{brunner_2017} for an introduction on this topic. 
 Classical theory for such problems (see e.g.\ \cite[Theorem 1.2.3]{brunner_2017})
 yields the existence of a continuous
 resolvent kernel $K$ such that~\eqref{volt1} is equivalent to:
 \begin{equation}
  v(t) = h(t) + \int_0^t K(t,s) h(s) \ds.
 \end{equation}
 Hence $\mathcal{F}'(0,0)$ is invertible. 
 The inverse function theorem then allows us to conclude that
 there exists $C, \delta_A, \delta_B > 0$ such that,
 for any $(x^*,v) \in A$, if $|x^*| \leqslant \delta_A$
 and $\norm{v}{C^0} \leqslant \delta_A$
 and $\mathcal{F}(x^*,v) = (x^*,\pi)$
 with $\norm{\pi}{C^0} \leqslant \delta_B$, then:
 \begin{equation} \label{ineq.final.0T}
  \norm{v}{L^\infty([0,T])} \leqslant \frac{C}{2} \left( |x^*| + \norm{\pi}{L^\infty([0,T])} \right)
  \leqslant C \norm{\phi_1(v,\aux_1)}{L^\infty([0,T])}.
 \end{equation}
 Moreover, since the map $\mathcal{F}$ is causal (in the sense that
 the value of $\mathbb{P}_0 \phi_1(v,\aux_1)$ at time $t \in [0,T]$ only
 depends on the values of $v$ on the past time interval $[0,t]$), the same
 property holds for its inverse. Thus, estimate~\eqref{ineq.final.0T} yields:
 \begin{equation} \label{ineq.final.I}
  \forall T' \in (0,T], \quad
   \norm{v}{L^\infty([0,T'])}
   \leqslant C \norm{\phi_1(v,\aux_1)}{L^\infty([0,T'])}.
 \end{equation}
 
 \bigskip \noindent \textbf{Progressive estimation.}
 We assume that the system is small-state small-time locally controllable.
 Let $\eta > 0$. We define  
 $\varepsilon := \min \{\delta_A, \delta_B, \delta_A / (2C), \eta / C\}$.
 Let $\delta > 0$ be given by the application of Definition~\ref{Definition:SSSTLC}.
 For any states $x^*, x^\dagger \in \bar{B}(0,\delta/2)$, there 
 exists a trajectory $(x,u) \in C^0([0,T],\R^n) \times L^1((0,T),\R)$ 
 with $x(0) = x^*$, $x(T) = x^\dagger$ and
 $\norm{x}{L^\infty} \leqslant \varepsilon$.
 Moreover, $u_1 \in C^0([0,T])$ with $u_1(0) = 0$.
 Let $\bar{T} := \sup \{ T' \in [0,T]; \enskip \norm{u_1}{L^\infty(0,T')} \leqslant \delta_A / 2 \}$. 
 Since $u_1$ is continuous and vanishes at the initial time, one has $\bar{T} > 0$.
 
 By contradiction, let us assume that $\bar{T} < T$. Then, by continuity of
 $u_1$, there exists $T' \in (\bar{T},T]$ such that $\norm{u_1}{L^\infty(0,T')} \leqslant \delta_A$.
 Thus, we can apply~\eqref{ineq.final.I} and obtain that $\norm{u_1}{L^\infty(0,T')} \leqslant C \varepsilon
 \leqslant \delta_A / 2$. Hence $\bar{T} = T$.
 
 Eventually, we can apply~\eqref{ineq.final.0T} and obtain 
 that $\norm{u_1}{L^\infty(0,T)} \leqslant C \varepsilon \leqslant \eta$.
 Therefore, the system is also \stlc{$W^{-1,\infty}$}.
\end{proof}

\section*{Conclusion and perspectives}

We proved that quadratic approximations for differential systems
can lead either to drifts quantified by Sobolev norms of the control or to 
the existence of an invariant manifold at the second-order. Thus, when a
nonlinear system does not satisfy the linear Kalman condition, one needs to
go at least up to the third order expansion to hope for positive results concerning
small-time local controllability. 

Our work highlights the importance of
the norm hypothesis in the definition of small-time local controllability,
even for differential systems.
Indeed, although the state lives in~$\R^n$, 
we have proved that the controllability properties depend strongly
on the norm of the control chosen in the definition of the notion.
We expect that other geometric results might be improved by exploring the
link between Lie brackets and functional settings.

For systems governed by partial differential equations, we expect that
the behaviors proved in finite dimension can also be observed. 
For example, the first author and Morancey obtain in~\cite{MR3167929}
a drift quantified by the $H^{-1}$-norm of the control, which prevents
small-time local controllability, under an 
assumption corresponding to $[f_1,[f_0,f_1]](0) \notin \Sone$.
In~\cite{MR2060480}, Coron and Cr\'epeau observe that the behavior of
the second-order expansion of a Korteweg-de-Vries system is fully
determined by the position of the linear approximation (thus recovering
a kind of invariant manifold up to the second order).

It is also known that new phenomenons can occur. For example, 
in~\cite{2015arXiv151104995M}, the second author obtains
a drift quantified by the $H^{-5/4}$-norm of the control for a Burgers
system, which thus does not seem to be linked with an integer order Lie 
bracket and is specific to the infinite dimensional setting.


\bibliographystyle{plain}
\bibliography{bibliography}

\end{document}